\documentclass[11pt,a4]{amsart}
\usepackage{amscd,amsmath,amssymb,amsfonts,ifthen,hyperref,tikz-cd}
\usepackage{calligra}
\usepackage{mathtools}
\usepackage{hyperref}
\usepackage{bbold}
\usepackage{color} 
\usepackage{graphicx}
\usepackage[all]{xy} 

\usepackage{fourier} 
\usepackage{srcltx}
 
\newtheorem{theorem}{Theorem}
\newtheorem{proposition}[theorem]{Proposition}

\newtheorem{thm}[subsubsection]{Theorem}
\newtheorem{lem}[subsubsection]{Lemma}
\newtheorem{cor}[subsubsection]{Corollary}
\newtheorem{prop}[subsubsection]{Proposition}
\theoremstyle{definition}
\newtheorem{defn}[subsubsection]{Definition}

\newtheorem{rem}[subsubsection]{Remark}

\DeclareMathAlphabet{\mathcalligra}{T1}{calligra}{m}{n}
\DeclareFontFamily{OT1}{pzc}{}
\DeclareFontShape{OT1}{pzc}{m}{it}{<-> s * [1.10] pzcmi7t}{}
\DeclareMathAlphabet{\mathpzc}{OT1}{pzc}{m}{it}

\newcommand{\sttimes}{\mbox{\hspace{.2ex}${\hspace{.2ex}}_s\times_t$\hspace{.2ex}}}

\newcommand{\stotimes}{\mbox{\hspace{.2ex}${\hspace{.2ex}}_s\otimes_t$\hspace{.2ex}}}

\newcommand{\ga}[2]{\begin{gather}\label{#1}#2 \end{gather}}

\newcommand{\Rep}{\text{\rm Rep\hspace{.1ex}}}

\renewcommand{\inf}{\text{\rm inf\hspace{.1ex}}}
\newcommand{\Spec}{\text{\rm Spec\hspace{.1ex}}}
\newcommand{\Hom}{\text{\rm Hom\hspace{.1ex}}}
\renewcommand{\mod}{\text{\rm mod\hspace{.1ex}}}
\newcommand{\id}{\text{\rm id\hspace{.1ex}}}

\usepackage{geometry}
\date{\today}
\author[PH Hai]{Ph\`ung H\^o Hai}
\email[PH Hai]{phung@math.ac.vn}

\author[VQ Bao]{V\~o Qu\^oc Bao}
\email[VQ Bao]{vqbao@math.ac.vn}

\author[TPQ Bao]{Tr\^an Phan Qu\^oc Bao}
\email[TPQ Bao]{phan.tran@unibe.ch}

\address[TPQ Bao]{University of Bern, Switzerland}

\address[VQ Bao, PH Hai]{Institute of Mathematics, Vietnam Academy of Science and Technology}

\title{Tannakian duality and Gauss-Manin connections for a family of curves}  

\begin{document}
\makeatletter
\@namedef{subjclassname@2020}{\textup{2020} Mathematics Subject Classification}
\makeatother
\keywords{de Rham cohomology; Gauss-Manin connection; Groupoid scheme; Group scheme cohomology; Tannakian duality.}
\subjclass[2020]{14F40, 14F43, 14L15, 14L30, 18G15,18G40,18M25}
\maketitle  
 
\begin{abstract} 
Let $X/S$ be a smooth family of smooth projective varieties, where $S$ is a smooth affine curve
over a field $k$ of characteristic $0.$
We relate the differential fundamental groupoid scheme of $X/k$ with
the differential fundamental groupoid scheme of $S/k$ and the 
relative differential fundamental group of $X/S$ in a short exact sequence. This yields natural maps
from the group cohomology of the geometric relative fundamental group
to the Gauss-Manin connections. For families of curves of genus at least $1,$ 
we prove that these maps are isomorphisms. This gives
an interpretation of the Gauss-Manin connection in terms of cohomology of the
differential fundamental group. As a consequence we can shrink $X$  
(as a family on $S$) to obtain a de Rham $K(\pi,1)$ surface.  
\end{abstract} 

\section{Introduction}\label{sect_intro}
For a connected noetherian scheme $X$, the \'etale homotopy group of $X$ is defined in terms
of the \'etale homotopy type $X_\mathrm{et}$. Here $X_\mathrm{et}$ is an object in
the pro-category of simplicial sets. A geometric point $\bar x$ of $X$ determines a point of 
$X_\mathrm{et}$ and the \'etale fundamental group $\pi^\mathrm{et}(X,\bar x)$ is isomorphic
to the fundamental group $\pi_1(X_\mathrm{et},\bar x)$, cf. \cite{AM69}, see also \cite{SS16,Ach17}. 
$X$ is said to be \textit{\'etale} $K(\pi,1)$ if the \'etale homotopy groups vanish in
degree $>1$. This property is equivalent to the requirement that the natural maps
$$\delta^i : \mathrm{H}^i (\pi_1(X_\mathrm{et},\bar x),\mathcal F_{\bar x}) \longrightarrow \mathrm{H}_\mathrm{et}^i (X,\mathcal F )$$
are isomorphisms for every locally constant constructible abelian sheaf $\mathcal F$ and all $i\geq 0$.
There are many examples of \'etale $K(\pi,1)$. For instance, in characteristic $0$, if $X$ is a normal, connected $\mathbb{C}$-scheme of finite type such that the group $\pi_1 := \pi_1(X(\mathbb{C}))$ is good (i.e., $\mathrm{H}^i(\widehat{\pi_1}, M) \cong \mathrm{H}^i(\pi_1, M)$ for any finite continuous representation $M$ of $\widehat{\pi}$) and the universal cover of $X(\mathbb C)$ is weakly contractible, then $X$ is an \'{e}tale $K(\pi, 1)$. In positive characteristic,
Achinger \cite{Ach17} showed that every connected affine scheme 
is \'etale $K(\pi,1)$. 

In \cite{EH08} the authors study a similar map to $\delta^i$ for the differential fundamental group
and the de Rham cohomology and show that they are isomorphisms when $X$ is an affine curve
in characteristic zero. In \cite{BHT25} it is shown that $\delta^i$ are isomorphisms for projective
curves of genus at least 1. A similar statement in positive characteristic for the stratified
fundamental group and infinitesimal cohomology was obtained in \cite{BHT25b}.
See also \cite{BN26} for the similar claim for abelian varieties. 
In this work we consider the problem for the case of one-parametric families of smooth projective 
curves of genus at least 1.

Let $f:X\longrightarrow S$ be a smooth morphism of smooth schemes over a field $k$ of characteristic zero.
Let $(\mathcal V,\nabla)$ be a coherent sheaf on $X$ equipped with a flat connection on $X/k$ 
(see \ref{sect-connection}). 
``Inflate'' it to a relative connection $(\mathcal V,\nabla_{/S})$  on $X/S$. Then 
the de Rham cohomology of $(\mathcal V,\nabla_{/S})$ is a sheaf on $S$ which is equipped with
a connection, called \textit{Gauss-Manin connection}. 

Fix now a $k$-point $x$ in $X$ and let $s:=f(x)$. Tannakian duality yields an affine group scheme
$\pi(X,x)$ and the representation category of which is equivalent to the category of coherent sheaves
with flat connections on $X/k$. $\pi(X,x)$ is called the 
\textit{differential fundamental group scheme} of $X$
with base point at $x$. 
The map $f$ induces a group homomorphism $\pi(X,x)\longrightarrow \pi(S,s)$.
Further, assuming that $f$ is proper, this homomorphism fits in an exact sequence, similar to the
homotopy exact sequence for the topological fundamental groups. 

H\'el\`ene Esnault raised the following question: can one describe the Gauss-Manin connection
on $\mathrm{H}^i_\mathrm{dR}(X/S,(\mathcal V,\nabla_{/S}))$ in terms of the cohomology
of the fundamental group schemes. This question would consist of more precise sub-questions. 
\begin{enumerate}
\item Is the natural map from the cohomology of the differential fundamental group scheme with
coefficients in a representation to the de Rham cohomology of the corresponding connection
an isomorphism?
\item Is there a notion of the relative differential fundamental group (for the relative scheme $X/S$)
and how is it related to the (absolute) differential fundamental groups mentioned above?
\end{enumerate}

It is not expected that these questions will have affirmative answers in the above generality.
Instead, one would ask for settings where the answers are affirmative.
In \cite{EH06}, the following case was 
studied: $S=\mathrm{Spec}(K)$, where $K$ is a function field over $k$ and $X/K$ is a smooth
curve. The answer to question (1) is affirmative for the first and second cohomology groups 
(see also \cite{BHT25}). 
An answer to question (2) is also given. 
Here, $S$ is chosen to be a point in order to apply the Tannakian 
duality. In \cite{BHT25}, question (1) is affirmatively answered for any curve of genus at least 1.
We note that, as the de Rham cohomology on curves vanish in degree larger than 2, this isomorphism
implies the similar vanishing property for the cohomology of the differential fundamental group
scheme. With this property, one can say that smooth projective curves of genus at least 1 are
\textit{de Rham $K(\pi,1)$}.

In this work, we consider the following setting: $S$ is a smooth affine curve over $k$ (thus $S$ is
the spectrum
of a Dedekind domain $R=\mathcal O(S)$ over $k$);
$f:X\longrightarrow S$ is a relative smooth, proper curve of
genus $g\geq 1$. Assuming the existence of a section $\eta:S\longrightarrow X$ of $f$, 
we show that the answers to both questions mentioned above are affirmative.
In particular, we show that $X$, as a surface over $k$, becomes de Rham $K(\pi,1)$
after a suitable shrinking. Below are more detailed
descriptions of the work. 

The initial motivation of our study are the two exact sequences of \'etale fundamental groups due to
A. Grothendieck. 
For algebraic schemes, Grothendieck's \'etale fundamental group serves as a replacement
for the topological fundamental group. 
Let $k$ for the moment be an arbitrary field. 
Let $X/k$ be a separated scheme, and let $x\in X\times_k\bar k$ be a $\bar k$-point of 
$X\times_k\bar k$, where $\bar k$ is an algebraic closure of $k$. Grothendieck defined in
\cite{SGA1} the \'etale fundamental group $\pi^\text{et}(X, x)$.
This group is a pro-finite group which classifies Galois coverings of $X$ 
(with a distinguished point above $ x$). Then Grothendieck deduced the following exact sequences.
\begin{enumerate}\item
The fundamental exact sequence which relates the \'etale fundamental groups of $X$, with
the geometric fundamental group $\pi^\text{et}(X\times_k\bar k , x)$ and
the Galois group $\text{\rm Gal}(\bar k/k)$:
$$1\longrightarrow \pi^\text{et}(X\times_k\bar k, x)\longrightarrow 
\pi^\text{et}(X, x)  \longrightarrow \mathrm{Gal}(\bar k /k)     \longrightarrow  1.$$
\item The homotopy exact sequence that relates the fundamental group of $X$, of $S$ and
of the fiber $X_s:=f^{-1}(s)$:
$$\pi^\text{et}(X_s, x)\longrightarrow 
\pi^\text{et}(X, x)\longrightarrow \pi^\text{et}(S, s)\longrightarrow  1,$$ 
provided that $f$ is proper with connected fibers. 
\end{enumerate} 
 
There is another algebraic replacement of the topological fundamental group
motivated by the Riemann-Hilbert correspondence.
Recall that on a smooth complex algebraic variety, there is a one-to-one 
correspondence between complex local systems
(i.e. complex finite-dimensional representations of the topological fundamental group) and
regular singular flat connections on algebraic vector bundles.
For a smooth scheme $X$ over a field $k$ of 
characteristic $0,$ consider the category 
of flat connections on $X/k$. A $k$-rational point $x$ of $X$ yields a fiber functor, 
i.e., it associates 
to each connection its fiber at $x$. Thus, Tannakian duality applies and yields a pro-algebraic 
affine group scheme. This group scheme is called the {\em differential fundamental group of $X$ at $x$}
and is denoted by $\pi(X,x)$. For simplicity, from now on we shall assume that \textit{$k$
is an algebraically closed field (of characteristic 0). }

Assume that $f:X\longrightarrow S$ is smooth, projective morphism of smooth schemes.
Further, assume that $f$ has connected fibers in the sense that $f_*(\mathcal O_X)=\mathcal O_S$. 
Fix a point $x\in X(k)$ and its image $s=f(x)\in S(k)$. There is a sequence
similar to the second one mentioned above: 
\begin{equation}\label{IntHES}
\pi(X_s/k,x)\longrightarrow \pi(X/k, x)\longrightarrow \pi(S/k,s)\longrightarrow 1,
\end{equation}
which was shown to be exact in characteristic zero case by L. Zhang \cite{Zha14} and  in any
characteristic by J. P. dos Santos \cite{dS15} (see Theorem \ref{HES}). It is called
the homotopy exact sequence for the differential fundamental group scheme.

Let $S$ be a smooth affine curve over $k$ and $\eta:S\longrightarrow X$ be a section to $f$. 
Then by the general Tannakian duality (cf. Appendix \ref{sect_tann_dual}) $\eta$ determines the \textit{differential fundamental groupoid scheme} $\Pi(X/k)$ as well as
the \textit{relative differential fundamental group scheme} $\pi(X/S)$.

An absolute connection on $X/k$ can be \textit{inflated} to a relative
connection on $X/S$ (cf. \ref{sect-GM})   
This gives a 
homomorphism $\pi(X/S)\longrightarrow \Pi(X/k)$.
This map fits into a sequence
\begin{align}\label{eq-3}
    \pi(X/S)\longrightarrow \Pi(X/k)\longrightarrow \Pi(S/k)\longrightarrow 1,
\end{align}
which is shown to be exact (see  \ref{sec-exact}), and we call it the \textit{fundamental exact sequence}. More precisely, we introduce the
category of relative connections of geometric origin, which determines 
the  \textit{geometric relative fundamental group} $\pi^\text{geom}(X/S)$. Then the above 
sequence splits into the following exact sequences:
$$\pi(X/S)\longrightarrow\pi^\text{geom}(X/S)\longrightarrow1$$
and
\begin{equation}\label{IntFES}1\longrightarrow \pi^\text{geom}(X/S)\longrightarrow \Pi(X/k)\longrightarrow \Pi(S/k)\longrightarrow 1.\end{equation}

We note that this result generalizes Theorem 5.11 of \cite{EH06} and
was implicitly mentioned in \cite[Theorem~9.2]{DHdS18}. It was formulated
and proved in the PhD thesis of Hugo Bay-Rousson \cite{BR19}; 
however, some parts of the argument in the proof there are missing. For the sake of completeness, 
we provide here a detailed proof. 

Given the above exact sequences, our next step is to compare the de Rham cohomology with
the group cohomology and to express the Gauss-Manin connection in terms of group 
cohomology. Let $(\mathcal V,\nabla)$ be a flat connection on $X/k$ and $(\mathcal V,\nabla_{/S})$
the inflated connection. 
Let $V:=\eta^*(\mathcal V)$ be the ``fiber'' at $\eta$. The exact sequence \eqref{IntFES} yields a map
$$
\mathrm{H}^{0}(\pi^{\mathrm{geom}}(X/S), V) \longrightarrow \mathrm{H}_\mathrm{dR}^{0}(X/S, (\mathcal V,\nabla_{/S})),
$$
which is bijective, and hence the natural maps of the derived functors
$$ 
 \beta^i: \mathbf{R}_{\mathrm{Rep}(S:\Pi(X / k))}^{i} \mathrm{H}^{0}(\pi^{\mathrm{geom}}(X/S), V) \longrightarrow 
\mathbf{R}_{\mathrm{MIC}(X/k)}^{\mathrm{i}} 
\mathrm{H}_{\mathrm{dR}}^{0}(X/S, (\mathcal V,\nabla_{/S}))=
\mathrm{H}_{\mathrm{dR}}^{\mathrm{i}}(X/S, (\mathcal V,\nabla_{/S})).
$$ 
We note that the derived functors on the source of the map are taken in $\mathrm{Rep}(S:\Pi(X / k))$
which is smaller than the category $ \mathrm{MIC}(X/k)$ of all connections on $X/k$ where the derived functors on the 
target of the map are taken. Similar constructions yield  the following diagram:
$$\xymatrix{
\mathrm{H}^i(\pi^\mathrm{geom}(X/S),V)\ar[r]^{\gamma^i} & \mathrm{H}^i(\pi(X/S), V) \ar[d]^{\delta^i}\\
 \mathbf{R}_{\mathrm{Rep}(S:\Pi(X / k))}^{i} \mathrm{H}^{0}(\pi^{\mathrm{geom}}(X/S), V) 
 \ar[u]^{\alpha^i}\ar[r]_{\qquad \beta^i}&
  \mathrm{H}^i_\mathrm{dR}(X/S, (\mathcal{V},\nabla_{/S})).
}$$

  \begin{theorem}[Theorem \ref{thm-gmt}, Lemma \ref{lem_comparison_functors} 
and Theorem \ref{thm-comparison}, Corollary \ref{cor-114}]
\label{thm-equivariant}
Let $f:X\longrightarrow S$ be a smooth projective relative curve of genus $g\geq 1$ over a smooth affine curve $S,$ equipped with a section $\eta:S\longrightarrow X$. 
Let $(\mathcal V,\nabla)$ be a vector bundle with flat connection on $X/k.$ Then: 
\begin{enumerate}
\item 
For $i\geq 0,$ the diagram above is commutative and all maps in it are isomorphisms. 
\item The Gauss-Manin connection on 
$\mathrm{H}^i_\mathrm{dR}(X/S, (\mathcal{V},\nabla_{/S}))$ corresponds to the action of 
$\Pi(S/k)$ on $\mathrm{H}^i(\pi^\mathrm{geom}(X/S),V)$ 
induced from the Lyndon-Hochschild-Serre sequence. 
\item The morphisms $\gamma^i$ and $\delta^i$ are isomorphisms for all $i\geq 0$ and $V \in \mathrm{Obj}(\mathrm{Rep}^\mathrm{f}(\pi^\mathrm{geom}(X/S))).$
\end{enumerate}
\end{theorem}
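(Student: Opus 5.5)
The plan is to reduce everything to the known absolute statement for a single projective curve of genus $g\geq 1$ (\cite{BHT25}: projective curves of genus at least 1 are de Rham $K(\pi,1)$) and to prove it by a base-change/fibrewise argument over the affine curve $S$, organised by the fundamental exact sequence \eqref{IntFES}.

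\textbf{Step 1: commutativity and the maps in degree $0$.} First I would check commutativity of the square in degree $0$. All four groups there are $\eta^*$ of the horizontal sections: $\mathrm{H}^0(\pi^\mathrm{geom}(X/S),V)=\mathrm{H}^0(\pi(X/S),V)=\mathrm{H}^0_\mathrm{dR}(X/S,(\mathcal V,\nabla_{/S}))$, the last equality being the bijectivity already noted. Every map is obtained from universal $\delta$-functor properties, so commutativity in all degrees follows by the universal property of derived functors. The sources are effaceable: $\alpha^\bullet$ and $\beta^\bullet$ are defined via injective resolutions in $\mathrm{Rep}(S:\Pi(X/k))$, whose restriction to $\pi^\mathrm{geom}(X/S)$ is still acyclic because $\Pi(X/k)\to\Pi(S/k)$ is faithfully flat by \eqref{IntFES}, with quotient $\pi^\mathrm{geom}(X/S)$. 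This restriction argument, which is the groupoid version of Shapiro's lemma, also gives that $\alpha^i$ is an isomorphism.

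\textbf{Step 2: $\beta^i$ and $\delta^i$ are isomorphisms.} Since $S$ is affine and $X/S$ is a relative curve, both sides vanish for $i\geq 3$. Indeed, $\mathrm{H}^i_\mathrm{dR}(X/S,-)$ vanishes for $i>2$ by the relative dimension. The group-cohomology side needs the vanishing of the cohomology of $\pi^\mathrm{geom}$ above degree $2$, which I will get fibrewise. For $i\le 2$, the sheaves $\mathrm{H}^i_\mathrm{dR}(X/S,\mathcal V)$ are locally free with Gauss--Manin connection, and their formation commutes with base change to $s\in S(k)$ (Katz--Oda). I would compare with the fibre: the restriction $\pi(X_s,x)\to\pi^\mathrm{geom}(X/S)$, specialised at $s$, identifies the representation categories of the fibres. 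Using the homotopy exact sequence (Theorem \ref{HES}) together with the section, I would show that $\pi^\mathrm{geom}(X/S)$ is the image of $\pi(X_s,x)$, and hence that $\mathrm{H}^i(\pi^\mathrm{geom}(X/S),V)$ base-changed to $s$ equals $\mathrm{H}^i(\pi(X_s),V_s)$. Then \cite{BHT25} gives $\mathrm{H}^i(\pi(X_s),V_s)\cong\mathrm{H}^i_\mathrm{dR}(X_s,\mathcal V_s)$ at each closed point. A map of vector bundles on the affine curve $S$ that is an isomorphism on all fibres is an isomorphism. This proves $\beta^i$ is an isomorphism, and by commutativity $\gamma^i$ and $\delta^i$ are isomorphisms too, which gives (1).

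\textbf{Step 3: the Gauss--Manin connection (part (2)).} The Lyndon--Hochschild--Serre construction makes $\mathrm{H}^i(\pi^\mathrm{geom},V)$ a representation of $\Pi(S/k)$, i.e.\ an object of $\mathrm{MIC}(S/k)$. It is computed as the derived functor of $\mathrm{H}^0(\pi^\mathrm{geom},-)$ from $\mathrm{Rep}(S:\Pi(X/k))$ to $\mathrm{Rep}(S:\Pi(S/k))$. The Gauss--Manin connection is the derived functor of $f_*$ on horizontal sections, from $\mathrm{MIC}(X/k)$ to $\mathrm{MIC}(S/k)$ (Katz's description). Since $\beta^0$ is an isomorphism of functors into $\mathrm{MIC}(S/k)$, the derived isomorphisms $\beta^i$ are automatically horizontal.

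\textbf{Step 4: general coefficients (part (3)) and the main obstacle.} For arbitrary $V\in\mathrm{Rep}^\mathrm{f}(\pi^\mathrm{geom}(X/S))$, every such $V$ is a subquotient of the restriction of some $\eta^*\mathcal V$, by the exactness of \eqref{IntFES}. I would use a dimension shifting/five-lemma argument: embed $V$ into an induced object coming from $\Pi(X/k)$, where the previous steps apply, and induct on the degree using the vanishing for $i\geq 3$.

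The main obstacle is the base-change identification in Step 2, namely that group cohomology of $\pi^\mathrm{geom}(X/S)$ specialises to that of $\pi(X_s)$, together with local freeness on the group side. I would first try to get this from the splitting given by $\eta$: write $\Pi(X/k)\cong\pi^\mathrm{geom}\rtimes\Pi(S/k)$, so that the cohomology sheaves are $\Pi(S/k)$-representations. These are then automatically locally free, being coherent modules with connection on the smooth curve $S$.
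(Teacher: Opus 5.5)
\textbf{The gap is in Step 2, which is where all of the theorem's content lies.}

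From Theorem \ref{HES} and the fibre of \eqref{eq_exact_diag} you only learn that $\pi^{\mathrm{geom}}(X/S)_s$ is the image of $\pi(X_s/k)$ in $\pi(X/k)$. That is, you get a \emph{quotient} $\pi(X_s/k)\twoheadrightarrow\pi^{\mathrm{geom}}(X/S)_s$, as in \eqref{163}, not an identification of representation categories.

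In general this quotient is not an isomorphism. An isomorphism would mean that every connection on $X_s$ is a subquotient of the restriction of an absolute connection on $X$, i.e.\ that the homotopy sequence is left exact. This fails for non-isotrivial families: the simple constituents of a restricted absolute connection have finite orbit under the monodromy action of $\pi_1(S)$, whereas a general simple connection on $X_s$ does not.

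A quotient map alone tells you almost nothing about cohomology. In degree $1$ inflation is injective, but surjectivity of $\mathrm{H}^1(\pi^{\mathrm{geom}}(X/S)_s,V_s)\to \mathrm{H}^1(\pi(X_s/k),V_s)$ means that the geometric subcategory is closed under extensions of the unit by $V_s$. In degree $2$ you get no a priori information at all.

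That these groups nevertheless agree for coefficients coming from $\Pi(X/k)$ is, after base change, essentially the statement being proved. So your Step 2 assumes its conclusion. The vanishing of group cohomology in degrees $\geq 3$, which you planned to obtain ``fibrewise'', rests on the same identification.

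\textbf{What the paper supplies instead.}

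In degree $1$, bijectivity of $\gamma^1$ comes from the universal extension (Theorem \ref{univ-ext}):
\begin{itemize}
\item the identity class in $\mathrm{End}(\mathrm{H}^1_{\mathrm{dR}}(X/S,\mathcal V))$ is Gauss--Manin flat;
\item by \eqref{GM-hyper} it therefore lifts to an absolute extension;
\item every relative extension of $\mathcal O_X$ by $\mathrm{inf}(\mathcal V)$ is a pull-back of it, hence is of geometric origin.
\end{itemize}

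In degree $2$, dimension shifting only gives injectivity:
\begin{itemize}
\item of $\nu^2$ (Lemma \ref{133});
\item of $\nu^2\otimes k_s$ (Proposition \ref{prop-20}, Corollary \ref{Cor-144}).
\end{itemize}
Surjectivity is proved on the generic fibre in Proposition \ref{130} by a genuinely new argument: Poincar\'e duality on $X_K$, infinitely many simple objects of $\mathrm{MIC}^{\mathrm{geom}}(X/S)_K$ obtained from the \'etale homotopy sequence, non-vanishing of $\mathrm{H}^1$ for $g\geq 2$, and commutativity of $\pi(X_K/K)$ for $g=1$. Then ``injective, generically bijective, injective on closed fibres'' forces $\nu^2$ to be bijective. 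Your plan has no substitute for any of these inputs.

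\textbf{What in your proposal is right.} Via $\alpha^i$, $\mathrm{H}^i(\pi^{\mathrm{geom}}(X/S),V)$ lies in $\mathrm{Rep}(R:\Pi(S/k))$. It is therefore a union of vector bundles with connection, hence $R$-flat. Through the universal coefficient sequence this gives
\[
\mathrm{H}^i(\pi^{\mathrm{geom}}(X/S),V)\otimes k_s\cong \mathrm{H}^i(\pi^{\mathrm{geom}}(X/S)_s,V_s).
\]
It does not, however, address the comparison between $\pi^{\mathrm{geom}}(X/S)_s$ and $\pi(X_s/k)$.

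A minor point on Step 1: $\pi^{\mathrm{geom}}(X/S)$ is the kernel, not the quotient. Acyclicity of restricted injectives comes from exactness of $\mathrm{Ind}^{\Pi(X/k)}_{\pi^{\mathrm{geom}}(X/S)}$. That in turn uses normality of $\pi^{\mathrm{geom}}(X/S)$ in $\Pi(X/k)^\Delta$ and exactness of $\mathrm{Ind}^G_{G^\Delta}$, not merely faithful flatness of $\Pi(X/k)\to\Pi(S/k)$.
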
 

\begin{proposition}[Proposition \ref{pro_surf_kpi1}]
Let $f:X\longrightarrow S$ be a relative smooth projective curve with genus $g\geq 1$ over a base $S$ which is 
a smooth affine curve. Assume that $f$ admits a section. Then we can shrink $S$ to an 
open $U$ so that $X_{U}$ as a $k$-surface is de Rham $K(\pi,1)$.
\end{proposition}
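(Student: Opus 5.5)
The plan is to compare the cohomology of $\pi(X_U,x)$ with the de Rham cohomology of $X_U$ through the two Lyndon--Hochschild--Serre/Leray spectral sequences, using the fundamental exact sequence \eqref{IntFES} and Theorem \ref{thm-equivariant}. Fix a $k$-point $x=\eta(s)$. For a flat connection $(\mathcal V,\nabla)$ on $X_U/k$ with fiber $V$ at $x$, the Leray spectral sequence for $f$ and de Rham cohomology is
$$E_2^{p,q}=\mathrm{H}^p_\mathrm{dR}\bigl(U,\mathrm{H}^q_\mathrm{dR}(X_U/U,(\mathcal V,\nabla_{/U}))\bigr)\Rightarrow \mathrm{H}^{p+q}_\mathrm{dR}(X_U,(\mathcal V,\nabla)),$$
where the coefficients carry the Gauss--Manin connection. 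On the group side, I would use the Hochschild--Serre spectral sequence for \eqref{IntFES}, or its fibre at $s$, $1\to\pi^\mathrm{geom}_s\to\pi(X_U,x)\to\pi(U,s)\to 1$:
$$E_2^{p,q}=\mathrm{H}^p\bigl(\pi(U,s),\mathrm{H}^q(\pi^\mathrm{geom},V)\bigr)\Rightarrow \mathrm{H}^{p+q}(\pi(X_U,x),V).$$
The natural maps $\delta^i$ from group cohomology to de Rham cohomology should be compatible with these spectral sequences. This compatibility comes from realizing both sides as derived functors of a composite of invariant functors, $(-)^{\pi(X_U)}=(-)^{\pi(U)}\circ(-)^{\pi^\mathrm{geom}}$ and $\mathrm{H}^0_\mathrm{dR}(X_U)=\mathrm{H}^0_\mathrm{dR}(U)\circ\mathrm{H}^0_\mathrm{dR}(X_U/U)$, together with the bijectivity in degree $0$.

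To compare the two $E_2$ pages, I would invoke Theorem \ref{thm-equivariant}(1),(2). They identify $\mathrm{H}^q(\pi^\mathrm{geom},V)$, with its $\Pi(U/k)$-action, with the Gauss--Manin module $\mathrm{H}^q_\mathrm{dR}(X_U/U,\mathcal V)$, for every $q$. This also shows that $\mathrm{H}^q(\pi^\mathrm{geom},V)=0$ for $q\ge 3$. The $E_2$ comparison then reduces to the base: the maps $\mathrm{H}^p(\pi(U,s),M)\to \mathrm{H}^p_\mathrm{dR}(U,M)$ should be isomorphisms for the Gauss--Manin connections $M$. I would use that a smooth affine curve is de Rham $K(\pi,1)$ \cite{EH08}. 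This holds for all connections on $U$, provided that the $\mathrm{H}^q_\mathrm{dR}(X_U/U,\mathcal V)$ are genuinely vector bundles with connection on $U$, so that they lie in the category where \cite{EH08} applies.

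The shrinking step is exactly there. The relative de Rham cohomology sheaves $\mathrm{H}^q_\mathrm{dR}(X/S,\mathcal V)$ are coherent with a connection, hence locally free, on a smooth curve in characteristic $0$. The relevant point is rather the comparison of derived functors: Theorem \ref{thm-equivariant} uses $\mathrm{Rep}(S:\Pi(X/k))$, and one needs base change and local freeness uniformly for \emph{all} $\mathcal V$, including the injective-type objects used in the comparison. My first attempt at this main obstacle would be to choose $U$ so that the finitely many data involved are free over $\mathcal O(U)$: the Hodge bundles and $\mathrm{H}^1_\mathrm{dR}(X/S)$, plus the vanishing of the relevant $R^1f_*$ of unipotent pieces. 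I would then argue by dévissage on $\mathcal V$, through extensions and the semisimple/unipotent decomposition, that the comparison persists after any further shrinking.

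With the $E_2$ pages isomorphic, the spectral sequence comparison theorem yields that $\delta^i:\mathrm{H}^i(\pi(X_U,x),V)\to\mathrm{H}^i_\mathrm{dR}(X_U,\mathcal V)$ is an isomorphism for all $i$, which is the de Rham $K(\pi,1)$ property of $X_U$. The convergence is unproblematic, since the $E_2$ pages vanish for $p\ge 2$ and for $q\ge 3$.
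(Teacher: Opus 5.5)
Your core argument is correct. You compare the Lyndon--Hochschild--Serre spectral sequence of Proposition~\ref{prop-21} with the Leray spectral sequence, identify the $E_2$ coefficients by Theorem~\ref{thm-equivariant}(1),(2), and use \cite{EH08} on the affine base. That is exactly the paper's proof of Corollary~\ref{Cor-Kpi1}; the paper leaves the appeal to \cite{EH08} implicit, so you are right to make it explicit. Note what it gives: once $f$ has a section, $X$ itself is de Rham $K(\pi,1)$, so under the hypotheses as literally stated you can take $U=S$.

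Your ``shrinking step'' addresses an obstacle that is not there:
\begin{itemize}
\item Lemma~\ref{coh1} already gives local freeness of $\mathrm{H}^q_\mathrm{dR}(X/S,\mathcal V)$ over all of $S$.
\item Theorems~\ref{thm-gmt} and~\ref{thm-comparison} hold over $S$ itself for every $\mathcal V$ in $\mathrm{MIC}^\circ(X/k)$.
\item The derived-functor issues in $\mathrm{Rep}(S:\Pi(X/k))$ are precisely what Theorem~\ref{thm-gmt} and Lemma~\ref{lem-25} settle in order to build the spectral sequence.
\item The $E_2$ comparison involves only coherent coefficients, so injective objects never need to be compared.
\end{itemize}
The plan with free Hodge bundles and a d\'evissage through a ``semisimple/unipotent decomposition'' is vague, since general connections admit no such splitting. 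It should be dropped.

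In the paper, shrinking serves a different purpose, and the proof of Proposition~\ref{pro_surf_kpi1} never uses the section. It proceeds as follows:
\begin{enumerate}
\item Take a finite Galois extension $\tilde K/K$ with $X_{\tilde K}(\tilde K)\neq\emptyset$.
\item Let $\tilde S$ be the normalization of $S$ in $\tilde K$; then $X_{\tilde S}\to\tilde S$ has a section by \cite[Thm~II.4.7]{Ha77}.
\item Shrink to an affine open $S'\subset\tilde S$ that is Galois \'etale over its image $U\subset S$.
\item Apply Corollary~\ref{Cor-Kpi1} to $X_{S'}\to S'$.
\item Descend along the Galois \'etale cover $X_{S'}\to X_U$ via Lemma~\ref{lem_descend_kp1}, using that $(-)^G$ is exact in characteristic $0$ on both the group-cohomology and the de Rham side.
\end{enumerate}
So the real content of the proposition is removing the section hypothesis. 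Its appearance in the statement looks like a slip, since the proof manufactures sections.

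If that section-free version is what you meant to prove, your proposal has a genuine gap. $\Pi(X/k)$, $\pi^\mathrm{geom}(X/S)$ and Theorem~\ref{thm-equivariant} all depend on the section $\eta$, and no Zariski shrinking can supply one. A section over a nonempty open $U$ restricts to a $K$-point of $X_K$, and conversely such a point extends over all of $S$ by properness. The missing ingredients are passing to a finite cover on which a section exists, and Galois descent of the $K(\pi,1)$ property.
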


This result can be compared with the known result for \'etale $K(\pi,1)$, cf. \cite[Theorem~2]{Sch96}.

\subsection{Outline of the paper}
\begin{enumerate}
\item Sections \ref{sect_flat_conn} and \ref{sect_fund_group} are preliminaries.
We recall the basic notions of de Rham cohomology, Gauss-Manin connections and the
Tannakian affine group and groupoid schemes associated with various categories of connections.

\item   In Section \ref{sect_exact_seq} we prove the exactness of the fundamental exact sequence 
\eqref{eq-3}, see Theorem \ref{FES}. We introduce the affine group scheme $\pi^\mathrm{geom}(X/S)$
which  is Tannakian dual to the category of connections on $X/S$ of ``geometric origin'' . These are
connections that can be presented as sub-quotients of connections on $X/k$ inflated to $X/R$.
The proof is lengthy with many lemmas being just extensions of results obtained previously
for schemes over discrete valuation rings in \cite{DHdS18}, so some proofs are put in the Appendix. 
 
\item Given this short exact sequence, we construct natural maps to compare different group cohomologies.
The construction of our comparison maps follows the pattern given in Lemma  \ref{lem_comparison}. 
In Theorem \ref{thm-gmt} we show that
the group cohomology of $\pi^\mathrm{geom}(X/S)$ with coefficient in a representation
of $\Pi(X/k)$ is isomorphic with the derived functors of the  $\pi^\mathrm{geom}(X/S)$-invariant
functors in $\mathrm{Rep}(S:\Pi(X/k))$. As a consequence, we obtain a version of the
Lyndon-Hochschild-Serre spectral sequence for the cohomology of $\Pi(X/k)$, 
cf.  Proposition \ref{prop-21}. 

\item Section \ref{sect-compa} is devoted to the proof of the comparison theorem mentioned
above. The comparison for the first cohomology groups:
$$ \mathrm{H}^1(\pi^\mathrm{geom}(X/S),V) \cong \mathrm{H}^1(\pi(X/S),V)$$
follows from the \textit{universal extension theorem} (see Theorem \ref{univ-ext}).
The idea of the proof for higher cohomology groups is to check the bijectivity on each fiber. 
 
For closed fibers, we use a generalization of the shifting dimension argument  (see Lemma \ref{lem-injective}).
The generic fiber case is more challenging. Proposition \ref{130}
treats the case of $X$ has genus $g\geq 2$. It  relies on the following results:
    \begin{itemize}
        \item [$\bullet$] Poincar\'e duality for de Rham cohomology on $X_K$.
        \item [$\bullet$] The existence of infinitely many non-isomorphism simple connections in 
        $\mathrm{MIC}^{\mathrm{geom}}(X/S)_K.$
        \item [$\bullet$] The non-vanishing of the first de Rham cohomology $\mathrm{H}^{1}_{\mathrm{dR}}(X/K, \mathcal{V}_K)$.
    \end{itemize}
The case $g=1$ follows from the fact that the fundamental group $\pi(X_K/K)$ is commutative.

\item In Appendix \ref{app_group_scheme}, we recall definitions and basic notions of flat affine groups  
and homomorphisms between them, for instance:  the Tannakian description of exact sequences 
homomorphisms of affine group schemes (Theorem \ref{th1}) and (Theorem \ref{thm-ExactSequence}).  In 
Appendix \ref{sect_morphism_group_schemes}, we prove Theorem \ref{lem-DHdS}, which generalizes the 
results in \cite{DHdS18}, where the ring of interest is  a  discrete valuation ring. The proof is similar, but we 
need to extend the necessary results to Dedekind rings. These results are given in Proposition \ref{proplocaltoglobal} and Proposition \ref{PropTannaka}.

\item In Appendix \ref{sect_repr}, we recall the concept of an affine $k$-groupoid scheme acting on $S$ and its representations in \cite[Section 3]{De90}.    In Appendix \ref{sect_tann_dual},   we review the general Tannakian duality.  In Appendix \ref{sub-induction},  we  define the induction functor of groupoid schemes homomorphism  (see \ref{A.2.5}), which helps us  prove that any $\Pi(X/k)^{\Delta}$-module is a quotient of a $\Pi(X/k)$-module (see Corollary \ref{lem-RepGDiagonal}).  In Appendix \ref{sect_cohomology_of_groupoid}, we define the cohomology of groupoid schemes, which helps us to prove Theorem \ref{thm-gmt}.

\end{enumerate}

\subsection{Notations and Conventions}
\begin{enumerate}
    \item  $k$ is algebraically closed field of characteristic 0.
    \item  $R$ is a Dedekind domain with quotient field $K$ and residue field $k.$
    \item $S$ is the spectrum of $R$, $f:X\longrightarrow S$ is smooth morphism.
    \item $x:\mathrm{Spec}(k)\longrightarrow X$ is a $k$-point of $X$ and similarly 
    $s$ is a $k$-point  of $S$. $\eta: S\longrightarrow X$ is a section to $f$, with its existence, 
    we shall assume that $\eta s=x$. 
    \item  $\mathrm{MIC}(X/S)$ denotes the category of quasi-coherent
$\mathcal O_X$-sheaves equipped with an $R$-linear flat connection. 
 \item  For a connection $(\mathcal V,\nabla)$ in
$\mathrm{MIC}(X/k)$ (i.e. an absolute connection), 
the inflated connection to $X/S$ is denoted by  $(\mathcal V,\nabla_{/S})$ or $\mathrm{inf}(\mathcal{V})$. 
    \item   $\mathrm{MIC}^\mathrm{coh}(X/S)$ denotes the full subcategory of $\mathrm{MIC}(X/S)$ of objects whose underlying sheaf is coherent. 
    \item $\mathrm{MIC}^{\circ}(X/S)$ denotes the full subcategory of $\mathrm{MIC}(X/S)$ of objects whose underlying sheaf is locally free.  
    \item $\mathrm{MIC}^\mathrm{se}(X/S)$ denotes the full subcategory of 
$\mathrm{MIC}(X/S)$ of objects which can be presented as quotients of objects from $\mathrm{MIC}^{\circ}(X/S)$.
     \item  $\mathrm{MIC}^\mathrm{geom}(X/S)$ denotes the full subcategory of $\mathrm{MIC}^\mathrm{se}(X/S)$ of objects which can be presented as a sub-quotient of an inflated object from 
     $\mathrm{MIC}^\circ(X/k)$.
   
    \item  When we compare the group cohomology of the fundamental group and the de Rham cohomology, we always denote by $V$ representation corresponding to the flat connection $(\mathcal{V},\nabla).$
    \item For a flat affine group scheme $G$ over $S$, the category of representations of $G$
    in $R$-modules is denoted by $\mathrm{Rep}_R(G)$, the subcategories of representations in
    finite  (resp. finite projective) $R$-modules  will be denoted by 
    $\mathrm{Rep}^\mathrm{f}_R(G)$ (resp. $\mathrm{Rep}^\circ_R(G)$).
    \item If $G$ is a groupoid scheme acting upon $S$, the category of representations of $G$ in
    (finite)  $R$-modules is denoted by $\mathrm{Rep}(R:G)$ 
    (resp. $\mathrm{Rep}^\mathrm{f}(R:G)$).
\end{enumerate}

\section{Flat connections and  de Rham cohomology}\label{sect_flat_conn}
Let $k$ be an algebraically closed field of characteristic $0,$ and let 
$R$ be a Dedekind domain over $k$, $S:=\mathrm{Spec} (R)$.
Let $f:X\longrightarrow S$ be a smooth morphism with geometrically connected fibers.

\subsection{Connections}
\label{sect-connection}
Let $\Omega^1_{X/S}$ denote the sheaf of relative K\"ahler differentials on $X/S$. Since 
$X$ is smooth over $S,$ $\Omega^1_{X/S}$ is a locally free sheaf. 

\subsubsection{Flat connections} A  \textit{connection} on a quasi-coherent sheaf of $\mathcal O_X$-modules 
$\mathcal M$ on $X$ is an $R$-linear map
$$\nabla:\mathcal M\longrightarrow \Omega^1_{X/S} \otimes \mathcal{M},$$
satisfying the Leibniz rule and is \textit{flat}
in the sense that the composed map $\nabla_1\circ\nabla=0$, where
$$\nabla_1: \Omega^1_{X/S}\otimes \mathcal M\longrightarrow 
\Omega^2_{X/S}\otimes\mathcal M; \quad \omega\otimes e\longmapsto d\omega\otimes e-\omega\otimes\nabla(e)$$
(cf. \cite[(1.0)]{Ka70}).
When no confusion may arise, we shall address a sheaf with a flat connection simply as a connection. The notation is $(\mathcal M,\nabla)$ and is usually abbreviated to $\mathcal M$.


\subsection{De Rham cohomology}\label{sect-deRham-cohomology}
Let $f: X \longrightarrow S = \mathrm{Spec}(R)$ be a smooth scheme over a noetherian ring $R.$
For a sheaf with flat connection $(\mathcal{M},\nabla)$ on $X/S$, 
the \textit{sheaf of horizontal sections} is defined to be 
\begin{align}\label{H0} 
\mathcal M^\nabla:= \mathop{\mathrm{Ker}}(\nabla : \mathcal{M} \to 
\Omega ^1_{X/S} \otimes \mathcal{M}).     
\end{align}
This is a sheaf of $R$-modules. The \textit{$0$-th de Rham cohomology} of $\mathcal M$ is defined to be 
$$\mathrm{H}_\mathrm{dR}^0(X/S,  (\mathcal M,\nabla)):=f_*(\mathcal M^\nabla).$$
The module $\mathrm{H}^0_\mathrm{dR}(X/S, (\mathcal{M},\nabla))$ can be identified with the hom-set of connections in $\mathrm{MIC}(X/S)$
$$\{\varphi:(\mathcal O_X,d) \longrightarrow (\mathcal M,\nabla)\}.$$

Since $\text{\rm MIC}(X/S)$ is equivalent to the category of left modules on the sheaf of differential operators $\mathcal{D}_{X/S}$, it has enough injectives (cf. \cite{Ka70}).
Thus, we can define the \textit{higher de Rham cohomologies }$\mathrm{H}^i_\mathrm{dR}(X/S, -)$ to be the derived functors of the functor 
$$\mathrm H^0_\mathrm{dR}(X/S, -):\mathrm{MIC}(X/S)\longrightarrow \mathrm{Mod}_R.$$

\noindent{\bf Convention.} When it is clear which connection on a sheaf $\mathcal M$
is taken, we shall omit it in the notation of the de Rham cohomology and simply write:
$$\mathrm{H}^i_\mathrm{dR}(X/S, \mathcal{M}).$$
This cohomology group can also be computed as the groups of extensions in $\mathrm{MIC}(X/S)$,
$\mathrm{Ext}^i_{\mathrm{ MIC}(X/S)}(\mathcal M,\mathcal N)$, which count $i$-extensions in ${\mathrm{ MIC}(X/S)}$, that is, exact sequences
$$0\longrightarrow\mathcal  N\longrightarrow\mathcal  N_1\longrightarrow \ldots\longrightarrow\mathcal 
N_i\longrightarrow\mathcal  M\longrightarrow 0,$$
up to an equivalence define as follows: two sequences are equivalent if there is a map of sequence between them which restricts to the identity maps on $\mathcal M$ and $\mathcal N$. 
Since $\mathrm{MIC}(X/S)$ has enough injectives, $\mathrm{Ext}^i_{\mathrm{ MIC}(X/S)}(\mathcal M,-)$ are the right derived functors of the hom-functor
$$\Hom_{\mathrm{ MIC}(X/S)}(\mathcal M,-) :\mathrm{ MIC}(X/S)\longrightarrow
\mathrm{Mod}_R.$$
That is, we have the following:
\begin{lem}\label{lemm1}
Let $(\mathcal{M},\nabla)$ be a flat connection in $\mathrm{MIC}(X/S)$. 
Then we have:
\begin{center}
$\mathrm{Ext}^i_{\mathrm{ MIC}(X/S)}(\mathcal O_X,\mathcal{M}) \cong \mathrm{H}^i_\mathrm{dR}(X/S, \mathcal{M}).$
\end{center}
\end{lem}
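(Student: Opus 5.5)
The plan is to identify the two $\delta$-functors in question and to use that both are universal. Write $F:=\Hom_{\mathrm{MIC}(X/S)}((\mathcal O_X,d),-)$ on $\mathrm{MIC}(X/S)$. Its right derived functors are $\mathrm{Ext}^i_{\mathrm{MIC}(X/S)}(\mathcal O_X,-)$, and these exist because $\mathrm{MIC}(X/S)$ has enough injectives: it is equivalent to the category of left $\mathcal D_{X/S}$-modules, as recalled above. By definition, $\mathrm{H}^i_\mathrm{dR}(X/S,-)$ are the right derived functors of $\mathrm{H}^0_\mathrm{dR}(X/S,-)=f_*((-)^\nabla)$ on the same category.

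\textbf{Step 1: identify the degree-zero functors.} We show $F\cong \mathrm{H}^0_\mathrm{dR}(X/S,-)$ naturally in $\mathcal M$. A morphism $\varphi:(\mathcal O_X,d)\to(\mathcal M,\nabla)$ is an $\mathcal O_X$-linear map, so it is determined by the global section $m=\varphi(1)\in\Gamma(X,\mathcal M)$. Compatibility with the connections reads $\nabla(\varphi(1))=(1\otimes\varphi)(d1)=0$. Hence such morphisms correspond to global sections of $\mathcal M^\nabla$, that is, to $\Gamma(S,f_*\mathcal M^\nabla)=\mathrm{H}^0_\mathrm{dR}(X/S,\mathcal M)$, since $S$ is affine. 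This bijection is $R$-linear and natural in $\mathcal M$.

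\textbf{Step 2: pass to derived functors.} Right derived functors of isomorphic left exact functors are canonically isomorphic as $\delta$-functors. Both families are computed from the same injective resolution of $\mathcal M$ in $\mathrm{MIC}(X/S)$, and applying the natural isomorphism of Step 1 termwise gives an isomorphism of complexes. Therefore
\[
\mathrm{Ext}^i_{\mathrm{MIC}(X/S)}(\mathcal O_X,\mathcal M)\cong \mathrm{H}^i_\mathrm{dR}(X/S,\mathcal M)
\]
for all $i$.

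\textbf{Step 3: Yoneda description (optional).} To match the description via $i$-extensions given before the statement, we also invoke the standard fact that in an abelian category with enough injectives, Yoneda Ext agrees with derived-functor Ext.

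The only point needing care is making sure both sides are computed in the same category $\mathrm{MIC}(X/S)$ of quasi-coherent objects, and that injectives exist there. Both are granted by the $\mathcal D_{X/S}$-module equivalence cited from \cite{Ka70}. I therefore expect no real obstacle: Step 1 is the substantive computation and is immediate.
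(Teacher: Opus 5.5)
Your proposal is correct and is essentially the paper's own argument. The paper identifies $\mathrm{H}^0_\mathrm{dR}(X/S,-)$ with $\Hom_{\mathrm{MIC}(X/S)}((\mathcal O_X,d),-)$ and notes that $\mathrm{MIC}(X/S)$ has enough injectives via the $\mathcal D_{X/S}$-module equivalence, so both sides are right derived functors of the same left exact functor on the same category; your Step 1 simply spells out the hom-set identification that the paper states without detail.
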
 

According to \cite{Gr69} or \cite{HKR62}, the de Rham cohomology can be computed in terms of the hyper-derived functors on the de Rham complex:
$$\mathrm{H}^i_\mathrm{dR}(X/S, \mathcal{M})\cong \mathbf{R}^if_{\ast}(\Omega^{\bullet}_{X/S}\otimes \mathcal{V}).$$

\subsection{The Gauss-Manin connection}\label{sect-GM} 
We briefly describe the construction of Gauss-Manin connection which is based on \cite{Ka70}, 
see also \cite{ABC20}. 

\subsubsection{Absolute connections}
If $R$ is of finite type (resp. essentially of finite type) over $k$, then
$X$ is smooth (resp. essentially smooth) as a scheme over $k$. 
We denote by $\mathrm{MIC}(X/k)$  the category of $\mathcal O_X$-quasi-coherent sheaves 
with $k$-linear flat connections. 
It is known that each $\mathcal O_X$-coherent sheaf with flat connection over $k$ is locally free, and the 
dual sheaf is equipped with a (dual) connection in a canonical way. Hence, 
$\mathrm{MIC}^\mathrm{coh}(X/k)=\mathrm{MIC}^{\circ}(X/k)$ 
is a rigid tensor $k$-linear abelian category. 
Similarly, we have the categories $\mathrm{MIC}^{\circ}(S/k)$ 
and $\mathrm{MIC}^\mathrm{se}(X/S),$ where $S=\mathrm{Spec}(R).$  
The pull-back by $f$ yields the functor (denoted by the same notation)
 $f^*: \mathrm{MIC}^{\circ}(S/k)\longrightarrow \mathrm{MIC}^{\circ}(X/k).$ 

\subsubsection{The inflation functor}
We have the \textit{inflation functor}
$$\inf:\mathrm{MIC}(X/k)\longrightarrow \mathrm{MIC}(X/S)$$
which assigns to each connection $(\mathcal V,\nabla)$ in $\mathrm{MIC}(X/k)$ the $R$-linear connection $(\mathcal V,\nabla_{/S})$ in $\mathrm{MIC}(X/S)$:
$$\nabla_{/S}:\mathcal V\stackrel{\nabla}\longrightarrow
\Omega^1_{X/k}\otimes \mathcal V
\longrightarrow \Omega^1_{X/S}\otimes \mathcal V.$$
This relative connection is called an {\em inflated connection}, and is denoted by 
$(\mathcal V, \nabla_{/S})$ or $\inf(\mathcal V)$.

Let $(\mathcal V,\nabla)$ be an absolute connection, that is, an object of $\mathrm{MIC}(X/k)$.
Consider the de Rham cohomology of the inflated connection $\mathrm{inf}(\mathcal V)=
(\mathcal V,\nabla_{/S})$.
In what follows we shall adopt the abbreviation:
$$\mathrm{H}_\mathrm{dR}^0(X/S, \mathcal V):=\mathrm{H}_\mathrm{dR}^0(X/S, (\mathcal V, \nabla_{/S})),$$
when it is clear that $\mathcal{V}$ is equipped with an absolute connection and the cohomology is taken 
over $X/S$ for the inflated connection on it.

\subsubsection{The Gauss-Manin connection}
The $R$-module $\mathrm{H}_\mathrm{dR}^0(X/S, \mathcal V)$,  is
equipped with a connection over $S/k$, known as
the  \textit{0-th Gauss-Manin connection}. The explicit construction is as follows. 
The smoothness of $f$ implies the following exact sequence of K\"ahler differentials:
\begin{equation}\label{eq-kaehler}
0\longrightarrow f^*\Omega^1_{S/k} \longrightarrow
\Omega^1_{X/k}\longrightarrow \Omega^1_{X/S}\longrightarrow 0.
\end{equation} 
This filtration of $\Omega^1_{X/k}$ induces a filtration on $\Omega^2_{X/k}$ which is compatible with the connection, and we obtain the following commutative diagram (cf. \cite[(3.2)]{Ka70}) with 
exact columns: 
$$
\xymatrix{
&& f^*\Omega^1_{S/k}\otimes\mathcal V\ar[r]_{\nabla_0\quad } \ar[d]_i & 
 f^*\Omega^1_{S/k}\otimes\mathcal \Omega^1_{X/k}\otimes V\ar[d]_j\\
&\mathcal V\ar[r]^{\nabla\quad }\ar@{=}[d]& \Omega^1_{X/k}\otimes\mathcal V\ar[d]\ar[r]^{\nabla_1} &
\Omega^2_{X/k}\otimes\mathcal V\ar[d]  \\
\mathcal V^{\nabla_{/S}}\ar[r]\ar@/^20pt/[rruu]^\partial&
 \mathcal V\ar[r]_{\nabla_{/S}}&\Omega^1_{X/S}\otimes\mathcal V\ar[r]& 
\Omega^2_{X/S}\otimes \mathcal V,}$$
here $\nabla_0: f^*\Omega^1_{S/k}\otimes\mathcal V\longrightarrow 
f^*\Omega^1_{S/k}\otimes \Omega^1_{X/k}\otimes\mathcal V$ is given by 
(for   $r\in R$  and $v\in \mathcal V$):
$$ dr\otimes v\longmapsto  - dr\otimes \nabla(v),$$
and  $\nabla_1:\Omega_{X/k}^1\otimes \mathcal V\longrightarrow
\Omega_{X/k}^2\otimes \mathcal V$ is given by 
(for $a \in \mathcal O_X$   and $v\in \mathcal V$)
$$da\otimes v\longmapsto  - da\wedge \nabla(v).$$
 
Recall that $\mathcal V^{\nabla_{/S}}$ is the kernel of the
$R$-linear map $\nabla_{/S}$. 
Diagram chasing yields a map 
$$\partial: \mathcal V^{\nabla_{/S}}\longrightarrow f^* \Omega^1_{S/k}\otimes\mathcal V, $$
with the property $i\circ\partial=\nabla$.  
Hence $j\circ\nabla_0\circ\partial=\nabla_1\circ\nabla=0$. Therefore, $\partial$ factors as:
$$\partial: \mathcal V^{\nabla_{/S}}\longrightarrow \Omega^1_{S/k}\otimes_R
\mathcal V^{\nabla_{/S}}.$$
Applying $f_*,$ we obtain a flat connection $\partial$ on 
$\mathrm H^0_\mathrm{dR}(X/S,\mathcal V)$ over $S/k$. 
The resulting connection is denoted by 
$\mathbf{R}_\mathrm{dR}^0f_*(\mathcal V,\nabla_{/S})$ or 
$\mathbf{R}_\mathrm{dR}^0f_*\mathcal V$ for short. 
Thus, we have a left exact functor
$$\mathbf{R}_\mathrm{dR}^0f_*:
\mathrm{MIC}(X/k)\longrightarrow \mathrm{MIC}(S/k).$$
The $i$-th derived functor of this functor is called the \textit{$i$-th Gauss-Manin connection}:
$$\mathbf R_\mathrm{dR}^if_*:\mathrm{MIC}(X/k)\longrightarrow \mathrm{MIC}(S/k).$$

\subsubsection{Expression in terms of hyper-cohomology}
As argued in \cite[23.2.5]{ABC20}, the module 
$\mathbf{R}_\mathrm{dR}^if_*(\mathcal{V},\nabla)$
is canonically isomorphic to the hyper-cohohomology $\mathbf{R}^if_{\ast}(\Omega^{\bullet}_{X/S}\otimes \mathcal{V}).$
The sequence in \eqref{eq-kaehler} yields an exact sequence of complexes: 
 \begin{equation}\label{eq-ExSeqDiff}
0 \longrightarrow f^*{\Omega^1_{S/k}} \otimes (\Omega^{\bullet-1}_{X/S}\otimes \mathcal{V}) \longrightarrow  \Omega^{\bullet}_{X/k}\otimes \mathcal{V} \longrightarrow \Omega^{\bullet}_{X/S}\otimes \mathcal{V} \longrightarrow 0. \end{equation}
Applying the hyper-derived functor $\mathbf{R}^{i}f_{*}$ 
to this exact sequence of complexes, we obtain the long exact sequence:
 \begin{center}
     $0 \longrightarrow \cdots \longrightarrow \mathbf{R}^if_{*}(\Omega^{\bullet}_{X/S}\otimes \mathcal{V}) \overset{\partial^i}{\longrightarrow} \mathbf{R}^{i+1}f_{*}(f^*{\Omega^1_{S/k}} \otimes (\Omega^{\bullet-1}_{X/S}\otimes \mathcal{V})) \longrightarrow \cdots.$
 \end{center}
 The connecting map 
 $$\partial^i: \mathbf{R}^if_{*}(\Omega^{\bullet}_{X/S}\otimes \mathcal{V}) \longrightarrow \mathbf{R}^{i+1}f_{*}(f^*{\Omega^1_{S/k}} \otimes (\Omega^{\bullet-1}_{X/S}\otimes \mathcal{V})) $$
 can be written as:
 \begin{center}
     $\partial^i: \mathbf{R}^if_{*}(\Omega^{\bullet}_{X/S}\otimes \mathcal{V}) \longrightarrow \Omega^1_{S/k} \otimes \mathbf{R}^if_{*}(\Omega^{\bullet}_{X/S}\otimes \mathcal{V})$
 \end{center}
 by projection formula. 
The map $\partial^i$ defines a flat connection on the $R$-module 
$\mathbf{R}^if_{*}(\Omega^{\bullet}_{X/S}\otimes \mathcal{V})=
\mathrm{H}^i_{\mathrm{dR}}(X/S, \mathcal{V}).$
Moreover, we get a long exact sequence:
 \begin{align}\label{GM-hyper}
\cdots \rightarrow \mathrm{H}^i_\mathrm{dR}(X/k,(\mathcal{V},\nabla)) \rightarrow \mathrm{H}^i_\mathrm{dR}(X/S, (\mathcal V, \nabla_{/S})) \overset{\partial^i}{\rightarrow} 
\Omega^1_{S/k} \otimes \mathrm{H}^i_\mathrm{dR}(X/S, (\mathcal{V},\nabla_{/S})) 
\rightarrow \cdots
\end{align}
 
\subsubsection{Finiteness of de Rham cohomology}
\begin{lem}\label{coh1}
Let $f: X \longrightarrow \Spec (R)$ be a proper smooth morphism of smooth $k$-schemes. Let 
$\mathcal{V}$ be an object in $\mathrm{MIC}^\circ(X/k)$. 
Then $\mathrm{H}^i_\mathrm{dR}(X/S,\mathcal V)$ is a finite 
projective $R$-module.
\end{lem}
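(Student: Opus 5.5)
We must show that $\mathrm{H}^i_\mathrm{dR}(X/S,\mathcal V)=\mathbf R^if_*(\Omega^\bullet_{X/S}\otimes\mathcal V)$ is a finite projective $R$-module. The plan is to prove finiteness and projectivity separately, using the Gauss--Manin connection for the second.

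\emph{Finiteness.} Consider the Hodge-to-de Rham (stupid filtration) spectral sequence
$$E_1^{p,q}=\mathbf R^qf_*(\Omega^p_{X/S}\otimes\mathcal V)\Longrightarrow \mathbf R^{p+q}f_*(\Omega^\bullet_{X/S}\otimes\mathcal V).$$
The differentials of $\Omega^\bullet_{X/S}\otimes\mathcal V$ are only $f^{-1}\mathcal O_S$-linear, not $\mathcal O_X$-linear. However, each term $\Omega^p_{X/S}\otimes\mathcal V$ is a coherent (indeed locally free) $\mathcal O_X$-module, and $f$ is proper. Hence every $E_1^{p,q}$ is a finite $R$-module. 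The filtration is finite, with $p$ running from $0$ to the relative dimension, and $q$ is bounded by the relative dimension. So the abutment is finite over the noetherian ring $R$.

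\emph{Projectivity.} Since $R$ is a Dedekind domain, a finite $R$-module is projective if and only if it is torsion-free. By the previous subsection, $M:=\mathrm{H}^i_\mathrm{dR}(X/S,\mathcal V)$ carries the Gauss--Manin connection $\partial^i$, a flat $k$-linear connection over $S/k$. We therefore reduce to the statement already recalled in the paper: a coherent $\mathcal O_S$-module with a flat connection over $k$ (with $S$ smooth over $k$, $\mathrm{char}\,k=0$) is locally free.

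For completeness I would sketch that reduction directly in the curve case. Let $T\subset M$ be the torsion submodule. Since $\partial^i$ satisfies the Leibniz rule, $T$ is stable under it: if $rm=0$ with $r\neq 0$, then $r\,\partial(m)=-dr\otimes m$ is torsion, and hence $\partial(m)$ is torsion, using that $\Omega^1_{S/k}$ is locally free. Thus $T$ is a finite-length module supported at finitely many closed points and equipped with a connection.

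Localize at such a point, with uniformizer $t$ and derivation $D=\partial_t$. If $t^nT=0$ with $n\ge 1$ minimal, apply $D$ to the relation $t^n m=0$ to get $n t^{n-1}m + t^n D m=0$. Since $t^nDm=0$, this gives $n t^{n-1} m=0$ for all $m\in T$. Because $\mathrm{char}\,k=0$, $n$ is invertible, so $t^{n-1}T=0$, contradicting minimality. Hence $T=0$.

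\emph{Main obstacle.} The point needing the most care is making sure the Gauss--Manin connection on $\mathbf R^if_*$ is a genuine connection on the whole module, including any torsion, and not merely on its torsion-free quotient. This holds because $\partial^i$ comes from the connecting map of the exact sequence \eqref{eq-ExSeqDiff}, which is defined on the full hypercohomology. Identifying its target with $\Omega^1_{S/k}\otimes\mathrm{H}^i_\mathrm{dR}(X/S,\mathcal V)$ uses the projection formula, which is valid because $\Omega^1_{S/k}$ is locally free. With this in place, the torsion argument above applies verbatim.

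If instead one wants to avoid the connection entirely, the alternative is to show that the Hodge spectral sequence degenerates and that each $\mathbf R^qf_*(\Omega^p\otimes\mathcal V)$ is locally free via cohomology and base change. That is harder for a general $\mathcal V$, so I would use the connection argument.
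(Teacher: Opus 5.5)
Your proposal is correct and follows the paper's proof. The paper also obtains finiteness from the Hodge-to-de Rham spectral sequence, whose $E_1$ terms are coherent because $f$ is proper, and projectivity from the existence of the Gauss--Manin connection on $\mathrm{H}^i_\mathrm{dR}(X/S,\mathcal V)$; your Leibniz-rule argument in characteristic zero simply spells out why that connection rules out torsion, a step the paper leaves implicit.
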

\begin{proof} 
We have the Hodge-to-de Rham spectral sequence (see \cite[(3.5.2.0)]{Ka70}):
\[ E_{1}^{p,q} = \mathrm{R}^qf_{*}(\Omega^p_{X/S}\otimes \mathcal V) \Longrightarrow \mathbf{R}^{p+q}f_{\ast}(\Omega_{X/S}^{\bullet}\otimes\mathcal V).\]
 Since $X$ is proper,  every term of $$E^{p,q}_1 = \mathrm{R}^qf_{\ast}(\Omega^p_{X/S}\otimes \mathcal V)$$ is coherent. It follows  that $\mathbf{R}^{p+q}f_{\ast}(\Omega_{X/S}^{\bullet}\otimes\mathcal V)$ is also coherent. Moreover, because the  de Rham cohomology sheaf $\mathrm{H}^i_\mathrm{dR}(X/S,\mathcal V)$ can be equipped with Gauss-Manin connection, the underlying $R$-module  must be projective as an $R$-module.
 \end{proof}

Consequently, the functors $\mathbf{R}_{\mathrm{dR}}^if_*$ restrict to the functors $\mathrm{MIC}^\circ(X/k)\longrightarrow
\mathrm{MIC}^\mathrm{\circ}(S/k)$ and we have the following commutative diagram:
\[ \begin{tikzcd}
 \mathrm{MIC}^\circ(X/k) \arrow{r}{\inf } \arrow[swap]{d}{\mathrm R^i_{\mathrm{dR}}f_*(-)} & \mathrm{MIC}^\mathrm{se}(X/S) \arrow{d}{\mathrm{H}^i_\mathrm{\mathrm{dR}}(X/S,-)} \\%
\mathrm{MIC}^{\circ}(S/k) \arrow{r}{\Gamma(S,-)} & \mathrm{ Mod}^\text{coh}(R),
\end{tikzcd}
\]
where $\mathrm{ Mod}^\text{coh}(R)$ is the category of finite modules over $R$.
\\
\\

Recall that a connection on $X/S$ is called \textit{trivial} if it has the form $M\otimes_R(\mathcal O_X,d)$
where $M$ is an $R$-module. More precisely, the underlying sheaf has the form $M\otimes_R\mathcal O_X$ and the connection is induced from the differential on $\mathcal O_X$.
\begin{lem}\label{lem-condA}
Let $(\mathcal{V},\nabla)$ be an object of $\mathrm{MIC}^\circ(X/k)$. Then the connection
$f^*(\mathbf{R}_\mathrm{dR}^0f_*(\mathcal V, \nabla))$ is the maximal
sub-object of $(\mathcal{V},\nabla)$ in $\mathrm{MIC}^\circ(X/k)$ with the property: 
its inflation to $\mathrm{MIC}^\mathrm{coh}(X/S)$ is a trivial connection. 
\end{lem}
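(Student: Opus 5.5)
Set $M:=\mathbf{R}_\mathrm{dR}^0f_*(\mathcal V,\nabla)=\mathrm{H}^0_\mathrm{dR}(X/S,\mathcal V)$. By Lemma \ref{coh1} it is a finite projective $R$-module, and by construction it carries the $0$-th Gauss--Manin connection $\partial$. The plan has three steps: construct a morphism $f^*M\to\mathcal V$ in $\mathrm{MIC}^\circ(X/k)$, show it is injective with trivial inflation, and then prove maximality.

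\textbf{Step 1 (the morphism).} The natural evaluation map $\varepsilon: f^*M=\mathcal O_X\otimes_R M\to\mathcal V$ is $\mathcal O_X$-linear, since $M=f_*(\mathcal V^{\nabla_{/S}})\subset\Gamma(X,\mathcal V)$.

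We must check that $\varepsilon$ is horizontal for the absolute connections, where $f^*M$ carries the pull-back of $\partial$. For $a\in\mathcal O_X$ and $m\in M$ we have $\nabla(a\,m)=da\otimes m+a\nabla(m)$. By the defining property $i\circ\partial=\nabla$ from the diagram in \ref{sect-GM}, $\nabla(m)$ is the image of $\partial(m)\in\Omega^1_{S/k}\otimes_R M$ in $\Omega^1_{X/k}\otimes\mathcal V$. This is exactly the formula for the pull-back connection, so $\varepsilon$ is horizontal.

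\textbf{Step 2 (injectivity and trivial inflation).} The inflation of $f^*M$ is $M\otimes_R(\mathcal O_X,d)$, which is trivial by definition.

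For injectivity, the kernel of $\varepsilon$ is a coherent sub-connection in $\mathrm{MIC}(X/k)$, hence locally free. It therefore suffices to check injectivity on the fibre $X_t$ over each closed point $t\in S$. Because $M$ is projective and $\mathrm{H}^0_\mathrm{dR}$ commutes with base change here (one can use the Gauss--Manin connection and coherence to get local freeness of all $\mathrm{H}^i_\mathrm{dR}$), we get $M\otimes k(t)\cong\mathrm{H}^0_\mathrm{dR}(X_t,\mathcal V|_{X_t})$. On the connected proper fibre $X_t$, the evaluation $\mathrm{H}^0_\mathrm{dR}(X_t,\mathcal V_t)\otimes\mathcal O_{X_t}\to\mathcal V_t$ is injective: this is the standard fact that horizontal sections linearly independent over $k$ remain independent over $\mathcal O$, since the locus of dependence is horizontal. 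Alternatively one can argue at the generic point, and then $\ker\varepsilon$ is torsion inside a locally free sheaf and hence zero.

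\textbf{Step 3 (maximality).} Let $\mathcal W\subset\mathcal V$ be a sub-object in $\mathrm{MIC}^\circ(X/k)$ whose inflation is trivial, say $\inf\mathcal W\cong N\otimes_R(\mathcal O_X,d)$. Then $\mathcal W$ is generated by $N=f_*(\mathcal W^{\nabla_{/S}})$, using $f_*\mathcal O_X=R$ and properness. Since $\mathcal W^{\nabla_{/S}}\subset\mathcal V^{\nabla_{/S}}$, we get $N\subset M$. Hence $\mathcal W=\mathcal O_X\cdot N\subset\varepsilon(f^*M)$, which gives maximality.

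The main obstacle is Step 2: proving the injectivity of $\varepsilon$ and the compatibility of $\mathrm{H}^0_\mathrm{dR}$ with base change. I would handle this by reducing to fibres, or to the generic fibre $X_K$, where $X_K$ is proper and geometrically connected over $K$ and the classical injectivity argument applies.
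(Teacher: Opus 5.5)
Your proposal is correct and follows the same route as the paper. The evaluation map $\mathcal O_X\otimes_R M\to\mathcal V$, with the pulled-back Gauss--Manin connection on the source, is horizontal, and its inflation is $M\otimes_R(\mathcal O_X,d)$; maximality follows because any sub-object with trivial inflation is generated by its relatively horizontal sections, which lie in $M$. Your Step 2 also proves the injectivity of $\mathcal O_X\otimes_R M\to\mathcal V$, which the paper takes for granted. The generic-fibre version of that argument is the more economical one, since it needs only flat base change to $K$ rather than base change of $\mathrm{H}^0_\mathrm{dR}$ to the closed fibres.
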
 
\begin{proof}
The connection on the pull-back 
$$f^*\mathbf{R}_\mathrm{dR}^0f_*\mathcal V=
f^*\mathrm H^0_\mathrm{dR}(X/S,\mathcal V)=
 \mathcal O_X\otimes \mathrm H^0_\mathrm{dR}(X/S,\mathcal V)$$
is given by $\nabla'(a\otimes e)= da\otimes e+a\otimes \partial e$. Hence on the inflated connection
$$\nabla'_{/S}(a\otimes e)=da\otimes e.$$
This means $(\mathcal O_X \otimes H^0_\mathrm{dR}(X/S,\mathcal V) ,\nabla')$ 
is a sub-connection of $(\mathcal V,\nabla)$ with the property that its inflation is a trivial relative connection.  

Actually, it has to be the maximal such as any other $\mathcal{W}\subset \mathcal{V}$ would have the property that $(\nabla'_{/S})|_{\mathcal{W}}$ is generated by horizontal sections.
\end{proof}

\begin{rem}\label{rmk-condA}
The Tannakian interpretation of Lemma \ref{lem-condA} is as follows. Let $V=\eta^*(\mathcal V)$ be the 
representation of $\Pi(X/k)$ that corresponds to $(\mathcal{V},\nabla).$ Then $f_\ast\inf(\mathcal V)
$ corresponds through $\eta^*$ to a representation of $\Pi(S/k)$
and $f^*f_*\inf(\mathcal{V})$ corresponds to a sub-representation of $V$ on which the action of 
$\Pi(X/k)$ factors through the action of $\Pi(S/k)$, or equivalently, the group $L$ acts trivially, where 
$L$ is the kernel of $f:\Pi(X/k)\longrightarrow \Pi(S/k)$, see Appendix \ref{sect_repr}. 
\end{rem}

\section{Fundamental groups and groupoids}\label{sect_fund_group}
Let $\eta: S=\text{\rm Spec} (R)\longrightarrow X$ be an $R$-point of $X$ (as an $R$-scheme).
This section also yields a $k$-point $x$ in the fiber $X_s$ of $X$:
$$\xymatrix{
\mathrm{Spec}(k)\ar[r]^s\ar[d]_x & \text{\rm Spec}(R)\ar[d]^\eta\\
X_s\ar[r]& X.
}$$
These points yield various Tannakian group and groupoid schemes which we shall refer to as
groups and groupoids for short.
\subsection{Categories of consideration} We consider the following categories:
\begin{itemize}
\item $\mathrm{MIC}(X/S)$ -- the category of \textit{quasi-coherent}
$\mathcal O_X$-sheaves equipped with $R$-linear flat connections;
\item $\mathrm{MIC}^\mathrm{coh}(X/S)$ -- the full subcategory of \textit{coherent}
$\mathcal O_X$-sheaves equipped with $R$-linear flat connections;
\item $\mathrm{MIC}^{\circ}(X/S)$ -- the full subcategory of locally free
$\mathcal O_X$-sheaves equipped with $R$-linear flat connections;
\item $\mathrm{MIC}^\mathrm{se}(X/S)$ -- the full subcategory of  $\mathrm{MIC}^\mathrm{coh}(X/S)$
consisting of objects which can 
be presented as quotients of objects from $\mathrm{MIC}^{\circ}(X/S)$.
\end{itemize}
$\mathrm{MIC}^\mathrm{se}(X/S)$ satisfies Serre's condition (whence the
superscription ``se''). Hence, it is a Tannakian category over $R$
in the sense of Saavedra (cf. \cite{Sa72} or \cite{DH18}). 

According to \cite{DH18}, an object in $\mathrm{MIC}^\mathrm{coh}(X/S)$  is locally free if (and only if) it is torsion free over $R$. We recall the following concept for easy referencing.
\begin{defn}[Special sub-quotients]
\label{defn_speical_subquotient}
Let $\mathcal{M}$ be an object in $\mathrm{MIC}^{\circ}(X/S).$ We write $T^{a,b}(\mathcal{M})$ for  $\mathcal{M}^{\otimes a}\otimes \mathcal{M}^{\vee \otimes b}.$
\begin{itemize} 
    \item [(i)] The full subcategory in $\mathrm{MIC}^\mathrm{coh}(X/S)$
of all sub-quotients of objects of the form 
$T^{a_1, b_1} (\mathcal{M}) \oplus \cdots \oplus T^{a_m, b_m} (\mathcal{M})$ is 
denoted by $\langle\mathcal{M}\rangle_{\otimes}.$
    \item [(ii)] We recall the notion of a special sub-quotient of a connection whose sheaf is locally free as 
an $\mathcal{O}_X$-module. A monomorphism (i.e. injective map) $\alpha: \mathcal{M}^{\prime} \rightarrow \mathcal{M}$
is said to be special if $\operatorname{Coker}(\alpha)$ is locally free. Then $\mathcal{M}^{\prime}$
is called a special sub-object of $\mathcal{M}$. 
\item[(iii)]
We call an object $\mathcal{M}^{\prime \prime}$ in $\mathrm{Obj}(\mathrm{MIC}^{\circ}(X/S))$ a \textit{special sub-quotient} of $\mathcal{M}$ if there exists a special 
monomorphism $\mathcal{M}^{\prime} \rightarrow \mathcal{M}$ and 
an epimorphism $\mathcal{M}^{\prime} \rightarrow \mathcal{M}^{\prime \prime}.$  
\item[(iv)] The category of all special sub-quotients of various 
$T^{a_1, b_1} (\mathcal{M}) \oplus \cdots \oplus T^{a_m, b_m} (\mathcal{M})$ is denoted by $\langle\mathcal{M}\rangle_{\otimes}^s.$
\end{itemize}
\end{defn} 

\subsection{The fundamental groups}
The assumption that $f:X\longrightarrow \mathrm{Spec}(R)$ has connected fibers allows us to define
the  fundamental group $\pi(X/k)=\pi(X/k,x)$ is the Tannakian dual of $\mathrm{MIC}^\circ(X/k)$ with 
respect to the fiber functor $x^*$ (cf. Appendix \ref{the-main-theoremA}):
$$\mathrm{Rep}^\mathrm{f}(\pi(X/k))\cong \mathrm{MIC}^\circ(X/k).$$
The fundamental group $\pi(S/k)$ is the Tannakian dual of $\mathrm{MIC}^\circ(X/k)$ with respect to the fiber functor $s^*$:
$$\mathrm{Rep}^\mathrm{f}(\pi(S/k))\cong \mathrm{MIC}^\circ(S/k).$$
The map $f:X\longrightarrow S$ induces a group homomorphism
$$f_*: \pi(X/k)\longrightarrow \pi(S/k).$$
This map is surjective as it admits a section induced from the section $\eta:S\longrightarrow X$.

\subsection{The fundamental groupoid}\label{subsub-1}
Assume that $\mathrm{End}_{\mathrm{MIC}^\circ(X/k)}(R,d_{S/k}) = k$, then by 
\cite[Th\'eor\`eme 1.12]{De90} we have the absolute fundamental groupoid (scheme) 
$\Pi(X/k)=\Pi(X/k,\eta)$ which is the Tannakian dual of $\mathrm{MIC}^\circ(X/k)$ 
with respect to the fiber functor $\eta^*$ (cf. Appendix \ref{the-main-theoremB} for more details):
$$\mathrm{Rep}^\mathrm{f}(S:\Pi(X/k))\cong \mathrm{MIC}^\circ(X/k).$$
The absolute fundamental groupoid $\Pi(S/k) = \Pi(S/k,\id)$ is the Tannakian dual of 
$\mathrm{MIC}^\circ(S/k)$ with respect to the forgetful functor $\id:(V,\nabla)\longmapsto V$:
$$\mathrm{Rep}^\mathrm{f}(S:\Pi(S/k))\cong \mathrm{MIC}^\circ(S/k).$$
The map $f:X\longrightarrow S$ induces a groupoid homomorphism
$$f^{\ast}: \Pi(X/k)\longrightarrow \Pi(S/k).$$
This map is surjective as it admits a section induced from the section $\eta:S\longrightarrow X$. 

We note that $\pi(X/k)$ is the base change of $\Pi(X/k)$ with respect to the map
$(s,s):\Spec (k)\longrightarrow \mathrm{Spec} (R)\times \mathrm{Spec}(R)$.

\subsection{The relative fundamental group scheme}
\label{sect-RFG} 
The \textit{relative fundamental group scheme} $\pi(X/S)=\pi(X/S,\eta)$ is the 
Tannakian dual of $\mathrm{MIC}^\mathrm{se}(X/S)$ with respect to the fiber functor $\eta^{\ast}$ (cf. Appendix \ref{th_duality}): 
$$\Rep^\mathrm{f}(\pi(X/S))\cong \mathrm{MIC}^\mathrm{se}(X/S).$$
By definition, the inflation functor has image in $\mathrm{MIC}^\mathrm{se}(X/S)$:
$$\inf:\mathrm{MIC}^\circ(X/k)\longrightarrow \mathrm{MIC}^\mathrm{se}(X/S).$$
%
%
The functor $\inf$ induces a homomorphism of group-groupoid (schemes):
$\pi(X/S)\longrightarrow \Pi(X/k),$
which factors through the diagonal subgroup scheme $\Pi(X/k)^\Delta$ of $\Pi(X/k)$ (see Appendix \ref{sect_groupoid_scheme}):
$$\xymatrix{\pi(X/S)\ar[r] \ar[rd]&\Pi(X/k)^\Delta\ar[d]\\ & \Pi(X/k).}$$

\subsection{The Ind-categories}\label{sect-indTanDual}
The Tannakian duality mentioned above extends to the ind-categories. 
Recall that the ind-category of a (noetherian) category is the category of inductive directed systems
of object from the original category. We shall use the notation
$\mathrm{MIC}^\mathrm{ind}(X/S)$ to denote the ind-category of  $\mathrm{MIC}^\mathrm{se}(X/S)$
and $\mathrm{MIC}^\mathrm{ind}(X/k)$ to denote the ind-category of
$\mathrm{MIC}^\circ(X/k)$. 
Since the category of all connections is co-complete, these ind-categories
can be identified with the category of connections
which can be presented as the union of their coherent sub-connections (or
equivalently, as direct limits of coherent connections).

We have the following  equivalences:
\begin{eqnarray*}
\Rep(\pi(X/k))\cong\Rep(S:\Pi(X/k))&\cong& \mathrm{MIC}^\mathrm{ind}(X/k)\subsetneq \mathrm{MIC}(X/k);\\ 
\Rep(\pi(X/S))&\cong&\mathrm{MIC}^\mathrm{ind}(X/S)\subsetneq \mathrm{MIC}(X/S);\\
\Rep(\pi^\mathrm{geom}(X/S))&\cong&\mathrm{ind-}\mathrm{MIC}^\mathrm{geom}(X/S)\subsetneq \mathrm{MIC}(X/S).
\end{eqnarray*}
We notice that there are quasi-coherent connections which cannot be presented as a
union of coherent sub-connections; for instance, the sheaf of algebras of differential operators. 
Therefore, the rightmost inclusions above are proper.


\section{Exact sequences of fundamental group and groupoid schemes}\label{sect_exact_seq}
\subsection{The homotopy exact sequence}
We continue to assume that $f: X \to S$ is a smooth projective morphism of smooth schemes over a
field $k$ of characteristic zero.  
The associated sequence of differential fundamental group schemes 
is shown to be exact by L. Zhang \cite{Zha14} in characteristic zero and J. P. 
dos Santos \cite{dS15} in the general case and is called the homotopy exact sequence, which resembles the
topological homotopy exact sequence. 
\begin{thm}[Zhang, dos Santos]
\label{HES}
The following sequence is exact
$$\pi(X_s/k,x)\longrightarrow \pi(X/k, x)\longrightarrow \pi(S/k)\longrightarrow 1.$$
\end{thm}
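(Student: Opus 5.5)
We prove the homotopy exact sequence through the Tannakian criteria for exactness of sequences of affine group schemes (Theorem \ref{thm-ExactSequence} in the appendix, in the form of Esnault--Hai--Sun). We write $L\to G\to A$ for $\pi(X_s/k,x)\to\pi(X/k,x)\to\pi(S/k,s)$. The criteria translate each part of exactness into a statement about the functors $f^*:\mathrm{MIC}^\circ(S/k)\to\mathrm{MIC}^\circ(X/k)$ and $\iota^*:\mathrm{MIC}^\circ(X/k)\to\mathrm{MIC}^\circ(X_s/k)$, where $\iota:X_s\to X$ is the inclusion of the fibre. Three things must be checked, and we take them in the following order.

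First, \emph{surjectivity} of $G\to A$. This amounts to showing that $f^*$ is fully faithful and that its essential image is closed under subobjects in $\mathrm{MIC}^\circ(X/k)$. Full faithfulness follows from the projection formula together with $f_*\mathcal O_X=\mathcal O_S$, which gives $f_*f^*E=E$ compatibly with connections. For closure under subobjects, let $\mathcal W\subset f^*E$ be a subconnection. Inflating to $X/S$, $\inf(f^*E)$ is trivial, so $\inf\mathcal W$ is a subconnection of a trivial relative connection. On each fibre $X_t$, which is proper and connected, a subconnection of a trivial connection is trivial: its horizontal sections generate it, because the de Rham $\mathrm H^0$ is exact on semisimple trivial pieces, or equivalently by taking $\det$ and using that line subbundles of $\mathcal O^n$ with connection on a proper variety are trivial. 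Hence $\mathcal W=f^*f_*\inf\mathcal W$, and Lemma \ref{lem-condA} identifies this with the maximal subobject whose inflation is trivial. In our setting surjectivity also follows directly from the section $\eta$, as already noted in the paper.

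Second, \emph{the composite $L\to A$ is trivial}. For any $E$ in $\mathrm{MIC}^\circ(S/k)$, the restriction $\iota^*f^*E$ is $\mathcal O_{X_s}\otimes E_s$ with the trivial connection, so the composite functor is trivial.

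Third, and this is the main obstacle, \emph{exactness in the middle}. The criterion asks for two conditions.
\begin{enumerate}
\item[(a)] For every $\mathcal V$ in $\mathrm{MIC}^\circ(X/k)$, the maximal trivial subobject of $\iota^*\mathcal V$ comes from a subobject of $\mathcal V$ lying in the image of $f^*$.
\item[(b)] Every object of $\mathrm{MIC}^\circ(X_s/k)$ is a quotient, or equivalently a subobject, of some $\iota^*\mathcal V$.
\end{enumerate}

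For (a), we use that $\mathrm H^0_{\mathrm{dR}}(X/S,\mathcal V)$ is locally free with the Gauss--Manin connection (Lemma \ref{coh1}), and that in characteristic $0$ it satisfies base change: $\mathrm H^0_{\mathrm{dR}}(X/S,\mathcal V)\otimes k(s)=\mathrm H^0_{\mathrm{dR}}(X_s,\iota^*\mathcal V)$. This holds because the Gauss--Manin sheaves are locally free, so the relative de Rham complex commutes with base change in every degree. Lemma \ref{lem-condA} then supplies $f^*\mathbf R^0_{\mathrm{dR}}f_*\mathcal V\subset\mathcal V$ restricting to the maximal trivial subobject of $\iota^*\mathcal V$. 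Applying this to $\mathcal V$ and to its tensor constructions gives (a).

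Condition (b) is the hardest part, and here we follow dos Santos. Given $\mathcal E$ on $X_s$, we try first to build $\mathcal V:=\mathbf R^0_{\mathrm{dR}}$ of the pushforward along $X_s\hookrightarrow X$, viewed as a $\mathcal D$-module, or more concretely the ``induced'' connection $f^*f_*$ applied to a relative extension of $\mathcal E$. If that fails, we fall back on Zhang's method. This works in an étale or formal neighbourhood of $s$: $\mathcal E$ extends to a relative connection over the formal disc by rigidity of connections on proper smooth fibres, and it then acquires an absolute connection because the obstruction to integrability in the $S$-direction vanishes after passing to the induced object $\mathcal V\otimes f^*\mathcal D_S$-type construction. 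The construction is then algebraized by Grothendieck existence and a finiteness argument that keeps only coherent subobjects. This algebraization is the delicate step where the properness of $f$ is used essentially.
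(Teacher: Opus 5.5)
Your steps one and two, and condition (a), are fine. Base change for $\mathrm{H}^0_{\mathrm{dR}}$ is justified because all the Gauss--Manin sheaves are locally free, and surjectivity in any case follows from $\eta$.

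\textbf{The gap is your condition (b), which is false in general.} Requiring that \emph{every} object of $\mathrm{MIC}^\circ(X_s/k)$ be a quotient (or subobject) of some $\iota^*\mathcal V$ forces, by the Esnault--Hai--Sun criterion, $\pi(X_s/k,x)\to\pi(X/k,x)$ to be a closed immersion. That is left exactness, which the theorem does not assert, and it fails already in the setting of this paper. Take $k=\mathbb{C}$. The flat sections of $\mathcal V^{\mathrm{an}}$ give a representation $\tilde\rho$ of $\pi_1^{\mathrm{top}}(X^{\mathrm{an}},x)$ whose restriction to the fibre group is the monodromy $\rho$ of $\iota^*\mathcal V$. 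Hence $\rho\circ c_{\tilde\gamma}\cong\rho$ for every $\tilde\gamma$, so the monodromy of the family permutes the finitely many Jordan--H\"older constituents of $\iota^*\mathcal V$. Now take a non-isotrivial family, e.g.\ the Legendre family over $\mathbb{P}^1\setminus\{0,1,\infty\}$ with its zero section. A rank-one connection on $X_s$ whose character has infinite orbit under the monodromy action on $\mathrm{Hom}(\pi_1(X_s),\mathbb{C}^\times)$ cannot be a subquotient of any $\iota^*\mathcal V$. So no construction can prove your (b).

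The construction you sketch would also not produce a global object. Rigidity together with Grothendieck existence gives at best something on the formal completion of $X$ along $X_s$, and nothing extends it to a connection on all of $X$. ``Try this, and if it fails fall back on Zhang'' is not an argument.

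\textbf{What exactness at $\pi(X/k,x)$ actually requires.} Apply the criterion to the \emph{image} $L'$ of $\pi(X_s/k,x)$, whose representations are the subquotients of restrictions. Besides (a), you need the \emph{relative} condition: every object of $\langle\iota^*\mathcal V\rangle_\otimes$ embeds into $\iota^*\tilde{\mathcal N}$ for some $\tilde{\mathcal N}\in\langle\mathcal V\rangle_\otimes$. This is exactly the form recorded in Lemma \ref{lem-HES}(2).

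This condition cannot be dropped. Condition (a) alone only yields $V^{L'}=V^{K}$ for all representations $V$, where $K$ is the kernel of $\pi(X/k)\to\pi(S/k)$. That does not force $L'=K$: a Borel subgroup of $\mathrm{SL}_2$ satisfies $V^{B}=V^{\mathrm{SL}_2}$ for all $V$. Given (a), the relative condition is equivalent to normality of the image of the fibre group. Establishing it is the actual content of the theorem of Zhang and dos Santos, which the paper cites rather than reproves, and your proposal does not address it.
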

This exact sequence generalizes Grothendieck's  homotopy exact sequence for \'etale fundamental groups \cite[Th\'eor\`eme~IX.6.1]{SGA1}.

\subsection{The geometric relative fundamental group}\label{sect-groupH}
One of our key observations is that the group scheme which is the image of the map 
$$\inf :\pi(X/S)\longrightarrow \Pi(X/k)^\Delta$$
turns out to be the Tannakian dual to the category of \textit{relative connections of
geometric origin}, i.e., sub-quotients of inflated connections. 

\begin{defn} 
\label{lem-categoryC}
Let $\mathrm{MIC}^\mathrm{geom}(X/S)$ be the full subcategory of $\mathrm{MIC}^\mathrm{se}(X/S)$ of objects which can be presented as sub-quotients of objects of the form $\text{\rm inf}(\mathcal{V})$, where $\mathcal{V}\in \mathrm{Obj}(\mathrm{MIC}^\circ(X/k)$).\end{defn}

\begin{lem}\label{lem-groupH}  The category $\mathrm{MIC}^\mathrm{geom}(X/S)$ defined above, together with the fiber functor $\eta^{\ast},$  is a Tannakian category. 
The canonical map
$\pi(X/S)\longrightarrow \pi^\mathrm{geom}(X/S)$ is surjective (i.e. faithfully flat).  \end{lem}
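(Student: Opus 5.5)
We must show two things. First, $\mathrm{MIC}^\mathrm{geom}(X/S)$ with the fiber functor $\eta^{\ast}$ satisfies the axioms of a Tannakian category over the Dedekind ring $R$ in the sense of Saavedra (as in \cite{DH18}). Second, the restriction $\mathrm{MIC}^\mathrm{se}(X/S)\to\mathrm{MIC}^\mathrm{geom}(X/S)$, read through duality, yields a faithfully flat homomorphism $\pi(X/S)\to\pi^\mathrm{geom}(X/S)$. The guiding idea is that $\mathrm{MIC}^\mathrm{geom}(X/S)$ is a full subcategory of $\mathrm{MIC}^\mathrm{se}(X/S)$ that is closed under subquotients. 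Since $\mathrm{MIC}^\mathrm{se}(X/S)$ is already Tannakian, we transport most of the structure from it.

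\textbf{Tannakian structure.} The steps, in order, are as follows.
\begin{enumerate}
\item $\mathrm{MIC}^\mathrm{geom}(X/S)$ contains the unit $(\mathcal O_X,d)=\inf(\mathcal O_X,d)$.
\item It is closed under finite direct sums, since $\inf$ is additive: a subquotient of $\inf\mathcal V_1$ direct-summed with a subquotient of $\inf\mathcal V_2$ is a subquotient of $\inf(\mathcal V_1\oplus\mathcal V_2)$.
\item It is closed under tensor products, since $\inf$ is a tensor functor and $\otimes$ of subquotients is a subquotient of the tensor product.
\item It is closed under subquotients, by definition.
\end{enumerate}
Kernels and cokernels computed in $\mathrm{MIC}^\mathrm{se}(X/S)$ are therefore subquotients of an inflated object, so the category is abelian and $R$-linear. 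Duals of objects in $\mathrm{MIC}^\circ(X/S)$ that lie in the category are also present. Indeed, if $\mathcal M$ is locally free and is a special subquotient of $\inf\mathcal V$, then $\mathcal M^\vee$ is a subquotient of $\inf(\mathcal V^\vee)=\inf(\mathcal V)^\vee$, because dualizing exchanges special sub-objects and quotients among locally free objects.

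To handle subquotients that are not special, we first need that every object of $\mathrm{MIC}^\mathrm{geom}(X/S)$ is a quotient of a locally free object lying in $\mathrm{MIC}^\mathrm{geom}(X/S)$. We take a subobject $\mathcal M'\subset\inf\mathcal V$. Its saturation $\mathcal M'^{\mathrm{sat}}$ (the preimage of the torsion of $\inf\mathcal V/\mathcal M'$) is torsion free, hence locally free by \cite{DH18}. However, $\mathcal M'$ itself is merely torsion free, hence also locally free. So subobjects of locally free objects are locally free. For a quotient $\mathcal M'\to\mathcal M$, the object $\mathcal M'$ is locally free and lies in the category, which gives the required presentation. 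This is the Serre condition of \cite{DH18}. The fiber functor $\eta^*$ is the restriction of the one on $\mathrm{MIC}^\mathrm{se}(X/S)$, so it is exact, faithful and tensor. Hence \cite{DH18} gives an affine flat group scheme $\pi^\mathrm{geom}(X/S)$ with $\Rep^\mathrm{f}(\pi^\mathrm{geom}(X/S))\cong\mathrm{MIC}^\mathrm{geom}(X/S)$.

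\textbf{Faithful flatness.} The inclusion $\mathrm{MIC}^\mathrm{geom}(X/S)\hookrightarrow\mathrm{MIC}^\mathrm{se}(X/S)$ is compatible with $\eta^*$, so it induces $\pi(X/S)\to\pi^\mathrm{geom}(X/S)$. By the Tannakian criterion for faithful flatness over a Dedekind base (Theorem \ref{th1} in the appendix, the analogue of \cite{DHdS18} for Dedekind rings), it suffices that the functor is fully faithful and that its essential image is closed under subquotients in $\mathrm{MIC}^\mathrm{se}(X/S)$. Both hold by construction: the category is a full subcategory, and it is stable under subquotients.

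\textbf{Main obstacle.} The delicate point is the criterion itself over $R$. Over a field, closure under subobjects suffices, but over a Dedekind ring one must handle torsion and special subobjects. We expect to invoke Theorem \ref{th1} directly. If that theorem requires closure only under special subquotients of locally free objects, the argument above covers it. If some step fails, the fallback is to check faithful flatness fiberwise: at each closed point and at the generic point, using Proposition \ref{proplocaltoglobal} to reduce from $R$ to its localizations, which are discrete valuation rings, where \cite{DHdS18} applies.
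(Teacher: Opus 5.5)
Your overall strategy is the natural one. You show that $\mathrm{MIC}^\mathrm{geom}(X/S)$ is a full subcategory of $\mathrm{MIC}^\mathrm{se}(X/S)$ stable under subquotients, $\oplus$, $\otimes$ and duals, and then apply Theorem~\ref{th1}(1) to the inclusion. The paper states this lemma without a written proof. Your faithful-flatness paragraph is correct once $\pi^\mathrm{geom}(X/S)$ is known to be a flat group scheme.

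\textbf{The gap is in the rigidity step.} Definition~\ref{defn-Tannakaovering} requires every object to be a quotient of an object that is rigid \emph{inside} $\mathrm{MIC}^\mathrm{geom}(X/S)$. Until that is checked, the hypotheses of Theorem~\ref{th_duality} are not verified, and a priori you would only reconstruct a flat monoid scheme, with no antipode.

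Your duality argument only covers locally free objects that are \emph{special} subquotients of some $\inf(\mathcal V)$. The objects you then use for domination are arbitrary subobjects $\mathcal M'\subset\inf(\mathcal V)$, which in general are not saturated. So you never show that $(\mathcal M')^\vee$ lies in the category.

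Tacitly assuming that every locally free object of the category is a special subquotient of an inflated object amounts to Lemma~\ref{lem-DHdS}. That lemma is deep: it relies on Theorem~\ref{HES}. Worse, its proof (Theorem~\ref{prop-ismorphicofgalois}) already uses the Tannakian duality for $\langle\inf(\mathcal V)\rangle_\otimes$, so invoking it here would be circular.

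\textbf{The repair is elementary: a scaling argument.}
\begin{enumerate}
\item Since $(\mathcal M')^{\mathrm{sat}}/\mathcal M'$ is a coherent $R$-torsion sheaf on the quasi-compact $X$, there is $0\neq a\in R$ with $a\,(\mathcal M')^{\mathrm{sat}}\subset\mathcal M'$.
\item Restriction gives a horizontal injection $(\mathcal M')^\vee\hookrightarrow(a\,(\mathcal M')^{\mathrm{sat}})^\vee$. Its kernel is $\mathcal{H}om$ of an $R$-torsion sheaf into the $R$-flat sheaf $\mathcal O_X$, hence zero.
\item Multiplication by $a$ is a horizontal isomorphism, so it identifies the target with $((\mathcal M')^{\mathrm{sat}})^\vee$.
\item Because $(\mathcal M')^{\mathrm{sat}}$ is special, $((\mathcal M')^{\mathrm{sat}})^\vee$ is a quotient of $\inf(\mathcal V^\vee)$.
\end{enumerate}
Hence $(\mathcal M')^\vee$ is a subquotient of $\inf(\mathcal V^\vee)$. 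The dual of any locally free quotient of $\mathcal M'$ is a subobject of $(\mathcal M')^\vee$, so it also lies in the category.

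Alternatively, for each $\mathcal V$ apply Proposition~\ref{PropTannaka}(2) with $\Pi=\pi(X/S)$ and $V=\eta^*\inf(\mathcal V)$. Combined with the Remark after Definition~\ref{defndiptych} and Lemma~\ref{faithfullyflat}, this identifies $\langle\inf(\mathcal V)\rangle_\otimes$ with $\mathrm{Rep}^\mathrm{f}_R(\Psi'_\rho)$ for a flat quotient $\Psi'_\rho$ of $\pi(X/S)$. Passing to the union over $\mathcal V$ gives both claims of the lemma at once.

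With either repair your argument is complete, and the fiberwise fallback in your last paragraph is not needed.
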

 
\subsection{The fundamental exact sequence} \label{sec-exact}
The functor $f^*$ and the inflation functor induce the following sequence of homomorphisms of group and groupoid schemes: 
$$\pi(X/S)\stackrel {\inf} \longrightarrow \Pi(X/k)\stackrel {f_\ast} \longrightarrow \Pi(S/k)\longrightarrow 1.$$

We first notice that the composition $f_\ast\circ \mathrm{inf}$ corresponds to the functor that takes the pull-back of connections on $S/k$ along $f$ and then inflates to relative connections on $X/S,$ thereby sending any connection on $S/k$ to a trivial relative connection on $X/S.$ Therefore, it is a trivial homomorphism.

Furthermore, we notice that the map $\mathrm{inf}$ factors through the composition of
$$\pi(X/S) \longrightarrow  \pi^\mathrm{geom}(X/S) 
 \longrightarrow \Pi(X/k).$$
The aim of this section is to show the following theorem.

\begin{thm}\label{FES} Let $f:X\longrightarrow \mathrm{Spec} (R)$ be a smooth projective map with  geometrically connected fibers.
Then the following sequences 
$$\pi(X/S) \longrightarrow  \pi^\mathrm{geom}(X/S) 
 \longrightarrow 1$$
 and 
$$1\longrightarrow \pi^\mathrm{geom}(X/S) \longrightarrow \Pi(X/k)\stackrel {f^\ast}\longrightarrow \Pi(S/k)\longrightarrow 1$$
of flat affine group schemes and groupoids schemes over $R$ are exact. 
\end{thm}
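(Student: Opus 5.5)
The first sequence is already contained in Lemma \ref{lem-groupH}: $\mathrm{MIC}^\mathrm{geom}(X/S)$ is a full subcategory of $\mathrm{MIC}^\mathrm{se}(X/S)$, closed under sub-quotients, so the induced map $\pi(X/S)\to\pi^\mathrm{geom}(X/S)$ is faithfully flat. The Tannakian criterion for this (Theorem \ref{th1}, extended to Dedekind bases as in Theorem \ref{lem-DHdS}) is that every object of the target category is a sub-quotient of objects coming from the source. The real work is in the second sequence. Here I will use the Tannakian characterization of short exact sequences of groupoid/group schemes, Theorem \ref{thm-ExactSequence}, applied to the functors
$$\mathrm{MIC}^\circ(S/k)\xrightarrow{f^*}\mathrm{MIC}^\circ(X/k)\xrightarrow{\inf}\mathrm{MIC}^\mathrm{geom}(X/S).$$
Concretely, I will verify the following four conditions.

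\textbf{(a) Surjectivity of $f^*:\Pi(X/k)\to\Pi(S/k)$.} This follows because $f^*$ is fully faithful on connections and its essential image is closed under sub-objects. Alternatively, it is immediate from the section $\eta$, since $\eta^*f^*=\mathrm{id}$.

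\textbf{(b) Injectivity of $\pi^\mathrm{geom}(X/S)\to\Pi(X/k)$.} This amounts to saying that every object of $\mathrm{MIC}^\mathrm{geom}(X/S)$ is a sub-quotient of $\inf(\mathcal V)$ for some $\mathcal V$. That holds by the very definition of $\mathrm{MIC}^\mathrm{geom}(X/S)$.

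\textbf{(c) Triviality of the composite.} The composite $\inf\circ f^*$ lands in trivial relative connections, as noted before the theorem.

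\textbf{(d) Exactness in the middle.} This has two parts.
\begin{enumerate}
\item For $\mathcal V\in\mathrm{MIC}^\circ(X/k)$, the maximal sub-object whose inflation is trivial must lie in the image of $f^*$. This is exactly Lemma \ref{lem-condA}, which identifies that sub-object with $f^*\mathbf R^0_\mathrm{dR}f_*\mathcal V$. Here Lemma \ref{coh1} guarantees that $\mathbf R^0_\mathrm{dR}f_*\mathcal V$ is locally free, so that it lies in $\mathrm{MIC}^\circ(S/k)$.
\item Every object of $\mathrm{MIC}^\mathrm{geom}(X/S)$ (equivalently, every $\pi^\mathrm{geom}$-representation) must be a quotient, or a sub, of the inflation of an object of $\mathrm{MIC}^\circ(X/k)$. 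This is the normality-type condition in Theorem \ref{thm-ExactSequence}.
\end{enumerate}

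The main obstacle is part (ii) of (d). Over a field it would follow from the induction functor: any $\Pi(X/k)^\Delta$-module is a quotient of a $\Pi(X/k)$-module, as in Corollary \ref{lem-RepGDiagonal}. Over the Dedekind base $R$ one must control torsion and work with special sub-quotients (Definition \ref{defn_speical_subquotient}). My plan has three steps.

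First, I reduce to locally free objects of $\mathrm{MIC}^\mathrm{geom}(X/S)$. I do this by showing that every object is a quotient of one in $\langle\inf\mathcal V\rangle^s_\otimes$, using that torsion-free coherent connections are locally free \cite{DH18}.

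Second, for a special sub-object $\mathcal M'\subset\inf\mathcal V$, I embed $\mathcal M'$ into a quotient of an inflated object. To do this, I take the induced module of the restricted representation, which is an ind-object of $\mathrm{MIC}(X/k)$, and pick a coherent sub-connection surjecting onto $\mathcal M'$.

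Third, I check the relevant statements locally: at each closed point $s$, using the DVR results of \cite{DHdS18}, and at the generic point. I then glue via Proposition \ref{proplocaltoglobal} and Proposition \ref{PropTannaka}.

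Once (a)–(d) hold, Theorem \ref{thm-ExactSequence} gives the exactness of $1\to\pi^\mathrm{geom}(X/S)\to\Pi(X/k)\to\Pi(S/k)\to1$. Here $\pi^\mathrm{geom}(X/S)$ is identified with the kernel, which is a subgroup of $\Pi(X/k)^\Delta$.
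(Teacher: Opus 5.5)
Organizing the proof around Theorem \ref{thm-ExactSequence} for $\pi^{\mathrm{geom}}(X/S)\to\Pi(X/k)^\Delta\to\Pi(S/k)^\Delta$ is a legitimate repackaging of the paper's route. The paper instead shows that the closed immersion $\pi^{\mathrm{geom}}(X/S)\subset L=\ker f^*$ is an isomorphism, by proving that restriction $\mathrm{Rep}_R(L)\to\mathrm{Rep}^{\mathrm f}(\pi^{\mathrm{geom}}(X/S))$ is an equivalence. However, the step you call automatic, (b), is the heart of the theorem. Over a Dedekind ring, Theorem \ref{th1}(2) requires every object of $\mathrm{Rep}^\circ_R(\pi^{\mathrm{geom}}(X/S))$ to be isomorphic to a \emph{special} sub-quotient of a restricted representation. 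The definition of $\mathrm{MIC}^{\mathrm{geom}}(X/S)$ only gives sub-quotients, and a locally free sub-object of $\inf(\mathcal V)$ may have torsion cokernel. In diptych language, the Tannakian group of $\langle\inf(\mathcal V)\rangle_\otimes$ is $\Psi'_\rho$, while the closed image in $\mathrm{GL}(\eta^*\mathcal V)$ is $\Psi_\rho$. The map $\Psi'_\rho\to\Psi_\rho$ is in general not an isomorphism; dilatations over a DVR already give counterexamples. That it is one here is Lemma \ref{lem-DHdS}, whence Corollary \ref{cor-groupH}. Its proof (Theorem \ref{prop-ismorphicofgalois}) reduces, via Proposition \ref{proplocaltoglobal} and Corollary \ref{Corim}, to showing that $\mathrm{Rep}^{\mathrm{coh}}_{k_s}(G_{k_s})\to\langle\inf(\mathcal V)|_{X_s}\rangle_\otimes$ is an equivalence at every closed point. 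This uses the homotopy exact sequence (Theorem \ref{HES}, Lemma \ref{lem-HES}) and the base-change isomorphism \eqref{eq.R} to lift objects and morphisms from $X_s$. Your first step under (d)(ii), restricted to locally free objects, amounts to Lemma \ref{lem-DHdS} itself, since a locally free quotient of a special sub-quotient is again one. The justification you offer (torsion-free coherent connections are locally free) does not touch the special versus non-special issue. Your third step names the right tools but never says what is to be checked on the closed fibres. Since the closed immersion is a \emph{hypothesis} of Theorem \ref{thm-ExactSequence}, nothing downstream is justified until this is done.

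The second gap is step two of (d)(ii). Surjectivity of the counit $\mathrm{Ind}(M')\to M'$ for $\pi^{\mathrm{geom}}(X/S)$-representations $M'$ is a normality-type property of $\pi^{\mathrm{geom}}(X/S)$ inside $\Pi(X/k)^\Delta$, which is exactly what is to be proved. Corollary \ref{lem-RepGDiagonal} applies to $\Pi(X/k)^\Delta$-modules, and $M'$ is not yet known to be one. Even over a field, representations of a non-normal closed subgroup need not be quotients of restricted ones. The paper supplies a geometric input here, Lemma \ref{lem-Deligne} (Deligne's argument): every special sub-object of $\inf(\mathcal V)$ is a quotient of some $\inf(\mathcal N)$. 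Together with Lemma \ref{lem-DHdS}, this gives Corollary \ref{cor-Lrepr}. Induction is used only for the honest kernel $L$, which is normal by construction (Lemma \ref{lem-Lrepr}). Fullness of $\mathrm{Rep}_R(L)\to\mathrm{Rep}^{\mathrm f}(\pi^{\mathrm{geom}}(X/S))$ then follows from $V^L=V^{\pi^{\mathrm{geom}}(X/S)}$ (Lemma \ref{lem-condA}), and essential surjectivity from Lemmas \ref{lem-DHdS} and \ref{lem-Deligne}.

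Two smaller points. First, conditions (a) and (b) of Theorem \ref{thm-ExactSequence} range over all of $\mathrm{Rep}^\circ_R(\Pi(X/k)^\Delta)$, not just $\mathrm{MIC}^\circ(X/k)$, so your use of Lemma \ref{lem-condA} needs Corollary \ref{lem-RepGDiagonal} to transfer. Second, the criterion you quote for faithful flatness in your first paragraph is the closed-immersion criterion of Theorem \ref{th1}(2). The correct one is Theorem \ref{th1}(1) (fully faithful with image closed under sub-objects), which is what your first sentence actually uses.
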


We notice that the exactness of the second sequence amounts to the exactness of the following sequence of $R$-group schemes (see Appendix \ref{sect_morphism_group_schemes}):
\begin{equation}\label{eq_exact_diag}
1\longrightarrow\pi^\mathrm{geom}(X/S) \longrightarrow \Pi(X/k)^\Delta\longrightarrow \Pi(S/k)^\Delta\longrightarrow 1.\end{equation}

The name ``fundamental exact sequence'' is motivated by Grothendieck's fundamental exact sequences of the \'etale fundamental groups:
$$1\longrightarrow \pi^{\rm et}(X\otimes_k\bar k,\bar x)\longrightarrow \pi^{\rm et}(X,\bar x)
\longrightarrow \text{\rm Gal}(\bar k/k)\longrightarrow 1.$$
Indeed, the fundamental groupoid $\Pi(S/k)$ can be seen as a generalization of the Galois group $\text{\rm Gal}(\bar k/k)$, while the relative fundamental group plays the role of the geometric \'etale fundamental group.  

For the \'etale fundamental groups, Grothendieck used the fundamental exact sequence to deduce the homotopy exact sequence. For the differential fundamental groups, dos Santos provided a direct proof using his criterion for the exact sequence of group schemes. In what follows, we shall use Theorem \ref{HES} to deduce Theorem \ref{FES}.  
The proof will be given in Section \ref{proof-FES}. First, we need some lemmas.  

Combining Theorem \ref{HES} with the criterion for exact sequences of affine group schemes \cite[Theorem A.1]{EHS08}, we deduce the following lemma (cf. \cite[Theorem~9.1]{DHdS18}).
\begin{lem}\label{lem-HES}
Let $\mathcal{V}$ be an object of $\mathrm{MIC}^\circ(X/k)$.
\begin{enumerate}
\item The maximal trivial sub-object of $\mathcal{V}|_{X_s}$ is the restriction of a sub-object $\mathcal{T} \longrightarrow \mathcal{V}.$ Moreover, $\mathcal{T}$ is the pull-back to $\mathrm{MIC}^\circ(X/k)$ of an object of $\mathrm{MIC}^\circ(S/k)$;
\item If $\mathcal{N}$ belongs to $\langle \mathcal{V}|_{X_s}\rangle_\otimes$, then there exists $\mathcal{\tilde{N}}$ in $\langle \mathcal{V}\rangle_\otimes$ and a monic $\mathcal{N}\longrightarrow  \mathcal{\tilde{N}}|_{X_s}.$
\end{enumerate}
\end{lem}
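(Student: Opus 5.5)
We translate the statement into group theory through Tannakian duality and then read off the two parts from the criterion of \cite[Theorem~A.1]{EHS08}, applied to the homotopy exact sequence of Theorem~\ref{HES}. Under the fibre functor $x^*$ we have $\mathrm{Rep}^\mathrm{f}(\pi(X/k,x))\cong\mathrm{MIC}^\circ(X/k)$, $\mathrm{Rep}^\mathrm{f}(\pi(X_s/k,x))\cong\mathrm{MIC}^\circ(X_s/k)$ and $\mathrm{Rep}^\mathrm{f}(\pi(S/k,s))\cong\mathrm{MIC}^\circ(S/k)$. In this dictionary the maps of Theorem~\ref{HES} become the restriction functor $\mathcal V\mapsto\mathcal V|_{X_s}$ and the pull-back $f^*$. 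Write
$$\pi(X_s/k,x)\xrightarrow{q}\pi(X/k,x)\xrightarrow{p}\pi(S/k,s)\longrightarrow 1 .$$
By Theorem~\ref{HES}, $p$ is faithfully flat and $\mathrm{Im}(q)=\mathrm{Ker}(p)$. The image of $q$ is a closed subgroup $H:=\mathrm{Im}(q)$, and $\pi(X_s/k,x)\to H$ is faithfully flat.

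For part (1), recall one direction of \cite[Theorem~A.1]{EHS08}: if the sequence is exact, then for any representation $V$ of $\pi(X/k)$ the subspace $V^{H}$ of $\mathrm{Ker}(p)$-invariants is a $\pi(X/k)$-subrepresentation, because $\mathrm{Ker}(p)$ is normal. On $V^H$ the action factors through $\pi(X/k)/\mathrm{Ker}(p)=\pi(S/k)$, so $V^H$ comes from a representation of $\pi(S/k)$. Since $\pi(X_s/k)\to H$ is faithfully flat, the $H$-invariants coincide with the $\pi(X_s/k)$-invariants. Under the dictionary, these invariants are exactly the maximal trivial sub-object of $\mathcal V|_{X_s}$. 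Set $\mathcal T\subset\mathcal V$ to be the sub-connection corresponding to $V^H$. Then $\mathcal T|_{X_s}$ is that maximal trivial sub-object, and $\mathcal T\cong f^*\mathcal W$ for some $\mathcal W\in\mathrm{MIC}^\circ(S/k)$. Alternatively, Lemma~\ref{lem-condA} identifies $\mathcal T$ concretely with $f^*\mathbf R^0_{\mathrm{dR}}f_*\mathcal V$; to see this one checks that $f^*f_*$ restricted to $X_s$ gives the horizontal sections, using properness and base change for $\mathrm H^0_{\mathrm{dR}}$.

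For part (2), let $V$ be the representation corresponding to $\mathcal V$. The category $\langle\mathcal V|_{X_s}\rangle_\otimes$ is equivalent to $\mathrm{Rep}^\mathrm{f}$ of the image $H_V$ of $\pi(X_s/k)$ in $GL(V)$; this image is the image of $H$ acting on $V$. Let $G_V$ be the image of $\pi(X/k)$ in $GL(V)$, so that $\langle\mathcal V\rangle_\otimes\cong\mathrm{Rep}^\mathrm{f}(G_V)$ and $H_V\subset G_V$ is a closed subgroup. The standard fact for affine algebraic groups in characteristic zero, also used in \cite{EHS08}, is that every representation $N$ of a closed subgroup $H_V\subset G_V$ embeds into the restriction of some representation $\tilde N$ of $G_V$. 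One proof embeds $N$ into $N\otimes k[H_V]$ and then uses the surjection $k[G_V]\to k[H_V]$, which yields $N\hookrightarrow\mathrm{Ind}_{H_V}^{G_V}N|_{H_V}$ restricted to a finite-dimensional piece. Translating back gives $\tilde{\mathcal N}\in\langle\mathcal V\rangle_\otimes$ and a monomorphism $\mathcal N\hookrightarrow\tilde{\mathcal N}|_{X_s}$.

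The main obstacle is bookkeeping rather than mathematics. We must make sure the full subcategory generated by the restrictions $\mathcal V|_{X_s}$ matches $\mathrm{Rep}(H_V)$ (and not $\mathrm{Rep}(\pi(X_s/k))$). This uses the fact that a homomorphism is faithfully flat onto its image, so that passing to images preserves the tensor-generated subcategories. We must also make sure the restriction of $\pi(X/k)$-representations corresponds exactly to restriction of connections along $X_s\hookrightarrow X$ with compatible base points $x=\eta(s)$. Once these identifications hold, both statements are the two halves of \cite[Theorem~A.1]{EHS08}.
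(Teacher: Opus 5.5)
Your route is the paper's: translate Theorem~\ref{HES} through Tannakian duality, read both parts off the criterion of \cite[Theorem~A.1]{EHS08}, and pass to the images $H_V\subset G_V\subset\mathrm{GL}(V)$ so that $\tilde{\mathcal N}$ lands in $\langle\mathcal V\rangle_\otimes$. This is what the paper means by ``the proof in loc.\ cit.\ shows that $\tilde{\mathcal N}$ can be chosen in $\langle\mathcal V\rangle_\otimes$''. Part (1) is fine: it is conditions (a)--(b) of the criterion, and you derive it correctly from the normality of $H=\mathrm{Ker}(p)$.

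Part (2), however, rests on a false claim. It is not true that every representation of a closed subgroup $H_V\subset G_V$ embeds into the restriction of a $G_V$-representation. Take a Borel subgroup $B$ of $\mathrm{SL}_2$. A $B$-stable line in an $\mathrm{SL}_2$-module is fixed by the unipotent radical, so it is spanned by a highest weight vector. Hence a non-dominant character of $B$ never occurs as a subobject of a restricted module, only as a subquotient, and subquotient is all one gets for general closed subgroups.

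Your sketch shows the same limitation. Lifting $N\hookrightarrow N_0\otimes k[H_V]$ along $k[G_V]\twoheadrightarrow k[H_V]$ only exhibits $N$ as a subobject of a quotient of a restricted $G_V$-module. There is also no natural monomorphism $N\to\mathrm{Ind}_{H_V}^{G_V}(N)|_{H_V}$: Frobenius reciprocity only gives the counit $\mathrm{Ind}_{H_V}^{G_V}(N)|_{H_V}\to N$, and for non-normal subgroups the induced module can even be zero.

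The missing ingredient is normality, which you already have. $H_V$ is the image of the normal subgroup $\mathrm{Ker}(p)$ under the faithfully flat map $\pi(X/k)\to G_V$, so it is normal in $G_V$. For a normal subgroup the counit is surjective; this is condition (c) of Theorem~\ref{thm-ExactSequence} in its field version \cite[Theorem~A.1]{EHS08}, used the same way via \cite[Theorem~4.2.2(c)]{DH18} in Lemma~\ref{lem-Lrepr}. So $N^\vee\in\langle\mathcal V|_{X_s}\rangle_\otimes$ is a quotient of $M|_{H_V}$ for some finite-dimensional $G_V$-module $M$, taken as a finite piece of the induced module. Dualizing gives $N\hookrightarrow M^\vee|_{H_V}$ with $M^\vee\in\mathrm{Rep}^\mathrm{f}(G_V)\simeq\langle\mathcal V\rangle_\otimes$. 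With this replacement your argument coincides with the paper's.
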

\begin{proof}
According to Theorem \ref{HES}, the following sequence
$$\pi(X_s/k)\longrightarrow \pi(X/k)\longrightarrow \pi(S/k)\longrightarrow 1.$$
is exact. Using the characterization of the exactness presented in \cite[Theorem A.1]{EHS08}, we immediately arrive at the desired conclusion. Moreover, the proof in loc. cit. shows that $\tilde{\mathcal{N}}$ can be chosen in $\langle\mathcal{V}\rangle_{\otimes}.$
\end{proof}

The following lemma is analogous to
\cite[Theorem~9.2]{DHdS18}, which generalizes a result of Deligne (\cite[Theorem~5.10]{EH06}).  
\begin{lem}\label{lem-Deligne} 
Let $\mathcal{V}$ be an object of $\mathrm{MIC}^\circ(X/k)$ and let $\mathcal{M}\longrightarrow \inf (\mathcal{V})$ be a special sub-object in $\mathrm{MIC}^\mathrm{se}(X/S)$. Then there exists 
$\mathcal{N}$  in $\mathrm{Obj}(\mathrm{MIC}^\circ(X/k))$ and an epimorphism 
$\text{\rm inf}(\mathcal{N})\longrightarrow \mathcal{M}.$ Moreover, 
$\mathcal{N}$ can be chosen in $\langle \mathcal{V}\rangle_\otimes$.\end{lem}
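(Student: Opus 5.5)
Following the proof of \cite[Theorem~9.2]{DHdS18} (which treats a discrete valuation ring), we plan to reduce to a statement about the fibre $X_s$ and then use Lemma \ref{lem-HES}. The special sub-object $\mathcal M\subset\inf(\mathcal V)$ has locally free cokernel $\mathcal Q:=\inf(\mathcal V)/\mathcal M$. We take the dual sequence $0\to\mathcal Q^\vee\to\inf(\mathcal V)^\vee\to\mathcal M^\vee\to 0$, which is again exact in $\mathrm{MIC}^\circ(X/S)$. An epimorphism onto $\mathcal M$ is equivalent, by duality, to a special monomorphism of $\mathcal M^\vee$ into something inflated. So we look for $\mathcal N$ together with a relative horizontal map $\mathcal M\to\inf(\mathcal V)\to\ldots$.

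\emph{Step 1 (a natural candidate).} We use the standard trick. Write $\mathcal M$ as the image of the horizontal map
$$\inf(\mathcal V)\otimes \mathcal{H}om(\inf\mathcal V,\mathcal M)^{\nabla}\longrightarrow\mathcal M.$$
Equivalently, consider $\mathcal W:=\mathcal V\otimes\mathcal V^\vee\in\langle\mathcal V\rangle_\otimes$. The sub-object $\mathcal M$ determines a relative horizontal section of a suitable quotient or sub-object of $\inf(\mathcal W)$, namely the projector-like element $\mathcal M\otimes\inf(\mathcal V)^\vee\subset\inf(\mathcal W)$. This reduces the statement to the following: \emph{every special sub-object $\mathcal L\subset\inf(\mathcal W)$ which is generated by relative flat data is dominated by an inflated absolute connection in $\langle\mathcal W\rangle_\otimes$.}

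\emph{Step 2 (fibrewise, then spreading).} Restricting to a closed fibre $X_s$, the sub-object $\mathcal M|_{X_s}\subset\mathcal V|_{X_s}$ lies in $\langle\mathcal V|_{X_s}\rangle_\otimes$. By Lemma \ref{lem-HES}(2), applied to the dual $\mathcal M^\vee|_{X_s}$ (a quotient of $\mathcal V^\vee|_{X_s}$, hence in $\langle\mathcal V|_{X_s}\rangle_\otimes$), there is $\tilde{\mathcal N}\in\langle\mathcal V\rangle_\otimes$ with a monic $\mathcal M^\vee|_{X_s}\to\tilde{\mathcal N}|_{X_s}$. Dually this gives an epimorphism $\tilde{\mathcal N}^\vee|_{X_s}\to\mathcal M|_{X_s}$. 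To lift this epimorphism to $X/S$, we use Lemma \ref{lem-HES}(1): the horizontal sections of $\mathcal{H}om(\tilde{\mathcal N}^\vee|_{X_s},\mathcal V|_{X_s})$ come from the maximal trivial sub-object $\mathcal T\subset\mathcal Hom(\tilde{\mathcal N}^\vee,\mathcal V)$, and $\mathcal T=f^*(\text{something on }S)$. By Lemma \ref{lem-condA}, $\mathcal T$ is exactly $f^*\mathbf R^0_{\mathrm{dR}}f_*$. Since $\mathrm{H}^0_{\mathrm{dR}}$ is projective (Lemma \ref{coh1}) and commutes with base change, the fibre maps spread to relative horizontal maps $\inf(\tilde{\mathcal N}^\vee)\otimes_R P\to\inf(\mathcal V)$, where $P$ is a finite projective $R$-module of relative horizontal sections. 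Let $\mathcal N:=\tilde{\mathcal N}^\vee\otimes f^*(\ldots)$ be the corresponding absolute connection, which is still in $\langle\mathcal V\rangle_\otimes$ up to tensoring with pull-backs from $S$. We then take the sub-collection of maps landing in $\mathcal M$. Because $\mathcal Q$ is locally free, the condition of landing in $\mathcal M$ is detected fibrewise, so the resulting map $\inf(\mathcal N)\to\mathcal M$ is surjective on the fibre $X_s$.

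\emph{Step 3 (surjectivity globally).} The cokernel of $\inf(\mathcal N)\to\mathcal M$ is a coherent relative connection, so it is supported on a closed subset. Surjectivity on the fibre at $s$, together with Nakayama, gives surjectivity over a neighbourhood of $s$. Doing this for finitely many points covering $S$ and taking the direct sum of the finitely many $\mathcal N$'s yields a global epimorphism. Here we use that $S$ is a noetherian curve, so it is quasi-compact and the complement of the locus where the cokernel vanishes is a finite set of closed points.

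The main obstacle is Step 2. One must ensure that the horizontal sections needed actually lie in the \emph{sub-object} $\mathcal M$ and not only in $\inf(\mathcal V)$, and that the resulting $\mathcal N$ remains in $\langle\mathcal V\rangle_\otimes$ rather than merely in $\langle\mathcal V\rangle_\otimes\otimes f^*\mathrm{MIC}^\circ(S/k)$. For this we would first try to choose $\tilde{\mathcal N}$ so that $\mathcal T$ is a trivial pull-back. Failing that, we would argue that pull-backs from $S$ which arise as sub-objects of objects of $\langle\mathcal V\rangle_\otimes$ already lie in $\langle\mathcal V\rangle_\otimes$, since $\mathcal T\subset\mathcal Hom(\tilde{\mathcal N}^\vee,\mathcal V)\in\langle\mathcal V\rangle_\otimes$.
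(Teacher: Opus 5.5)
The paper gives no proof of this lemma of its own; it only presents it as the analogue of \cite[Theorem~9.2]{DHdS18}, which generalizes Deligne's \cite[Theorem~5.10]{EH06}. Judged on its own terms, your proposal has a genuine gap in Step~2, exactly where you flag the ``main obstacle''.

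The fibre step is fine, namely Lemma~\ref{lem-HES}(2) followed by dualizing. Lemma~\ref{lem-HES}(1) does let you lift $\phi_s:\tilde{\mathcal N}^\vee|_{X_s}\to\mathcal M|_{X_s}\subset\mathcal V|_{X_s}$ to a relative horizontal map $\phi:\inf(\tilde{\mathcal N}^\vee)\to\inf(\mathcal V)$. This works because $\mathcal{H}om(\tilde{\mathcal N}^\vee,\mathcal V)$ is an \emph{absolute} connection, so the maximal trivial sub-object of its restriction to $X_s$ is a pull-back from $S$.

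But nothing forces $\phi$ to land in $\mathcal M$. You only know that $q\circ\phi:\inf(\tilde{\mathcal N}^\vee)\to\mathcal Q$ vanishes on the single fibre $X_s$. Local freeness of $\mathcal Q$ lets you detect vanishing on the generic fibre, not on one closed fibre. What you actually need is that $\mathrm{Hom}_{X/S}(\inf(\tilde{\mathcal N}^\vee),\mathcal M)\otimes_R k(s)\to\mathrm{Hom}(\tilde{\mathcal N}^\vee|_{X_s},\mathcal M|_{X_s})$ reaches an epimorphism.

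The obstruction to lifting $\phi_s$ into $\mathcal M$ lives in the $\mathfrak m_s$-torsion of $\mathrm{H}^1_{\mathrm{dR}}(X/S,\mathcal{H}om(\inf(\tilde{\mathcal N}^\vee),\mathcal M))$. Knowing that $\phi_s$ lifts into $\inf(\mathcal V)$ only places this obstruction in the cokernel of $\mathrm{Hom}_{X/S}(\inf(\tilde{\mathcal N}^\vee),\inf(\mathcal V))\to\mathrm{Hom}_{X/S}(\inf(\tilde{\mathcal N}^\vee),\mathcal Q)$. Since $\mathcal{H}om(\inf(\tilde{\mathcal N}^\vee),\mathcal M)$ is not inflated, it has no Gauss--Manin connection, and fibrewise horizontal maps into $\mathcal M|_{X_s}$ can jump. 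That there are enough relative horizontal maps from inflated objects into $\mathcal M$ is exactly the content of the lemma, so Step~2 assumes what is to be proved.

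Your suggested remedies only address twisting by pull-backs from $S$. That issue is harmless anyway: $S$ is affine, so finitely many generators of $P$ give a map from $\inf(\tilde{\mathcal N}^\vee)^{\oplus m}$.

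Step~1 does not rescue this, because the claimed presentation of $\mathcal M$ as the image of $\inf(\mathcal V)\otimes\mathcal{H}om(\inf(\mathcal V),\mathcal M)^{\nabla}$ is false in general. Take $X=E\times\mathbb A^1$ and $\mathcal V=p_1^*V_0$, where $V_0$ is an extension of $\mathcal O_E^2$ by $\mathcal O_E$ whose two classes form a basis of $\mathrm{H}^1_{\mathrm{dR}}(E)$. Let $\mathcal M$ be the preimage of $\mathcal O_X\cdot(1,t)\subset\mathcal O_X^2$. Then each fibre of $\mathcal M$ is a non-split self-extension of $\mathcal O$ that is not a quotient of $V_0$. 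Consequently every relative horizontal map $\inf(\mathcal V)\to\mathcal M$ lands in the socle $\mathcal O_X\subset\mathcal M$, even though $\mathcal M$ is a quotient of $\inf(\mathcal V^\vee)$.

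Step~3 (Nakayama plus properness of $f$, so the bad locus is a finite set of closed points) would be fine once Step~2 worked at every closed point. What is missing is a genuinely global input that produces horizontal maps into $\mathcal M$ itself. That is the role of Deligne's argument, which the paper imports via \cite{DHdS18}. Your combination of the homotopy exact sequence on one closed fibre with base change for absolute connections does not supply it.
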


Combining Lemma \ref{lem-Deligne} and Lemma \ref{lem-HES}, we deduce the following property of the 
category $\mathrm{MIC}^\mathrm{geom}(X/S)$. The proof of the following lemma requires techniques of flat affine group scheme and will be given in  Appendix B, see Theorem \ref{prop-ismorphicofgalois}.   
\begin{lem}[{compare with \cite[Theorem~8.2]{DHdS18}}]
\label{lem-DHdS}
Let $(\mathcal{V},\nabla)$ be an absolute connection on $X/k$. Then each  relative connection that is locally free as an $\mathcal{O}_X$-module
in  $\langle \inf (\mathcal{V})\rangle_\otimes$ is indeed a \emph{special} sub-object of a tensor generated object from $\inf (\mathcal{V})$.
\end{lem}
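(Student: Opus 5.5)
We must show: if $\mathcal M\in\langle\inf(\mathcal V)\rangle_\otimes$ is locally free, then $\mathcal M$ is a special sub-object of some tensor construction $T=T^{a_1,b_1}(\inf\mathcal V)\oplus\cdots$. The plan is to first reduce to the local case on $S$. Then we prove the local statement using the technique of \cite[Theorem~8.2]{DHdS18}, with Lemma \ref{lem-Deligne} and Lemma \ref{lem-HES} replacing their analogues over a discrete valuation ring.

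\textbf{Step 1: reduction to subobjects.} By definition, $\mathcal M$ is a sub-quotient. So there are a subobject $\mathcal M'\subset T$ and an epimorphism $\mathcal M'\to\mathcal M$. Replacing $T$ by $T\oplus T^\vee$ and using duality, it suffices to show two things:
\begin{enumerate}
\item[(a)] every locally free subobject of $T$ is contained in a special subobject of $T$ with the same generic fibre, namely its saturation;
\item[(b)] the saturation again lies in the image of tensor constructions in such a way that $\mathcal M$ becomes a special sub-object.
\end{enumerate}
For the saturation $\mathcal M'^{\mathrm{sat}}$ of $\mathcal M'$ in $T$, the cokernel $T/\mathcal M'^{\mathrm{sat}}$ is $R$-torsion free, hence locally free by \cite{DH18}. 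So $\mathcal M'^{\mathrm{sat}}$ is special, and it is still a relative connection. The real issue is the quotient map: we must replace "quotient of a special sub-object" by "special sub-object of another construction". For this we dualize. Since $\mathcal M$ is locally free, $\mathcal M^\vee\hookrightarrow \mathcal M'^\vee$, and $\mathcal M'^\vee$ is a quotient of $T^\vee$ when $\mathcal M'$ is special. So $\mathcal M^\vee$ is a subobject of a quotient of $T^\vee$, and we repeat the argument. This shows that the problem is exactly to control special quotients versus special subobjects, as in \cite{DHdS18}.

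\textbf{Step 2: local-to-global.} Being special means that a cokernel is locally free, i.e.\ $R$-torsion free. This condition is local on $S$. Moreover, the tensor construction can be chosen uniformly, since only finitely many closed points need to be corrected: the generic fibre is fine, and over a dense open of $S$ everything is flat. So, via Proposition \ref{proplocaltoglobal}, I would reduce to $R$ replaced by the localizations $R_{\mathfrak m}$, which are discrete valuation rings with residue field $k$. At each such point the argument of \cite[Theorem~8.2]{DHdS18} applies. The essential inputs there are exactly the Deligne-type result (our Lemma \ref{lem-Deligne}: special subobjects of $\inf(\mathcal V)$ are quotients of inflations of objects of $\langle\mathcal V\rangle_\otimes$) and the homotopy exact sequence on the closed fibre (Lemma \ref{lem-HES}). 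Tannakian interpretation: the image of $\pi(X/S)\to\Pi(X/k)^\Delta$ is a flat closed subgroup $H$. The claim becomes the statement that every finite projective representation of $H$ in $\langle V\rangle$ is a special subquotient, i.e.\ that $H$ is the closed-fibre-faithful image in the sense of Theorem \ref{th1}.

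\textbf{Step 3: gluing.} Finally, I would glue the local tensor constructions by taking direct sums over the finitely many bad points, together with the generic one, using Proposition \ref{PropTannaka}.

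\textbf{Main obstacle.} I expect the main obstacle to be the DVR step, specifically checking that the closed fibre of the image group is the image of $\pi(X_s/k)$. This is where Lemma \ref{lem-HES}(2) is used to lift objects of $\langle\mathcal V|_{X_s}\rangle$ into $\langle\mathcal V\rangle$ via monomorphisms, and these monomorphisms then give the required special embeddings.
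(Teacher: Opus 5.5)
\textbf{There is a genuine gap: the one nontrivial step is deferred, and the mechanism you sketch for it would not work.}

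Your architecture is the paper's. Via Proposition \ref{PropTannaka}, the lemma says the following: for the Tannakian group $G'$ of $\langle\inf(\mathcal V)\rangle_\otimes$ and $\rho\colon G'\to\mathrm{GL}(\eta^*\inf(\mathcal V))$, the map $\Psi'_\rho\to\Psi_\rho$ of the diptych is an isomorphism. By Proposition \ref{proplocaltoglobal} and Corollary \ref{Corim} this reduces to $\mathrm{Im}(\rho_{k_s})=\Psi_{\rho,k_s}$ at each closed point. Equivalently, the functor $\mathrm{Rep}^{\mathrm{coh}}_k(\Psi_{\rho,k_s})\to\langle\inf(\mathcal V)|_{X_s}\rangle_\otimes$ must be an equivalence; this is your ``closed fibre of the image group is the image of $\pi(X_s/k)$''.

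That equivalence is the whole content of the lemma, and you only defer it (``the argument of \cite[Theorem~8.2]{DHdS18} applies''). A monomorphism $\mathcal W\hookrightarrow\tilde{\mathcal N}|_{X_s}$ from Lemma \ref{lem-HES}(2) yields no object over $R$. The preimage of $\mathcal W$ in $\inf(\tilde{\mathcal N})$ has cokernel $\tilde{\mathcal N}|_{X_s}/\mathcal W$, which is killed by $\mathfrak m_s$. So it is never special unless $\mathcal W=\tilde{\mathcal N}|_{X_s}$. One must lift \emph{morphisms}, not objects.

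The missing ingredient is the base change
$\mathrm{Hom}(\inf(\mathcal N),\inf(\mathcal N'))\otimes_Rk\cong\mathrm{Hom}(\mathcal N|_{X_s},\mathcal N'|_{X_s})$ for $\mathcal N,\mathcal N'\in\langle\mathcal V\rangle_\otimes$. It comes from Lemma \ref{lem-HES}(1): the maximal trivial subobject of $(\mathcal N'\otimes\mathcal N^\vee)|_{X_s}$ is the restriction of some $f^*\mathcal E$ with $\mathcal E\in\mathrm{MIC}^\circ(S/k)$. Its inflation is trivial, so horizontal sections on $X_s$ lift.

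With this, the paper argues as follows:
\begin{itemize}
\item It writes $\mathcal W$ as the image of some $\phi_s\colon\mathcal N|_{X_s}\to\mathcal N'|_{X_s}$, using the homotopy exact sequence and the exactness criterion.
\item It lifts $\phi_s$ to $\phi$ over $R$ and takes the saturated image of $\phi$. This is an object of $\langle\inf(\mathcal V)\rangle^s_\otimes$ restricting to $\mathcal W$, which gives essential surjectivity.
\item Fullness is obtained the same way, after using Lemma \ref{lem-Deligne} to present objects as quotients (and, dually, subobjects) of restrictions of absolute connections.
\end{itemize}

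The surrounding steps also need repair.
\begin{itemize}
\item Step 1 is circular, as you half-concede.
\item Your Tannakian rephrasing assumes that the image of $\pi(X/S)$ in $\Pi(X/k)^\Delta$ is a flat \emph{closed} subgroup. That is Corollary \ref{cor-groupH}, which the paper deduces from this very lemma; closedness of the flat image is precisely $\Psi'_\rho=\Psi_\rho$.
\item Special-subquotient presentations over the various $R_{\mathfrak m}$ cannot be glued by taking direct sums. The gluing should be done at the level of Hopf algebras, where image and saturation commute with localization. Then $\Psi'_\rho\cong\Psi_\rho$ is a local statement, and Proposition \ref{PropTannaka} gives the conclusion over $R$ directly. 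Alternatively, as in the paper, apply Proposition \ref{proplocaltoglobal} to the closed fibres of $S$ without passing to DVRs at all.
\end{itemize}
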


\begin{cor}\label{cor-groupH} 
The map
$\pi^\mathrm{geom}(X/S)\longrightarrow \Pi(X/k)^\Delta$ is a closed immersion.  \end{cor}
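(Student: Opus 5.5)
We use the Tannakian criterion for closed immersions over the Dedekind ring $R$, in the form recalled in Appendix B (Theorem \ref{th1}). A homomorphism of flat affine $R$-group schemes $H\to G$ is a closed immersion provided every object of $\mathrm{Rep}^\circ_R(H)$ is isomorphic to a special subquotient of an object coming from $\mathrm{Rep}^\circ_R(G)$. Over a field one could take arbitrary subquotients; over $R$ one needs special ones, so that torsion-free cokernels keep flatness.

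\emph{Step 1: identify the two categories.} By Lemma \ref{lem-groupH}, $\mathrm{Rep}^\mathrm{f}(\pi^\mathrm{geom}(X/S))$ is $\mathrm{MIC}^\mathrm{geom}(X/S)$. The representations of $\Pi(X/k)^\Delta$ coming from $\Pi(X/k)$ are the inflations $\inf(\mathcal V)$ for $\mathcal V\in\mathrm{MIC}^\circ(X/k)$. By Corollary \ref{lem-RepGDiagonal}, every $\Pi(X/k)^\Delta$-module is a quotient of such a restriction. Hence it suffices to show the following: every object of $\mathrm{MIC}^\mathrm{geom}(X/S)$ that is locally free, i.e.\ torsion-free over $R$ (cf.\ \cite{DH18}), is a special subquotient of some $\inf(\mathcal V)$.

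\emph{Step 2: reduce to a tensor category generated by one object.} Let $\mathcal M$ be a locally free object of $\mathrm{MIC}^\mathrm{geom}(X/S)$. By definition, $\mathcal M$ is a subquotient of some $\inf(\mathcal V)$, so $\mathcal M\in\langle\inf(\mathcal V)\rangle_\otimes$. Lemma \ref{lem-DHdS} then says that $\mathcal M$ is a \emph{special} subobject of a tensor-generated object $T=\bigoplus T^{a_i,b_i}(\inf\mathcal V)$. Since inflation is a tensor functor, $T\cong\inf(\bigoplus T^{a_i,b_i}\mathcal V)$ is itself an inflated object. So $\mathcal M$ is a special subobject of an inflation, which gives the criterion.

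\emph{Step 3: pass from subobjects to quotients and finish.} If the criterion is stated with quotients rather than subobjects, we use Lemma \ref{lem-Deligne}. It applies to the special subobject $\mathcal M\subset\inf(\mathcal V')$, where $\mathcal V'=\bigoplus T^{a_i,b_i}\mathcal V$, and gives an epimorphism $\inf(\mathcal N)\to\mathcal M$ with $\mathcal N\in\langle\mathcal V'\rangle_\otimes$. Thus $\mathcal M$ is both a quotient and a special subobject of inflated objects, which covers either form of the criterion. We then conclude by Theorem \ref{th1} that $\pi^\mathrm{geom}(X/S)\to\Pi(X/k)^\Delta$ is a closed immersion.

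\emph{Main obstacle.} The main obstacle is checking that the criterion in Appendix B is stated for the diagonal group $\Pi(X/k)^\Delta$, whose representations need not all come from $\Pi(X/k)$. That is why Corollary \ref{lem-RepGDiagonal} is invoked: it ensures that generating by inflations from $\Pi(X/k)$ is enough. The genuinely hard input, namely the specialness coming from the homotopy exact sequence, is already packaged in Lemma \ref{lem-DHdS}.
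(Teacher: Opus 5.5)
Your argument is correct and is essentially the paper's proof. Lemma \ref{lem-DHdS} makes every locally free object of $\mathrm{MIC}^\mathrm{geom}(X/S)$ a special subobject of $\inf(\bigoplus T^{a_i,b_i}\mathcal V)$, and Theorem \ref{th1}(2) then gives the closed immersion. The detours through Corollary \ref{lem-RepGDiagonal} and Lemma \ref{lem-Deligne} are unnecessary: the criterion only asks for a special subquotient of \emph{some} object of $\mathrm{Rep}^\circ_R(\Pi(X/k)^\Delta)$, and the restriction of an inflated object is already such an object, so the ``main obstacle'' you describe does not arise.
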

\begin{proof}
 Since every object of $\mathrm{MIC}^\mathrm{geom}(X/S)^{\circ}$ is isomorphic to a special sub-quotient of an object of the form $(\mathcal{V},\nabla_{/S})$ 
via Lemma \ref{lem-DHdS}, Theorem \ref{th1}
implies that $\pi^\mathrm{geom}(X/S)\longrightarrow \Pi(X/k)^\Delta$ 
is a closed immersion.
\end{proof}

\begin{cor}\label{cor-Lrepr}
Any $R$-projective finite representation of $\pi^\mathrm{geom}(X/S)$ is also a special sub-object of a finite representation of $\Pi(X/k)$ considered as a representation of $\pi^\mathrm{geom}(X/S).$ Consequently, any finite representation of $\pi^\mathrm{geom}(X/S)$ is a quotient of a finite representation of $\Pi(X/k)$ considered as a $\pi^\mathrm{geom}(X/S)$-representation.  
\end{cor}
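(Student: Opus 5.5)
Under the Tannakian equivalence $\mathrm{Rep}^\mathrm{f}(\pi^\mathrm{geom}(X/S))\cong \mathrm{MIC}^\mathrm{geom}(X/S)$ (Lemma \ref{lem-groupH}) and $\mathrm{Rep}^\mathrm{f}(S:\Pi(X/k))\cong\mathrm{MIC}^\circ(X/k)$, the restriction of $\Pi(X/k)$-representations to $\pi^\mathrm{geom}(X/S)$ corresponds to the inflation functor $\inf$. I will therefore prove the geometric statement and then translate it back.

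First, let $W$ be an $R$-projective finite representation of $\pi^\mathrm{geom}(X/S)$. It corresponds to an object $\mathcal M$ of $\mathrm{MIC}^\mathrm{geom}(X/S)$ that is locally free as an $\mathcal O_X$-module. By Definition \ref{lem-categoryC}, $\mathcal M$ is a sub-quotient of some $\inf(\mathcal V)$ with $\mathcal V\in\mathrm{MIC}^\circ(X/k)$, so $\mathcal M\in\langle\inf(\mathcal V)\rangle_\otimes$. Lemma \ref{lem-DHdS} then shows that $\mathcal M$ is a special sub-object of a tensor generated object $T=\bigoplus_j T^{a_j,b_j}(\inf(\mathcal V))$. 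Since $\inf$ is a tensor functor compatible with duals, we have $T\cong \inf\bigl(\bigoplus_j T^{a_j,b_j}(\mathcal V)\bigr)$, and $\bigoplus_j T^{a_j,b_j}(\mathcal V)\in\mathrm{MIC}^\circ(X/k)$. Applying $\eta^*$ turns this into the statement that $W$ is a special sub-object (with $R$-projective cokernel) of the restriction of a finite $\Pi(X/k)$-representation. This proves the first claim.

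For the second claim, take an arbitrary finite representation $W$ of $\pi^\mathrm{geom}(X/S)$, i.e.\ an object $\mathcal M$ of $\mathrm{MIC}^\mathrm{geom}(X/S)\subset\mathrm{MIC}^\mathrm{se}(X/S)$. By definition of $\mathrm{MIC}^\mathrm{se}$, there is a surjection $\mathcal P\twoheadrightarrow\mathcal M$ with $\mathcal P$ locally free. The main obstacle is to arrange that $\mathcal P$ itself lies in $\mathrm{MIC}^\mathrm{geom}(X/S)$, since $\mathcal P$ is a priori only in $\mathrm{MIC}^\circ(X/S)$. I will argue group-theoretically instead. Over the Dedekind ring $R$, every finite representation of the flat group scheme $G=\pi^\mathrm{geom}(X/S)$ is a quotient of an $R$-projective finite representation: $W$ lies in a finitely generated subcomodule of $W\otimes R[G]$, and since $R[G]$ is flat, hence torsion free and so a union of finite projective subcomodules, the coaction embeds $W\to W_0\otimes R[G]$. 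Composing with a surjection $R^n\to W_0$ gives a finite projective subcomodule $P\subset R[G]^n$ surjecting onto $W$ via $\epsilon$; this is the standard argument for flat affine groups over Dedekind rings recalled in Appendix \ref{app_group_scheme}.

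Alternatively, I can use Lemma \ref{lem-Deligne} directly. Write $\mathcal M$ as a quotient of a sub-object $\mathcal M'\subset\inf(\mathcal V)$, and replace $\mathcal M'$ by its saturation $\mathcal M''$, the preimage of the torsion of the cokernel. Then $\mathcal M''$ is a special sub-object, and Lemma \ref{lem-Deligne} gives a surjection $\inf(\mathcal N)\twoheadrightarrow\mathcal M''$. The difficulty here is that $\mathcal M$ is a quotient of $\mathcal M'$, not of $\mathcal M''$, so the group-theoretic route is cleaner. Given the projective cover $P\twoheadrightarrow W$, the first part embeds $P$ as a special sub-object of $\eta^*\mathcal V'$. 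Dualizing this embedding, which is legitimate because the cokernel is projective, yields a surjection $(\eta^*\mathcal V')^\vee\twoheadrightarrow P^\vee$. I then apply the same reasoning to $P^\vee\otimes W$-type constructions, or more simply repeat the projective-cover-plus-dual trick on $P^\vee$. This produces $P$ as a quotient of the restriction of $\mathcal V'^{\vee\vee}$-type objects, so $W$ is a quotient of the restriction of a finite $\Pi(X/k)$-representation.
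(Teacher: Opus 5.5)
Your first part is the paper's argument: Lemma \ref{lem-DHdS}, plus the fact that $\mathrm{inf}$ is a tensor functor commuting with duals.

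For the second part you take a different route. The paper turns ``special sub-object of an inflated connection'' into ``quotient of an inflated connection'' by invoking Lemma \ref{lem-Deligne}. Its proof as written only addresses objects of $\mathrm{MIC}^{\mathrm{geom}}(X/S)^{\circ}$ and leaves the passage to representations with $R$-torsion implicit. You instead dominate $W$ by an $R$-projective finite representation $P$ and then use rigidity. Apply the first part to $P^\vee$ to get a special embedding $P^\vee\hookrightarrow \eta^*(\mathcal V'')$ with $\mathcal V''\in\mathrm{MIC}^\circ(X/k)$. Then dualize; this is legitimate because the cokernel is $R$-projective, so the sequence is $R$-split. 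You obtain $\eta^*\bigl((\mathcal V'')^{\vee}\bigr)\twoheadrightarrow P\twoheadrightarrow W$.

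This is correct, but you should write it exactly that way. Delete your first dualization of the embedding of $P$ itself, since it only exhibits $P^\vee$ as a quotient. Also drop the remark on $P^\vee\otimes W$-type constructions and the double-dual notation. For the domination step, simply cite that $\mathrm{MIC}^{\mathrm{geom}}(X/S)$ is Tannakian over $R$ in the sense of Definition \ref{defn-Tannakaovering}, or the domination statement recalled in Appendix \ref{app_group_scheme}. Your comodule sketch, as written, conflates composing with taking a preimage.

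As for what each route buys: at the level of this corollary, yours needs only Lemma \ref{lem-DHdS} and the rigidity of $\mathrm{Rep}^\circ_R(\pi^{\mathrm{geom}}(X/S))$. It is in the same spirit as the dualization arguments in Lemma \ref{lem-Lrepr} and Corollary \ref{lem-RepGDiagonal}, and it makes the torsion case explicit. The paper's route shows directly, without passing through duals, that every locally free geometric connection is a quotient of an inflated one. Globally nothing is saved, since Lemma \ref{lem-Deligne} is used anyway in the appendix proof of Lemma \ref{lem-DHdS}.
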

\begin{proof}
According to Lemma \ref{lem-DHdS}, every object of $\mathrm{MIC}^{\mathrm{geom}}(X/S)^{\circ}$ is a special sub-object  of the form $(\mathcal{V},\nabla_{/S}).$  Hence, by Lemma \ref{lem-Deligne}, any object of $\mathrm{MIC}^{\mathrm{geom}}(X/S)^{\circ}$ is a quotient of some inflated connection.
\end{proof}

Let $L$ be a kernel of the map $f: \Pi(X/k) \longrightarrow \Pi(S/k).$ Notice that $L$ is also equal to the kernel of $\Pi(X/k)^\Delta\longrightarrow \Pi(S/k)^\Delta$ (cf. Appendix \ref{sect_morphism_group_schemes}). 
\begin{lem}\label{lem-Lrepr}
Every finite  representation of $L$ can be represented as a quotient of the restriction to $L$ of a finite representation of $\Pi(X/k).$ Consequently, an $R$-projective finite representation
of $L$ can be embedded into the restriction to $L$ of a finite representation of $\Pi(X/k).$ 

\end{lem}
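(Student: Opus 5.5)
We deduce the statement from the general theory of affine groupoid schemes together with the induction functor of Appendix \ref{sub-induction}, and avoid any geometric input. Write $G:=\Pi(X/k)$, $H:=\Pi(S/k)$ and $L=\mathrm{Ker}(G\to H)$, which equals the kernel of $G^\Delta\to H^\Delta$. The argument has two steps: first show that every finite $L$-representation is a quotient of the restriction of a $G^\Delta$-representation, and then show that every $G^\Delta$-representation is a quotient of a $G$-representation.

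For the first step, consider a finite representation $W$ of $L$. We form the induced module $\mathrm{Ind}_L^{G^\Delta}W=(W\otimes R[G^\Delta])^L$. Because $L$ is a closed normal subgroup scheme of the flat affine $R$-group scheme $G^\Delta$, evaluation at the identity gives an $L$-equivariant map $\mathrm{Ind}_L^{G^\Delta}W\to W$. We then argue that this map is surjective, using the splitting $H^\Delta\to G^\Delta$ induced by the section $\eta$. This splitting makes $G^\Delta\cong L\rtimes H^\Delta$ as schemes, so that $R[G^\Delta]\cong R[L]\otimes_R R[H^\Delta]$ as $L$-comodules. Consequently $\mathrm{Ind}_L^{G^\Delta}W\cong W\otimes R[H^\Delta]$, and the counit of $R[H^\Delta]$ splits the evaluation map.

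Since $G^\Delta$ is flat over the Dedekind ring $R$, every representation is the union of its finite subrepresentations. We can therefore choose a finite $G^\Delta$-subrepresentation $U$ of the induced module whose image in $W$ contains a finite generating set of $W$, so that $U\to W$ is already surjective. Next, Corollary \ref{lem-RepGDiagonal} lets us replace the $G^\Delta$-module $U$ by a finite $G$-representation $V$ with $V\twoheadrightarrow U$; if necessary we again pass to a finite subrepresentation of an ind-object. Composing the two surjections yields the required epimorphism $V|_L\twoheadrightarrow W$.

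For the second statement, let $W$ be an $R$-projective finite representation of $L$. Its dual $W^\vee$ is again finite and projective, so by the first step there is an epimorphism $V\twoheadrightarrow W^\vee$ of $L$-representations with $V$ finite over $\Pi(X/k)$. We may take $V$ projective over $R$: all objects of $\mathrm{MIC}^\circ(X/k)$ are locally free, so $\eta^*$ of them is projective. Dualizing gives a monomorphism $W\hookrightarrow V^\vee$. Its cokernel is the dual of the kernel, which is a direct summand because $W^\vee$ is projective, so this embedding is even special. Finally, $V^\vee$ is again the restriction of a $\Pi(X/k)$-representation, since the representation category is rigid.

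The main obstacle is the surjectivity of $\mathrm{Ind}\to W$ over the Dedekind base. Without the section $\eta$ this would need faithful flatness of $G^\Delta\to H^\Delta$ together with an argument local on $R$. We intend to rely on the semidirect decomposition that $\eta$ provides, and to check carefully that the resulting isomorphism of coordinate rings is compatible with the $L$-comodule structures.
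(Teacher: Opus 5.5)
Your proof is correct. It has the same two-step skeleton as the paper's: first realize a finite $L$-representation as a quotient of a $\Pi(X/k)^\Delta$-representation, then pass from $\Pi(X/k)^\Delta$ to $\Pi(X/k)$ via Corollary \ref{lem-RepGDiagonal}. The difference lies in how the first step is justified.

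The paper simply invokes the exactness criterion \cite[Theorem 4.2.2 (c)]{DH18} (Theorem \ref{thm-ExactSequence}(c)) for the normal subgroup $L\subset\Pi(X/k)^\Delta$. That criterion applies to any exact sequence of flat $R$-group schemes, so beyond the faithful flatness of $\Pi(X/k)^\Delta\to\Pi(S/k)^\Delta$ it does not use the splitting. On the other hand, as stated, condition (c) concerns only $\mathrm{Rep}^\circ_R(L)$, so torsion representations must first be covered by $R$-projective ones.

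You instead prove the needed surjectivity $\mathrm{Ind}_L^{\Pi(X/k)^\Delta}W\to W$ directly from the splitting $\sigma$ induced by $\eta$. Writing $g=\sigma(h)\,l$ identifies the induced module with $W\otimes_R R[\Pi(S/k)^\Delta]$, and evaluation at the identity with $\mathrm{id}_W\otimes\varepsilon$. Two details make this work:
\begin{itemize}
\item use this order of factors, so that right translation by $L$ acts on the $L$-factor only;
\item use flatness of $R[\Pi(S/k)^\Delta]$ to pull it out of the $L$-invariants.
\end{itemize}

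Your route is more self-contained and handles arbitrary finite $W$ directly. As a byproduct it gives flatness of $L$, since $R[L]$ is a direct summand of $R[\Pi(X/k)^\Delta]$, and faithful flatness of $\Pi(X/k)^\Delta\to\Pi(S/k)^\Delta$. Its cost is dependence on the section, which is available throughout the paper anyway.

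Your dualization argument for the second claim is a correct spelling-out of what the paper leaves implicit. This includes the observation that the resulting embedding is special.
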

\begin{proof}
Since $L$ is normal in $\Pi(X/k)^{\Delta}$, according to \cite[Theorem 4.2.2 (c)]{DH18},
any finite  representation of $L$ can be represented as a quotient of the restriction to $L$ of a finite representation of $\Pi(X/k)^{\Delta}$. Now, Corollary \ref{lem-RepGDiagonal} yields
the conclusion. \end{proof}

%
%
%

\subsubsection{Proof of \ref{FES}}\label{proof-FES}
Our strategy is to show that the group $\mathrm{MIC}^\mathrm{geom}(X/S)$ equals the kernel $L$ of the map $f:\Pi(X/k)\longrightarrow \Pi(S/k).$  

By construction, we have closed immersion $\pi^\mathrm{geom}(X/S)\subset L$, 
which induces a functor
$\mathcal F:\mathrm{Rep}_R(L)\longrightarrow \mathrm{Rep}^\mathrm{f}(\pi^\mathrm{geom}(X/S)).$ 
This functor is faithful by construction. We will show that it is full and essentially surjective. 

\noindent 
{\em Step 1}. We show that 
$\mathcal{F}^{\circ}: \mathrm{Rep}_R^{\circ}(L) \longrightarrow \mathrm{MIC}^\mathrm{geom}(X/S)^{\circ}$ is full. 

Let $U_0, U_1$ be objects in $\mathrm{Rep}^{\circ}_R(L)$ and 
$\phi: \mathcal{F}^{\circ}(U_0) \rightarrow \mathcal{F}^{\circ}(U_1)$ a $\mathrm{MIC}^\mathrm{geom}(X/S)$-morphism, meaning $\phi$ is a $R$-linear map $U_0 \rightarrow U_1$ that is $\pi^\mathrm{geom}({X/S})$-linear. We  show that $\phi$ is indeed $L$-linear.  By Lemma \ref{lem-Lrepr} there are $L$-linear morphisms $\pi: V_0 	\twoheadrightarrow U_0$ and $i: U_1 \xhookrightarrow{} V_1,$ where $V_0$ and $V_1$ are objects in $\mathrm{Rep}^\mathrm{f}(S:\Pi(X/k)).$ We set $\psi =i\phi \pi,$ and then we  have the following diagram
$$\xymatrix{
U_0\ar[r]^{\phi}  & U_1 \ar@{^(->}[d]^i\\
V_0  \ar[r]_{\psi}   \ar@{->>}[u]^{\pi}& V_1.
}$$
Thus, $\phi$ is $L$-linear if and only if $\psi$ is. This leads us  to showing  that
$$\Hom_L(V_0,V_1)=\Hom_{\pi^\mathrm{geom}(X/S)}(V_0,V_1),$$
for any $V_0,V_1\in \Rep_f(S:\Pi(X/k)),$  which amounts to showing
$V^L=V^{\pi^\mathrm{geom}(X/S)}$ for $V=V_0^\vee\otimes V_1,$  where $V^L$ is the sub-module of $V$ consisting of
elements which are stable under the action of $L$. 

Now, through Tannakian duality, Lemma \ref{lem-condA} tells us that (see Remark \ref{rmk-condA}), for $V\in \mathrm{Obj}(\mathrm{Rep}^\mathrm{f}(S:\Pi(X/k)))$ that corresponds to the connection $(\mathcal V,\nabla)\in \mathrm{Obj}(\mathrm{MIC}^\circ(X/k)),$
$$V^{\pi^\mathrm{geom}(X/S)}=V^{\pi(X/S)}=\mathrm H^0_\mathrm{dR}(X/S,(\mathcal{V},\nabla_{/S})) \subset V^L.$$
On the other hand, we obviously have $V^L \subset V^{\pi^\mathrm{geom}(X/S)}. $ Thus, we have an equality.
 
\noindent 
{\em Step 2}. We show that $\mathcal F$ is essentially surjective. 
Let $\mathcal V$ be an object  in $\mathrm{MIC}^\mathrm{geom}(X/S)^{\circ}$. Then, according to Lemma 
\ref{lem-DHdS}, there exists an object $(\mathcal{W},\nabla)$ in $\mathrm{MIC}^\circ(X/k)$ such that $\mathcal V$ is 
a special sub-object of $\inf  (\mathcal{W})$. According to 
Lemma \ref{lem-Deligne}, $\mathcal V$ 
can be realized as the image of a morphism $\varphi$ between inflated objects. By the 
above discussion, $\varphi$ is also in the image of $\mathcal{F},$ hence so is 
$\mathrm{Im}(\varphi).$ It means that $\mathcal{V}$ corresponds to a representation of 
$L$ and we finish the proof.
\hfill $\Box$

\subsection{Comparison of cohomology and the Lyndon--Hochschild--Serre spectral sequence} 
Our next aim is to compare various cohomologies. The comparison homomorphisms are by definition
functorial homomorphism between cohomology functors. We first  mention a lemma to fix
our construction.

The settings are as follows. Let $\mathcal C$ and $\mathcal D$ be abelian categories which have
enough injective. Let $\iota:\mathcal C\longrightarrow \mathcal D$ be an exact functor. Let $F,G$
be left exact functors from respectively $\mathcal C$ and $\mathcal C$
to some abelian category, which are compatible with $\iota$ in the sense: there exists a natural isomorphism
$\chi: F\circ \iota\longrightarrow G.$ Let $F^i$ (resp. $G^i$) denote the right derived functors of $F$ 
(resp. $G$).  Then we have an important lemma.
\begin{lem} \label{lem_comparison}

We have the following
 \begin{enumerate}
\item With the assumption as above, there exist a uniquely natural comparison map $\chi^i:F^i
\longrightarrow  G^i\circ \iota$, for each $i\geq 0$, $\chi_0=\chi$, which are
compatible with the connecting maps.
\item Let $j:\mathcal D\longrightarrow E$ be another exact functor and $H$ be a left exact functor on
$\mathcal E$ with the same target as that of $F$ and $G$, such that $H\circ j$ is isomorphic to $G$
by a natural map $\psi.$
Let $\psi^{i}:G^i\longrightarrow H^i\circ j$ be the comparison map. 
Let $\psi(\iota): G\circ \iota \longrightarrow H\circ j\circ \iota$ and $\omega:=\psi(\iota)\circ\chi: F\longrightarrow H\circ j\circ \iota.$ 
Then we have the following commutative diagram of comparison maps:
$$\xymatrix{F^i\ar[r]^{\chi^i}\ar[rd]_{\omega^i} & G^i\circ \iota \ar[d]^{\psi^i(\iota)}\\
& H^i\circ j\circ \iota.}$$
\end{enumerate}
\end{lem}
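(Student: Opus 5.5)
The plan is to use the universal property of right derived functors as universal $\delta$-functors (Grothendieck). Note the typo in the setting: $G$ is a functor on $\mathcal D$, and $F$ a functor on $\mathcal C$. Also, $\chi$ is naturally read as $F\cong G\circ\iota$.

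For (1), consider the family $(G^i\circ\iota)_{i\ge 0}$ of functors on $\mathcal C$.

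\emph{Step 1.} Since $\iota$ is exact, a short exact sequence $0\to A\to B\to C\to 0$ in $\mathcal C$ is sent to a short exact sequence in $\mathcal D$. The connecting maps of $(G^i)$ then make $(G^i\circ\iota)$ a cohomological $\delta$-functor on $\mathcal C$. The isomorphism $\chi$ identifies its degree-$0$ term with $F$.

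\emph{Step 2.} Since $\mathcal C$ has enough injectives, $(F^i)$ is effaceable in positive degrees, hence a universal $\delta$-functor. By universality, the natural transformation $\chi:F^0=F\to G^0\circ\iota$ extends uniquely to a morphism of $\delta$-functors $\chi^i:F^i\to G^i\circ\iota$. Compatibility with connecting maps is part of being a morphism of $\delta$-functors.

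Concretely, one can also build $\chi^i$ by hand. Take an injective resolution $A\to I^\bullet$ in $\mathcal C$. Apply $\iota$ to get a resolution $\iota A\to\iota I^\bullet$ (exactness of $\iota$). Map it to an injective resolution $\iota A\to J^\bullet$ in $\mathcal D$, unique up to homotopy. Then $\chi^i$ is the composite
\[
F^i(A)=H^i(F I^\bullet)\xrightarrow{\chi} H^i(G\iota I^\bullet)\to H^i(GJ^\bullet)=G^i(\iota A).
\]
We do not require $\iota$ to preserve injectives.

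For (2), both $\omega^i$ and $\psi^i(\iota)\circ\chi^i$ are morphisms of $\delta$-functors $(F^i)\to(H^i\circ j\circ\iota)$. The second is a composite of two morphisms of $\delta$-functors: $\psi^i(\iota)$ remains one after precomposing with the exact functor $\iota$, because $\iota$ preserves short exact sequences. In degree $0$ both restrict to $\psi(\iota)\circ\chi=\omega$. The uniqueness part of universality of $(F^i)$ then forces $\omega^i=\psi^i(\iota)\circ\chi^i$ for all $i$.

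The only point needing care is Step 1: checking that $(G^i\circ\iota)$ really is a $\delta$-functor, including naturality of the connecting maps. This is immediate from exactness of $\iota$ and the functoriality of connecting maps for $G^i$ on $\mathcal D$. No other obstacle is expected.
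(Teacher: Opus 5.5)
Your proposal is correct and follows the same route as the paper: you use that $(F^i)$ is a universal $\delta$-functor (Hartshorne, Chap.~III, Cor.~1.4) to extend $\chi$ uniquely to $(G^i\circ\iota)$, and you get part (2) from the uniqueness of that extension. Your check that $(G^i\circ\iota)$ is a $\delta$-functor, via exactness of $\iota$, and your readings of the typos ($G$ defined on $\mathcal D$, and $\chi\colon F\to G\circ\iota$) supply details the paper leaves implicit.
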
 
\begin{proof}
By definition, $(F^i)_{i\neq 0})$ are right derived functors of $F$ while $(G^i\circ \iota)_{i\neq 0})$
form a $\delta$-functors of the same functor. According to \cite[Cor.~1.4, Chap.~III]{Ha77}, there exist
a unique natural map $\chi ^i:F^i\longrightarrow G^i\circ \iota$ as required. The second claim
follows from the uniqueness of the maps involved.
\end{proof}

According to Theorem \ref{FES}, we have
$$1\longrightarrow \pi^{\mathrm{geom}}(X/S) \longrightarrow \Pi(X/k)\stackrel f\longrightarrow \Pi(S/k)\longrightarrow 1.$$

Then hence we have the following diagram of functors:
\begin{align}\label{dia-2}
    \xymatrix{\mathrm{Rep}(S:\Pi(X/k)) 
    \ar[d]_{\mathrm{H}^0(\pi^{\mathrm{geom}}(X/S),-)}
    \ar[rr]^{\mathrm{Res}^{\Pi(X/k)}_{\pi^{\mathrm{geom}}(X/S)}} & & 
      \mathrm{Rep}(\pi^{\mathrm{geom}}(X/S)) \ar[d]^{\mathrm{H}^0(\pi^{\mathrm{geom}}(X/S),-)}  \\
\mathrm{Rep}(S: \Pi(S/k))  \ar[rr]^{\mathrm{For}} &&   \mathrm{Mod}_R
%
   }
\end{align}
Apply Lemma \ref{lem_comparison} we get comparison maps
$$\alpha^i:   \mathbf{R}_{\mathrm{Rep}(S:\Pi(X / k))}^{n} \mathrm{H}^{0}(\pi^{\mathrm{geom}}(X/S), V) \longrightarrow   \mathrm{H}^i(\pi^{\mathrm{geom}}(X/S), V)$$
(the former derived functor is taken in $\mathrm{Rep}(S:\Pi(X / k))$ and the latter one is taken in
$ \mathrm{Rep}(\pi^{\mathrm{geom}}(X/S))$. 

\begin{thm}\label{thm-gmt}
Let $X$ be a projective smooth scheme over a Dedekind domain $R$ with geometrically 
connected fibers. Let $V$ be an object in $\mathrm{Rep}^\mathrm{f}(S:\Pi(X/k)).$ For all $i\geq 0,$   the canonical homomorphism
$$\alpha^i:
  \mathbf{R}_{\mathrm{Rep}(S:\Pi(X / k))}^{i} \mathrm{H}^{0}(\pi^{\mathrm{geom}}(X/S), V)\longrightarrow \mathrm{H}^{i}(\pi^{\mathrm{geom}}(X/S), V)
$$
is an isomorphism.
\end{thm}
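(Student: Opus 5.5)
We would prove Theorem \ref{thm-gmt} by the standard effaceability argument for universal $\delta$-functors. Both sides are $\delta$-functors on $\mathrm{Rep}(S:\Pi(X/k))$ that agree in degree $0$: the source by construction, and the target $V\mapsto \mathrm{H}^i(\pi^{\mathrm{geom}}(X/S),V)$ because the restriction functor $\mathrm{Res}^{\Pi(X/k)}_{\pi^{\mathrm{geom}}(X/S)}$ is exact. By Lemma \ref{lem_comparison}, $\alpha^i$ is the unique morphism of $\delta$-functors extending the identity in degree $0$. It therefore suffices to show that the target is effaceable, i.e.\ universal. Concretely, we need: for every $V\in\mathrm{Rep}(S:\Pi(X/k))$ there is a monomorphism $V\hookrightarrow I$ in $\mathrm{Rep}(S:\Pi(X/k))$ with $\mathrm{H}^i(\pi^{\mathrm{geom}}(X/S),I)=0$ for all $i>0$. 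Once both families are universal $\delta$-functors with the same $\mathrm{H}^0$, $\alpha^i$ is an isomorphism for every $i$, and in particular for finite $V$.

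For the effacing objects we use injectives of $\mathrm{Rep}(S:\Pi(X/k))$. First, such injectives exist and every $V$ embeds into one of the form $V\otimes_R \mathcal O(\Pi(X/k))$, or a suitable coinduced module, with $R$-injective coefficients. This is the regular-representation construction for groupoids (Appendix \ref{sect_cohomology_of_groupoid}), or Grothendieck's AB5 argument for the comodule category, which is locally noetherian since $R$ is Dedekind and $\Pi(X/k)$ is flat. The key step is to show that the restriction of an injective $\Pi(X/k)$-representation to $\pi^{\mathrm{geom}}(X/S)=L$ is acyclic. For this we use the exact sequence of Theorem \ref{FES}, which identifies $\pi^{\mathrm{geom}}(X/S)$ with the kernel $L$, together with its diagonal version \eqref{eq_exact_diag}. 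We then write $\mathcal O(\Pi(X/k))$, viewed as an $L$-module, as an induced module: faithful flatness of $\Pi(X/k)^\Delta\to\Pi(S/k)^\Delta$ gives that $\Pi(X/k)^\Delta$ is an $L$-torsor over $\Pi(S/k)^\Delta$. Since $\Pi(S/k)^\Delta$ is affine and the torsor is fppf-locally trivial, $\mathcal O(\Pi(X/k)^\Delta)$ is, as an $L$-comodule, of the form $\mathcal O(L)\otimes_R M$ up to a faithfully flat base change. Injective $L$-modules then arise as direct summands of such objects, and Shapiro's lemma gives vanishing of $\mathrm{H}^{>0}(L,-)$ on them. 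The induction functor of Appendix \ref{sub-induction} and Corollary \ref{lem-RepGDiagonal} should be used to pass between $\Pi(X/k)$ and $\Pi(X/k)^\Delta$-modules, since restriction from the groupoid to the diagonal group has an exact right adjoint preserving injectives.

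The main obstacle is this acyclicity step. The issue is that $\mathcal O(\Pi(X/k))$ is a comodule over a groupoid acting on $S\times S$, so passing to $L$ requires the diagonal base change, and one must make sure injectivity is preserved over the Dedekind base $R$ rather than a field. If the torsor argument is too delicate, the fallback is a Lyndon--Hochschild--Serre type argument in the groupoid cohomology of Appendix \ref{sect_cohomology_of_groupoid}. There we would show that $\mathrm{H}^0(L,-)$ sends injectives of $\mathrm{Rep}(S:\Pi(X/k))$ to injectives of $\mathrm{Rep}(S:\Pi(S/k))$, and compare the Grothendieck spectral sequence with the Hochschild complex computing $\mathrm{H}^\bullet(L,V)$ by means of the cobar resolution $V\to V\otimes\mathcal O(\Pi(X/k))\to\cdots$. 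Each term of that resolution is $L$-acyclic by the torsor computation, which yields the isomorphism directly.
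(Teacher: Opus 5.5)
Your reduction is the same as the paper's. There too one shows that the restriction to $L=\pi^{\mathrm{geom}}(X/S)$ of an injective of $\mathrm{Rep}(S:\Pi(X/k))$ is $(-)^L$-acyclic; such an injective is a direct summand of $I\otimes_t\mathcal O(\Pi(X/k))$ with $I$ an injective $R$-module (Lemma~\ref{lem-enoughinjectives}). The paper then concludes by Remark~\ref{rem-2}, which is equivalent to your effaceability argument.

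The gap is that this acyclicity, which is the whole content of the theorem, is not established by what you sketch.
\begin{itemize}
\item Your torsor computation concerns $\mathcal O(\Pi(X/k)^\Delta)$ over $\Pi(S/k)^\Delta$. What must actually be handled is $I\otimes_t\mathcal O(\Pi(X/k))$: a comodule over the groupoid, twisted by a typically non-flat $R$-module.
\item The bridge you offer, that restriction to the diagonal ``has an exact right adjoint preserving injectives'', does not provide this. $\mathrm{Ind}^{\Pi(X/k)}_{\Pi(X/k)^\Delta}$ preserves injectives merely because restriction is exact, and that says nothing about restrictions of $\Pi(X/k)$-injectives. Exactness of a right adjoint makes the left adjoint preserve projectives, not injectives.
\item Even on the diagonal, ``of the form $\mathcal O(L)\otimes_R M$ up to a faithfully flat base change'' is not yet a proof. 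You need that $\mathrm{H}^{>0}(L,-)$ vanishes after the base change and therefore before it. That requires $L$ to act linearly over the base ring, and the Hochschild complex (which computes $\mathrm{H}^\bullet(L,-)$ since $L$ is $R$-flat) to commute with the flat base change. The remark that injective $L$-modules are summands of such objects is beside the point.
\item These groups are pro-algebraic, so the torsor is only fpqc-locally trivial. This is harmless, since pulling back along the torsor itself trivializes it.
\item Your fallback invokes the same unproved acyclicity for every term of the cobar resolution, so it does not close the gap.
\end{itemize}

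The paper closes the gap with Proposition~\ref{lem-11}, $\mathrm{H}^i(L,I\otimes_t\mathcal O(\Pi(X/k)))\cong\mathbf R^i\mathrm{Ind}^{\Pi(X/k)}_L(I)$. This turns the acyclicity into exactness of $\mathrm{Ind}^{\Pi(X/k)}_L\cong\mathrm{Ind}^{\Pi(X/k)}_{\Pi(X/k)^\Delta}\circ\mathrm{Ind}^{\Pi(X/k)^\Delta}_L$ (Corollary~\ref{cor-23}). The first factor is exact by Theorem~\ref{thm-exact}. The second is exact because $L$ is normal in $\Pi(X/k)^\Delta$ with affine quotient $\Pi(S/k)^\Delta$ (Lemma~\ref{lem-999}); this is where your torsor intuition legitimately enters.

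If you want to keep your own route, skip the diagonal altogether:
\begin{itemize}
\item By Theorem~\ref{FES}, $\Pi(X/k)\to\Pi(S/k)$ is faithfully flat with kernel $L$. So $\Pi(X/k)$ is itself an $L$-torsor over the affine scheme $\Pi(S/k)$, with $L$ acting $\mathcal O(\Pi(S/k))$-linearly.
\item Base change along $\mathcal O(\Pi(S/k))\to\mathcal O(\Pi(X/k))$ makes $\mathcal O(\Pi(X/k))$ a coinduced $L$-module, and descent gives $\mathrm{H}^{>0}(L,\mathcal O(\Pi(X/k)))=0$.
\item The augmented Hochschild complex of $\mathcal O(\Pi(X/k))$ is then an exact complex of $R$-flat modules (via $t$). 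Its cycles are torsion-free, hence flat, over the Dedekind ring $R$, so it remains exact after applying $I\otimes_t-$.
\end{itemize}
This yields the required $L$-acyclicity of $I\otimes_t\mathcal O(\Pi(X/k))$ for every $R$-module $I$.
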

\begin{proof}
As an initial step, we describe the injective objects in the category $\mathrm{Rep}(R: \Pi(X/k)).$ Indeed, each injective object $F_1$ in $\mathrm{Rep}(R: \Pi(X/k))$ is a direct summand of $I \otimes_t \mathcal{O}(\Pi(X/k)),$ where $I$ is regarded as a trivial $\Pi(X/k)$-module (see Lemma \ref{lem-enoughinjectives}). Next, we prove that the restriction functor maps  an injective $\Pi(X/k)$-module to one that is acyclic for the  functor $(-)^{\pi^{\mathrm{geom}(X/S)}}$. Indeed, thanks to Proposition \ref{lem-11}, we obtain
\begin{align*}
\mathrm{R}^i \mathrm{ Ind }_{\pi^{\mathrm{geom}(X/S)}}^{\Pi(X/k)} (I) \simeq \mathrm{H}^i(\pi^{\mathrm{geom}(X/S)}, I \otimes_t \mathcal{O}(G)) = \mathrm{H}^i(\pi^{\mathrm{geom}(X/S)}, F_1) \oplus \mathrm{H}^i(\pi^{\mathrm{geom}(X/S)}, F_2),
\end{align*}
where $F_2$ is a $\Pi(X/k)$-module such that $F_1 \oplus F_2 = I \otimes_t \mathcal{O}(\Pi(X/k))$. Since the induction functor $\mathrm{ Ind }_{\pi^{\mathrm{geom}(X/S)}}^{\Pi(X/k)}$ is exact (see Lemma \ref{lem-999}), the $\Pi(X/k)$-module $F_1$ is acyclic for the functor $(-)^{\pi^{\mathrm{geom}}(X/S)}$. Therefore, Remark \ref{rem-2}-(1),(2) implies that the
$$ V \mapsto \mathrm{H}^i(\pi^{\mathrm{geom}(X/S)},V)   $$
can be regarded as the derived functors of
$$  V \mapsto V^{\pi^{\mathrm{geom}}(X/S)} $$
from  $\mathrm{Rep}(S:\Pi(X/k))$ to $\mathrm{Rep}(R: \Pi(S/k)).$
\end{proof}

\begin{lem}\label{lem-999}
    The functor $\mathrm{Ind}_{\pi^{\mathrm{geom}}(X/S)}^{\Pi(X/k)}$ is exact.
\end{lem}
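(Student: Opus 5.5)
We need to show that $\mathrm{Ind}_{\pi^{\mathrm{geom}}(X/S)}^{\Pi(X/k)}$ is exact. Write $H=\pi^{\mathrm{geom}}(X/S)$ and $G=\Pi(X/k)$. The plan is to realize induction as taking $H$-invariants of a tensor product with the coordinate ring, and then use the geometry of the quotient $G/H$.

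\emph{Step 1: the model for induction.} By the construction in Appendix \ref{sub-induction}, for an $H$-module $M$ we have
$$\mathrm{Ind}_H^G(M)=(M\otimes_t \mathcal O(G))^{H},$$
with $H$ acting diagonally, through the right translation on $\mathcal O(G)$. Left exactness is automatic, so only right exactness needs proof. By Theorem \ref{FES}, $H$ is the kernel of the faithfully flat map $f:G\to\Pi(S/k)$. Under our hypotheses, the quotient $G/H$ is therefore identified with $\Pi(S/k)$, which is an affine scheme. Consequently the projection $G\to G/H\cong \Pi(S/k)$ is an $H$-torsor that is faithfully flat and affine.

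\emph{Step 2: exactness of invariants.} Since the torsor is faithfully flat, descent gives an equivalence between $H$-equivariant quasi-coherent modules on $G$ and quasi-coherent modules on $\Pi(S/k)$. Under this equivalence, the functor $N\mapsto (N\otimes_t\mathcal O(G))^H$ becomes descent of the equivariant sheaf $M\otimes\mathcal O_G$ to the quotient. Descent along a faithfully flat morphism is exact, and global sections on the affine scheme $\Pi(S/k)$ are exact. Hence $\mathrm{Ind}_H^G$ is exact.

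For a more algebraic check, the same conclusion can be stated as follows. The multiplication map identifies $M\otimes_t\mathcal O(G)$ with $M\otimes \mathcal O(\Pi(S/k))\otimes$(twisted), which gives an isomorphism $\mathrm{Ind}_H^G(M)\otimes_{\mathcal O(\Pi(S/k))}\mathcal O(G)\cong M\otimes\mathcal O(G)$. Faithful flatness of $\mathcal O(G)$ over $\mathcal O(\Pi(S/k))$ then transfers exactness of $M\mapsto M\otimes\mathcal O(G)$, which is exact because $\mathcal O(G)$ is $R$-flat, back to $\mathrm{Ind}$.

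\emph{Main obstacle.} The delicate point is working over the Dedekind base $R$ with groupoids rather than over a field. Two facts are needed: that $\mathcal O(G)$ is faithfully flat over $\mathcal O(\Pi(S/k))$, and that the quotient $G/H$ really is the affine $\Pi(S/k)$, i.e.\ that the torsor is trivialized fpqc-locally. I would deduce faithful flatness from the surjectivity in Theorem \ref{FES}, together with the results on faithfully flat homomorphisms of flat affine group schemes over Dedekind rings in Appendix \ref{sect_morphism_group_schemes}. If the groupoid setting causes difficulties, I would reduce to the diagonal sequence \eqref{eq_exact_diag} and then extend to $G$ using the source/target structure.
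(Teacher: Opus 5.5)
Your argument is correct, but it is organized differently from the paper's. The paper never descends along $\Pi(X/k)\to\Pi(S/k)$ directly. Instead it factors the induction through the diagonal group, $\mathrm{Ind}^{\Pi(X/k)}_{\pi^{\mathrm{geom}}(X/S)}\cong\mathrm{Ind}^{\Pi(X/k)}_{\Pi(X/k)^{\Delta}}\circ\mathrm{Ind}^{\Pi(X/k)^{\Delta}}_{\pi^{\mathrm{geom}}(X/S)}$ (Corollary~\ref{cor-23}). It gets exactness of the inner factor from the group-scheme argument of \cite[Theorem~4.2.2]{DH18}, applied to the normal subgroup $\pi^{\mathrm{geom}}(X/S)\subset\Pi(X/k)^{\Delta}$ with affine quotient $\Pi(S/k)^{\Delta}$ (the exact sequence \eqref{eq_exact_diag}). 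It gets exactness of the outer factor from Theorem~\ref{thm-exact}. That proof is exactly your descent isomorphism, but for the torsor $\Pi(X/k)\to S\times_k S$ under $\Pi(X/k)^{\Delta}$, with faithful flatness of $\mathcal O(\Pi(X/k))$ over $R\otimes_k R$ supplied by transitivity.

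Your one-step version is more transparent, but it uses two facts the paper only states at the diagonal level. The first is the torsor isomorphism $\Pi(X/k)\times_S\pi^{\mathrm{geom}}(X/S)\cong\Pi(X/k)\times_{\Pi(S/k)}\Pi(X/k)$, which is formal from the definition of the kernel. The second is faithful flatness of $\mathcal O(\Pi(X/k))$ over $\mathcal O(\Pi(S/k))$. For this the group-scheme results of Appendix~\ref{sect_morphism_group_schemes} do not apply as such, so you do need your fallback. Base change along the fpqc cover $\Pi(X/k)\to S\times_k S$ turns $f$ into $f^{\Delta}\times_S\mathrm{id}$, and faithful flatness of $f^{\Delta}$ then descends. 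This is precisely where transitivity, the input of Theorem~\ref{thm-exact}, re-enters your route.

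There is an even quicker way. The section $\sigma$ of $f$ induced by $\eta$ gives an isomorphism $(h,a)\mapsto h\,\sigma(a)$ from $\pi^{\mathrm{geom}}(X/S)\times_S\Pi(S/k)$ onto $\Pi(X/k)$. So the torsor is trivial, and faithful flatness follows from flatness of $\pi^{\mathrm{geom}}(X/S)$.

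What each route buys: the paper's factorization reuses an existing group-scheme result and Theorem~\ref{thm-exact}, which it needs anyway for Corollary~\ref{lem-RepGDiagonal}. Yours identifies the quotient directly with the affine scheme $\Pi(S/k)$ and dispenses with the composition-of-inductions step.
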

\begin{proof}
    According to Theorem \ref{FES}, the geometric relative differential fundamental group $\pi^\mathrm{geom}(X/S)$ is the normal subgroup of $\Pi(X/k)^{\Delta}$. Then, the functor $\mathrm{Ind}_{\pi^\mathrm{geom}(X/S)}^{\Pi(X/k)^{\Delta}}$ is exact by the same argument as in the proof of \cite[Theorem 4.2.2]{DH18}.  Using  Theorem \ref{thm-exact} combined with the following equality (Corollary \ref{cor-23}):
 \begin{align*}
\mathrm{Ind}^{\Pi(X/k)}_{\Pi(X/k)^{\Delta}} \circ \mathrm{Ind}_{\pi^\mathrm{geom}(X/S)}^{\Pi(X/k)^{\Delta}} \cong \mathrm{Ind}_{\pi^\mathrm{geom}(X/S)}^{\Pi(X/k)},
 \end{align*}
 we conclude that $\mathrm{Ind}_{\pi^\mathrm{geom}(X/S)}^{\Pi(X/k)}$ is exact.
\end{proof}

Let $V$ be an object in $\mathrm{Rep}^\mathrm{f}(S:\Pi(X/k)).$ Thanks to Lemma \ref{lem-13}, the invariant $V^{\pi^\mathrm{geom}(X/S)}$ is $\Pi(X/k)$-module. This yields a functor
\begin{align*}
    (-)^{\pi^\mathrm{geom}}: \mathrm{Rep}^\mathrm{f}(S:\Pi(X/k)) &\longrightarrow \mathrm{Rep}^\mathrm{f}(S:\Pi(S/k))\\
    V & \mapsto V^{\pi^\mathrm{geom}(X/S)}.
\end{align*}
\begin{lem}\label{lem-25}
    The functor $(-)^{\pi^\mathrm{geom}(X/S)}$ sends injective objects to injective objects.
\end{lem}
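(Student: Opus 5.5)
The standard argument is that invariants under a normal subgroup preserve injectives because the invariant functor is right adjoint to an exact functor. I will apply this with the inclusion of $\Pi(S/k)$-modules into $\Pi(X/k)$-modules via $f$, taking the categories in their ind-versions $\mathrm{Rep}(S:\Pi(X/k))$ and $\mathrm{Rep}(S:\Pi(S/k))$, where injectives exist (Lemma \ref{lem-enoughinjectives}).

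\textbf{Step 1: the inflation functor.} Consider
$$f^\ast:\mathrm{Rep}(S:\Pi(S/k))\longrightarrow \mathrm{Rep}(S:\Pi(X/k)),$$
which lets $\Pi(X/k)$ act on a $\Pi(S/k)$-module through the surjection $f:\Pi(X/k)\to\Pi(S/k)$ of Theorem \ref{FES}. On underlying $R$-modules it is the identity, so it is exact.

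\textbf{Step 2: the adjunction.} I will show that $(-)^{\pi^\mathrm{geom}(X/S)}$ is right adjoint to $f^\ast$, i.e. there is a natural isomorphism
$$\Hom_{\Pi(X/k)}(f^\ast W, V)\cong \Hom_{\Pi(S/k)}(W, V^{\pi^\mathrm{geom}(X/S)}).$$
Take a $\Pi(X/k)$-linear map $\phi: f^\ast W\to V$. Since $\pi^\mathrm{geom}(X/S)$ acts trivially on $f^\ast W$, the image of $\phi$ lies in $V^{\pi^\mathrm{geom}(X/S)}$. By Lemma \ref{lem-13}, this invariant submodule is a $\Pi(X/k)$-submodule. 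The induced $\Pi(X/k)$-action on it factors through $\Pi(S/k)$, because $\pi^\mathrm{geom}(X/S)$ is the kernel of $f$ and $f$ is faithfully flat (Theorem \ref{FES}). Morphisms between modules on which the action factors through a faithfully flat quotient are the same whether regarded over $\Pi(X/k)$ or over $\Pi(S/k)$. This gives the bijection, and naturality is clear.

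\textbf{Step 3: conclusion.} For an injective $I$ in $\mathrm{Rep}(S:\Pi(X/k))$ and a monomorphism $W'\hookrightarrow W$ of $\Pi(S/k)$-modules, the map
$$\Hom(W,I^{\pi^\mathrm{geom}})\to\Hom(W',I^{\pi^\mathrm{geom}})$$
identifies, via Step 2, with $\Hom(f^\ast W,I)\to \Hom(f^\ast W',I)$. This map is surjective because $f^\ast$ is exact (Step 1) and $I$ is injective. Hence $I^{\pi^\mathrm{geom}(X/S)}$ is injective.

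The main obstacle is Step 2, specifically the claim that a $\Pi(X/k)$-module killed by $\pi^\mathrm{geom}(X/S)$ is canonically a $\Pi(S/k)$-module. For groupoids this needs the quotient description from the exact sequence of Theorem \ref{FES}: I expect $\mathcal O(\Pi(S/k))$ to be identified with the $\pi^\mathrm{geom}$-invariant part of $\mathcal O(\Pi(X/k))$, and the coaction to be checked to land in $V\otimes_t\mathcal O(\Pi(S/k))$. I would first try to reduce this to the diagonal group schemes via \eqref{eq_exact_diag}, and then transport the result back to groupoids using Corollary \ref{lem-RepGDiagonal}.
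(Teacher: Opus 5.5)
Your argument is correct and is essentially the paper's proof: the paper also shows that $(-)^{\pi^{\mathrm{geom}}(X/S)}$ is right adjoint to the exact functor (called $\alpha$ there) that lets $\Pi(X/k)$ act through $f$, using the inclusion $V^{\pi^{\mathrm{geom}}(X/S)}\subset V$ and the full faithfulness of $\alpha$, and then concludes that injectives go to injectives. The paper takes the point you flag as the main obstacle for granted after Lemma \ref{lem-13}, namely that the action on $V^{\pi^{\mathrm{geom}}(X/S)}$ descends to $\Pi(S/k)$; to justify it, it is quicker to use Lemma \ref{lem-condA} and Remark \ref{rmk-condA} on finite objects, where $V^{\pi^{\mathrm{geom}}(X/S)}$ corresponds to $f^*\mathbf{R}^0_{\mathrm{dR}}f_*\mathcal V$, and then pass to unions in the ind-categories, rather than going through Hopf algebroid invariants.
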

\begin{proof}
    We define the  functor $ \alpha$ from $ \mathrm{Rep}^\mathrm{f}(S:\Pi(S/k))$  to $\mathrm{Rep}^\mathrm{f}(S:\Pi(X/k)):$ Let $A$ be a $k$-algebra.  If $V$ is a $\Pi(S/k)$-module, then set $\alpha(M)$ equal to $M$ as a $R$-module, and let any $\Pi(X/k)$ act on $A\otimes_sV$ via the homomorphism $f(A): \Pi(X/k)(A) \longrightarrow \Pi(S/k)(A)$ and the given action of $\Pi(S/k).$ 

    We have for any $\Pi(S/k)$-module $U$ and any $\Pi(X/k)$-module $V:$
    $$ \mathrm{Hom}_{\Pi(X/k)}(\alpha(U),V) \cong \mathrm{Hom}_{\Pi(X/K)}(\alpha
    (U), V^{\pi^\mathrm{geom}(X/S)})\cong \mathrm{Hom}_{\Pi(S/k)}(U,V^{\pi^\mathrm{geom}(X/S)}),$$
 where the first isomorphism is induced by the inclusion $V^{\pi^\mathrm{geom}(X/S)} \subset V$ and the second isomorphism follows from the fact that $\alpha$ is fully faithful functor. This shows that $(-)^{\pi^\mathrm{geom}(X/S)}$ is right adjoint to the exact functor $\alpha.$ Thus injective
objects are mapped to injective objects.
\end{proof}

\begin{prop}[Lyndon--Hochschild--Serre spectral sequence]\label{prop-21}
    Let $V$ be an object in $\mathrm{Rep}^\mathrm{f}(S:\Pi(X/k)).$ Then there exist a spectral sequence
    $$ E_2^{p,q}= \mathrm{H}^p(\Pi(S/k), \mathrm{H}^q(\pi^\mathrm{geom}(X/S), V)) \Rightarrow \mathrm{H}^{p+q}(\Pi(X/k),V).$$
    \end{prop}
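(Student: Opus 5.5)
The plan is to obtain the spectral sequence as a Grothendieck spectral sequence for the composite of two left exact functors. Consider
$$\mathrm{Rep}(S:\Pi(X/k))\xrightarrow{(-)^{\pi^\mathrm{geom}(X/S)}}\mathrm{Rep}(S:\Pi(S/k))\xrightarrow{(-)^{\Pi(S/k)}}\mathrm{Mod}_R.$$
Here the first functor is well defined with values in $\Pi(S/k)$-modules by Lemma \ref{lem-13}, since $\pi^\mathrm{geom}(X/S)$ is the kernel of $f$ by Theorem \ref{FES}.

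The first step is to identify the composite with $(-)^{\Pi(X/k)}$. A vector fixed by $\Pi(X/k)$ is in particular fixed by the normal subgroup $\pi^\mathrm{geom}(X/S)$. Conversely, on $V^{\pi^\mathrm{geom}(X/S)}$ the action of $\Pi(X/k)$ factors through the surjection $f:\Pi(X/k)\to\Pi(S/k)$, so $\Pi(S/k)$-invariance there is the same as $\Pi(X/k)$-invariance. The derived functors of the composite, computed in $\mathrm{Rep}(S:\Pi(X/k))$, are by definition the groupoid cohomology $\mathrm{H}^{p+q}(\Pi(X/k),V)$ (see Appendix \ref{sect_cohomology_of_groupoid}). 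The existence of enough injectives in this category is Lemma \ref{lem-enoughinjectives}. Similarly, the derived functors of $(-)^{\Pi(S/k)}$ give $\mathrm{H}^p(\Pi(S/k),-)$.

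The second step is the hypothesis of Grothendieck's theorem: the first functor must send injectives to $(-)^{\Pi(S/k)}$-acyclic objects. Lemma \ref{lem-25} gives more than this, namely that $(-)^{\pi^\mathrm{geom}(X/S)}$ sends injectives to injectives, because it is right adjoint to the exact inflation functor $\alpha$. The Lemma is stated for finite representations, but its argument works verbatim in the ind-categories $\mathrm{Rep}(S:-)$, where injectives actually live. I will note this explicitly.

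The third step identifies the inner derived functors, $R^q\big((-)^{\pi^\mathrm{geom}(X/S)}\big)(V)$ computed in $\mathrm{Rep}(S:\Pi(X/k))$, with $\mathrm{H}^q(\pi^\mathrm{geom}(X/S),V)$ carrying its induced $\Pi(S/k)$-action. This is exactly Theorem \ref{thm-gmt}, via the isomorphisms $\alpha^q$. One point needs care: $\alpha^q$ is an isomorphism of $R$-modules, while the $\Pi(S/k)$-structure on the right-hand side is defined through the left-hand side. The derived functors on the left are taken as functors into $\mathrm{Rep}(S:\Pi(S/k))$, and forgetting the structure is exact, so the underlying modules agree; this is the content of Remark \ref{rem-2} as used in the proof of Theorem \ref{thm-gmt}.

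Grothendieck's theorem then yields $E_2^{p,q}=\mathrm{H}^p(\Pi(S/k),\mathrm{H}^q(\pi^\mathrm{geom}(X/S),V))\Rightarrow\mathrm{H}^{p+q}(\Pi(X/k),V)$. I expect the main obstacle to be the bookkeeping of ambient categories, not any real mathematics. Lemma \ref{lem-25} and the invariant functor are phrased on finite representations, while injectives and derived functors require the ind- or full module categories. The fix is to redo the adjunction argument of Lemma \ref{lem-25} for arbitrary $\Pi(S/k)$- and $\Pi(X/k)$-modules, checking that $\alpha$ remains exact and fully faithful there.
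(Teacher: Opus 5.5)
Your proposal is correct and is essentially the paper's proof: Grothendieck's spectral sequence for $\mathrm{Rep}(S:\Pi(X/k))\to\mathrm{Rep}(S:\Pi(S/k))\to\mathrm{Vec}_k$, with Lemma \ref{lem-25} supplying the injectives-to-injectives hypothesis and Theorem \ref{thm-gmt} identifying the inner derived functors with $\mathrm{H}^q(\pi^{\mathrm{geom}}(X/S),V)$. Your extra remarks, identifying the composite with $(-)^{\Pi(X/k)}$ and running the adjunction of Lemma \ref{lem-25} in the ind-categories, only make explicit what the paper leaves implicit.

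One small slip: the outer invariants functor $(-)^{\Pi(S/k)}=\mathrm{Hom}_{\Pi(S/k)}(R,-)$ takes values in $\mathrm{Vec}_k$, as in the paper, not $\mathrm{Mod}_R$, because groupoid invariants are not stable under multiplication by $R$.
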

 \begin{proof}
     We have the following diagram:
     $$ \mathrm{Rep}(R: \Pi(X/k)) \overset{(-)^{\pi^\mathrm{geom}(X/S)}}{\longrightarrow}\mathrm{Rep}(R: \Pi(S/k)) \overset{(-)^{\Pi(S/k)}}{\longrightarrow}\mathrm{Vec}_k.$$
Since functor $(-)^{\pi^\mathrm{geom}(X/S)}$ sends injective objects to injective objects (cf. Lemma \ref{lem-25}), we have 
$$ E_2^{p,q} = \mathrm{H}^p(\Pi(S/k), \mathbf{R}_{\mathrm{Rep}(S:\Pi(X / k))}^{q} \mathrm{H}^{0}(\pi^{\mathrm{geom}}(X/S), V)) \Rightarrow \mathrm{H}^{p+q}(\Pi(X/k),V).$$
Thanks to Theorem \ref{thm-gmt}, we obtain
$$ E_2^{p,q}= \mathrm{H}^p(\Pi(S/k), \mathrm{H}^q(\pi^\mathrm{geom}(X/S), V)) \Rightarrow \mathrm{H}^{p+q}(\Pi(X/k),V).$$
 \end{proof}

\section{Comparison between de Rham cohomology and group cohomology of  \texorpdfstring{$\pi(X/S)$}{pi(X/S)}}  \label{sect-compa}
In this section we aim to compare, through Tannakian duality, the de Rham cohomology on 
$X/S$ and the group cohomology of $\pi^\mathrm{geom}
(X/S)$.  More precisely, given Theorem \ref{thm-gmt}, our next question is to compare  
$
\mathrm{H}^{i}(\pi^{\mathrm{geom}}(X/S),V)$ with
$\mathrm{H}_\mathrm{dR}^{i}(X/S, \mathcal V), $
where $\mathcal V$ is an absolute connection and $V=\eta^*(\mathcal V)$ is the corresponding
representation. Using Lemma \ref{lem_comparison}, there are several comparison maps, which all
agree.

Our strategy is to investigate the bijectivity the maps on the generic and the closed 
fibers of $\mathrm{Spec}(R)$. 

\subsection{The comparison maps between de Rham cohomology and group cohomology}

We recall from Subsection 
\ref{sect-RFG} and Lemma \ref{lem-groupH}  
that the fiber functor $\eta^{\ast}$ induces 
$$\Rep^\mathrm{f}(\pi(X/S))\cong \mathrm{MIC}^\mathrm{se}(X/S),$$
and
$$\Rep^\mathrm{f}(\pi^\mathrm{geom}(X/S))\cong \mathrm{MIC}^\mathrm{geom}(X/S).  $$
By taking the ind-categories of these categories, we have 
\begin{align}\label{eq-5}
   \Rep(\pi(X/S))\cong \mathrm{MIC}^\mathrm{ind}(X/S),  
\end{align}
and
\begin{align*}
       \Rep(\pi^\mathrm{geom}(X/S))\cong \mathrm{ind-}\mathrm{MIC}^\mathrm{geom}(X/S).  
\end{align*}


Starting with the natural isomorphism
$$\mathrm{H}^0(\pi(X/S),M) \longrightarrow \mathrm{H}^0_{\mathrm{dR}}(X/S,\mathcal{M})$$
applying Lemma \ref{lem_comparison} (where $\alpha$ is the inclusion functor
$\Rep(\pi(X/S))\longrightarrow \mathrm{MIC}(X/S)$) we obtain the comparison maps
\begin{align}\label{eq-4}
  \delta^i: \mathrm{H}^i(\pi(X/S),M) \longrightarrow \mathrm{H}^i_{\mathrm{dR}}(X/S,\mathcal{M}).  
\end{align}
By a similar argument, we also obtain the following map:
$$\gamma^i: \mathrm{H}^i(\pi^\mathrm{geom}(X/S),V) \longrightarrow \mathrm{H}^i(\pi(X/S),V),$$
for any $V$ from $\mathrm{MIC}^\mathrm{geom}(X/S).$ 
Hence, we obtain the following map:
$$ \nu^i:\mathrm{H}^i(\pi^{\mathrm{geom}}(X/S),V)\overset{\gamma^i} \longrightarrow \mathrm{H}^i(\pi(X/S),V) \overset{\delta^i}{\longrightarrow} \mathrm{H}_\mathrm{dR}^i(X/S, \mathcal V). $$
This map is a priory not compatible with the action of $\Pi(S/k)$, since the middle term is not
equipped with such an action. To complete the picture, we need to relate it with the map
$\alpha$ of \ref{thm-gmt}.

 Let  $\mathcal{V} $ be an object in   $\mathrm{MIC}^\circ(X/k),$ together with its fiber  
    $V= \eta^{\ast}(\mathcal{V})$ which is an $R$-module.  
According to Lemma \ref{lem-13}, the finite $R$-module $\mathrm{H}^{0}(\pi^{\mathrm{geom}}(X/S), V)$ 
is a $\Pi(R / k)$-representation in a natural way. 
According to Lemma \ref{lem-condA}, the canonical morphism of objects in $\mathrm{MIC}(S/k)$
$$
\nu^0:\mathrm{H}^{0}(\pi^{\mathrm{geom}}(X/S), V) \longrightarrow \mathrm{H}_\mathrm{dR}^{0}(X/S, \mathcal V)
$$
is an isomorphism. 
In other words, the following diagram of functors commutes:
\begin{align}\label{dia-1}
        \xymatrix{ \mathrm{Rep}(S:\Pi(X/k))
         \ar[d]_{\mathrm{H}^0(\pi^{\mathrm{geom}}(X/S),-)} \ar@{^(->}[rr]  & & 
    \mathrm{MIC}(X/k) \ar[d]^{\mathrm{H}_\mathrm{dR}^0(X/S, \mathrm{inf}(-))} \\
    \mathrm{Rep}(R: \Pi(S/k)) \ar@{^(->}[rr] &   & \mathrm{MIC}(S/k).} 
\end{align}
Applying Lemma \ref{lem_comparison}, we obtain a homomorphism
$$ 
\beta^i:     \mathbf{R}_{\mathrm{Rep}(S:\Pi(X / k))}^i \mathrm{H}^{0}(\pi^{\mathrm{geom}}(X/S), V) \longrightarrow \mathbf{R}_{\mathrm{MIC}(X/k)}^i \mathrm{H}_\mathrm{dR}^0(X/S,\mathcal V)
$$
where, on the left-hand side, the derived functor is taken in $\operatorname{Rep}(R: \Pi(X / k))$ and on the right-hand side, the derived functor is taken in $\operatorname{MIC}(X / k)$.  We know that the right-hand side is the n-th relative de Rham cohomology of the inflated connection (cf. \cite[Remark~3.1]{Ka70},
\cite[23.2.5]{ABC20}):
$$
\mathbf{R}_{\mathrm{MIC}(X/k)}^i \mathrm{H}_{\mathrm{dR}}^{0}(X/S, \mathcal V)=
\mathrm{H}_{\mathrm{dR}}^i(X/S, \mathcal V).
$$

\begin{lem}\label{lem_comparison_functors} For the absolute connection $\mathcal V$ in $\mathrm{MIC}^\mathrm{o}(X/k)$ and
its fiber $V=\eta^*\mathcal V$ we have the following commutative diagram:
$$\xymatrix{
\mathrm{H}^i(\pi^{\mathrm{geom}}(X/S),V) \ar[rr]^{\gamma^i}
&& \mathrm{H}^i(\pi(X/S),V)\ar[d]^{\delta^i}\\
\mathbf{R}_{\mathrm{Rep}(S:\Pi(X / k))}^i \mathrm{H}^0(\pi^{\mathrm{geom}}(X/S), V) 
\ar[rr]_{\beta^i} \ar[u]_{\alpha^i}
&&   \mathbf{R}_{\mathrm{MIC}(X/k)}^i \mathrm{H}_\mathrm{dR}^0(X/S,\mathcal V)
}$$
\end{lem}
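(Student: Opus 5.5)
The plan is to reduce the commutativity to the uniqueness part of Lemma \ref{lem_comparison}. All four maps are comparison maps of $\delta$-functors out of the derived functor $\mathbf{R}^i_{\mathrm{Rep}(S:\Pi(X/k))}\mathrm{H}^0(\pi^\mathrm{geom}(X/S),-)$, which is a universal $\delta$-functor on $\mathrm{Rep}(S:\Pi(X/k))$. So it suffices to exhibit both composites $\delta^i\circ\gamma^i\circ\alpha^i$ and $\beta^i$ as morphisms of $\delta$-functors
$$\mathbf{R}^\bullet_{\mathrm{Rep}(S:\Pi(X/k))}\mathrm{H}^0(\pi^\mathrm{geom}(X/S),-)\longrightarrow \mathrm{H}^\bullet_\mathrm{dR}(X/S,\inf(-))$$
on $\mathrm{Rep}(S:\Pi(X/k))$, after forgetting to $\mathrm{Mod}_R$, and to check that they agree in degree $0$.

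First I will set up the chain of exact functors
$$\mathrm{Rep}(S:\Pi(X/k))\xrightarrow{\mathrm{Res}}\mathrm{Rep}(\pi^\mathrm{geom}(X/S))\xrightarrow{\mathrm{Res}}\mathrm{Rep}(\pi(X/S))\xrightarrow{\ \subset\ }\mathrm{MIC}(X/S).$$
The second functor is restriction along the surjection of Lemma \ref{lem-groupH}. Under the equivalences \eqref{eq-5}, the composite of the chain is $\mathcal V\mapsto\inf(\mathcal V)$ viewed in $\mathrm{MIC}(X/S)$. The left exact functors sitting over these categories are $\mathrm{H}^0(\pi^\mathrm{geom},-)$, $\mathrm{H}^0(\pi^\mathrm{geom},-)$, $\mathrm{H}^0(\pi(X/S),-)$ and $\mathrm{H}^0_\mathrm{dR}(X/S,-)$. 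They are compatible by the natural isomorphisms $V^{\pi^\mathrm{geom}}=V^{\pi(X/S)}=\mathrm{H}^0_\mathrm{dR}$ coming from Lemma \ref{lem-condA} and Remark \ref{rmk-condA}. Applying part (2) of Lemma \ref{lem_comparison} twice then shows that $\delta^i(\mathrm{Res})\circ\gamma^i(\mathrm{Res})\circ\alpha^i$ equals the comparison map $\omega^i$ attached to the composite isomorphism $\omega:\mathrm{H}^0(\pi^\mathrm{geom},-)\to\mathrm{H}^0_\mathrm{dR}(X/S,\inf(-))$.

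Next I will treat $\beta^i$. It is defined on the target side using the derived functors of $\mathcal V\mapsto \mathrm{H}^0_\mathrm{dR}(X/S,\inf\mathcal V)$ computed in $\mathrm{MIC}(X/k)$, together with the identification of these with $\mathrm{H}^i_\mathrm{dR}(X/S,\inf\mathcal V)$ from \cite{Ka70}, \cite{ABC20}. That identification is itself an isomorphism of $\delta$-functors on $\mathrm{MIC}(X/k)$ which is the identity in degree $0$. To see that it respects the connecting maps, note that $\inf$ is exact and carries short exact sequences to short exact sequences in $\mathrm{MIC}(X/S)$, and that the hypercohomology description $\mathbf R^if_*(\Omega^\bullet_{X/S}\otimes-)$ produces compatible long exact sequences. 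Hence $\beta^i$, followed by this identification and restricted along $\mathrm{Rep}(S:\Pi(X/k))\subset\mathrm{MIC}(X/k)$, is again a morphism of $\delta$-functors out of the universal one. In degree $0$ it equals $\nu^0$ from diagram \eqref{dia-1}, after forgetting the $\Pi(S/k)$-action, and $\nu^0=\omega$. Uniqueness in Lemma \ref{lem_comparison}(1) then gives $\beta^i=\omega^i$, which is the commutativity claimed.

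The main obstacle is the bookkeeping of the target categories. The maps $\alpha^i$ and $\beta^i$ land in $\Pi(S/k)$-representations or in $\mathrm{MIC}(S/k)$, while $\gamma^i$ and $\delta^i$ land only in $\mathrm{Mod}_R$. To compare them I must check that the forgetful functors $\mathrm{Rep}(R:\Pi(S/k))\to\mathrm{Mod}_R$ and $\mathrm{MIC}(S/k)\to\mathrm{Mod}_R$ are exact. They are, because both categories are abelian with underlying-module kernels and cokernels. Being exact, these forgetful functors preserve $\delta$-functor structures, so the uniqueness argument can be run after composing with them. I also need the isomorphism $\mathbf R^i_{\mathrm{MIC}(X/k)}\mathrm{H}^0_\mathrm{dR}\cong\mathrm{H}^i_\mathrm{dR}$ to be compatible with the connecting maps, not merely an objectwise isomorphism. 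I would establish this first, by the effaceability argument above.
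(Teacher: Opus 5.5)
Your proposal is correct and follows essentially the paper's route: the paper's proof just applies the uniqueness statement of Lemma~\ref{lem_comparison}(2) to the chain of exact functors $\mathrm{Rep}(R:\Pi(X/k))\to\mathrm{Rep}(\pi^{\mathrm{geom}}(X/S))\to\mathrm{Rep}(\pi(X/S))\to\mathrm{MIC}(X/S)$, with the $\mathrm{H}^0$-functors taken to $\mathrm{Mod}_R$.

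Your extra routing of $\beta^i$ through $\mathrm{MIC}(X/k)$ and $\inf$ makes explicit a step the paper leaves implicit. It works most cleanly if you take the identification $\mathbf{R}^i_{\mathrm{MIC}(X/k)}\mathrm{H}^0_{\mathrm{dR}}(X/S,-)\cong\mathrm{H}^i_{\mathrm{dR}}(X/S,\inf(-))$ to be the comparison map of Lemma~\ref{lem_comparison}(1) for $\iota=\inf$. That map is a morphism of $\delta$-functors by construction, so only its bijectivity needs Katz. By contrast, merely noting that both sides have long exact sequences does not by itself give compatibility with the connecting maps.
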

\begin{proof}
Consider the following commutative diagram
$$\xymatrix{&&
\mathrm{Rep}(R:\Pi(X/k))
\ar[lld]_{\mathrm{Res}}\ar[d]_{\mathrm{Res}}\ar[rrd]^{\mathrm{inf}}
\\
\mathrm{Rep}\pi^{\mathrm{geom}}(X/S)
\ar@{^(->}[rr] \ar[rrd]_{\mathrm{H}^0(\pi^\mathrm{geom}(X/S),-)\qquad } &&
\mathrm{Rep}\pi(X/S)
 \ar@{^(->}[rr] \ar[d]_{\mathrm{H}^0(\pi(X/S),-)} &&
\mathrm{MIC}(X/S) \ar[lld]^{\mathrm{H}_{\mathrm{dR}}^0(X/S,-)} 
\\
&&\mathrm{Mod}_R.
}$$
The composite functor is $V\mapsto \mathrm{H}^0(\pi^\mathrm{geom}(X/S),V)$.
Thus, Lemma \ref{lem_comparison} (2) applies and yields the required diagram. 
\end{proof}
It follows from the Tannakian duality mentioned above and the description of de Rham cohomology as 
ext-group (cf. subsection \ref{sect-deRham-cohomology}) that for any 
$\mathcal V\in \mathrm{Obj}(\mathrm{ind-}\mathrm{MIC}^\mathrm{geom}(X/S))$ and $V=\eta^*(\mathcal V)$ we 
have an isomorphism
\begin{align}\label{eq-113}
\mathrm H^0_\mathrm{dR}(X/S,\mathcal V)\cong  \mathrm H^0(\pi^\mathrm{geom}(X/S), V).
\end{align}
Let $\mathcal{M}$ be an object in $\mathrm{MIC}^\mathrm{se}(X/S).$ Then any extension of $\mathcal{M}$ by $\mathcal{O}_X$ is again an object in $\mathrm{MIC}^\mathrm{se}(X/S),$ this implies that the natural map
\begin{align}\label{eq-22}
   \delta^1: \mathrm{H}^1(\pi(X/S),M)\longrightarrow \mathrm{H}^1_{\mathrm{dR}}(X/S, \mathcal{M}) 
\end{align}
is bijective. The proof for the bijectivity of $\nu^1$ is not straightforward; we will need to use the Universal Extension Theorem in the following section.

\subsection{The universal extension theorem}
By the nature of $\pi^\mathrm{geom}(X/S),$ we have the following result, which is an adaptation of \cite[Thm.~4.2]{EH06}.
\begin{thm}[\textit{Universal extension}]\label{univ-ext}
Let $(\mathcal{V}, \nabla)$ be an object in $ \mathrm{MIC}^\circ(X/k)$, 
then there exists an extension in $\mathrm{MIC}^\circ(X/k)$:
\begin{center}
$0 \longrightarrow  \mathcal{V} \longrightarrow\mathcal  W\longrightarrow
f^*(\mathrm R^1_{\mathrm{dR}}f_\ast \mathcal{V}) \longrightarrow 0$
\end{center}
with the property that the connecting morphism in the long exact sequence of de Rham cohomology on $X/S$:
\begin{center}
$ \mathrm{H}^0_\mathrm{dR}(X/S, f^*(\mathrm R^1_{\mathrm{dR}}f_* \mathcal{V}))=
 \mathrm{H}^1_\mathrm{dR}(X/S,\mathcal V)\stackrel{\text{\rm connecting }} 
 \longrightarrow
  \mathrm{H}^1_\mathrm{dR}(X/S,\mathcal V)$
\end{center}
is the identity map.
\end{thm}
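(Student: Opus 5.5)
We construct $\mathcal W$ as the extension of $f^*(\mathrm{R}^1_{\mathrm{dR}}f_*\mathcal V)$ by $\mathcal V$ in $\mathrm{MIC}(X/k)$ that corresponds to the identity map. The starting point is the long exact sequence \eqref{GM-hyper} and the Leray-type interpretation of absolute de Rham cohomology.

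Put $\mathcal H:=\mathrm{R}^1_{\mathrm{dR}}f_*\mathcal V$, which is an object of $\mathrm{MIC}^\circ(S/k)$ by Lemma \ref{coh1}. Extensions of $f^*\mathcal H$ by $\mathcal V$ in $\mathrm{MIC}(X/k)$ are classified by
$$\mathrm{Ext}^1_{\mathrm{MIC}(X/k)}(f^*\mathcal H,\mathcal V)=\mathrm{H}^1_{\mathrm{dR}}(X/k,f^*\mathcal H^\vee\otimes\mathcal V),$$
since $f^*\mathcal H$ is locally free with flat connection, hence dualizable. We compute the right-hand side with the Leray spectral sequence for $X\to S\to\mathrm{Spec}\,k$ in the de Rham setting:
$$E_2^{p,q}=\mathrm{H}^p_{\mathrm{dR}}(S/k,\mathrm{R}^q_{\mathrm{dR}}f_*(f^*\mathcal H^\vee\otimes\mathcal V))\Rightarrow \mathrm{H}^{p+q}_{\mathrm{dR}}(X/k,f^*\mathcal H^\vee\otimes\mathcal V).$$
Because $S$ is a curve, $p\in\{0,1\}$, so the spectral sequence degenerates into short exact sequences. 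This is equivalent to the long exact sequence \eqref{GM-hyper}: $\mathrm{H}^p_{\mathrm{dR}}(S/k,-)$ for $p=0,1$ are the kernel and cokernel of the Gauss--Manin map $\partial$ on the affine curve $S$. By the projection formula, $\mathrm{R}^q_{\mathrm{dR}}f_*(f^*\mathcal H^\vee\otimes\mathcal V)=\mathcal H^\vee\otimes\mathrm{R}^q_{\mathrm{dR}}f_*\mathcal V$, compatibly with the Gauss--Manin connections. In degree one this gives an exact sequence
$$0\to \mathrm{H}^1_{\mathrm{dR}}(S/k,\mathcal H^\vee\otimes \mathrm{R}^0_{\mathrm{dR}}f_*\mathcal V)\to \mathrm{H}^1_{\mathrm{dR}}(X/k,f^*\mathcal H^\vee\otimes\mathcal V)\to \mathrm{H}^0_{\mathrm{dR}}(S/k,\mathcal H^\vee\otimes\mathcal H)\to \mathrm{H}^2_{\mathrm{dR}}(S/k,\dots)=0.$$
The last group vanishes because $S$ is an affine curve, so its de Rham cohomology vanishes in degree $2$. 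Hence the edge map onto $\mathrm{H}^0_{\mathrm{dR}}(S/k,\mathrm{End}(\mathcal H))=\mathrm{End}_{\mathrm{MIC}(S/k)}(\mathcal H)$ is surjective, and we choose $\mathcal W$ to be a preimage of $\mathrm{id}_{\mathcal H}$.

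Next we identify the edge map with the connecting morphism in the statement. For an extension $0\to\mathcal V\to\mathcal W\to f^*\mathcal H\to 0$, inflate to $X/S$ and take relative de Rham cohomology. Since the inflation of $f^*\mathcal H$ is trivial, $\mathrm{H}^0_{\mathrm{dR}}(X/S,f^*\mathcal H)=\mathcal H$. The connecting map $\mathcal H\to\mathrm{H}^1_{\mathrm{dR}}(X/S,\mathcal V)$ is then the image of the extension class under
$$\mathrm{Ext}^1_{\mathrm{MIC}(X/k)}\to \mathrm{Ext}^1_{\mathrm{MIC}(X/S)}(f^*\mathcal H,\mathcal V)=\mathrm{Hom}_R(\mathcal H,\mathrm{H}^1_{\mathrm{dR}}(X/S,\mathcal V)).$$
The last identification holds because $f^*\mathcal H$ is relatively trivial and locally free, together with Lemma \ref{lemm1}. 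So we must check that the composite of the restriction map with this identification equals the Leray edge map followed by the forgetful map $\mathrm{H}^0_{\mathrm{dR}}(S/k,\mathrm{End}\,\mathcal H)\subset\mathrm{End}_R(\mathcal H)$. I would verify this via Lemma \ref{lem_comparison}: both are $\delta$-functorial extensions of the same degree-zero isomorphism, namely the identification $\mathrm{H}^0_{\mathrm{dR}}(X/S,\mathrm{inf}(-))$ from Lemma \ref{lem-condA}. Alternatively, one can check it directly with \v{C}ech cocycles on an affine cover. With this compatibility, the chosen $\mathcal W$ has connecting map $\mathrm{id}$. Finally, $\mathcal W$ is coherent because it is an extension of coherent sheaves, so it lies in $\mathrm{MIC}^\circ(X/k)$.

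The main obstacle is this last compatibility. Alongside it, the Leray spectral sequence for de Rham cohomology of the composite $X\to S\to k$ has to be justified. This is where \cite{Ka70} and the hypercohomology description \eqref{eq-ExSeqDiff} are used: the filtration of $\Omega^\bullet_{X/k}$ by powers of $f^*\Omega^1_{S/k}$ (the Katz--Oda filtration) gives the spectral sequence, and on a curve the filtration has length two. Its $E_1$ differential is exactly the Gauss--Manin connection, which makes the identification of the edge map transparent.
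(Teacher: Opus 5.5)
Your proposal is correct and follows essentially the paper's route. The paper likewise views the desired extension as a class in $\mathrm{H}^1_{\mathrm{dR}}(X/k,\mathcal V\otimes f^*\mathcal H^\vee)$. It identifies $\mathrm{H}^1_{\mathrm{dR}}(X/S,\mathcal V\otimes f^*\mathcal H^\vee)$ with $\mathrm{End}_R(\mathrm{H}^1_{\mathrm{dR}}(X/S,\mathcal V))$, then lifts the Gauss--Manin-horizontal identity using exactness of \eqref{GM-hyper} at $\mathrm{H}^1_{\mathrm{dR}}(X/S,-)$, which is your two-column Leray degeneration. The compatibility you flag as the main obstacle (inflation of extension classes versus the map in \eqref{GM-hyper}, and the connecting map versus the $\mathrm{Ext}$/$\mathrm{Hom}$ identification) is only asserted in the paper. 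Your $\delta$-functor argument via Lemma \ref{lem_comparison} fills in that step.
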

\begin{proof}
Let $\mathcal{Z} =f^*(\mathrm R^1f_* (\mathcal{V},\nabla))$ (see Subsection \ref{sect-GM}). Then
$$\inf(\mathcal{Z})=\mathcal O_X\otimes\mathrm 
H^1_\mathrm{dR}(X/S,\inf(\mathcal V)).$$
 Let 
\[ \mathcal{W}= \mathcal V\otimes \mathcal Z^\vee\]
as objects in $\mathrm{MIC}^\circ(X/k)$. 
Then we have isomorphisms in $\mathrm{MIC}^\mathrm{se}(X/S)$ of the cohomologies of Gauss-Manin connections: 
\begin{eqnarray*}
\mathrm{H}^1_\mathrm{dR}(X/S,\inf(\mathcal{W}))&
\cong& \mathrm{Ext}^1_{\mathrm{MIC}(X/S)}(\mathcal O_X,\inf(\mathcal V)\otimes 
\inf (\mathcal{Z}^\vee))\\
&\cong& \mathrm{Ext}^1_{\mathrm{MIC}(X/S)}(\inf (\mathcal{Z}),\inf(\mathcal V))\\
&\cong& \mathrm H^1_\mathrm{dR}(X/S,\inf(\mathcal V))^\vee\otimes  
\mathrm{Ext}^1_{\mathrm{MIC}(X/S)}(\mathcal O_X,\inf(\mathcal V))\\
&\cong&\mathrm H^1_\mathrm{dR}(X/S,\inf(\mathcal V))^\vee\otimes \mathrm H^1_\mathrm{dR}(X/S,\inf(\mathcal V))\\
&\cong& \mathrm{End}_R(\mathrm H^1_\mathrm{dR}(X/S,\inf(\mathcal V)).
\end{eqnarray*} 
 Let $\varepsilon$ be the element in 
$\mathrm{Ext}^1_{\mathrm{MIC}(X/S)}(\inf (\mathcal{Z}), \inf(\mathcal V))$, the image of which the identity map in 
$\mathrm{End}_R(\mathrm H^1_\mathrm{dR}(X/S,\inf(\mathcal V)))$. As the identity map is killed by the Gauss-Manin connections, so is $\varepsilon$. 

Now consider the exact sequence of complexes:
\begin{center}
$0 \longrightarrow f^*\Omega^1_{S/k}\otimes_R (\Omega_{X/S}^{\bullet-1}\otimes \mathcal{W}) \longrightarrow \Omega_{X/k}^{\bullet}\otimes \mathcal{W} \longrightarrow \Omega_{X/S}^{\bullet}\otimes \mathcal{W} \longrightarrow 0$.
\end{center}
In our situation, the base scheme is an affine scheme, so we get the long exact sequence \eqref{GM-hyper}:
\begin{center}
$\cdots \longrightarrow \mathrm{H}^1_\mathrm{dR}(X/k,\mathcal{W} ) \longrightarrow \mathrm{H}^1_\mathrm{dR}(X/S,\inf(\mathcal{W})) \longrightarrow \Omega^1_{S/k}\otimes_R \mathrm{H}^1_\mathrm{dR}(X/S,\inf (\mathcal{W})) \longrightarrow \cdots$
\end{center}
Now, the homomorphism:
\begin{center}
$\partial^1: \mathrm{H}^1_\mathrm{dR}(X/S,\inf (\mathcal{W})) \longrightarrow \Omega^1_{S/k}\otimes_R \mathrm{H}^1_\mathrm{dR}(X/S,\inf (\mathcal{W}))$
\end{center}
is the Gauss-Manin connection. Hence, $\varepsilon \in \mathrm{ Ker}\delta_1$ and thus it 
is lifted to $\Tilde{\varepsilon} \in \mathrm{H}^1_\mathrm{dR}(X/k,(\mathcal{W},\nabla_{\mathcal{W}}))$ by the long exact sequence. 
Consequently, there exists an extension of  connections in $\mathrm{MIC}^\circ(X/k)$:
\[
\Tilde{\varepsilon}: \quad 0 \longrightarrow (\mathcal{W},\nabla_{\mathcal{W}}) \longrightarrow (\mathcal{T}',\nabla_{\mathcal{T}'})\longrightarrow (\mathcal O_X,d) \longrightarrow 0.\]
Notice that the inflation of this sequence to $\mathrm{MIC}^\mathrm{se}(X/S)$ is $\varepsilon$. Hence, the induced connecting map is the identity map by construction of $\varepsilon$.  
\end{proof}
With Theorem $\ref{univ-ext}$, we have a statement on the comparison of  cohomologies.
\begin{cor}\label{cor-comparison}
Let $\mathcal{V}$ be an object in $\mathrm{MIC}^{\mathrm{geom}}(X/S)$ which is locally free as an $\mathcal{O}_X$-module. Let $V=\eta^*(\mathcal V)$ be the corresponding representation of $\pi^\mathrm{geom}(X/S)$. Then 
    $$  \nu^1:\mathrm{H}^{1}(\pi^{\mathrm{geom}}(X/S),V)
    \overset{\gamma^1} \longrightarrow \mathrm{H}^1(\pi(X/S),V) \overset{\delta^1}{\longrightarrow} \mathrm{H}_\mathrm{dR}^{1}(X/S, \mathcal V)$$
    is bijective.  
\end{cor}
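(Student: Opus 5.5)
We identify both sides with Yoneda $\mathrm{Ext}^1$ groups and compare extensions. By the Tannakian equivalence $\Rep^\mathrm{f}(\pi^\mathrm{geom}(X/S))\cong\mathrm{MIC}^\mathrm{geom}(X/S)$, extended to ind-categories, $\mathrm{H}^1(\pi^\mathrm{geom}(X/S),V)$ is the group of extensions $0\to\mathcal V\to\mathcal E\to\mathcal O_X\to 0$ in $\mathrm{ind}\text{-}\mathrm{MIC}^\mathrm{geom}(X/S)$. Since $\mathcal V$ is coherent, any such $\mathcal E$ can be taken coherent: take the preimage of $1$ in a coherent sub-object. By \eqref{eq-22} and Lemma~\ref{lemm1}, $\mathrm{H}^1_\mathrm{dR}(X/S,\mathcal V)$ is the group of extensions of $\mathcal O_X$ by $\mathcal V$ in $\mathrm{MIC}^\mathrm{se}(X/S)$. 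Under these identifications $\nu^1$ just regards an extension in the smaller category as one in the larger category. By the uniqueness in Lemma~\ref{lem_comparison}, this agrees with the derived-functor comparison maps.

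Injectivity is formal. Both categories are full subcategories of $\mathrm{MIC}(X/S)$ and $\mathrm{MIC}^\mathrm{geom}(X/S)$ is closed under subquotients. A splitting of $\mathcal E$ in $\mathrm{MIC}^\mathrm{se}(X/S)$ is therefore a splitting in $\mathrm{MIC}^\mathrm{geom}(X/S)$, so an extension that becomes trivial in the larger category is already trivial. Hence $\nu^1$ is injective.

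For surjectivity we show that every extension class in $\mathrm{H}^1_\mathrm{dR}(X/S,\mathcal V)$ is realized by an object of $\mathrm{MIC}^\mathrm{geom}(X/S)$.

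First we reduce to inflated $\mathcal V$. By Lemma~\ref{lem-DHdS}, together with Lemma~\ref{lem-Deligne} and Corollary~\ref{cor-Lrepr}, $\mathcal V$ is a special sub-object of some $\inf(\mathcal U)$ with $\mathcal U\in\mathrm{MIC}^\circ(X/k)$. Set $\mathcal Q=\inf(\mathcal U)/\mathcal V$, which is locally free. From $0\to\mathcal V\to\inf(\mathcal U)\to\mathcal Q\to0$ the long exact sequence gives
\[\mathrm{H}^0_\mathrm{dR}(X/S,\mathcal Q)\to\mathrm{H}^1_\mathrm{dR}(X/S,\mathcal V)\to\mathrm{H}^1_\mathrm{dR}(X/S,\inf\mathcal U).\]
The same sequence exists for $\pi^\mathrm{geom}$-cohomology, and $\nu^0$ is an isomorphism by \eqref{eq-113}. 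Injectivity of $\nu^1$ for $\mathcal U$ (and for $\mathcal Q$, already proved) then lets a four/five-lemma chase reduce surjectivity for $\mathcal V$ to surjectivity for $\inf(\mathcal U)$. This chase needs surjectivity of $\nu^1$ only at the inflated term and injectivity at the others. The alternative is to argue directly, by pulling back along $\mathcal V\hookrightarrow\inf\mathcal U$ and pushing out, but the chase seems cleaner.

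For $\mathcal V=\inf(\mathcal U)$ we use the universal extension, Theorem~\ref{univ-ext}. It gives $0\to\mathcal U\to\mathcal W\to f^*(\mathrm R^1_\mathrm{dR}f_*\mathcal U)\to0$ in $\mathrm{MIC}^\circ(X/k)$, whose inflation lies in $\mathrm{MIC}^\mathrm{geom}(X/S)$, so the connecting map $\mathrm{H}^1_\mathrm{dR}(X/S,\mathcal U)\to\mathrm{H}^1_\mathrm{dR}(X/S,\mathcal U)$ is the identity. Given a class $c\in\mathrm{H}^1_\mathrm{dR}(X/S,\mathcal U)$, view it as a horizontal section of $\inf f^*(\mathrm R^1_\mathrm{dR}f_*\mathcal U)=\mathcal O_X\otimes\mathrm{H}^1_\mathrm{dR}(X/S,\mathcal U)$. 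This gives a morphism $\mathcal O_X\to\inf f^*(\mathrm R^1_\mathrm{dR}f_*\mathcal U)$, and pulling back $\inf\mathcal W$ along it produces an extension of $\mathcal O_X$ by $\inf\mathcal U$ with class $c$. This extension is a subobject of $\inf\mathcal W$, hence lies in $\mathrm{MIC}^\mathrm{geom}(X/S)$ and comes from $\mathrm{H}^1(\pi^\mathrm{geom}(X/S),V)$.

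The main obstacle is the reduction step. The chase must respect the identification of $\nu^1$ with the Yoneda map and its compatibility with connecting homomorphisms, which Lemma~\ref{lem_comparison} supplies. One must also check that $\mathcal Q$, though only in $\mathrm{MIC}^\mathrm{geom}(X/S)$ rather than inflated, is harmless. This holds because only $\nu^0$-bijectivity and $\nu^1$-injectivity are needed at $\mathcal Q$, and both hold in general.
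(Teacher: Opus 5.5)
Your proposal is correct and takes essentially the paper's route. Injectivity comes from fullness of $\mathrm{MIC}^\mathrm{geom}(X/S)\subset\mathrm{MIC}^\mathrm{se}(X/S)$; surjectivity for inflated $\mathcal V$ comes from pulling back the universal extension of Theorem~\ref{univ-ext}; the general locally free case follows from a special embedding $\mathcal V\hookrightarrow\inf(\mathcal U)$ and the four lemma (only one minor slip below). The slip: the pulled-back extension is a subobject of $\inf(\mathcal W)$ only when $c\neq 0$, using that $\mathrm{H}^1_\mathrm{dR}(X/S,\mathcal U)$ is projective as the paper notes, but it is always a subobject of $\inf(\mathcal W\oplus\mathcal O_X)$, so your conclusion stands.
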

\begin{proof} 
Thanks to \eqref{eq-22}, we only need to prove that the  map 
$$\gamma^1: \mathrm{H}^1(\pi^\mathrm{geom}(X/S),V)
 \longrightarrow \mathrm{H}^1(\pi(X/S),V) $$
is bijective.

\noindent\textit{{Case 1}}. Assume that  $V$ corresponds to an inflated connection $\mathrm{inf}(\mathcal{V}).$ The category $\mathrm{MIC}^\mathrm{geom}(X/S)$ is a full subcategory of $\mathrm{MIC}^\mathrm{se}(X/S),$ so the homomorphism 
\[\gamma^1:  \mathrm{H}^1(\pi^\mathrm{geom}(X/S),V)
 \longrightarrow \mathrm{H}^1(\pi(X/S),V)\]
is injective.

We prove the surjectivity, which, through Tannakian duality, amounts to saying that each extension 
\[ e:\quad 0 \longrightarrow (\mathcal V,\nabla_{/S})\longrightarrow (\mathcal{V}',\nabla_{\mathcal{V}'}) \longrightarrow \mathcal{O}_X \longrightarrow 0\]
is in $\mathrm{MIC}^\mathrm{geom}(X/S)$, which means $(\mathcal V', \nabla_{\mathcal{V}'})\in \mathrm{Obj}(\mathrm{MIC}^\mathrm{geom}(X/S)).$ Indeed, the element $e\in \mathrm{H}^1_\mathrm{dR}(X/S,(\mathcal V,\nabla_{/S}))$ can be
seen as a map in $\mathrm{MIC}^\mathrm{se}(X/S)$
\[\tilde e:\mathcal O_X\longrightarrow
\mathcal O_X\otimes \mathrm{H}^1_\mathrm{dR}(X/S,(\mathcal{V},\nabla_{/S}).\]

Let $\varepsilon$ be the universal extension of Theorem
\ref{univ-ext}. Then $e$ can be obtained from from $\varepsilon$ by pulling back a long $\tilde e$: 
\[ \begin{tikzcd}
e:\quad 0 \arrow{r} &  (\mathcal{V},\nabla_{/S}) \arrow{r} \arrow[swap]{d}{=} & (\mathcal{V}',\nabla') \arrow{r} \arrow{d} & (\mathcal{O}_X,d) \arrow{r}
 \arrow[swap]{d}{\tilde e} & 0  \\%
\varepsilon:\quad0 \arrow{r} &  (\mathcal{V},\nabla_{/S}) \arrow{r} &
 (\mathcal{W},\nabla_{/S}) \arrow{r} &  (\mathcal{Z},\nabla_{/S}) \arrow{r} & 0.
\end{tikzcd}
\]
The module $\mathrm{H}^1_\mathrm{dR}(X/S,(\mathcal V,\nabla_{/S}))$ on $S$ is equipped
with the Gauss-Manin connection (cf. Lemma \ref{coh1}), hence is projective, hence the map $\tilde e$ is injective.
Thus, $(\mathcal{V}',\nabla)$ is a sub-connection of $(\mathcal{W},\nabla_{/S}),$ 
and hence is an object of $\mathrm{MIC}^\mathrm{geom}(X/S)$. This completes the proof.

\noindent\textit{{Case 2}}. Assume that $\mathcal{V} \in \mathrm{Obj}(\mathrm{MIC}^\mathrm{geom}(X/S))$ and is locally free as an $\mathcal{O}_X$-module. According to Lemma \ref{lem-DHdS}, we can consider $\mathcal{V}$ as special sub-object of some inflated connection $\mathrm{inf}(\mathcal{U}),$ where $\mathcal{U} \in \mathrm{Obj}(\mathrm{MIC}^\circ(X/k)).$ Consider the following short exact sequence in $\mathrm{MIC}^\mathrm{geom}(X/S):$
$$0 \longrightarrow \mathcal{V} \longrightarrow \mathrm{inf}(\mathcal{U})\longrightarrow \mathcal{Q}\longrightarrow 0,$$
where $\mathcal{Q}:= \mathrm{inf}(\mathcal{U})/\mathcal{V}.$ By Tannakian duality, we have the following short exact sequence in $\mathrm{MIC}^\mathrm{geom}(X/S):$
$$ 0 \longrightarrow V \longrightarrow U \longrightarrow Q \longrightarrow 0.$$
Applying long exact sequence, we have:
$$\xymatrix{\mathrm{H}^{0}(\pi^\mathrm{geom}(X/S), Q) \ar[r] \ar[d]^{\cong}&\mathrm{H}^{1}(\pi^\mathrm{geom}(X/S), V) \ar[r] \ar@{^{(}->}[d]^{\nu^1(V)} & \mathrm{H}^{1}(\pi^\mathrm{geom}(X/S), V) \ar[r] \ar[d]^{\cong}  & \mathrm{H}^{1}(\pi^\mathrm{geom}(X/S), Q) \ar@{^{(}->}[d] \ar[r] &0\\
 \mathrm{H}^0_\mathrm{dR}(X/S, \mathcal{Q})   \ar[r]  & \mathrm{H}^1_\mathrm{dR}(X/S, \mathcal{V})\ar[r] & \mathrm{H}^1_\mathrm{dR}(X/S, \mathrm{inf}(\mathcal{U})) \ar[r] &  \mathrm{H}^1_\mathrm{dR}(X/S, \mathcal{Q}) \ar[r]&0. }$$ 
The Four-Lemma implies that $\nu^1(V)$ is bijective. 
\end{proof}

\subsection{The injectivity of \texorpdfstring{$\nu^2$}{nu²}}
Before proving that the map $\nu^2$ is injective, we need the following lemma.
\begin{lem}\label{lem-injective}
Let $\mathcal{B}$ be an abelian category that has enough injectives and $\mathcal{A}$ be an abelian (has enough injective) full subcategory of $\mathcal{B}.$ Let $F$ be a left exact functor: 
$$ F: \mathcal{B} \longrightarrow \mathrm{Ab}.$$
Then we have the following natural transformations: 
$\psi^i: R^i_{\mathcal{A}}F \longrightarrow R^i_{\mathcal{B}}F,$
for each $i\geq 1.$
Suppose that the following condition holds:
\begin{itemize}
    \item [i)]  $X\in \mathrm{Obj}(\mathcal{A})$ such that $\psi^i(X)$ is bijective.
    \item [ii)] there exists $J \in \mathrm{Obj}(\mathcal{A})$ and  injective map $X \hookrightarrow J$ in $\mathcal{A}$ such that 
    $R^i_{\mathcal{B}}F(J)=0$ and
    $R^k_\mathcal{A}F(J)=0$ for $k\in \{i,i+1\} ,$ and $\psi^i(J/X)$ is bijective.
\end{itemize}
Then $\psi^{i+1}(X)$ is injective. 
\end{lem}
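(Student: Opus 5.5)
The argument is dimension shifting: use the short exact sequence $0\to X\to J\to J/X\to 0$ in $\mathcal A$ and compare the two long exact sequences it produces, one for $R^\bullet_{\mathcal A}F$ and one for $R^\bullet_{\mathcal B}F$. Since the inclusion $\mathcal A\subset\mathcal B$ is exact (a full abelian subcategory with exact inclusion, as in all our applications), the sequence stays exact in $\mathcal B$. Lemma \ref{lem_comparison}(1) shows that the $\psi^\bullet$ are compatible with the connecting maps, so we get a commutative ladder
$$\xymatrix{
R^i_{\mathcal A}F(J)\ar[r]\ar[d]^{\psi^i(J)} & R^i_{\mathcal A}F(J/X)\ar[r]^{\partial_{\mathcal A}}\ar[d]^{\psi^i(J/X)} & R^{i+1}_{\mathcal A}F(X)\ar[r]\ar[d]^{\psi^{i+1}(X)} & R^{i+1}_{\mathcal A}F(J)\ar[d]\\
R^i_{\mathcal B}F(J)\ar[r] & R^i_{\mathcal B}F(J/X)\ar[r]^{\partial_{\mathcal B}} & R^{i+1}_{\mathcal B}F(X)\ar[r] & R^{i+1}_{\mathcal B}F(J).
}$$

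By hypothesis ii), $R^{i+1}_{\mathcal A}F(J)=0$. Exactness of the top row then says that $\partial_{\mathcal A}\colon R^i_{\mathcal A}F(J/X)\to R^{i+1}_{\mathcal A}F(X)$ is surjective. Since $R^i_{\mathcal A}F(J)=0$, it is also injective, so it is an isomorphism. In the bottom row, $R^i_{\mathcal B}F(J)=0$ makes $\partial_{\mathcal B}$ injective.

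Now take $\xi\in R^{i+1}_{\mathcal A}F(X)$ with $\psi^{i+1}(X)(\xi)=0$. Write $\xi=\partial_{\mathcal A}(\zeta)$. Commutativity gives $\partial_{\mathcal B}(\psi^i(J/X)(\zeta))=0$. Since $\partial_{\mathcal B}$ is injective, $\psi^i(J/X)(\zeta)=0$. Since $\psi^i(J/X)$ is bijective, $\zeta=0$, and therefore $\xi=0$.

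This argument never uses hypothesis i); it only needs the vanishing conditions on $J$ and the bijectivity of $\psi^i(J/X)$, so hypothesis i) can simply be carried along. The only point that needs care, and the main (though mild) obstacle, is to make sure the naturality square with the connecting homomorphisms really commutes. This is exactly the $\delta$-functor compatibility provided by Lemma \ref{lem_comparison}: $(R^\bullet_{\mathcal B}F)|_{\mathcal A}$ is a $\delta$-functor on $\mathcal A$ because the inclusion is exact, and $R^\bullet_{\mathcal A}F$ is universal. I would check explicitly that the hypotheses of that lemma hold here, with $\iota$ the inclusion and $G=F$.
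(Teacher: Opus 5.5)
Your proof is correct and is essentially the paper's argument: the paper draws the same ladder of long exact sequences for $0\to X\to J\to J/X\to 0$ in $\mathcal A$ and in $\mathcal B$. It uses $R^i_{\mathcal A}F(J)=R^{i+1}_{\mathcal A}F(J)=0$ to make the top connecting map an isomorphism and $R^i_{\mathcal B}F(J)=0$ to make the bottom one injective, and concludes by the same diagram chase.

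Your side remarks are also accurate. Hypothesis i) is never used; only the injectivity of $\psi^i(J/X)$ is needed. Exactness of the inclusion $\mathcal A\subset\mathcal B$ is genuinely required, both to view the sequence in $\mathcal B$ and to obtain the comparison maps $\psi^\bullet$ compatible with connecting homomorphisms; the paper uses it only tacitly.
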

\begin{proof}
    Let $X$ be an object of $\mathcal{A}$ that satisfies the conditions. Consider the following short exact sequence:
    $$ 0 \longrightarrow X \longrightarrow J \longrightarrow J/X \longrightarrow 0$$
    in $\mathcal{A},$ this short exact sequence can be considered in $\mathcal{B}.$ Using long exact sequence, we obtain:
    $$\xymatrix{
	0=R^i_{\mathcal{A}}F(J) \ar[r] \ar[d] &R^i_{\mathcal{A}}F(J/X) \ar[r]^{\cong}\ar[d]^{\psi^i(J/X)}_\cong & R^{i+1}_{\mathcal{A}}F(X) \ar[r]\ar[d]^{\psi^{i+1}(X)}& R^{i+1}_{\mathcal{A}}F(J) = 0 \ar[d]^{\psi^{i+1}(J)}
\\
0=R^i_{\mathcal{B}}F(J) \ar[r] &	{R^i_{\mathcal{B}}F(J/X)} \ar@{^{(}->}[r]& R_{\mathcal{B}}^{i+1}F(X) \ar[r]& R^{i+1}_{\mathcal{B}}F(J). }$$
This implies that $\psi^{i+1}(X)$ is injective.
\end{proof}

\begin{lem}\label{133}
Let $\mathcal V$ be an object in $ \mathrm{MIC}^\mathrm{geom}(X/S)$ such that $\mathcal{V}$ is locally free as $\mathcal{O}_X$-module. Denote $V:=\eta^*(\mathcal{V}).$ Then the map
 $$\nu^2:\mathrm{H}^2(\pi^\mathrm{geom}(X/S), V) \rightarrow \mathrm{H}^2_\mathrm{dR}(X/S, \mathcal{V})$$ 
is injective.  
\end{lem}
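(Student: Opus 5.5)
The plan is to apply Lemma \ref{lem-injective} with $i=1$. We take $\mathcal{A}=\mathrm{Rep}(\pi^\mathrm{geom}(X/S))\cong \mathrm{ind-}\mathrm{MIC}^\mathrm{geom}(X/S)$ and $\mathcal{B}=\mathrm{MIC}(X/S)$. The functor $F$ is $\mathrm{H}^0_\mathrm{dR}(X/S,-)$, whose restriction to $\mathcal{A}$ is $\mathrm{H}^0(\pi^\mathrm{geom}(X/S),-)$ by \eqref{eq-113}. With these choices, $\psi^i$ is exactly the comparison map $\nu^i=\delta^i\circ\gamma^i$; this identification follows from the uniqueness in Lemma \ref{lem_comparison}(2). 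Hypothesis i) for $X=V$ is then Corollary \ref{cor-comparison}, since $\nu^1(V)$ is bijective for locally free $\mathcal V$ in $\mathrm{MIC}^\mathrm{geom}(X/S)$.

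The main work is to produce the object $J$ of hypothesis ii). I would try a universal-extension construction that kills first cohomology. By Lemma \ref{lem-DHdS}, $\mathcal V$ is a special sub-object of some $\inf(\mathcal U)$ with $\mathcal U\in \mathrm{MIC}^\circ(X/k)$. Theorem \ref{univ-ext} gives an extension $0\to \mathcal U\to \mathcal W\to f^*\mathrm R^1_\mathrm{dR}f_*\mathcal U\to 0$ whose connecting map on $\mathrm{H}^1_\mathrm{dR}(X/S,-)$ is the identity. Hence $\mathrm{H}^1_\mathrm{dR}(X/S,\inf\mathcal U)\to \mathrm{H}^1_\mathrm{dR}(X/S,\inf \mathcal W)$ is zero. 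Iterating this (replace $\mathcal U$ by $\mathcal W$, and so on) and passing to the direct limit gives an ind-object $\mathcal J=\varinjlim \mathcal W_n$ in $\mathrm{ind-}\mathrm{MIC}^\mathrm{geom}(X/S)$. Since de Rham cohomology commutes with filtered colimits on the quasi-compact $X$, we get $\mathrm{H}^1_\mathrm{dR}(X/S,\mathcal J)=0$. By Corollary \ref{cor-comparison} and compatibility of $\nu^1$ with colimits, $\mathrm{H}^1(\pi^\mathrm{geom}(X/S),J)=0$ as well. We then compose with $V\hookrightarrow U$ to get $V\hookrightarrow J$.

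For bijectivity of $\nu^1(J/V)$, I would write $J/V$ as a colimit of the quotients $W_n/V$. These are locally free because $V\subset U$ is special and the extensions $U\subset W_n$ have projective cokernels. Corollary \ref{cor-comparison} applies to each of them, and we pass to the limit.

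The remaining condition, $\mathrm{H}^2(\pi^\mathrm{geom}(X/S),J)=0$, is the step I expect to be the main obstacle. My first attempt is to repeat the universal-extension idea in degree two, but that is exactly what is unavailable. Instead, I would embed $V$ into an injective hull $J$ inside $\mathcal{A}$, which is possible because $\mathcal{A}$ has enough injectives, being comodules over $\mathcal O(\pi^\mathrm{geom}(X/S))$. This gives $\mathrm{H}^k(\pi^\mathrm{geom},J)=0$ for all $k\ge1$ automatically. The remaining condition $\mathrm{H}^1_\mathrm{dR}(X/S,J)=0$ then follows from the fact that injective comodules are direct summands of $I\otimes \mathcal O(\pi^\mathrm{geom}(X/S))$, together with bijectivity of $\nu^1$ on the finite pieces. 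For the latter, I would filter $\mathcal O(\pi^\mathrm{geom})$ by finite locally free subcomodules, which exist by Corollary \ref{cor-Lrepr}, and apply Corollary \ref{cor-comparison} to each piece and to the quotients. Bijectivity of $\nu^1(J/V)$ is handled by the same filtration argument. With all hypotheses verified, Lemma \ref{lem-injective} yields injectivity of $\nu^2(V)$.
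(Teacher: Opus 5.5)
Your overall strategy is the paper's: apply Lemma~\ref{lem-injective} with $i=1$, get hypothesis i) from Corollary~\ref{cor-comparison}, and reduce the remaining conditions to Corollary~\ref{cor-comparison} via direct limits. The gap is your final choice of $J$ as an injective hull of $V$ in $\mathrm{Rep}(\pi^\mathrm{geom}(X/S))$.

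Such a $J$ is a direct summand of some $I\otimes_R\mathcal O(\pi^\mathrm{geom}(X/S))$ with $I$ an injective, hence divisible, $R$-module, so $J$ is $R$-divisible. Now $R$ is not a field and $V\neq 0$ is finite projective. Take a nonzero nonunit $a$, some $v\in V\setminus aV$, and $j\in J$ with $aj=v$. Then the class of $j$ is a nonzero $a$-torsion element of $J/V$. So $J/V$ is not a filtered union of $R$-projective finite subrepresentations, and Corollary~\ref{cor-comparison} does not apply to its finite pieces. Your claim that bijectivity of $\nu^1(J/V)$ ``is handled by the same filtration argument'' therefore fails. In fact, since $J$ is injective, $\mathrm{H}^1(\pi^\mathrm{geom}(X/S),J/V)\cong \mathrm{H}^2(\pi^\mathrm{geom}(X/S),V)$, so bijectivity of $\nu^1(J/V)$ already contains the injectivity of $\nu^2(V)$ you are trying to prove.

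There are two easy repairs.

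The paper's repair is to take $J=V\otimes_R\mathcal O(\pi^\mathrm{geom}(X/S))$, with the group acting only on the second factor.
\begin{itemize}
\item This $J$ need not be injective, but it is acyclic by Lemma~\ref{cor-2}, which is all Lemma~\ref{lem-injective} requires.
\item The coaction $V\to J$ is split over $R$ by the counit. Hence both $J$ and $Q=J/V$ are $R$-flat, and so are filtered unions of finite projective subrepresentations.
\item Corollary~\ref{cor-comparison} together with commutation with direct limits then gives that $\nu^1(J)$ and $\nu^1(Q)$ are bijective.
\end{itemize}

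Alternatively, you can keep the injective hull.
\begin{itemize}
\item The diagram in the proof of Lemma~\ref{lem-injective} only uses \emph{injectivity} of $\psi^1(J/X)$.
\item $\nu^1$ is injective on every object of $\mathrm{ind}$-$\mathrm{MIC}^\mathrm{geom}(X/S)$. It is the map on Yoneda $\mathrm{Ext}^1(\mathbf 1,-)$ induced by a fully faithful exact embedding into $\mathrm{MIC}(X/S)$, and an extension that splits in $\mathrm{MIC}(X/S)$ splits in the full subcategory.
\end{itemize}

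Two smaller points.
\begin{itemize}
\item For $\mathrm{H}^1_\mathrm{dR}(X/S,\mathcal J)=0$ you must use a torsion-free $I$, for example $I=V\otimes_R K$. Simpler: $J$ embeds in $(V\otimes_RK)\otimes_R\mathcal O(\pi^\mathrm{geom}(X/S))$ and is therefore torsion-free. With a torsion summand in $I$, your ``finite locally free pieces'' do not exist.
\item The finite locally free subcomodules come from local finiteness of comodules plus torsion-freeness over the Dedekind ring $R$, not from Corollary~\ref{cor-Lrepr}.
\end{itemize}
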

\begin{proof}
We prove by applying Lemma \ref{lem-injective} to the map
$$\nu^1: \mathrm{H}^1(\pi^\mathrm{geom}(X/S), V)\longrightarrow \mathrm{H}^1_{\mathrm{dR}}(X/S,\mathcal{V}).$$

Thanks to Corollary \ref{cor-comparison}, we have that $\nu^1(\mathcal{V})$ is bijective.
  
Choose $J = V\otimes_R \mathcal{O}(\pi^\mathrm{geom}(X/S)).$ Then $\mathrm{H}^k(\pi^\mathrm{geom}(X/S),J) = 0$ for every $i\geq 1$ (see Lemma \ref{cor-2}). 
Since $\pi^\mathrm{geom}(X/S)$ is flat over $R,$ 
$V\otimes_R \mathcal{O}(\pi^\mathrm{geom}(X/S))$ is a direct limit of locally free of $R$-finite  
projective $\pi^\mathrm{geom}(X/S)$-representations. 
Now, Corollary \ref{cor-comparison} and the fact that group cohomology commutes with direct limits 
imply that $\nu^1(V\otimes \mathcal{O}(\pi^\mathrm{geom}(X/S))$ is bijective.
Hence, $$\mathrm{H}_{\mathrm{dR}}^1(X/S, \mathcal{V}\otimes \mathcal{W})=0,$$ 
where $\mathcal{W}$ is the integrable connection corresponding to regular representation 
$\mathcal{O}(\pi^\mathrm{geom}(X/S))$ of $\pi^\mathrm{geom}(X/S)$. 
    
    The morphism $V \longrightarrow V \otimes_R \mathcal{O}(\pi^\mathrm{geom}(X/S))$ is 
special as it admits a splitting given by the counit map,
that is, the quotient $Q:= V \otimes_R \mathcal{O}(\pi^\mathrm{geom}(X/S))/V$ is also flat,
hence it is also a direct limit of $R$-finite projective
representations. As before, this implies that $\nu^1(Q)$ is bijective.
Therefore, $\nu^2(\mathcal{V})$ is injective.
\end{proof}
For $i\geq 2,$ we establish the comparison by reducing the problem to comparisons in both generic and closed fibers. This approach requires an understanding of the fibers of Tannakian groups and the scalar extension of Tannakian categories.
\subsection{Comparison at the generic fiber}
In this subsection, we show that the maps $\delta^i_K$ are isomorphisms. Recall
that $\delta^i$ is an isomorphism in the case of smooth projective curves over fields of characteristic zero, cf. \cite{BHT25}. However, we cannot apply this result straightforwardly as the two
groups $\pi(X/S)_K$ and $\pi(X_K/K)$ are different. 
Fortunately, their cohomology groups with coefficients in representations of $\pi(X/S)_K$ are
isomorphic, cf. Proposition \ref{130}.

Before proceeding, let us remind the reader of
the just mentioned result:
\begin{lem}\cite[Theorem 4.1]{BHT25} \label{lem-998}
The map $$\delta^i(X_K/K): \mathrm{H}^i(\pi(X_K/K),M_K) \rightarrow \mathrm{H}^i_{\mathrm{dR}}(X_K/K, \mathcal{M}_K) $$
 is bijective for all $i\geq 0.$ Further, both sides 
 are non-vanishing for any non-zero connection $\mathcal M$, 
 provided that $X_K$ has genus $\geq 2$. The same holds for the map $\delta^i(X_s/k)$. 
\end{lem}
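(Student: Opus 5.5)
The proof has three steps. First, show that the restrictions of $\mathrm{H}^i_\mathrm{dR}(X_K/K,-)$ to $\mathrm{Rep}(\pi(X_K/K))\cong\mathrm{MIC}^\mathrm{ind}(X_K/K)$ form an \emph{effaceable} $\delta$-functor there. They then coincide with the right derived functors of $\mathrm{H}^0$, and the comparison maps $\delta^i$ of Lemma \ref{lem_comparison} are isomorphisms. Second, read off the non-vanishing from an Euler characteristic computation. Third, note that the argument over $k$ for $X_s$ is word for word the same.

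In degree $0$, $\delta^0$ is bijective by Tannakian duality, since horizontal sections are morphisms from $(\mathcal O_{X_K},d)$. For $i\geq 1$ I will show that every class $c\in\mathrm{H}^i_\mathrm{dR}(X_K/K,\mathcal M)$, with $\mathcal M$ coherent, is killed by some monomorphism $\mathcal M\hookrightarrow\mathcal N$ with $\mathcal N$ coherent.

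\begin{itemize}
\item For $i=1$, $c$ is an extension $0\to\mathcal M\to\mathcal E\to\mathcal O\to 0$ whose middle term $\mathcal E$ is again coherent. Pushing out along $\mathcal M\hookrightarrow\mathcal E$ kills $c$, because the pushed-out sequence acquires a splitting.
\item For $i\geq 3$ there is nothing to do, since de Rham cohomology of a curve vanishes in these degrees.
\item For $i=2$, Poincar\'e duality gives $\mathrm{H}^2_\mathrm{dR}(\mathcal M)\cong\mathrm{H}^0_\mathrm{dR}(\mathcal M^\vee)^\vee$, compatibly with morphisms. So it suffices to find a surjection $\mathcal N^\vee\twoheadrightarrow\mathcal M^\vee$ such that $\mathrm{H}^0_\mathrm{dR}(\mathcal N^\vee)\to\mathrm{H}^0_\mathrm{dR}(\mathcal M^\vee)$ is zero.
\end{itemize}

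Here the hypothesis $g\geq1$ is used, and it is the step I expect to be the main obstacle. Since $\mathrm{H}^1_\mathrm{dR}(X_K,\mathcal O)\neq0$, we can form the universal extension $0\to \mathrm{H}^1_\mathrm{dR}(\mathcal O)^\vee\otimes\mathcal O\to\mathcal U\to\mathcal O\to0$ (the analogue of Theorem \ref{univ-ext} over $K$). Its connecting map is the identity, so the section $1\in\mathrm{H}^0_\mathrm{dR}(\mathcal O)$ does not lift to $\mathcal U$. Now set $\mathcal N^\vee:=\mathcal U\otimes\mathcal M^\vee\twoheadrightarrow\mathcal M^\vee$. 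I must check that the image of $\mathrm{H}^0_\mathrm{dR}(\mathcal U\otimes\mathcal M^\vee)$ in $\mathrm{H}^0_\mathrm{dR}(\mathcal M^\vee)$ is zero. The obstruction to lifting $s\in\mathrm{H}^0_\mathrm{dR}(\mathcal M^\vee)$ is the cup product of $s$ with the identity class in $\mathrm{H}^1_\mathrm{dR}(\mathcal O)^\vee\otimes\mathrm{H}^1_\mathrm{dR}(\mathcal O)$. This class lies in $\mathrm{H}^1_\mathrm{dR}(\mathcal O)^\vee\otimes\mathrm{H}^1_\mathrm{dR}(\mathcal M^\vee)$, viewed as a map $\mathrm{H}^1_\mathrm{dR}(\mathcal O)\to\mathrm{H}^1_\mathrm{dR}(\mathcal M^\vee)$, $a\mapsto a\cup s$. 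Showing that this map is nonzero for every $s\neq0$ is the delicate point. I would handle it by reducing to the sub-connection $\mathcal O\cdot s\cong\mathcal O\subset\mathcal M^\vee$ and using Poincar\'e duality on $X_K$ to see that cup product with a nonzero horizontal section is injective on $\mathrm{H}^1$. If this fails, I would iterate the construction with $\mathcal U^{\otimes n}$.

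Since $X_K$ is proper, de Rham cohomology commutes with filtered colimits, so effaceability extends from coherent objects to all of $\mathrm{MIC}^\mathrm{ind}(X_K/K)$. Hence the $\mathrm{H}^i_\mathrm{dR}$ form a universal $\delta$-functor on $\mathrm{Rep}(\pi(X_K/K))$, and $\delta^i$ is an isomorphism for all $i$. As a by-product, $\mathrm{H}^i(\pi(X_K/K),-)=0$ for $i\geq3$.

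For non-vanishing when $g\geq2$, use $\chi_\mathrm{dR}(\mathcal M)=\mathrm{rk}(\mathcal M)(2-2g)<0$. This forces $\mathrm{H}^1_\mathrm{dR}(X_K/K,\mathcal M)\neq0$ for every nonzero $\mathcal M$, and by the bijectivity of $\delta^1$ the group cohomology side is nonzero as well. Since the argument only used that the base is a field of characteristic $0$, the same proof applies verbatim to $X_s/k$.
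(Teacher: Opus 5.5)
Most of your proposal is sound: degrees $0$, $1$ and $\geq 3$, the passage from coherent objects to $\mathrm{MIC}^{\mathrm{ind}}(X_K/K)$, the Poincar\'e-duality reduction in degree $2$, and the Euler characteristic argument for non-vanishing all work. Your computation is also correct: $s\in\mathrm{H}^0_{\mathrm{dR}}(\mathcal M^\vee)$ lifts to $\mathrm{H}^0_{\mathrm{dR}}(\mathcal U\otimes\mathcal M^\vee)$ exactly when $s_*\colon\mathrm{H}^1_{\mathrm{dR}}(\mathcal O)\to\mathrm{H}^1_{\mathrm{dR}}(\mathcal M^\vee)$ vanishes.

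The gap is the claim that $s_*\neq 0$ for every $s\neq 0$. It is false, so the one-step choice $\mathcal N^\vee=\mathcal U\otimes\mathcal M^\vee$ does not work. Take $\mathcal M=\mathcal U$ and $s=\epsilon$, the augmentation $\mathcal U\to\mathcal O$ viewed as a horizontal section of $\mathcal U^\vee$. Then $\epsilon=(\epsilon\otimes\mathrm{id})(\mathrm{id}_{\mathcal U})$ is the image of the horizontal section $\mathrm{id}_{\mathcal U}$ of $\mathcal U\otimes\mathcal U^\vee$. Equivalently, $\mathcal U^\vee$ is the universal extension $0\to\mathcal O\to\mathcal U^\vee\to\mathrm{H}^1_{\mathrm{dR}}(\mathcal O)\otimes\mathcal O\to 0$. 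Its connecting map onto $\mathrm{H}^1_{\mathrm{dR}}(\mathcal O)$ is an isomorphism, so $s_*=0$.

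Your proposed reduction to $\mathcal O\cdot s$ cannot work. The kernel of $\mathrm{H}^1_{\mathrm{dR}}(\mathcal O\cdot s)\to\mathrm{H}^1_{\mathrm{dR}}(\mathcal M^\vee)$ is the image of $\mathrm{H}^0_{\mathrm{dR}}(\mathcal M^\vee/\mathcal O s)$, and Poincar\'e duality for the trivial connection says nothing about it. The same example, with $\mathcal M$ equal to the very extension you tensor with, shows that no single fixed $\mathcal U$ effaces $\mathrm{H}^2$ for all $\mathcal M$. So the degree-$2$ effacement, which is the real content of the lemma, is not established.

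Your fallback with $\mathcal U^{\otimes n}$ can be made to work, but it needs an argument you have not supplied. Here is one:
\begin{enumerate}
\item The group $\pi(X_K/K)$ acts on $U^{\otimes n}$, the fiber of $\mathcal U^{\otimes n}$, through the vector group $\mathrm{H}^1_{\mathrm{dR}}(\mathcal O)^\vee\otimes\mathbb G_a$.
\item A lift $\varphi\colon\mathcal M\to\mathcal U^{\otimes n}$ of $s^\vee\colon\mathcal M\to\mathcal O$ has unipotent image. This image is a quotient of the maximal unipotent quotient of $\mathcal M$, so it is killed by $x^m$ for every $x$ in the Lie algebra of that vector group, where $m$ is a length depending only on $\mathcal M$.
\item Grade $U^{\otimes n}$ by the number of tensor factors lying in $\mathrm{H}^1_{\mathrm{dR}}(\mathcal O)^\vee$. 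Such an $x$ raises the degree by one, so any $u$ with $\epsilon^{\otimes n}(u)=1$ satisfies $x^n u=n!\,x^{\otimes n}\neq 0$ when $x\neq 0$.
\item Hence for $n\geq m$ no lift exists, and the map on $\mathrm{H}^0_{\mathrm{dR}}$ is zero. This uses $g\geq 1$ and characteristic $0$.
\end{enumerate}

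For comparison, the paper only cites \cite{BHT25} for this lemma. The version of the degree-$2$ step it reproduces in Proposition \ref{130} builds $N\twoheadrightarrow M^\vee$ with $\mathrm{H}^0(N)=0$ using extensions by non-trivial simple objects. That route needs $g\geq 2$ and infinitely many simple objects, and treats $g=1$ separately via commutativity of $\pi(X_K/K)$. A completed unipotent argument like yours would handle all $g\geq 1$ at once.
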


To describe the generic fiber of $\pi(X/S),$ we recall the category $\mathrm{MIC}^\mathrm{se}(X/S)_K $
introduced in  \cite[1.3.1, 1.3.2]{DH18}. The objects of $\mathrm{MIC}^\mathrm{se}(X/S)_{K}$ 
are the same as those of $\mathrm{MIC}^\mathrm{se}(X/S)$ and for two objects $\mathcal{M}$ and  
$\mathcal{N}$ in $\mathrm{MIC}^\mathrm{se}(X/S)_K$ their hom-set is
$$ \mathrm{Hom}_{\mathrm{MIC}^\mathrm{se}(X/S)_K}(\mathcal{M},\mathcal{N}) := \mathrm{Hom}_{\mathrm{MIC}^\mathrm{se}(X/S)}(\mathcal{M},\mathcal{N}) \otimes_R K. $$
The category $\mathrm{MIC}^\mathrm{se}(X/S)_K \cong \mathrm{MIC}^{\circ}(X/S)_K$ is abelian 
 \cite[Lemma 5.1.2]{DH18}. Moreover, this category is Tannakian  with
the fiber functor 
$$ \eta^{\ast} \otimes K: \mathrm{MIC}^\mathrm{se}(X/S)_K \rightarrow \mathrm{Vec}_K,$$
and it is Tannakian dual to the generic fiber of $\pi(X/S),$ that is, 
$$ \mathrm{MIC}^\mathrm{se}(X/S)_K \cong \mathrm{Rep}^\mathrm{f}(\pi(X/S)_K).$$
We can similarly define $\mathrm{MIC}^\mathrm{geom}(X/S)_K$, and we also obtain 
$$ \mathrm{MIC}^\mathrm{geom}(X/S)_{K} \cong \mathrm{Rep}^\mathrm{f}(\pi^\mathrm{geom}(X/S)_K).$$
 
\begin{lem}\label{131}
The category $\mathrm{MIC}^\mathrm{se}(X/S)_{K}$ is full subcategory of category $\mathrm{MIC}^\circ(X_K/K).$ Moreover, $$\pi(X_K/K) \twoheadrightarrow \pi(X/S)_K.$$
\end{lem}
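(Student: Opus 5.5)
We construct a base-change functor $\Phi:\mathrm{MIC}^\mathrm{se}(X/S)_K\to\mathrm{MIC}^\circ(X_K/K)$ and show it is fully faithful, exact and tensor compatible with the fiber functors. The surjectivity $\pi(X_K/K)\twoheadrightarrow\pi(X/S)_K$ then follows from the Tannakian criterion (Theorem \ref{th1}): a homomorphism is faithfully flat once the induced functor is fully faithful and its essential image is closed under sub-objects.

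\textbf{Step 1: the functor and its compatibility.} For $\mathcal M$ in $\mathrm{MIC}^\mathrm{se}(X/S)$ set $\Phi(\mathcal M):=\mathcal M\otimes_R K$ on $X_K$, equipped with the induced $K$-linear connection. This lands in $\mathrm{MIC}^\circ(X_K/K)$, since coherent sheaves with flat connection over a field of characteristic zero are locally free. Because hom-sets in $\mathrm{MIC}^\mathrm{se}(X/S)_K$ are $\mathrm{Hom}\otimes_R K$, $\Phi$ is well defined on morphisms. It is clearly tensor compatible, and it is compatible with the fiber functors $\eta^*\otimes K$ and $\eta_K^*$.

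\textbf{Step 2: fully faithful.} Write $\mathrm{Hom}(\mathcal M,\mathcal N)=\mathrm H^0_\mathrm{dR}(X/S,\mathcal M^\vee\otimes\mathcal N)$, reducing first to locally free $\mathcal M,\mathcal N$ since $\mathrm{MIC}^\mathrm{se}(X/S)_K\cong\mathrm{MIC}^\circ(X/S)_K$. Then $f_*$ of the horizontal sections is a finite $R$-module because $f$ is proper. Flat base change for $R\to K$ (a localization, hence flat) gives
$$\mathrm H^0_\mathrm{dR}(X/S,\mathcal M^\vee\otimes\mathcal N)\otimes_R K\cong\mathrm H^0_\mathrm{dR}(X_K/K,\Phi(\mathcal M)^\vee\otimes\Phi(\mathcal N)),$$
using that the relative de Rham complex commutes with flat base change. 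This yields bijectivity on hom-sets.

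\textbf{Step 3: closure under sub-objects.} This is the main obstacle. Let $\mathcal N_K\subset\Phi(\mathcal M)$ be a sub-connection on $X_K$ with $\mathcal M$ locally free. Take the schematic closure $\mathcal N:=\mathcal M\cap j_*\mathcal N_K$, where $j:X_K\to X$. It is coherent, and since $R$ is Dedekind it is $R$-torsion free, with $\mathcal M/\mathcal N$ also torsion free. Stability under $\nabla$ follows because $\nabla$ preserves $\mathcal M$ and, after $\otimes K$, preserves $\mathcal N_K$.

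Next we need $\mathcal N$ to lie in $\mathrm{MIC}^\mathrm{se}(X/S)$. By \cite{DH18}, a coherent relative connection that is torsion free over $R$ is locally free, so $\mathcal N$ lies in $\mathrm{MIC}^\circ(X/S)$, and $\Phi(\mathcal N)=\mathcal N_K$. Hence the essential image is closed under sub-objects.

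The delicate points are the local freeness (the cited \cite{DH18} result) and Step 2's base change. Both rely on $R$ being Dedekind and $f$ being proper.
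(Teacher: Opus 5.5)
Your proposal is correct and follows essentially the paper's route: the base-change functor $\mathcal M\mapsto\mathcal M_K$, the Tannakian criterion for faithful flatness (Deligne--Milne over the field $K$), and closure under sub-objects by saturating, $\mathcal N=\mathcal M\cap j_*\mathcal N_K$, which is locally free by the result of \cite{DH18}. The differences are minor. You actually justify full faithfulness, by flat base change of $\mathrm H^0_\mathrm{dR}$, whereas the paper only asserts it. You also intersect directly along $j\colon X_K\to X$, whereas the paper first spreads $\mathcal N_0$ out over an open $U\subset S$ and pushes forward along the open immersion $X_U\to X$.
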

\begin{proof}
The category $\mathrm{MIC}^\mathrm{se}(X/S)_K$ can be considered as full subcategory of $\mathrm{MIC}^\circ(X_K/K)$ through the following restriction functor
\begin{align*}
\mathrm{MIC}^\mathrm{se}(X/S) &\rightarrow \mathrm{MIC}^\circ(X_K/K)\\
(\mathcal{M},\nabla) & \mapsto (\mathcal{M}_K, \nabla_K).
\end{align*}
The restriction functor is compatible with the fiber functors of these two categories. Therefore, we have 
homomorphims
$$\pi(X_K/K) \rightarrow \pi(X/S)_K.$$

We show the surjectivity of this map using the criterion of Deligne-Milne, \cite[Theorem 2.21]{DM82}. 
Thus, we need to show that for each object $\mathcal{M}_{0} \in \mathrm{MIC}^\mathrm{se}(X/S)_K,$ when considered as object in $\mathrm{MIC}^{\circ}(X_K/K)$, all its sub-objects will be objects in $\mathrm{MIC}^\mathrm{se}(X/S)_K.$ 

There exists, by the definition $\mathrm{MIC}^\mathrm{se}(X/S)_K,$ 
an $\mathcal{M} \in \mathrm{Obj}(\mathrm{MIC}^{\circ}(X/S))$ such that $i^{\ast}\mathcal{M} = \mathcal{M}_0,$ where $i = i_{U/R}\circ i_{K/U}$ (see the maps $i_{K/U}$ and $i_{U/R}$ in the  diagram below). Let $\mathcal{N}_0$ be a sub-object of $\mathcal{M}_0.$ Then there exists an open sub-scheme $U$ of $\mathrm{Spec}(R)$  and $\mathcal{N}_{U}$  is a sub-connection of ${\mathcal{M}}_{|_{X_U}}$ such that $i_{K/U}^{\ast}\mathcal{N}_{U} = \mathcal{N}_{0},$ where $i_{K/U}$ is the map in the following Cartesian diagram:
$$\xymatrix{
X_K\ar[r]^{i_{K/U}} \ar[d]& X_{U} \ar[r]^{i_{U/R}} \ar[d]      & X \ar[d]\\
\mathrm{Spec}(K)  \ar[r]   &   U \ar[r]     &\mathrm{Spec}(R).
}$$
We set $\mathcal{N}:= {i_{U/R}}_{\ast}\mathcal{N}_{U} \cap \mathcal{M}.$ The sheaf $\mathcal{N}$ is defined as the sheafification of the subpresheaf of $i_{\ast}i^{\ast}\mathcal{M}.$ Since $\mathcal{N}$ can be considered as the subconnection of $\mathcal{M},$ the sheaf $\mathcal{N}$ is locally free due to Proposition 5.1.1  in \cite{DH18}.

We are  now ready to prove $i^{\ast}\mathcal{N}= \mathcal{N}_0,$ i.e., $\mathcal{N}_{0} \in \mathrm{Obj}(\mathrm{MIC}^\mathrm{se}(X/S)_K).$   Then 
$$ i^{\ast}\mathcal{N} = i^{\ast}{i_{U/R}}_{\ast} \mathcal{N}_U \cap i^{\ast}\mathcal{M} = i^{\ast}{i_{U/R}}_{\ast} \mathcal{N}_U \cap \mathcal{M}_0,    $$
so we will prove that 
$$i^{\ast}{i_{U/R}}_{\ast} \mathcal{N}_U = \mathcal{N}_{0}$$
to finish the proof. Indeed, since $i_{U/R}^\ast {i_{U/R}}_\ast \mathcal{N} = \mathcal{N}$ (as $i_{U/R}$ is an open immersion) and $i^\ast = i^\ast_{K/U}\circ i^\ast_{U/R},$ the result follows.
\end{proof}
Using Lemma \ref{131} and Lemma \ref{lem-groupH}, we obtain the following morphism:
\begin{align}\label{eq-24}
     \pi(X_K/K) \twoheadrightarrow \pi(X/S)_K \twoheadrightarrow \pi^{\mathrm{geom}}(X/S)_K.
\end{align}
 We have $(-)^{\pi^\mathrm{geom}(X/S)_K} = (-)^{\pi(X/S)_K} = (-)^{\pi(X_K/K)},$ hence from the "universal $\delta$-functor" formalism we get for every $\pi^{\mathrm{geom}}(X/S)_K$-module $V_K$ a system of maps
$$ \lambda_K^i: \mathrm{H}^{i}(\pi^{\mathrm{geom}}(X/S)_K, V_K) \longrightarrow \mathrm{H}^i(\pi(X/S)_K,V_K)\longrightarrow \mathrm{H}^{i}(\pi(X_K/K),V_K).$$

The main result of this subsection is the following: 
\begin{prop}\label{130}
    Let $R$ be a Dedekind domain. Let $X$ be a projective curve over $R$ with genus $g\geq 1.$ Let $V_K$ be an object in $ \mathrm{Rep}^\mathrm{f}(\pi^\mathrm{geom}(X/S)_K).$ Then the maps
\begin{align}\label{eq-6}
\lambda^i_{K}: \mathrm{H}^{i}(\pi^\mathrm{geom}(X/S)_K, V_K)\stackrel{\gamma^i} \longrightarrow \mathrm{H}^i(\pi(X/S)_K,V_K)   \longrightarrow \mathrm{H}^{i}(\pi(X_K/K),V_K)
\end{align}
for all $i\geq 0$ are isomorphisms.
\end{prop}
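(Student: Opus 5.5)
We will prove that for every $V_K\in\mathrm{Rep}^\mathrm{f}(\pi^\mathrm{geom}(X/S)_K)$ the composite $\lambda^\bullet_K$ is an isomorphism of $\delta$-functors. Write $G:=\pi(X_K/K)$ and $H:=\pi^\mathrm{geom}(X/S)_K$, so that \eqref{eq-24} gives a faithfully flat map $G\twoheadrightarrow H$ with kernel $N$. The argument is by dimension shifting, starting from degrees $0$ and $1$.

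In degree $0$, $\lambda^0_K$ is the identity, since invariants coincide. In degree $1$, $\lambda^1_K$ is injective because $\mathrm{Rep}^\mathrm{f}(H)$ is a full subcategory of $\mathrm{Rep}^\mathrm{f}(G)$. For surjectivity, we first take $V_K$ to be the generic fiber of an inflated connection $\mathrm{inf}(\mathcal V)$. We base change Theorem \ref{univ-ext} to $K$. Every extension of $\mathcal O_{X_K}$ by $\mathcal V_K$ in $\mathrm{MIC}(X_K/K)$ then becomes a pull-back of the universal extension $\varepsilon_K$ along a map $\mathcal O_{X_K}\to\mathcal O_{X_K}\otimes\mathrm H^1_\mathrm{dR}(X_K/K,\mathcal V_K)$. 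Here we use that $\mathrm H^1_\mathrm{dR}(X/S,\mathcal V)\otimes_R K=\mathrm H^1_\mathrm{dR}(X_K/K,\mathcal V_K)$, which follows from flat base change and Lemma \ref{coh1}. Hence every such extension is a subobject of $\varepsilon_K$ and lies in $\mathrm{Rep}^\mathrm{f}(H)$. The case of a general $V_K$ then follows as in Case 2 of Corollary \ref{cor-comparison}: embed $V_K$ into an inflated object via Lemma \ref{lem-DHdS} and apply the four lemma.

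For the higher degrees, we use Lemma \ref{lem-injective} with $\mathcal A=\mathrm{Rep}(H)$, $\mathcal B=\mathrm{Rep}(G)$ and $F=(-)^G$. We embed $V_K$ into $J=V_K\otimes\mathcal O(H)$, which is $H$-acyclic, and the quotient is again an ind-object of $\mathrm{Rep}^\mathrm{f}(H)$. For the lemma we also need $J$ to be acyclic for $G$ in the relevant degrees. By \cite[Theorem 4.1]{BHT25} (Lemma \ref{lem-998}) we have $\mathrm H^i(G,-)=\mathrm H^i_\mathrm{dR}(X_K/K,-)$. This vanishes for $i\ge3$ on finite objects, and hence on ind-objects since cohomology commutes with direct limits. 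So the only genuine issue is degree $2$, and the induction gives $\lambda^i_K$ bijective for all $i$ once we know that $\lambda^2_K$ is bijective and that $\mathrm H^i(H,V_K)=0$ for $i\ge3$.

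\textbf{Main obstacle: degree $2$ and the vanishing of $\mathrm H^3(H,-)$.} Using the Hochschild--Serre spectral sequence for $1\to N\to G\to H\to1$, it suffices to show that $\mathrm H^1(N,V_K)^{H}$ and the higher terms do not contribute. Equivalently, the restriction $\mathrm H^2(H,V_K)\to \mathrm H^2(G,V_K)$ should be an isomorphism.

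For $g\ge2$ we argue by Poincaré duality. The group $\mathrm H^2_\mathrm{dR}(X_K/K,\mathcal V_K)$ is dual to $\mathrm H^0(X_K,\mathcal V_K^\vee)$, which is an $H$-invariant. So every class in $\mathrm H^2_\mathrm{dR}$ is detected by a map $V_K\to$ (the dual of a trivial object), and this gives surjectivity of $\lambda^2_K$. For injectivity, given a class in $\mathrm H^2(H,V_K)$, we kill it by embedding $V_K$ into an $H$-representation $W$ with $\mathrm H^1_\mathrm{dR}(X_K,W)\to\mathrm H^1_\mathrm{dR}(X_K,W/V_K)$ controlled. To produce such $W$ we use the infinitely many pairwise non-isomorphic simple objects of $\mathrm{MIC}^\mathrm{geom}(X/S)_K$, each with nonzero $\mathrm H^1_\mathrm{dR}$ by Lemma \ref{lem-998}, and we choose $W$ so that $\mathrm H^2_\mathrm{dR}(X_K,W)=0$ by avoiding the duals of its constituents.

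For $g=1$, the group $G=\pi(X_K/K)$ is commutative. Hence $H$ is a commutative quotient and the cohomology reduces to semisimple rank-one pieces together with the unipotent part. One then checks directly that both sides are computed by the same Koszul-type complex: $\Lambda^\bullet$ of the rank-two $\mathrm H^1$ for the trivial character, and zero for nontrivial characters.
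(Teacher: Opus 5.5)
Your treatment of degrees $0$ and $1$, and your reduction of degrees $\geq 3$ to degree $2$, are sound and close to the paper's. The differences are minor: you apply the universal extension directly over $K$ rather than passing through $\nu^1\otimes K$, and your $J=V_K\otimes\mathcal O(H)$ replaces the paper's injective envelope. Two points in your degree-$2$ discussion are not obstacles:
\begin{itemize}
\item Lemma~\ref{lem-injective} with $i=1$ already gives injectivity of $\lambda^2_K$, because $\mathrm H^1(G,J)=0$ follows from bijectivity of $\lambda^1_K$ on ind-objects.
\item The vanishing of $\mathrm H^3(H,-)$ follows from the same lemma once $\lambda^2_K$ is bijective.
\end{itemize}

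The genuine gap is \emph{surjectivity of $\lambda^2_K$ for $g\geq 2$}. Poincar\'e duality identifies $\mathrm H^2(G,V_K)$ with $\mathrm{Hom}_H(V_K,K)^\vee$, but this says nothing about the image of $\mathrm H^2(H,V_K)$. To turn ``detection'' into surjectivity, you would need, for every nonzero $\phi\colon V_K\to K$, that $\phi_*\circ\lambda^2_K(V_K)=\lambda^2_K(K)\circ\phi^H_*$ is nonzero. That requires two things:
\begin{itemize}
\item $\mathrm H^2(H,K)\to\mathrm H^2(G,K)\cong K$ is nonzero, which is the case $V_K=K$ of the statement itself;
\item $\phi^H_*\colon\mathrm H^2(H,V_K)\to\mathrm H^2(H,K)$ is onto, i.e.\ $\mathrm H^3(H,\ker\phi)=0$. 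In your setup this is only available after $\lambda^2_K$ is known to be surjective (via $\mathrm H^2(G,J)=0$).
\end{itemize}
The object $W\supset V_K$ with $\mathrm H^2_{\mathrm{dR}}(X_K/K,W)=0$ is put to the wrong use. Vanishing of $\mathrm H^2(G,W)$ cannot kill a class of $\mathrm H^2(H,V_K)$.

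What is missing is the paper's use of such an object for \emph{surjectivity}. Suppose $M\hookrightarrow M'$ in $\mathrm{Rep}^\mathrm{f}(H)$ with $\mathrm H^2(G,M')=0$. Then $\mathrm H^1(G,M'/M)\to\mathrm H^2(G,M)$ is onto. This map factors as the bijection $\lambda^1_K(M'/M)$ followed by $\mathrm H^1(H,M'/M)\to\mathrm H^2(H,M)$, so $\lambda^2_K(M)$ is onto.

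Constructing $M'$ also takes more than ``avoiding duals of constituents.'' Dually, one needs $N\twoheadrightarrow M^\vee$ in $\mathrm{Rep}^\mathrm{f}(H)$ with $\mathrm H^0(G,N)=0$, built by induction on $\dim\mathrm H^0(G,-)$. Given $K\hookrightarrow M^\vee$ with cokernel $F$:
\begin{itemize}
\item Pick a non-trivial simple $P$ of $\mathrm{Rep}^\mathrm{f}(H)$ with $\mathrm{Hom}(P,F)=0$ (Lemma~\ref{lem-simple}). Then $\mathrm{Ext}^2_G(F,P)\cong\mathrm{Hom}(P,F)^\vee=0$.
\item A non-split extension of $K$ by $P$ exists because $\mathrm H^1(G,P)\neq 0$ (Lemma~\ref{lem-998}).
\item Since $\mathrm{Ext}^2_G(F,P)=0$, this extension lifts to $0\to P\to N\to M^\vee\to 0$ with $\dim\mathrm H^0(G,N)<\dim\mathrm H^0(G,M^\vee)$.
\end{itemize}
The essential point you do not address is that $N$ stays in $\mathrm{Rep}^\mathrm{f}(H)$. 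This holds because $\mathrm{Ext}^1_H(M^\vee,P)=\mathrm H^1(H,M\otimes P)\cong\mathrm H^1(G,M\otimes P)$ by bijectivity of $\lambda^1_K$.

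Your $g=1$ sketch matches the paper's, provided you note that bijectivity of $\lambda^1_K$ at trivial coefficients forces $H^{\mathrm{uni}}\cong\mathbb G_a^2$.
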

\begin{proof}
 
\textit{Bijectivity of $\lambda^1_K$}. Let $V$ be an object in $\mathrm{Rep}^\mathrm{f}(\pi^\mathrm{geom}(X/S))$ that is finite projective as an $R$-module. According to Corollary \ref{cor-comparison}, we have 
\begin{align}\label{160}
\nu^1: \mathrm{H}^1(\pi^\mathrm{geom}(X/S),V) \overset{\cong}{\rightarrow} \mathrm{H}^1_\mathrm{dR}(X/S, \mathcal{V}).
\end{align}
Applying the flat base change theorems for de Rham cohomology \cite[23.5]{ABC20} and for group cohomology  \cite[Proposition 4.13]{Ja03}, we obtain the following diagram:
$$\xymatrix{
{\mathrm{H}^1(\pi^\mathrm{geom}(X/S)_K,V_K)}\ar[rr]^{\lambda^1_K} && \mathrm{H}^1(\pi(X_K/K),V_K)\ar[dd]^{\delta^1(X_K/K)} \\
	{\mathrm{H}^1(\pi^\mathrm{geom}(X/S),V)\otimes_RK}\ar[d]_{\nu^1\otimes K}\ar[u]^\cong	&&  \\
	{\mathrm{H}^1_\mathrm{dR}(X/S, \mathcal{V})\otimes_RK}\ar[rr]_\cong && {\mathrm{H}^1_\mathrm{dR}(X_K/K,\mathcal{V}_K)}.
}$$

From \eqref{160}, we can see that the map $\nu^1 \otimes K $ is bijective. Thanks to Lemma \ref{lem-998}, the morphism $\delta^1(X_K/K)$ is also bijective. Thus, the morphism $\lambda_K^1$ is bijective. 

We now prove that $\lambda^1_K(V_K)$ is bijective for an arbitrary object $V$ in $\mathrm{MIC}^\mathrm{geom}(X/S).$ Let $\mathcal{V}$ be an corresponding integrable connection corresponding to $V.$ We define $\mathcal{V}^{\mathrm{tor}}$ as the torsion sub-sheaf of $\mathcal{V},$ it is a sub-sheaf of $\mathcal{V}$ such that the quotient sheaf $\mathcal{V}^{\mathrm{fr}}:= \mathcal{V}/\mathcal{V}^\mathrm{tor} $ is a locally free $\mathcal{O}_X$-module. Consider the following short exact sequence in $\mathrm{MIC}^\mathrm{geom}(X/S):$
$$ 0 \longrightarrow \mathcal{V}^\mathrm{tor} \longrightarrow \mathcal{V} \longrightarrow \mathcal{V}^\mathrm{fr}\longrightarrow 0.$$ Hence, 
$$\mathcal{V}_K \cong \mathcal{V}^{\mathrm{fr}}_K.$$

Applying again the flat base change theorems for de Rham cohomology \cite[23.5]{ABC20} and for group cohomology  \cite[Proposition~4.13]{Ja03}, we obtain the following diagram:
$$\xymatrix{
\mathrm{H}^1(\pi^\mathrm{geom}(X/S)_K,V_K)\ar[rr]^{\lambda^1_K}  \ar[d]^\cong&& {\mathrm{H}^1(\pi(X_K/K),V_K)}\ar[d]^{\delta^1(X_K/K)} \\
	\mathrm{H}^1(\pi^\mathrm{geom}(X/S)_K,V^{\mathrm{fr}}_K)	&&  \mathrm{H}^1(X_K/K, \mathcal{V}_K)\ar[d]_\cong\\
      &&    \mathrm{H}^1(X_K/K, \mathcal{V}^\mathrm{fr}_K)\\
		\mathrm{H}^1(\pi^\mathrm{geom}(X/S),V^{\mathrm{fr}})\otimes_R K\ar[rr]^{\nu^1\otimes K} \ar[uu]_{\cong} && \mathrm{H}^1_\mathrm{dR}(X/S,\mathcal{V}^\mathrm{fr})\otimes_R K.  \ar[u]^{\cong}
}$$
From \eqref{160}, we can see that the map $\nu^1 \otimes K $ is bijective. Thanks to Lemma \ref{lem-998}, the morphism $\delta^1(X_K/K)$ is also bijective. Thus, the morphism $\lambda_K^1$ is bijective for every representation $V_{K}$ in $\mathrm{Rep}^\mathrm{f}(\pi(X/S)_K).$

\noindent\textit{Injectivity of $\lambda_K^2$}. We apply the Lemma \ref{lem-injective} to the map
$$ \lambda^1_K: \mathrm{H}^1(\pi^\mathrm{geom}(X/S)_K,V_K)\longrightarrow \mathrm{H}^1(\pi(X_K/K), V_K).$$
Since we have proved that $\lambda^1_K(V_K)$ is bijective for an arbitrary representation $V_K$ in $\mathrm{Rep}^\mathrm{f}(\pi^\mathrm{geom}(X/S)),$ it suffices to choose $J$ to be an injective envelope of $V_K$ in $\mathrm{ind}$-$\mathrm{Rep}^\mathrm{f}(\pi^\mathrm{geom}(X/S)).$

\noindent\textit{Surjectivity of $\lambda_K^2$}. There are two cases: the genus is $1$ or larger than 
or equal to $2.$ 

\noindent Case $g\geq 2.$ We need a version of Lemma 4.4 in \cite{BHT25}.

\noindent\textbf{Claim}. 
\textit{Let $M$ be an object in $\mathrm{Rep}^\mathrm{f}(\pi^\mathrm{geom}(X/S)_K).$ Then there exists an object $M'$ in $\mathrm{Rep}^\mathrm{f}(\pi^\mathrm{geom}(X/S)_K)$ and an injective map $j: M\hookrightarrow M'$ such that 
$$\mathrm{H}^2(\pi(X_K/K),M')=0.$$}
\noindent    \textit{Verification}.  
Thanks to the bijectivity of $\delta^{\bullet}(X_K/K):\mathrm{H}^i(\pi(X_K/K),M_K) \rightarrow \mathrm{H}^i_{\mathrm{dR}}(X_K/K, \mathcal{M}_K) $ (Lemma \ref{lem-998}), 
and the Poincar\'e duality for de Rham cohomology on $X_K/K$,  
the claim is equivalent to showing that
\begin{center}\it  there exists a surjective morphism
$ N\twoheadrightarrow M^\vee$ in $\mathrm{Rep}^\mathrm{f}(\pi^\mathrm{geom}(X/S)_K)$ such that  
$\mathrm{H}^{0}(\pi(X_K/K), N) =0.$\end{center}
By induction on the dimension of $\mathrm H^0(\pi(X_K/K),N),$ it suffices to
show that there exists  $N$ surjecting on $M^\vee$ and satisfying the strict inequality:
$$\mathrm{h}^{0}(\pi(X_K/K), N) < 
\mathrm{h}^0(\pi(X_K/K),M^\vee),$$
as long as $\mathrm{h}^0(\pi(X_K/K),M^\vee)\neq 0$.

Consider $K$ as the trivial representation of $\pi^\mathrm{geom}(X/S)_K$, assume that there exists an inclusion 
$K\hookrightarrow M^\vee$ and let 
 $F:= M^{\vee}/K $ be the quotient representation. 
 Consider an arbitrary, non-trivial, irreducible representation $P$
and apply the functor 
$\mathrm{Ext}^i_{\mathrm{Rep}(\pi(X_K/K))}(-,P)$
to the short exact sequence in (the full subcategory) $\mathrm{Rep}^\mathrm{f}(\pi^\mathrm{geom}(X/S)_K):$
$$0 \longrightarrow K \overset{e}{\longrightarrow} 
 M^{\vee} \longrightarrow F \longrightarrow 0, $$
we obtain a long exact sequence:
$$  \mathrm{Ext}^1_{\mathrm{Rep}(\pi(X_K/K))}(F,P) \longrightarrow 
\mathrm{Ext}^1_{\mathrm{Rep}(\pi(X_K/K))}({M}^{\vee},P)
\overset{\mathrm{ev}_{e}}{\longrightarrow} 
\mathrm{Ext}^1_{\mathrm{Rep}(\pi(X_K/K))}(K,P)
\longrightarrow \mathrm{Ext}^2_{\mathrm{Rep}(\pi(X_K/K))}(F,P).$$
By Poincar\'e duality we have 
\begin{align*}\mathrm{Ext}^2_{\mathrm{Rep}(\pi(X_K/K))}(F,P) &\cong  \mathrm{H}^2(\pi(X_K/K), F^\vee\otimes P)\\
&\cong \mathrm{H}^0(\pi(X_K/K), F\otimes P^\vee)^\vee\\
&\cong
\mathrm{Hom}_{\mathrm{Rep}^\mathrm{f}(\pi^\mathrm{geom}(X/S)_K)}(P,F)^\vee.
\end{align*}

According to Lemma \ref{lem-simple}, there exist infinitely many non-isomorphic simple objects in $\mathrm{MIC}^\mathrm{geom}(X/S)_K.$ Since $\mathrm{MIC}^\mathrm{geom}(X/S)_K$ is equivalent to 
$\mathrm{Rep}^\mathrm{f}(\pi^\mathrm{geom}(X/S)_K)$, we can choose a non-trivial irreducible representation $P$  such that 
$$\mathrm{Hom}_{\mathrm{Rep}^\mathrm{f}(\pi^\mathrm{geom}(X/S)_K)}(P,F)^\vee= 0,$$ 
consequently, the map 
$$\mathrm{ev}_e:\mathrm{Ext}^1_{\mathrm{Rep}(\pi(X_K/K))}
(M^{\vee},P) {\longrightarrow} 
\mathrm{Ext}^1_{\mathrm{Rep}(\pi(X_K/K))}(K,P)$$
is surjective. 
 
Since $\delta^{1}(X_K/K)$ is bijective  and the first group cohomology $\mathrm{H}^{1}(\pi(X_K/K),M_K)$ is non-vanishing (Lemma \ref{lem-998}),  there exists  
a non-split extension of connections:
$$\varepsilon:  P\longrightarrow  Q\longrightarrow K.$$
With the assumption that $P$ 
is non-trivial and simple, we have  
 $\mathrm{H}^0(\pi(X_K/K), Q) =0$.   
Let $\epsilon$ be a pre-image of $\varepsilon$ along the surjective map   
$\mathrm{ev}_{e}$.  
In terms of extensions, these are related by the following diagrams in $\mathrm{Rep}^\mathrm{f}(\pi^\mathrm{geom}(X/S)_K)$
\begin{align}\label{eq-23}
    \xymatrix{ 
\qquad\qquad \epsilon: 0\ar[r]& P\ar[r] \ar@{=}[d]& 
N\ar[r]& M^{\vee} 
 \ar[r]& 0\\ 
 \varepsilon= \mathrm{ev}_e(\epsilon): 0\ar[r]&  P  \ar[r]&
  Q\ar[r]\ar[u]& K\ar[r]\ar[u]_e&0.}
\end{align}
The object $N$ still belongs to $\mathrm{Rep}^\mathrm{f}(\pi^\mathrm{geom}(X/S)_K),$ since
\begin{align*}
     \mathrm{Ext}^1_{\mathrm{Rep}^\mathrm{f}(\pi^\mathrm{geom}(X/S)_K)}(M^\vee, P) &\cong \mathrm{Ext}^1_{\mathrm{Rep}(\pi^\mathrm{geom}(X/S))}(K, M \otimes P) \\
     &\cong \mathrm{H^1}(\pi^\mathrm{geom}(X/S)_K, M\otimes P)\\
     & \cong \mathrm{H}^1(\pi(X_K/K),M\otimes P)\\
     & \cong \mathrm{Ext}^1_{\mathrm{Rep}(\pi(X_K/K))}(K, M \otimes P)\\
     & \cong \mathrm{Ext}^1_{\mathrm{Rep}^\mathrm{f}(\pi(X_K/K))}(M^\vee,  P).
\end{align*}
  
Taking the long exact sequence of group cohomology associated with Diagram \eqref{eq-23},  we obtain
$$\xymatrix{
0=\mathrm{H}^{0}(\pi(X_K/K), P) \ar[r]&
\mathrm{H}^{0}(\pi(X_K/K), N)\ar[r] &
\mathrm{H}^{0}(\pi(X_K/K), M^\vee)\ar[r]&
\mathrm{H}^{1}(\pi(X_K/K),P)\\
0=\mathrm{H}^{0}(\pi(X_K/K),P) \ar[r]\ar@{=}[u]&
\mathrm{H}^{0}(\pi(X_K/K),Q) =0\ar[r]\ar[u] &
\mathrm{H}^{0}(\pi(X_K/K), K)\ar@{^(->}[r]\ar@{^(->}[u]&
\mathrm{H}^{1}(\pi(X_K/K),P).\ar@{=}[u]
}$$
Since $\mathrm{H}^{0}(\pi(X_K/K),K)=K\neq 0$, the rightmost
upper horizontal map
$$\mathrm{H}^{0}(\pi(X_K/K),  M^\vee)\longrightarrow
\mathrm{H}^{1}(\pi(X_K/K),P)$$
should be non-zero, that is
$\mathrm{H}^{0}(\pi(X_K/K), N)\neq 
\mathrm{H}^{0}(\pi(X_K/K), M^\vee),$
whence the desired inequality. $\vartriangleright$

Let $M$ be an object in $\mathrm{Rep}^\mathrm{f}(\pi^\mathrm{geom}(X/S)_K)$ and let $M \hookrightarrow M'$ as in Claim above, that is,  
${\mathrm{H}^2(\pi(X_K/K), M')}=0$.
The short exact sequence 
$$0 \rightarrow M  \rightarrow M' \rightarrow M'/M \rightarrow 0,$$ in $\mathrm{Rep}^\mathrm{f}(\pi^\mathrm{geom}(X/S)_K)$
can be considered as in $\mathrm{Rep}^\mathrm{f}(\pi(X_K/K)).$ Since the map
$\lambda_K^i$ in \eqref{eq-6} is a morphism of $\delta$-functors, we have the following diagram:
$$\xymatrix{
	{\mathrm{H}^1(\pi^\mathrm{geom}(X/S)_K,M'/M)}\ar[r]\ar[d]^{\lambda_{K}^1}_\cong & {\mathrm{H}^2(\pi^\mathrm{geom}(X/S)_K,M)} \ar[r]\ar@{^(->}[d]^{\lambda_{K}^2}& {\mathrm{H}^2(\pi^\mathrm{geom}(X/S)_K,M')} \ar@{^(->}[d]^{\lambda_{K}^2}
\\
	{\mathrm{H}^1(\pi(X_K/K), M'/ M)} \ar@{->>}[r]& {\mathrm{H}^2(\pi(X_K/K), M)} \ar[r]& {\mathrm{H}^2(\pi(X_K/K), M')}=0. }$$
We conclude that the middle arrow is surjective. 

\noindent Case $g=1.$ Since  $X$ is an elliptic curve, it has a group structure. 
By the K\"unneth theorem for $\pi(X_K/K)$ \cite[Corollary 10.47]{De89}, it possesses another multiplication induced by the group structure on $X$. 
Using the Eckmann-Hilton argument, we conclude that $\pi(X_K/K)$ is abelian, leading to the decomposition
$$ \pi(X_K/K) = \pi(X_K/K)^{\mathrm{uni}} \times \pi(X_K/K)^{\mathrm{red}}. $$
Furthermore, $\pi^\mathrm{uni}(X_K/K)\cong \mathbb{G}_a\times \mathbb{G}_a$, by means of \cite[(1.2), (1.16)]{LM82}. 

On the other hand, Lemma \ref{131} implies that the group $\pi(X/S)_K$ is also abelian, and we have the surjection
\begin{align*}
  \pi^\mathrm{uni}(X_K/K) \twoheadrightarrow \pi(X/S)_K^{\mathrm{uni}} 
\end{align*}
Since $\mathrm{H}^1(  \pi^\mathrm{uni}(X_K/K), K) = \mathrm{H}^1 (\pi(X/S)_K^{\mathrm{uni}},  K), $ it follows that $\pi(X/S)_K \cong \mathbb{G}_a\times \mathbb{G}_a.$ Now, using Lyndon-Hochschild-Serre spectral sequence, we rewrite the map $\lambda^i_K$ for $i\geq 2$ as
$$ \mathrm{H}^i(\pi(X/S)_K^{\mathrm{uni}}, V_K) \longrightarrow \mathrm{H}^i(  \pi^\mathrm{uni}(X_K/K), V_K).  $$
 Since $  \pi^\mathrm{uni}(X_K/K) \cong \pi(X/S)_K^{\mathrm{uni}}$   the map $\lambda^i_K$ is an  isomorphism for $i\geq 2.$

\noindent\textit{Bijectivity of $\lambda^i_K$  ($i\geq 3$).}
Since de Rham cohomology vanishes in 
degrees $i\geq 3$, it suffices to show that $\lambda^i_K$ is injective. We apply the same argument as in the proof of the injectivity of  $\lambda^2_K$ and the result follows.
\end{proof}

\begin{lem}\label{lem-simple}
    There exist infinitely many non-isomorphic simple objects in $\mathrm{MIC}^\mathrm{geom}(X/S)_K.$
\end{lem}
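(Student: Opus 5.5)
The plan is to produce infinitely many pairwise non-isomorphic rank-one objects, coming from absolute connections on $X/k$ whose inflations remain non-isomorphic over the generic fiber. The natural source is line bundles with flat connection. Since $f$ is a relative curve of genus $g\ge1$ with a section $\eta$, the relative Picard scheme $\mathrm{Pic}^0_{X/S}$ is an abelian scheme of dimension $g$. The universal vector extension $E(X/S)$ parametrizes degree-zero line bundles with relative flat connection.

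First I would construct absolute rank-one connections. Take $(\mathcal O_X, d+\omega)$, where $\omega\in\Gamma(X,\Omega^1_{X/k})$ is a closed absolute $1$-form. For an absolute connection, flatness amounts to $d\omega=0$ in $\Omega^2_{X/k}$. Its inflation is $(\mathcal O_X,d+\bar\omega)$, where $\bar\omega$ is the image of $\omega$ in $\Omega^1_{X/S}$. Two such rank-one relative connections over $X_K$ are isomorphic iff $\bar\omega_K-\bar\omega'_K=dh/h$ for some invertible $h$ on $X_K$. Since $X_K$ is proper and geometrically connected, $h\in K^\times$ and $dh=0$ relatively, so this happens iff $\bar\omega_K=\bar\omega'_K$ in $\Gamma(X_K,\Omega^1_{X_K/K})$. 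Each such inflated object lies in $\mathrm{MIC}^\mathrm{geom}(X/S)$, and so does its image in $\mathrm{MIC}^\mathrm{geom}(X/S)_K$. Each is simple because it has rank one and its underlying bundle is locally free, so any nonzero proper subobject over $K$ is impossible. So it suffices to find infinitely many closed absolute $1$-forms whose relative images differ.

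Next I would produce these forms. By Lemma \ref{coh1} and base change, $f_*\Omega^1_{X/S}$ is a projective $R$-module of rank $g\ge1$. The exact sequence \eqref{eq-kaehler} together with $\mathrm{H}^1(X,f^*\Omega^1_{S/k})=\Omega^1_{S/k}\otimes \mathrm R^1f_*\mathcal O_X$ obstructs lifting a relative form to an absolute one. To avoid this, I would instead use the Gauss--Manin picture. The flat sections of $\mathrm{H}^1_\mathrm{dR}(X/S)$ need not be horizontal. The cleaner route uses the exact sequence \eqref{GM-hyper} with $\mathcal V=\mathcal O_X$: the image of $\mathrm{H}^1_\mathrm{dR}(X/k)\to \mathrm{H}^1_\mathrm{dR}(X/S)$ is the kernel of the Gauss--Manin map $\partial^1$. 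An element of this image is represented by an absolute extension of $\mathcal O_X$ by $\mathcal O_X$, which gives a rank-two unipotent object, not a rank-one one.

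For that reason I expect the main obstacle to be producing infinitely many non-isomorphic simple objects rather than just extensions. My preferred approach is therefore torsion line bundles. Choose $n$-torsion sections of $\mathrm{Pic}^0_{X/S}$ after shrinking $S$ or passing to a finite étale cover; over the generic point, $K$-points of $\mathrm{Pic}^0(X_K)$ of finite order exist in abundance after a finite extension. Alternatively, use the pullback of torsion points along the section. A line bundle $\mathcal L$ with $\mathcal L^{\otimes n}\cong\mathcal O_X$ carries a canonical absolute flat connection, induced from the $\mu_n$-cyclic étale cover $Y\to X$ via $f_*\mathcal O_Y=\bigoplus \mathcal L^{i}$ with $d$. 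Its inflation restricted to $X_K$ has underlying bundle $\mathcal L_K$. Distinct torsion classes in $\mathrm{Pic}(X_K)$ give non-isomorphic objects, since the underlying bundles differ.

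If rational torsion points over $S$ are insufficient, I would instead use the finite étale covers $Y\to X$ obtained by pulling back isogenies of the Jacobian $J\to S$, translated via the section $\eta$. The summands of $f_*\mathcal O_Y$ with $d$ give simple objects of $\mathrm{MIC}^\circ(X/k)$. Their inflations stay simple over $K$ by the Deligne--Milne criterion argument of Lemma \ref{131}, and they have distinct determinants or underlying bundles. Checking simplicity over $K$ and non-isomorphism over $X_K$, rather than only over $X$, is the step requiring care.
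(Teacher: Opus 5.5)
\paragraph{A genuine gap: the rank-one strategy fails in general.}
Your opening strategy is to produce infinitely many pairwise non-isomorphic rank-one objects. This cannot work in general, and your preferred route through torsion line bundles has a real gap. Shrinking $S$ does not change $K$. When the Jacobian has trivial $K/k$-trace (e.g.\ a non-isotrivial elliptic family), $\mathrm{Pic}^0(X_K)$ has only finitely many torsion points by Lang--N\'eron. In the same situation, the relative image of a closed absolute $1$-form is a Gauss--Manin-horizontal class in $f_*\Omega^1_{X/S}$, and such classes vanish by the theorem of the fixed part. Torsion points that only become rational over a finite extension $K'$ (or an \'etale cover $S'\to S$) give objects of $\mathrm{MIC}^{\mathrm{geom}}(X_{S'}/S')_{K'}$, which is a different category, and you supply no descent. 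Pushing forward along $X_{K'}\to X_K$ produces higher-rank objects, and their simplicity and non-isomorphism is exactly what would remain to be proved. In fact, in this situation a rank-one object of $\mathrm{MIC}^{\mathrm{geom}}(X/S)_K$ comes from a monodromy-invariant character of $\pi_1(X_s)$. There are only finitely many of these, so higher-rank simple objects are unavoidable.

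\paragraph{The fallback: right input, wrong justification.}
Your fallback uses the covers $h_n\colon Y_n\to X$ obtained by pulling back $[n]\colon J\to J$ along the Abel--Jacobi map defined by $\eta$. This is the right kind of input, but the step you flag as delicate is justified incorrectly. Lemma~\ref{131} only says that $\mathrm{MIC}^{\mathrm{se}}(X/S)_K$ is closed under subobjects in $\mathrm{MIC}^\circ(X_K/K)$. It says nothing about inflation preserving simplicity, and that principle is false in general: for a simple connection $\mathcal W$ of rank $\geq 2$ on $S/k$, $f^*\mathcal W$ is simple in $\mathrm{MIC}^\circ(X/k)$ but $\inf(f^*\mathcal W)$ is trivial. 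Determinants also need not distinguish the summands; for large monodromy the determinant of an orbit summand is trivial.

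\paragraph{How to repair it.}
Forget simplicity of inflations. Over $\bar K$, $\inf(h_{n*}\mathcal O_{Y_n})$ splits as the direct sum of the $n^{2g}$ $n$-torsion line bundles with their canonical connections. These are pairwise non-isomorphic because $Y_{n,\bar K}$ is connected. Now take the Jordan--H\"older constituents of $\inf(h_{n*}\mathcal O_{Y_n})_K$ in the finite-length category $\mathrm{MIC}^{\mathrm{geom}}(X/S)_K$. If there were only finitely many simple objects, only finitely many rank-one connections could occur as their constituents over $\bar K$. Yet every $n$-torsion one does, for every $n$, which is a contradiction.

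\paragraph{Comparison with the paper.}
Completed this way, your argument is a concrete abelian version of the paper's. The paper works with the whole profinite quotient $\pi^{\mathrm{geom,et}}(X/S)$ and identifies its closed fibre with $\pi^{\mathrm{et}}(X_s)$ via the exact \'etale homotopy sequence, proved by comparison with $\mathbb C$ and Anderson's criterion. The Jacobian covers give the needed infinitude directly, without that transcendental input.
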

\begin{proof}
    The idea is to identify irreducible objects among finite connections in the sense of \cite[Definition~2.4]{EH08}. We construct  a quotient $\pi^{\mathrm{geom,et}}(X/S)_K$ of $\pi^{\mathrm{geom}}(X/S)_K$, which is pro-\'etale but not finite. Consequently, it admits infinitely many irreducible representations.

    Recall that the finite objects in $\mathrm{MIC}^{\mathrm{coh}}(X/k)$ correspond to finite-dimensional representations of the \'etale fundamental group $\pi^{\mathrm{et}}(X)$, which can be regarded as a profinite quotient of $\pi(X/k)$. Under our assumptions, there is a short exact sequence of \'etale fundamental groups:
\begin{align}\label{40}
        1\longrightarrow \pi^{\mathrm{et}}(X_s) \longrightarrow \pi^{\mathrm{et}}(X) \longrightarrow \pi^{\mathrm{et}}(S)\longrightarrow 1.
\end{align}
This follows from the compatibility of the \'etale fundamental group with base change 
between two algebraically closed fields. We may assume, after replacing $k$ 
with an algebraically closed field extension of a subfield finitely generated over  
$\mathbb{Q},$ that  $k \subset \mathbb{C},$ and then base change to $\mathbb{C}.$ 
 Hence, we may assume $k = \mathbb{C}$. In this case, $S$ is a $K(\pi,1)$-space, and the homotopy exact sequence of the topological fundamental groups associated with $f\colon X \to S$ is short exact. Using Anderson's criteria for preserving injectivity after pro-finite completion \cite[Section 2]{And74} then implies that the sequence above is exact; see also \cite[Proposition~2.5]{EK24}.

 The inflated finite connections generate a full tensor subcategory of $\mathrm{MIC}^{\mathrm{geom}}(X/S)$, closed under taking sub-quotients, and hence define a quotient $\pi^{\mathrm{geom,et}}(X/S)$ of $\pi^{\mathrm{geom}}(X/S).$ This group scheme is strictly profinite in the sense of \cite[Definition~8.7]{HdS23}.
More precisely, the full subcategory generated by a single inflated finite connection $\mathrm{inf}(\mathcal{V})$ is Tannakian dual to a finite group scheme: indeed, the \'etale Galois covering that trivializes $\mathrm{inf}(\mathcal{V})$ yields a pushforward of the structure sheaf equipped with a canonical connection, which is finite and generates a Tannakian subcategory containing $(\mathcal{V}, \nabla).$ As a consequence, the generic fiber of
$\pi^{\mathrm{geom,et}}(X/S)$ is pro-finite, too.

On the other hand, the Tannakian criterion for exact sequences of affine group schemes \cite[Appendix]{EHS08}, together with the exactness of the \'etale sequence \eqref{40}, implies that the closed fiber of $\pi^{\mathrm{geom,et}}(X/S)$ at $s$ is isomorphic to $\pi^{\mathrm{et}}(X_s)$. Indeed, any connection on $X_s$ arises as a sub-quotient of the restriction of a connection on $X/k$, and thus belongs to $\mathrm{MIC}^{\mathrm{geom,et}}(X/S)_s$.
Since $\pi^{\mathrm{et}}(X_s)$ is not finite, the same holds for $\pi^{\mathrm{geom,et}}(X/S)$ and its generic fiber.

Therefore, $\pi^{\mathrm{geom,et}}(X/S)_K$ is pro-\'etale and not finite, and hence admits infinitely many irreducible representations. As it is a quotient of $\pi^{\mathrm{geom}}(X/S)_K$, the same holds for $\pi^{\mathrm{geom}}(X/S)_K$.
\end{proof}

\begin{cor}\label{143}
Let $X$ be a projective curve with genus $g\geq 1.$ Let $\mathcal V$ be an object in $ \mathrm{MIC}^\mathrm{geom}(X/S).$ Denote $V=\eta^*(\mathcal V)$. Then the map
 $$\nu^i \otimes K:\mathrm{H}^i(\pi^\mathrm{geom}(X/S), V) \otimes K \longrightarrow \mathrm{H}^i_\mathrm{dR}(X/S,\mathcal{V}) \otimes K$$ 
is an isomorphism for  $i\geq 0.$ 
\end{cor}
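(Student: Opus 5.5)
The plan is to factor $\nu^i\otimes K$ through the generic-fibre maps studied in Proposition \ref{130}, using flat base change on both sides. Since $K$ is flat over $R$, flat base change for group cohomology (\cite[Proposition 4.13]{Ja03}) identifies $\mathrm{H}^i(\pi^\mathrm{geom}(X/S),V)\otimes_R K$ with $\mathrm{H}^i(\pi^\mathrm{geom}(X/S)_K,V_K)$. Flat base change for de Rham cohomology (\cite[23.5]{ABC20}) identifies $\mathrm{H}^i_\mathrm{dR}(X/S,\mathcal V)\otimes_R K$ with $\mathrm{H}^i_\mathrm{dR}(X_K/K,\mathcal V_K)$.

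With these identifications, I would form the square
$$\xymatrix{
\mathrm{H}^i(\pi^\mathrm{geom}(X/S),V)\otimes_RK \ar[r]^{\nu^i\otimes K}\ar[d]_\cong & \mathrm{H}^i_\mathrm{dR}(X/S,\mathcal V)\otimes_RK\ar[d]^\cong\\
\mathrm{H}^i(\pi^\mathrm{geom}(X/S)_K,V_K)\ar[r]_{\delta^i(X_K/K)\circ\lambda^i_K} & \mathrm{H}^i_\mathrm{dR}(X_K/K,\mathcal V_K).}$$
The bottom composite is an isomorphism for every $i\geq 0$. Indeed, Proposition \ref{130} shows that $\lambda^i_K$ is bijective, and Lemma \ref{lem-998} shows that $\delta^i(X_K/K)$ is bijective. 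Once the square commutes, $\nu^i\otimes K$ is therefore an isomorphism. The case $i=0$ also follows directly from \eqref{eq-113}. For torsion $\mathcal V$ one may replace $\mathcal V$ by $\mathcal V^{\mathrm{fr}}$, since the generic fibres agree, exactly as in the proof of Proposition \ref{130}.

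The main point is commutativity of the square, which I would check by the uniqueness statement in Lemma \ref{lem_comparison}. On the category $\mathrm{Rep}(\pi^\mathrm{geom}(X/S))$, consider the two routes $V\mapsto \mathrm{H}^i_\mathrm{dR}(X_K/K,\mathcal V_K)$. Both arise from the single natural isomorphism in degree $0$, namely $V^{\pi^\mathrm{geom}}\otimes K\cong (\mathcal V_K)^{\nabla}$. Both routes are morphisms of $\delta$-functors, because the base change maps are compatible with connecting homomorphisms (flat base change is exact). Both go out of the universal $\delta$-functor $\mathrm{H}^\bullet(\pi^\mathrm{geom}(X/S),-)$. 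Hence they coincide.

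The one subtlety is that the base change isomorphisms hold for finitely generated $V$, while the universal property is applied on the ind-category. I expect this to be handled by the facts that group cohomology commutes with direct limits and that both sides are effaceable. Alternatively, one can take $\delta$-functor maps with source $\mathrm{H}^\bullet(\pi^\mathrm{geom}(X/S),-)\otimes K$, which is still universal because $-\otimes_RK$ is exact and sends injectives to $\mathrm{H}^{>0}$-acyclics, and then compare the two composites by uniqueness.
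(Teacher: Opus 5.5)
Your proposal is correct and is essentially the paper's proof: flat base change for group cohomology and for de Rham cohomology reduces $\nu^i\otimes K$ to $\delta^i(X_K/K)\circ\lambda^i_K$, which is bijective by Proposition \ref{130} and Lemma \ref{lem-998}. Your uniqueness-of-$\delta$-functor argument for the commutativity of the square fills in a step the paper only asserts. Your worry about ind-objects does not arise, because that argument needs only that the base change maps commute with connecting maps, not that they are bijective.
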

\begin{proof}
Using flat base change for de Rham cohomology and group cohomology, we have the following diagram:
$$\xymatrix{
\mathrm{H}^i(\pi^\mathrm{geom}(X/S),V)\otimes_R K\ar[rr]^{\nu^i\otimes K} \ar[d]^\cong && \mathrm{H}^i_{\mathrm{dR}}(X/S,\mathcal{V})\otimes K\ar[dd]^{\cong} \\
	\mathrm{H}^i(\pi^\mathrm{geom}(X/S)_K,V_K)\ar[d]_{\lambda^i_K}	&&  \\
	\mathrm{H}^i(\pi(X_K/K), V_K)\ar[rr]^{\delta^i(X_K/K)} && {\mathrm{H}^i_\mathrm{dR}(X_K/K,\mathcal{V}_K)}.
}$$
As $\lambda^i_K$ is an isomorphism via Proposition \ref{130} and $\delta^i(X_K/K)$ is also an
isomorphism, so is  $\delta^i \otimes K$.  
\end{proof}

\subsection{Comparison at the closed fiber}
Let $s$ be a closed point of $S$. In what follows we 
identify $k$ with the residue field $k_s$ at $s$ and
shorten the notation for the tensor product $-\otimes_Rk_s$ to $(-)_s$.
Our aim now is to verify that the map
$$\nu^2_s: \mathrm{H}^2(\pi^\mathrm{geom}(X/S), V)_s\longrightarrow  \mathrm{H}^2_\mathrm{dR}(X/S, \mathcal{V})_s$$
is injective.
To define the special fiber of $\pi(X/S)$ at the closed point $s$ of $S =\mathrm{Spec}(R),$ we recall the category $\mathrm{MIC}^\mathrm{se}(X/S)_s $. 
Let $\mathfrak{m}_s$ be the maximal ideal that determines $s.$ Then
$\mathrm{MIC}^\mathrm{se}(X/S)_{s}$ is the full subcategory in 
$\mathrm{MIC}^\mathrm{se}(X/S)$ of objects annihilated by $m_s$.
We then have (cf. \cite[Chap.~10]{Ja03}):
$$\mathrm{MIC}^\mathrm{se}(X/S)_s\cong \mathrm{Rep}^\mathrm{f}(\pi(X/S)_s).$$
We define $\mathrm{MIC}^\mathrm{geom}(X/S)_{s}$ similarly and we also obtain 
$$ \mathrm{MIC}^\mathrm{geom}(X/S)_{s} \cong \mathrm{Rep}^\mathrm{f}(\pi^\mathrm{geom}(X/S)_{s}).$$

Similarly to \eqref{eq-24}, we also have the following morphism:
\begin{align}\label{163}
    \pi(X_s/k) \twoheadrightarrow \pi(X/S)_{s} \twoheadrightarrow \pi^{\mathrm{geom}}(X/S)_{s}.
\end{align}
 We have $(-)^{\pi^\mathrm{geom}(X/S)_{s}} = (-)^{\pi(X/S)_{s}} = (-)^{\pi(X_{s}/k)},$ hence from the "universal $\delta$-functor" formalism we get for every $\pi^{\mathrm{geom}}(X/S)_{s}$-module $V_{s}$ a system of maps
$$ \lambda^i_{s}: \mathrm{H}^{i}(\pi^{\mathrm{geom}}(X/S)_{s}, V_{s}) \longrightarrow \mathrm{H}^i(\pi(X/S)_{s},V_{s})\longrightarrow \mathrm{H}^{i}(\pi(X_{s}/k),V_{s}).$$

\begin{prop}\label{prop-20}
        Let $R$ be a Dedekind domain. Let $X$ be a projective curve over $R$ with genus $g\geq 1.$ Let $V_{s}$ be an object in $ \mathrm{Rep}^\mathrm{f}(\pi^\mathrm{geom}(X/S)_{s}),$ where $V$ corresponds to an inflated connection. Then the map
\begin{align*}
     \lambda^2_{s}: \mathrm{H}^{2}(\pi^\mathrm{geom}(X/S)_{s}, V_{s}) \longrightarrow \mathrm{H}^2(\pi(X/S)_{s},V_{s})   \longrightarrow \mathrm{H}^{2}(\pi(X_{s}/k),V_{s})
\end{align*}
is injective.
\end{prop}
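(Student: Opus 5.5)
The plan is to run the same dimension-shifting argument that gave the injectivity of $\lambda^2_K$, now over the closed fiber. We apply Lemma \ref{lem-injective} with $i=1$, taking
\[
\mathcal{A}=\mathrm{ind}\text{-}\mathrm{Rep}^\mathrm{f}(\pi^\mathrm{geom}(X/S)_s),\qquad
\mathcal{B}=\mathrm{Rep}(\pi(X_s/k)),\qquad F=(-)^{\pi(X_s/k)} .
\]
Via the surjections \eqref{163}, $\mathcal{A}$ is a full subcategory of $\mathcal{B}$ closed under subquotients. On $\mathcal{A}$ we have $F=(-)^{\pi^\mathrm{geom}(X/S)_s}$, so the comparison maps $\psi^i$ of Lemma \ref{lem-injective} are exactly the $\lambda^i_s$. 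It then suffices to establish two things:
\begin{enumerate}
\item[(a)] $\lambda^1_s$ is bijective on all of $\mathrm{Rep}^\mathrm{f}(\pi^\mathrm{geom}(X/S)_s)$, and also on its ind-objects, since cohomology commutes with filtered colimits;
\item[(b)] $V_s$ embeds into an injective $J$ of $\mathcal{A}$ whose higher $\mathcal{B}$-cohomology vanishes in degree $1$.
\end{enumerate}

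For (a), the key input is the first comparison over $R$. Let $V$ be $R$-projective in $\mathrm{MIC}^\mathrm{geom}(X/S)$. By Corollary \ref{cor-comparison}, $\nu^1\colon \mathrm{H}^1(\pi^\mathrm{geom}(X/S),V)\to \mathrm{H}^1_\mathrm{dR}(X/S,\mathcal V)$ is an isomorphism. By Lemma \ref{coh1} the target is projective, and the same argument shows $\mathrm{H}^2_\mathrm{dR}$ is projective too. The de Rham side therefore satisfies base change to $s$, giving $\mathrm{H}^1_\mathrm{dR}(X/S,\mathcal V)_s\cong \mathrm{H}^1_\mathrm{dR}(X_s/k,\mathcal V_s)$.

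On the group side, we use the sequence $0\to V\xrightarrow{t} V\to V_s\to 0$, where $t$ is a local uniformizer at $s$ (we may localize $R$ at $s$ first). Its long exact sequence gives
\[
0\to \mathrm{H}^1(\pi^\mathrm{geom},V)_s\to \mathrm{H}^1(\pi^\mathrm{geom}(X/S)_s,V_s)\to \mathrm{H}^2(\pi^\mathrm{geom},V)[t]\to 0 .
\]
The torsion term vanishes by Lemma \ref{133}: $\nu^2$ injects $\mathrm{H}^2(\pi^\mathrm{geom},V)$ into a projective module. Hence $\mathrm{H}^1(\pi^\mathrm{geom}(X/S)_s,V_s)\cong \mathrm{H}^1(\pi^\mathrm{geom},V)_s$. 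Composing with $\delta^1(X_s/k)$, which is bijective by Lemma \ref{lem-998}, gives bijectivity of $\lambda^1_s$ on reductions of $R$-projective objects.

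To pass to an arbitrary $V_s$, I would use the equivalent extension description: every extension of $\mathcal O_{X_s}$ by $\mathcal V_s$ in $\mathrm{MIC}(X_s/k)$ should lie in $\mathrm{MIC}^\mathrm{geom}(X/S)_s$. To see this, present $V_s$ as a subquotient of $U_s$ with $U$ the fiber of an inflated connection (Corollary \ref{cor-Lrepr}), and apply the five lemma as in Case 2 of Corollary \ref{cor-comparison}. Alternatively, use the universal extension (Theorem \ref{univ-ext}) restricted to $X_s$. Since the statement only requires $V$ to come from an inflated connection, the projective case may already suffice for $X=V_s$. For the auxiliary objects $J/X$, however, I need the general case.

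For (b), take $J=V_s\otimes\mathcal O(\pi^\mathrm{geom}(X/S)_s)$, exactly as in Lemma \ref{133}. This $J$ is $\mathcal{A}$-acyclic by Lemma \ref{cor-2}, it is a filtered union of finite objects, and the quotient $J/V_s$ is again an ind-object. So $\psi^1(J/V_s)$ is bijective by (a) together with commutation with colimits. Bijectivity of $\psi^1(J)$ then forces $\mathrm{H}^1(\pi(X_s/k),J)=\mathrm{H}^1(\pi^\mathrm{geom}(X/S)_s,J)=0$. Applying Lemma \ref{lem-injective} yields injectivity of $\lambda^2_s$.

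The main obstacle I expect is step (a) for non-liftable $V_s$, that is, showing that $\mathrm{Ext}^1$ on $X_s$ between objects of $\mathrm{MIC}^\mathrm{geom}(X/S)_s$ stays inside that subcategory. The torsion analysis above handles only reductions of projective $V$. For the general case I would lean on Corollary \ref{cor-Lrepr} and a d\'evissage along surjections from inflated objects.
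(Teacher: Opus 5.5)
Your framework matches the paper's: you apply Lemma~\ref{lem-injective} with $i=1$, using bijectivity of $\lambda^1_s$ at $V_s$ for inflated $\mathcal V$, which comes from Corollary~\ref{cor-comparison}, de Rham base change and Lemma~\ref{lem-998}. Your treatment of that inflated case is correct but heavier than needed. Instead of killing $\mathrm{H}^2(\pi^\mathrm{geom}(X/S),V)[t]$ via Lemma~\ref{133}, the paper uses only two facts. First, base change $\mathrm{H}^1(\pi^\mathrm{geom}(X/S),V)\otimes k\to \mathrm{H}^1(\pi^\mathrm{geom}(X/S)_s,V_s)$ is injective. Second, $\lambda^1_s$ is injective on every object, because $\pi(X_s/k)\to\pi^\mathrm{geom}(X/S)_s$ is surjective. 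Also, Lemma~\ref{coh1} gives projectivity of $\mathrm{H}^i_\mathrm{dR}(X/S,\mathcal V)$ only for inflated $\mathcal V$, since its proof uses the Gauss--Manin connection. It does not cover arbitrary $R$-projective objects of $\mathrm{MIC}^\mathrm{geom}(X/S)$, as you state.

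The real gap is in (b). With $J=V_s\otimes\mathcal O(\pi^\mathrm{geom}(X/S)_s)$, the vanishing $\mathrm{H}^1(\pi(X_s/k),J)=0$ is equivalent to surjectivity of $\lambda^1_s$ on $J$. In your plan that must come from surjectivity on arbitrary finite $\pi^\mathrm{geom}(X/S)_s$-representations, since the finite subobjects of $J$ need not be fibers of inflated connections. This is exactly what you leave open. At $J/V_s$, by contrast, the proof of Lemma~\ref{lem-injective} only uses injectivity of $\psi^1$, which is automatic.

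The d\'evissage you suggest, along surjections $W_s\twoheadrightarrow V_s$ from inflated objects, does not work. In the long exact sequence the term after $\mathrm{H}^1(V_s)$ is $\mathrm{H}^2$ of the kernel, and you have no control over $\lambda^2_s$ there. Use embeddings instead:
\begin{itemize}
\item Apply Corollary~\ref{cor-Lrepr} to $V_s^\vee$, a finite $\pi^\mathrm{geom}(X/S)$-representation killed by $\mathfrak m_s$, and dualize. This gives $V_s\hookrightarrow W_s$ with $W=\eta^*(\mathcal W)$ for some $\mathcal W\in\mathrm{MIC}^\circ(X/k)$.
\item Apply the four lemma to $\mathrm{H}^0(W_s/V_s)\to\mathrm{H}^1(V_s)\to\mathrm{H}^1(W_s)\to\mathrm{H}^1(W_s/V_s)$. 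The vertical maps at $\mathrm{H}^0(W_s/V_s)$ and $\mathrm{H}^1(W_s)$ are bijective, and the one at $\mathrm{H}^1(W_s/V_s)$ is injective.
\item This yields surjectivity of $\lambda^1_s(V_s)$, after which your argument goes through.
\end{itemize}

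The paper sidesteps this step by taking $J=V_s\otimes\mathcal O(\pi(X/k))$ instead. The closed fiber $1\to\pi^\mathrm{geom}(X/S)_s\to\pi(X/k)\to\pi(S/k)\to 1$ of \eqref{eq_exact_diag} makes induction from $\pi^\mathrm{geom}(X/S)_s$ to $\pi(X/k)$ exact. Hence $\mathrm{H}^i(\pi^\mathrm{geom}(X/S)_s,J)=\mathbf R^i\mathrm{Ind}(V_s)=0$ for $i\geq 1$. Both $J$ and $J/V_s$ are filtered unions of fibers of absolute connections, so only the inflated case of $\lambda^1_s$ is ever needed.
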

\begin{proof} Our strategy is to utilize Lemma \ref{lem-injective}. Thus our first task will be
checking the bijectivity of $\lambda^1_s$.

\noindent  \textit{Bijectivity of $\lambda^1_{s}$}. Thanks to \eqref{163}, the map
    $$ \lambda^1_{s}: \mathrm{H}^1(\pi^\mathrm{geom}(X/S)_{s},V_{s}) \longrightarrow \mathrm{H}^1(\pi(X_{s}/k),V_{s})$$
is injective for an arbitrary representation $V_{s}$ in $\mathrm{Rep}^\mathrm{f}(\pi(X/S)_{s}).$
    
    Let $\mathcal{V}$ be an inflated connection. According to Corollary \ref{cor-comparison}, we have 
\begin{align}\label{161}
\nu^1: \mathrm{H}^1(\pi^\mathrm{geom}(X/S),V) \overset{\cong}{\rightarrow} \mathrm{H}^1_\mathrm{dR}(X/S, \mathcal{V}).
\end{align}
According to  \cite[4.18 - p.64]{Ja03}, the  map
$$ \mathrm{H}^i(\pi(X/S),V) \otimes k  \hookrightarrow \mathrm{H}^i(\pi(X/S)_s, V_s), $$
for $i\geq 0,$ is injective. Combined with the base change theorem for de Rham cohomology \cite[(8.0.4.1)]{Ka70}, we obtain the following diagram:
$$\xymatrix{
{\mathrm{H}^1(\pi^\mathrm{geom}(X/S)_{s},V_{s})}\ar@{^{(}->}[rr]^{\lambda^1_{s}} && \mathrm{H}^1(\pi(X_{s}/k),V_{s})\ar[dd]^{\delta^1(X_{s}/k)} \\
	{\mathrm{H}^1(\pi^\mathrm{geom}(X/S),V)\otimes_R k}\ar[d]_{\nu^1\otimes k}\ar@{^{(}->}[u]	&&  \\
	{\mathrm{H}^1_\mathrm{dR}(X/S, \mathcal{V})\otimes_Rk}\ar[rr]_\cong && {\mathrm{H}^1_\mathrm{dR}(X_{s}/k,\mathcal{V}_{s})}.
}$$
From \eqref{161}, we can see that the map $\nu^1 \otimes k $ is bijective. Thanks to Lemma \ref{lem-998}, the morphism $\delta^1(X_{s}/k)$ is also bijective. Thus, the morphism $\lambda_{k}^1$ is bijective. 

\noindent\textit{Injectivity of $\lambda^2_{s}$}. We first need a exact sequence realizing
$\pi^\mathrm{geom}(X/S)_s$ as a normal subgroup of $\pi(X/k)$.  
Recall the exact sequence \eqref{eq_exact_diag}. Take the fiber at $s$ we obtain the following 
sequence 
$$1 \longrightarrow \pi^\mathrm{geom}(X/S)_s\longrightarrow \Pi(X/k)^\Delta_s \longrightarrow \Pi(S/k)^\Delta_s \longrightarrow 1, $$
which is exact. Indeed, the faithfully flatness of the  map is known, cf. Theorem \ref{HES};
while the exactness in the middle and on the left is expressed as a fiber product (cf. Appendix 
\ref{sect_kernel}), 
hence it is preserved by base change. 

We also notice that $\Pi(X/k)^\Delta_s =\pi(X/k)$ -- the
differential fundamental group of $X$ (with base point at $x$), and same for $S$. Thus the above 
exact sequence can be expressed as
$$1 \longrightarrow \pi^\mathrm{geom}(X/S)_s\longrightarrow \pi(X/k) \longrightarrow \pi(S/k)
\longrightarrow 1.$$
According to \cite[Corollary 5.13 - p.78]{Ja03} we have the following:

\noindent \textbf{Claim}. 
\textit{The induction functor $\mathrm{Ind}_{\pi^\mathrm{geom}(X/S)_s}^{\pi(X/k)}$ is an exact functor.}

Let $\mathcal{V}$ be an inflated connection. We prove the injectivity of $\lambda^2_{s}$ by apply Lemma \ref{lem-injective} to
$$\lambda^1_{s}: \mathrm{H}^1(\pi^\mathrm{geom}(X/S)_s, V_s)\longrightarrow \mathrm{H}^1(\pi(X_s/k),{V}_s).$$
Notice that $\mathcal{V}$ as an absolute connection corresponds to a representation $\mathcal V_x$
of $\pi(X/k)$ which when restricted to $\pi^\mathrm{geom}(X/S)_s$ is just $V_s$.
In fact, we have
$$\mathcal V_x=x^*(\mathcal V)=s^*\circ\eta^*(\mathcal V)=V_s.$$

Consider the following short exact sequence: 
$$ 0 \longrightarrow V_s\stackrel{\rho_{ V_s}} \longrightarrow  V_s \otimes_t \mathcal{O}(\pi(X/k)) \longrightarrow Q \longrightarrow 0,$$
in $\mathrm{Rep}(\pi(X/k))$ (which is equivalent to $\mathrm{Ind-MIC}(X/k)$ 
by means of
the fiber functor at $x$).
 
We now check the conditions in Lemma \ref{lem-injective}.
\begin{itemize}    
    \item [i)] We have shown above that $\lambda^1_{s}$ is bijective at $V_s$. 
    \item  [ii)] Choose 
$J = V_s\otimes\mathcal{O}(\pi(X/k)).$
According to \cite[Proposition 4.10 - p.54]{Ja03} we have
 $$\mathrm{H}^i(\pi^\mathrm{geom}(X/S)_s,J) = \mathbf{R}^i\mathrm{Ind}_{\pi^\mathrm{geom}(X/S)_s}^{\pi(X/k)}(V_s) = 0$$ for all $i\geq 1$.  Since $J$ is a direct limit of representations that corresponding to absolute connections,  $\lambda^1_{s}$ is bijective at $J$, consequently  
 $$\mathrm{H}^1(\pi(X_{s}/k), J)=0.$$  Since $Q_s$ is also a direct limit of representations that corresponding to inflated connections
we also have $\lambda^1_{s}$ is injective at $Q$.
\end{itemize}
Therefore, the map $\lambda^2_{s}$ is bijective at $V_s$.
\end{proof}

 \begin{cor} \label{Cor-144}
Let $f:X\longrightarrow S$ be a projective curve with genus $g\geq 1.$ Let $\mathcal V$ be an inflated connection, 
$V=\eta^*(\mathcal V)$. Then the map
 $$\nu^2 \otimes k:\mathrm{H}^2(\pi^\mathrm{geom}(X/S), V)_s \longrightarrow \mathrm{H}^2_\mathrm{dR}(X/S,\mathcal{V})_s$$ 
is an injection.
 \end{cor}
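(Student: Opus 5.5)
The plan mirrors the proof of Corollary \ref{143}, replacing the generic fiber by the closed fiber and bijectivity by injectivity. I would build a commutative square in which $\nu^2\otimes k$ is composed with base change maps and compared with $\lambda^2_s$ and $\delta^2(X_s/k)$.

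\emph{Step 1: the diagram.} By \cite[4.18]{Ja03}, the base change map $\mathrm{H}^2(\pi^\mathrm{geom}(X/S),V)\otimes_R k\to \mathrm{H}^2(\pi^\mathrm{geom}(X/S)_s,V_s)$ is injective. On the de Rham side, $X/S$ is a smooth proper relative curve, so $\mathrm{H}^3_\mathrm{dR}(X/S,\mathcal V)=0$. By Lemma \ref{coh1} all $\mathrm{H}^i_\mathrm{dR}(X/S,\mathcal V)$ are projective, so the base change theorem \cite[(8.0.4.1)]{Ka70} gives an isomorphism $\mathrm{H}^2_\mathrm{dR}(X/S,\mathcal V)\otimes_R k\cong \mathrm{H}^2_\mathrm{dR}(X_s/k,\mathcal V_s)$. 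I would then arrange the square
$$\xymatrix{
\mathrm{H}^2(\pi^\mathrm{geom}(X/S),V)\otimes_R k \ar[r]^{\nu^2\otimes k}\ar@{^(->}[d] & \mathrm{H}^2_\mathrm{dR}(X/S,\mathcal V)\otimes_R k \ar[d]^{\cong}\\
\mathrm{H}^2(\pi^\mathrm{geom}(X/S)_s,V_s)\ar[r]_{\delta^2(X_s/k)\circ\lambda^2_s} & \mathrm{H}^2_\mathrm{dR}(X_s/k,\mathcal V_s).
}$$

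\emph{Step 2: the bottom arrow is injective.} Proposition \ref{prop-20} gives that $\lambda^2_s$ is injective, since $V$ comes from an inflated connection. Lemma \ref{lem-998} gives that $\delta^2(X_s/k)$ is bijective. Hence the bottom composite is injective, and so is the left-down-then-bottom path.

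\emph{Step 3: commutativity, the main obstacle.} The square must commute, and this is the step I expect to be hardest. I would argue with Lemma \ref{lem_comparison}. Every map in the square is a morphism of $\delta$-functors in $V$ extending the canonical identification in degree $0$. Both composites $\mathrm{H}^\bullet(\pi^\mathrm{geom}(X/S),-)\to \mathrm{H}^\bullet_\mathrm{dR}(X_s/k,(-)_s)$ are natural transformations from the universal $\delta$-functor $\mathrm{H}^\bullet(\pi^\mathrm{geom}(X/S),-)$ on $\mathrm{Rep}(\pi^\mathrm{geom}(X/S))$. The target must be made a $\delta$-functor by restricting to $R$-flat objects, or by composing with the base change maps before tensoring. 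Both composites agree in degree $0$, so uniqueness forces them to coincide. The subtle point is that $-\otimes_R k$ is not exact, so I would run the uniqueness argument on the maps into the fiber theories rather than on the $\otimes k$ forms, exactly as is implicitly done in Corollary \ref{143}.

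Once the square commutes, injectivity of $\nu^2\otimes k$ follows from the injectivity of the path through the lower left corner together with the right vertical isomorphism.
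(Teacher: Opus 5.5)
Your proposal is correct and follows the paper's proof essentially verbatim. The paper uses the same square: the injective base change map of \cite[4.18]{Ja03}, Katz's base change isomorphism on the de Rham side, injectivity of $\lambda^2_s$ from Proposition~\ref{prop-20}, and bijectivity of $\delta^2(X_s/k)$ from Lemma~\ref{lem-998}. The one difference is that the paper takes commutativity of the square for granted, whereas you justify it by uniqueness of morphisms of $\delta$-functors on $R$-flat objects, where $-\otimes_R k$ is exact.
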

 \begin{proof}
According to  \cite[4.18 - p.57]{Ja03}, the  map
$$ \mathrm{H}^i(\pi(X/A),M)_s  \hookrightarrow \mathrm{H}^i(\pi(X/A)_s, M_s), $$
for $i\geq 0,$ is injective. 
Now, the base change theorem for de Rham cohomology \cite[(8.0.4.1)]{Ka70} yields the following diagram:
$$\xymatrix{
\mathrm{H}^2(\pi^\mathrm{geom}(X/S),V)_s\ar[rr]^{\nu^2_s} \ar@{^{(}->}[d] && \mathrm{H}^2_{\mathrm{dR}}(X/S,\mathcal{V})_s\ar[dd]^{\cong} \\
	\mathrm{H}^2(\pi^\mathrm{geom}(X/S)_s,V_s)\ar[d]_{\lambda^2_{s}}	&&  \\
	\mathrm{H}^2(\pi(X_{s}/k), V_s)\ar[rr]^{\delta^2(X_{s}/k)} && {\mathrm{H}^2_\mathrm{dR}(X_{s}/k,\mathcal{V}_{s})}.
}$$
Since  $\lambda_s^2 $ is injective and so is $\delta^2(X_s/k)$, 
according to Lemma \ref{lem-998}, we conclude that the map $\nu^2_s$ is injective.   
 \end{proof}

\subsection{Comparison theorem}
  We are now ready to prove: 
\begin{thm}\label{thm-comparison}
Let $f:X\longrightarrow S$ be a projective curve with genus $g\geq 1$, where $S$
is a smooth affine curve. Assume that $S$ admits a section.
Let $\mathcal{V}$ be an object in $\mathrm{MIC}^\mathrm{geom}(X/S),$ 
and $V$ be the  corresponding representation of $\pi^\mathrm{geom}(X/S)$. 
Then the map
 $$\nu^i: \mathrm{H}^i(\pi^\mathrm{geom}(X/S),V)\overset{\gamma^i}{\longrightarrow}\mathrm{H}^i(\pi(X/S), V) \overset{\delta^i}{\longrightarrow} \mathrm{H}^i_\mathrm{dR}(X/S, \mathcal{V})$$ 
is  bijective for all $i
\geq 0.$ 
\end{thm}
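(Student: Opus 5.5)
We prove bijectivity of $\nu^i$ by induction on $i$, using the generic and closed fibre comparisons already established and the fact that both sides are finite $R$-modules over the Dedekind domain $R$.

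\emph{Low degrees.} For $i=0$ the claim is the isomorphism \eqref{eq-113}. For $i=1$ with $\mathcal V$ locally free it is Corollary \ref{cor-comparison}. For general $\mathcal V\in\mathrm{MIC}^\mathrm{geom}(X/S)$ we use the sequence $0\to\mathcal V^{\mathrm{tor}}\to\mathcal V\to\mathcal V^{\mathrm{fr}}\to 0$ and the five lemma. For this we also need the torsion part. The torsion object $\mathcal V^{\mathrm{tor}}$ is annihilated by a nonzero ideal, so it has a finite filtration whose graded pieces live on closed fibres $\mathrm{MIC}^\mathrm{geom}(X/S)_s$. Each such piece is a quotient of a locally free geometric object (Corollary \ref{cor-Lrepr}). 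This lets us run the same dimension-shifting in degrees $0,1$.

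\emph{Degree $2$.} Let $\mathcal V$ be inflated first. Lemma \ref{133} gives injectivity of $\nu^2$. Let $C$ be the cokernel; both source and target are finite $R$-modules. By Corollary \ref{143}, $C\otimes_R K=0$, so $C$ is torsion. Tensor the sequence $0\to \mathrm H^2(\pi^\mathrm{geom},V)\to\mathrm H^2_\mathrm{dR}(X/S,\mathcal V)\to C\to 0$ with $k_s$. The de Rham term is projective by Lemma \ref{coh1}, so this yields $0\to\mathrm{Tor}_1(C,k_s)\to \mathrm H^2(\pi^\mathrm{geom},V)_s\xrightarrow{\nu^2_s}\mathrm H^2_\mathrm{dR}(X/S,\mathcal V)_s\to C_s\to0$. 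Corollary \ref{Cor-144} says $\nu^2_s$ is injective, hence $\mathrm{Tor}_1(C,k_s)=0$ for every closed point $s$. Therefore $C$ is torsion free, and being torsion, $C=0$.

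For general locally free geometric $\mathcal V$, we embed it specially into $\inf(\mathcal U)$ (Lemma \ref{lem-DHdS}) with quotient $\mathcal Q$, and compare the long exact sequences. The $\mathrm H^3$ terms vanish on the de Rham side because relative de Rham cohomology of curves vanishes above degree $2$. We need the same on the group side, and this is obtained below. The five lemma then gives bijectivity, since degree $1$ is known for every object. Torsion objects are handled by the filtration argument as above.

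\emph{Degrees $\geq3$.} Here we need $\mathrm H^i(\pi^\mathrm{geom}(X/S),V)=0$. By Corollary \ref{143} and de Rham vanishing this module is torsion. To see it is torsion free, we adapt the closed fibre argument of Proposition \ref{prop-20}. Lemma \ref{lem-injective}, applied with $J=V\otimes\mathcal O(\pi^\mathrm{geom}(X/S))$ and with bijectivity in degree $2$ for all objects including quotients, gives injectivity of $\nu^3$ into $0$. Inducting on $i$ then yields vanishing in all degrees $\ge3$.

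The main obstacle is degree $2$ for non-inflated and torsion objects. The closed fibre injectivity (Proposition \ref{prop-20}) is only available for inflated $V$, so the reduction to inflated connections via special embeddings and the five lemma must be carried out carefully. In particular, the $\mathrm H^3$ vanishing on the group side, which that five-lemma step uses, has to be proved first, via Lemma \ref{lem-injective}, and this must be done without circularity.
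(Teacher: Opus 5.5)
Your degree-$2$ argument for inflated $\mathcal V$ is the paper's Case~1. The $\mathrm{Tor}_1(C,k_s)$ computation is exactly the commutative algebra behind the paper's ``hence $\nu^2$ is an isomorphism''. For degrees $\geq 3$ you take a different route from the paper's closed-fibre argument: Lemma~\ref{lem-injective} over $R$ with $J=V\otimes_R\mathcal O(\pi^\mathrm{geom}(X/S))$, as in the proof of Lemma~\ref{133} one degree up. This route is cleaner. It gives $\mathrm H^3(\pi^\mathrm{geom}(X/S),V)\hookrightarrow\mathrm H^3_\mathrm{dR}(X/S,\mathcal V)=0$ directly, without needing finite generation of the group cohomology, which the paper's fibrewise vanishing tacitly uses.

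However, as written your argument does not close, and the circularity you flag at the end is genuine in your arrangement. The degree-$\geq 3$ step needs $\nu^2$ bijective on $J$ and $J/V$. These are unions of locally free geometric objects that are not inflated, so that step presupposes degree $2$ for all locally free geometric objects. Your five-lemma step for such objects rests on ``degree $1$ for every object'' plus $\mathrm H^3$-vanishing, and that is not enough. Consider the window, on both sides,
\[
\mathrm H^1(\inf\mathcal U)\to\mathrm H^1(\mathcal Q)\to\mathrm H^2(\mathcal V)\to\mathrm H^2(\inf\mathcal U)\to\mathrm H^2(\mathcal Q).
\]
Degree-$1$ bijectivity and bijectivity of $\nu^2(\inf\mathcal U)$ give only injectivity of $\nu^2(\mathcal V)$. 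Surjectivity needs $\nu^2(\mathcal Q)$ to be injective. Vanishing of $\mathrm H^3$ on both sides gives only surjectivity of $\nu^2(\mathcal Q)$, which does not help.

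The missing input is already available. Since $\mathcal V\to\inf(\mathcal U)$ is special, $\mathcal Q$ is a locally free object of $\mathrm{MIC}^\mathrm{geom}(X/S)$. Lemma~\ref{133}, which holds for all locally free geometric objects and not only inflated ones, gives injectivity of $\nu^2(\mathcal Q)$. Corollary~\ref{cor-comparison} gives bijectivity of $\nu^1(\mathcal Q)$. The four lemma then yields $\nu^2(\mathcal V)$ bijective with no $\mathrm H^3$ term at all, which breaks the circle. The correct order is:
\begin{enumerate}
\item locally free objects in degrees $\leq 2$;
\item locally free objects in degrees $\geq 3$, by your Lemma~\ref{lem-injective} argument with induction on $i$;
\item arbitrary objects, only after that.
\end{enumerate}

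The same ordering problem affects your degree-$1$ claim for torsion objects. Injectivity of $\nu^1$ holds for every object: $\gamma^1$ is injective by fullness and closure under subquotients, and $\delta^1$ is bijective by \eqref{eq-22}. But for $\mathcal N=\mathcal P/\mathcal K$, surjectivity of $\nu^1(\mathcal N)$ requires surjectivity of $\nu^2(\mathcal K)$. So torsion objects cannot be handled ``first''.

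Once locally free objects are done in all degrees, an arbitrary $\mathcal N$ follows from the five lemma applied to a presentation $0\to\mathcal K\to\mathcal P\to\mathcal N\to 0$. Here $\mathcal P$ is inflated (Corollary~\ref{cor-Lrepr}), and $\mathcal K$ is locally free as a subobject of a locally free object. In degrees $\geq 3$ this also gives vanishing, since $\mathrm H^i(\mathcal N)$ sits between $\mathrm H^i(\mathcal P)=0$ and $\mathrm H^{i+1}(\mathcal K)=0$. The closed-fibre filtration of $\mathcal V^{\mathrm{tor}}$ is therefore unnecessary.
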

\begin{proof} Assume that $S=\mathrm{Spec}(R)$ and $K$ be the quotient field of $R$.

\noindent\textit{Case 1}. Assume that  $V$ corresponds to an inflated connection $\mathrm{inf}(\mathcal{V}).$

\textit{The case of $i = 2.$}  It then follows from Lemma \ref{133}, Corollary \ref{143}, and Corollary \ref{Cor-144} that
\begin{itemize}
\item [(1)]   $\nu^2$ is injective
\item [(2)]   $\nu^2 \otimes K$ is bijective
\item [(3)]   $\nu^2 \otimes k_s$ is injective for any closed point $s$ of $\mathrm{Spec}(R).$
\end{itemize}
Hence $\nu^2$ is an isomorphism.

\textit{The case of $i\geq 3.$} Thanks to Corollary \ref{143} and the flat base change theorem for de Rham cohomology \cite[23.5]{ABC20}, we obtain
$$ \mathrm{H}^i(\pi^\mathrm{geom}(X/S),V) \otimes K \cong \mathrm{H}^i_{\mathrm{dR}}(X/S, \mathcal{V}) \otimes K \cong \mathrm{H}^i_{\mathrm{dR}}(X_K/K,\mathcal{V}_K) = 0.$$
For any closed point $s \in \mathrm{Spec}(A),$ since $\nu^2 $ is bijective, we use the same argument as in the proof of the bijectivity of $\lambda^1_{s}$ in Proposition $\ref{prop-20}$ to conclude that 
$$      \lambda^2_{s}: \mathrm{H}^{2}(\pi^\mathrm{geom}(X/S)_{s}, V_{s})   \longrightarrow \mathrm{H}^{2}(\pi(X_{s}/k),V_{s})  $$ 
is bijective. We now use the same argument as in the proof of Corollary \ref{Cor-144}, we obtain 
$$ \mathrm{H}^3(\pi(X/S),V)\otimes k \hookrightarrow \mathrm{H}^3_{\mathrm{dR}}(X_s/k, \mathcal{V}_s) = 0.$$
Hence,
   $$\mathrm{H}^3(\pi^\mathrm{geom}(X/S),V) \cong \mathrm{H}^3_{\mathrm{dR}}(X/S,\mathcal{V}) =0.$$
We repeat the same arguments to obtain 
$$ \mathrm{H}^i(\pi^\mathrm{geom}(X/S),V) \cong \mathrm{H}^i_{\mathrm{dR}}(X/S,\mathcal{V}) =0.$$

\noindent\textit{Case 2}. Assume that $U$ corresponds to a sub-connection $\mathcal{U}$  of $\mathrm{inf}(\mathcal{V}).$ We have the short exact sequence:
\begin{align}\label{seq-3}
     0 \rightarrow U \rightarrow V \rightarrow Q \rightarrow 0 
\end{align}
where  $Q: =M/V$ is the quotient representation corresponding to the connection $\mathcal{Q}:=\mathrm{inf}(\mathcal{V})/\mathcal{U}.$ Thus we also have an exact sequence
$$ 0 \rightarrow \mathcal{U} \rightarrow \mathrm{inf}(\mathcal{V}) \rightarrow \mathcal{Q} \rightarrow 0.$$
The  associated long exact sequences of group cohomology and de Rham cohomology yield the following commutative diagram:
$$\xymatrix{\mathrm{H}^{1}(\pi^\mathrm{geom}(X/S), Q) \ar[r] \ar[d]^{\cong}&\mathrm{H}^{2}(\pi^\mathrm{geom}(X/S), U) \ar[r] \ar@{^{(}->}[d]^{\delta^2(U)} & \mathrm{H}^{2}(\pi^\mathrm{geom}(X/S), V) \ar[r] \ar[d]^{\cong}  & \mathrm{H}^{2}(\pi^\mathrm{geom}(X/S), Q) \ar@{^{(}->}[d] \ar[r] &0\\
 \mathrm{H}^1_\mathrm{dR}(X/S, \mathcal{Q})   \ar[r]  & \mathrm{H}^2_\mathrm{dR}(X/S, \mathcal{U})\ar[r] & \mathrm{H}^2_\mathrm{dR}(X/S, \mathrm{inf}(\mathcal{V})) \ar[r] &  \mathrm{H}^2_\mathrm{dR}(X/S, \mathcal{Q}) \ar[r]&0. }$$  
The four-lemma implies that $\delta^2(U)$ is bijective.  For $i\geq 3,$ we argue similarly and obtain 
$$ \mathrm{H}^i(\pi^\mathrm{geom}(X/S), U) \cong \mathrm{H}^i_{\mathrm{dR}}(X/S, \mathcal{U}) =0.$$

\noindent\textit{Case 3}. Finally, we prove the claim for the quotient $U/T$, where $T,U$ correspond
to the sub-connections $\mathcal{T}\subset \mathcal{U}\subset \mathrm{inf}(\mathcal{V}).$
Similarly as above, we have  a commutative diagram with exact lines:
$$\xymatrix{\mathrm{H}^{2}(\pi(X/A), T) \ar[r] \ar[d]^{\cong}&\mathrm{H}^{2}(\pi(X/A), U) \ar[r] \ar[d]^{\cong} & \mathrm{H}^{2}(\pi(X/A), U/T) \ar[r] \ar@{^{(}->}[d]^{\delta^2(U/T)}  & 0  \\
 \mathrm{H}^2_\mathrm{dR}(X/A, (\mathcal{T}))   \ar[r]  & \mathrm{H}^2_\mathrm{dR}(X/A, \mathcal{U})\ar[r] & \mathrm{H}^2_{\mathrm{dR}}(X/A, \mathcal{U}/\mathcal{T}) \ar[r] &  0. }$$
This diagram implies that $\delta^2(U/T)$ is bijective. For $i\geq 3,$ we argue similarly and obtain 
$$ \mathrm{H}^i(\pi^\mathrm{geom}(X/S), U/T) \cong \mathrm{H}^i_{\mathrm{dR}}(X/S, \mathcal{U}/\mathcal{T}) =0.$$
\end{proof}


\begin{cor}\label{cor-114}
    Let $f:X\longrightarrow S$ be a projective curve with genus $g\geq 1$, where $S$
is a smooth affine curve. Assume that $S$ admits a section.
Let $\mathcal{V}$ be an object in $\mathrm{MIC}^\mathrm{geom}(X/S),$ 
and $V$ be the  corresponding representation of $\pi(X/S)$. 
Then the map
 $$\delta^i: \mathrm{H}^i(\pi(X/S), V) {\longrightarrow} \mathrm{H}^i_\mathrm{dR}(X/S, \mathcal{V})$$ 
is  bijective for all $i
\geq 0.$ 
\end{cor}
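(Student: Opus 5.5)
We deduce Corollary~\ref{cor-114} from Theorem~\ref{thm-comparison} and the factorization $\nu^i=\delta^i\circ\gamma^i$. Since $\nu^i$ is bijective for every $\mathcal V\in\mathrm{MIC}^\mathrm{geom}(X/S)$ and all $i\geq 0$, it is enough to show that $\gamma^i$ is surjective, or equivalently bijective. The same conclusion then follows for $\delta^i$. We proceed by induction on $i$, using dimension shifting inside the larger category $\mathrm{Rep}(\pi(X/S))\cong\mathrm{MIC}^\mathrm{ind}(X/S)$.

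For $i=0$ all three maps are the identifications of invariants, \eqref{eq-113}. For $i=1$ the map $\delta^1$ is bijective by \eqref{eq-22}. Since $\nu^1$ is bijective, $\gamma^1$ is bijective as well. This also covers non-locally-free $\mathcal V$, because Theorem~\ref{thm-comparison} applies to all of $\mathrm{MIC}^\mathrm{geom}(X/S)$.

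For the inductive step we use the approach of Lemma~\ref{lem-injective}. Embed $V$ into $J=V\otimes_R\mathcal O(\pi(X/S))$, which is acyclic for $\pi(X/S)$-cohomology (Lemma~\ref{cor-2}). The inclusion is split over $R$ by the counit, so the quotient $Q$ is again a filtered union of objects of $\mathrm{MIC}^\mathrm{se}(X/S)$. The difficulty is that $J$ and $Q$ need not lie in $\mathrm{ind}$-$\mathrm{MIC}^\mathrm{geom}(X/S)$, so the theorem cannot be applied to them directly. The remedy is to argue on the $\delta$ side. From the long exact sequences we get $\mathrm{H}^{i+1}(\pi(X/S),V)\cong\mathrm{H}^{i}(\pi(X/S),Q)$ for $i\geq1$, and $\mathrm{H}^{i+1}_\mathrm{dR}(X/S,\mathcal V)\hookleftarrow \mathrm{H}^{i}_\mathrm{dR}(X/S,\mathcal Q)$ modulo the de Rham cohomology of $\mathcal J$.

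Accordingly, the claim to establish is that $\delta^i$ is bijective for all of $\mathrm{MIC}^\mathrm{se}(X/S)$, not only for geometric objects. We plan to run the same argument as in the proof of Theorem~\ref{thm-comparison} with $\pi(X/S)$ in place of $\pi^\mathrm{geom}(X/S)$. The steps are:
\begin{itemize}
\item injectivity of $\delta^2$ from the bijectivity of $\delta^1$ via Lemma~\ref{lem-injective}, with $J$ the regular-representation module as in Lemma~\ref{133};
\item bijectivity after $\otimes K$, using Lemma~\ref{131}, which gives $\pi(X_K/K)\twoheadrightarrow\pi(X/S)_K$, together with the analogue of Proposition~\ref{130};
\item injectivity after $\otimes k_s$, as in Proposition~\ref{prop-20} and Corollary~\ref{Cor-144}, via $\pi(X_s/k)\twoheadrightarrow\pi(X/S)_s$.
\end{itemize}
Combining these gives $\delta^2$ bijective. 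The vanishing for $i\geq3$ follows from the de Rham vanishing on the fibres, exactly as in Case~1 of Theorem~\ref{thm-comparison}.

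The main obstacle is the generic-fibre step for $\pi(X/S)_K$, namely surjectivity of $\mathrm{H}^2(\pi(X/S)_K,-)\to\mathrm{H}^2(\pi(X_K/K),-)$. For $g\geq2$ the proof of Proposition~\ref{130} only needs two inputs: infinitely many simple objects, which $\mathrm{MIC}^\mathrm{se}(X/S)_K\supset\mathrm{MIC}^\mathrm{geom}(X/S)_K$ supplies by Lemma~\ref{lem-simple}; and closure of the category under the extensions built there, which holds here because $\mathrm{MIC}^\mathrm{se}$ is closed under extensions by $\mathcal O_X$, as in \eqref{eq-22}. So that argument should carry over verbatim. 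For $g=1$ we rely on the commutativity of $\pi(X/S)_K$ established there.

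Once $\delta^i$ is bijective for all $i$, the corollary follows, and $\gamma^i$ is bijective as a byproduct.
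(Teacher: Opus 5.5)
Your first paragraph correctly reduces the problem to the injectivity of $\delta^i$, equivalently the surjectivity of $\gamma^i$, because $\nu^i=\delta^i\circ\gamma^i$ is bijective by Theorem~\ref{thm-comparison}. Your first bullet supplies exactly that injectivity for $i=2$: Lemma~\ref{lem-injective}, applied with $J=V\otimes_R\mathcal O(\pi(X/S))$, shows $\delta^2$ is injective. The non-geometric objects $J$ and $Q$ enter only through $\delta^1$, which is bijective on all ind-objects of $\mathrm{MIC}^\mathrm{se}(X/S)$ by \eqref{eq-22}. Combining this with the surjectivity of $\delta^2$ (from $\nu^2$ bijective) already gives $\delta^2$ and $\gamma^2$ bijective for geometric $\mathcal V$. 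That is precisely the paper's proof. You do not draw this conclusion. Instead you commit to proving bijectivity of $\delta^i$ on \emph{all} of $\mathrm{MIC}^\mathrm{se}(X/S)$ by rerunning the fibrewise argument for $\pi(X/S)$, and that detour has a genuine gap.

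The gap is not at the generic fibre, which you flag; there the argument of Proposition~\ref{130} essentially transfers. It lies at the closed fibres and in the final assembly.

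\textbf{Closed fibres.} The injectivity of $\lambda^2_s$ in Proposition~\ref{prop-20} rests on two inputs.
\begin{enumerate}
\item The exact sequence $1\to\pi^\mathrm{geom}(X/S)_s\to\pi(X/k)\to\pi(S/k)\to 1$. This makes $V_s\otimes\mathcal O(\pi(X/k))$ acyclic and a limit of absolute connections.
\item The bijectivity of $\lambda^1_s$. This is proved only for inflated connections, via base change $\mathrm H^1_\mathrm{dR}(X/S,\mathcal V)\otimes k_s\cong\mathrm H^1_\mathrm{dR}(X_s/k,\mathcal V_s)$, which relies on the Gauss--Manin connection (Lemma~\ref{coh1}).
\end{enumerate}
Neither input is available for $\pi(X/S)_s$ acting on arbitrary relative connections. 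Indeed, the image of $\pi(X/S)$ in $\Pi(X/k)^\Delta$ is only $\pi^\mathrm{geom}(X/S)$.

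\textbf{Assembly.} Deducing bijectivity from ``injective, bijective after $\otimes K$, injective after $\otimes k_s$'' requires $\mathrm H^2_\mathrm{dR}(X/S,\mathcal V)$ to be torsion free. This again comes from the Gauss--Manin connection and fails in general. Take $\mathcal V=(\mathcal O_X,d+t\omega)$, with $\omega$ a relative $1$-form non-zero on $X_K$ and $0\neq t\in\mathfrak m_{s_0}$. Top-degree base change and Poincar\'e duality on the fibres show that $\mathrm H^2_\mathrm{dR}(X/S,\mathcal V)$ is non-zero at $s_0$ but vanishes after $\otimes K$.

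So drop the strengthening. Injectivity of $\delta^2$ from your first bullet, plus surjectivity from $\nu^2$, settles $i=2$. For $i\geq 3$, refer to the argument of Theorem~\ref{thm-comparison} for geometric $\mathcal V$ only, as the paper does.
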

\begin{proof}
By \eqref{eq-113} and \eqref{eq-22}, the morphisms $\delta^i$ are isomorphisms for $i=0,1.$ Lemma \ref{lem-injective} implies that $\delta^2$ is injective; when combined with Theorem \ref{thm-comparison}, this shows that $\delta^2$ is bijective. For $i\geq 3,$ we use the argument from Theorem \ref{thm-comparison} to conclude that
$$ \mathrm{H}^i(\pi(X/S), V) \cong \mathrm{H}^i_\mathrm{dR}(X/S, \mathcal{V}) =0.$$\end{proof}
\subsection{An application}
As an application of  Theorem \ref{thm-comparison}, we exhibit here some examples
of surfaces which are de Rham $K(\pi,1)$. We begin with a direct corollary of \ref{thm-comparison}.
\begin{cor}\label{Cor-Kpi1}
Let $f:X\longrightarrow S$ be a relative smooth projective curve with genus $g\geq 1$ over a base $S$,
which is a smooth affine curve. Assume that $f$ admits a section. 
Then $X$, as a $k$-surface, is de Rham $K(\pi,1)$. That is, for all connections
$(\mathcal{V},\nabla)$ on $X/k$ and the corresponding representation $V$ of
the differential fundamental group $\pi(X/k,x)$ the map
    $$ \mathrm{H}^i(\pi(X/k),V) \longrightarrow \mathrm{H}^i_{\mathrm{dR}}(X/k,\mathcal{V})$$
is  bijective for all $i \geq 0.$  
\end{cor}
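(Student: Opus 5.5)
The plan is to compare two spectral sequences: the Lyndon--Hochschild--Serre sequence of Proposition \ref{prop-21} on the group side, and the Leray sequence for $f$ in de Rham cohomology on the geometric side. On the group side we pass from $\Pi(X/k)$ to $\pi(X/k)=\Pi(X/k)^\Delta_s$. Since $\mathrm{Rep}(S:\Pi(X/k))\cong\mathrm{Rep}(\pi(X/k))\cong\mathrm{MIC}^{\mathrm{ind}}(X/k)$, the invariant functors agree, so $\mathrm{H}^i(\Pi(X/k),V)=\mathrm{H}^i(\pi(X/k),V_x)$; likewise for $S$. Proposition \ref{prop-21} then gives
\[E_2^{p,q}=\mathrm{H}^p(\Pi(S/k),\mathrm{H}^q(\pi^{\mathrm{geom}}(X/S),V))\Rightarrow \mathrm{H}^{p+q}(\pi(X/k),V).\]
On the geometric side, $\mathrm{H}^0_{\mathrm{dR}}(X/k,-)=\mathrm{H}^0_{\mathrm{dR}}(S/k,-)\circ \mathbf{R}^0_{\mathrm{dR}}f_*$. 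The functor $\mathbf{R}^0_{\mathrm{dR}}f_*$ is right adjoint to the exact functor $f^*$, so it preserves injectives. The resulting Grothendieck spectral sequence is
\[E_2^{p,q}=\mathrm{H}^p_{\mathrm{dR}}(S/k,\mathbf{R}^q_{\mathrm{dR}}f_*\mathcal V)\Rightarrow \mathrm{H}^{p+q}_{\mathrm{dR}}(X/k,\mathcal V).\]

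Next, Lemma \ref{lem_comparison} applied to the diagram (\ref{dia-1}) and to the composite functors produces a morphism of spectral sequences compatible with the edge maps. On $E_2$ it is built from two ingredients. The first is the isomorphism $\beta^q$, equivalently $\nu^q$ composed with $\alpha^q$: by Theorem \ref{thm-gmt}, Theorem \ref{thm-comparison} and Lemma \ref{lem_comparison_functors}, this identifies $\mathrm{H}^q(\pi^{\mathrm{geom}}(X/S),V)$ with $\mathbf{R}^q_{\mathrm{dR}}f_*\mathcal V$ as objects of $\mathrm{Rep}(S:\Pi(S/k))\subset\mathrm{MIC}(S/k)$, i.e. $\Pi(S/k)$-equivariantly, which is part (2) of Theorem \ref{thm-equivariant}. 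The second is the comparison $\mathrm{H}^p(\pi(S/k),W)\to\mathrm{H}^p_{\mathrm{dR}}(S/k,\mathcal W)$ for the finite projective Gauss--Manin modules $\mathcal W=\mathbf{R}^q_{\mathrm{dR}}f_*\mathcal V$, which are locally free by Lemma \ref{coh1}. For this I invoke \cite{EH08}: a smooth affine curve is de Rham $K(\pi,1)$, so this comparison is an isomorphism for all $p$. The comparison theorem for spectral sequences then gives isomorphisms on the abutments, which is the claimed bijectivity of $\mathrm{H}^i(\pi(X/k),V)\to\mathrm{H}^i_{\mathrm{dR}}(X/k,\mathcal V)$.

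The main obstacle is making the morphism of spectral sequences rigorous. The group-side sequence is built from injectives in $\mathrm{Rep}(S:\Pi(X/k))$, while the de Rham side uses injectives in $\mathrm{MIC}(X/k)$. I would handle this with the Cartan--Eilenberg description. Take an injective resolution $V\to I^\bullet$ in $\mathrm{Rep}(S:\Pi(X/k))$ and an injective resolution $\mathcal V\to J^\bullet$ in $\mathrm{MIC}(X/k)$; these are connected by a map of resolutions that is unique up to homotopy. Applying $(-)^{\pi^{\mathrm{geom}}}$, respectively $\mathbf{R}^0_{\mathrm{dR}}f_*$, then gives a map of double complexes after choosing Cartan--Eilenberg resolutions on the $S$-side. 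The isomorphism $\nu^0$ of (\ref{dia-1}) identifies the two functors on $\mathrm{Rep}(S:\Pi(X/k))$. The induced map on $E_2$ is then the composite of the derived comparison maps, which are the ones identified above by the uniqueness in Lemma \ref{lem_comparison}.

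A secondary point is degrees. Both $E_2$ pages vanish for $p\ge 2$, since $\mathrm{H}^p(\pi(S/k),-)$ vanishes there by \cite{EH08}, and for $q\ge 3$, by Theorem \ref{thm-comparison}. So convergence is automatic, and it would even suffice to compare two-step filtrations with the five lemma.
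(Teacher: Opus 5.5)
Your proposal is correct and follows the paper's own route. Like the paper, you compare the Lyndon--Hochschild--Serre spectral sequence of Proposition~\ref{prop-21} with the Leray spectral sequence for $f$, use Theorems~\ref{thm-gmt} and \ref{thm-comparison} in the $q$-direction, and pass from $\Pi(X/k)$ to $\pi(X/k)$ via $\mathrm{Rep}(\pi(X/k))\cong\mathrm{Rep}(S:\Pi(X/k))$. You are also more explicit than the paper, which asserts the $E_2$-isomorphism citing only Theorem~\ref{thm-comparison}: it leaves implicit both the base comparison $\mathrm{H}^p(\pi(S/k),W)\cong\mathrm{H}^p_{\mathrm{dR}}(S/k,\mathcal W)$ for the affine curve $S$ (which you take from \cite{EH08}) and the actual construction of the morphism of spectral sequences (which your Cartan--Eilenberg argument supplies).
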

\begin{proof}
    We have the natural map from the Lyndon--Hochschild-Serre spectral sequence (see Proposition \ref{prop-21}):
      $$ E_2^{p,q}= \mathrm{H}^p(\Pi(S/k), \mathrm{H}^q(\pi^\mathrm{geom}(X/S), V)) \Rightarrow \mathrm{H}^{p+q}(\Pi(X/k),V)$$
   to Leray spectral sequence (see \cite[23.3.1]{ABC20}):
    $$ E_2^{p,q}= \mathrm{H}^p_{\mathrm{dR}}(S/k,\mathrm{H}^q_{\mathrm{dR}}(X/S,\mathcal{V})) \Rightarrow \mathrm{H}^{p+q}_{\mathrm{dR}}(X/k, \mathcal{V}). $$
Thanks to Theorem \ref{thm-comparison}, the corresponding terms in two spectral sequences are 
isomorphic. Hence, 
$$ \mathrm{H}^i(\Pi(X/k),V) \cong \mathrm{H}^i_{\mathrm{dR}}(X/k, \mathcal{V}), \hspace{0.2cm}\text{for all}\; i\geq0.$$

On the other hand, we have the following equivalence between three categories:
$$ \mathrm{Rep}(\pi(X/k)) \cong \mathrm{MIC}^{\mathrm{ind}}(X/k)\cong \mathrm{Rep}(R:\Pi(X/k)), $$
which implies that the map 
    $$ \mathrm{H}^i(\pi(X/k),V) \longrightarrow \mathrm{H}^i(\Pi(X/k),V) \longrightarrow\mathrm{H}^i_{\mathrm{dR}}(X/k,\mathcal{V})$$
is  bijective for all $i \geq 0.$  
\end{proof}

For an arbitrary smooth projective map $f:X\longrightarrow S$, there may be no sections to $f$.
However, by shrinking $X$ as a family on $S$ to some \'etale neighborhood, we can have sections. This
observation yields the following: 
\begin{prop}\label{pro_surf_kpi1}
Let $f:X\longrightarrow S$ be a relative smooth projective curve with genus $g\geq 1$ over a base $S$ which is 
a smooth affine curve. Assume that $f$ admits a section. Then we can shrink $S$ to a (Zariski)
open $U$ so that $X_{U}$ as a $k$-surface is de Rham $K(\pi,1)$.
\end{prop}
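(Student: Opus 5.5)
The plan is to reduce Proposition~\ref{pro_surf_kpi1} to Corollary~\ref{Cor-Kpi1}, by checking that all hypotheses used there persist after restricting to a Zariski open $U\subset S$. Since $f$ admits a section, the only remaining issue is that Corollary~\ref{Cor-Kpi1} runs through the whole machinery of the paper. That machinery uses the section $\eta$, the smoothness and properness of $f$ with geometrically connected fibers (so that Theorem~\ref{HES} and Theorem~\ref{FES} apply), the finiteness and projectivity of $\mathrm{H}^i_\mathrm{dR}(X/S,\mathcal V)$ from Lemma~\ref{coh1}, and the comparison results of Section~\ref{sect-compa}. Each of these is stable under base change along an open immersion $U=\mathrm{Spec}(R')\hookrightarrow S$, where $R'$ is a localization of $R$ and hence again a Dedekind domain over $k$, finitely generated and smooth. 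The restriction $\eta|_U$ is a section of $f_U:X_U\to U$, and $X_U$ is a smooth $k$-surface.

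I will choose $U$ so that every fiber-wise input used in the proof of Theorem~\ref{thm-comparison} holds uniformly. In particular, the de Rham cohomology sheaves $\mathrm{H}^i_\mathrm{dR}(X/S,\mathcal V)$ and the Hodge cohomology $\mathrm{R}^qf_*\Omega^p_{X/S}$ should commute with base change at every closed point. The latter can be arranged by generic flatness and cohomology-and-base-change after shrinking, although for curves it should already be automatic. The same shrinking also lets one assume that the relevant loci in the proof of Lemma~\ref{131} are defined over all of $U$.

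Having fixed $U$, I then apply Corollary~\ref{Cor-Kpi1} to $f_U$. This requires comparing the Lyndon--Hochschild--Serre spectral sequence of Proposition~\ref{prop-21} for $1\to\pi^{\mathrm{geom}}(X_U/U)\to\Pi(X_U/k)\to\Pi(U/k)\to1$ with the Leray spectral sequence for de Rham cohomology of $X_U\to U$. Theorem~\ref{thm-comparison} identifies the $q$-rows: $\mathrm{H}^q(\pi^{\mathrm{geom}},V)\cong\mathrm{H}^q_\mathrm{dR}(X_U/U,\mathcal V)$ as $\Pi(U/k)$-representations, compatibly with the Gauss--Manin connection. For the $p$-columns one needs that $U$, an affine curve, is de Rham $K(\pi,1)$, which is the result of \cite{EH08} on affine curves. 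Given both, the map of spectral sequences is an isomorphism on $E_2$, hence on abutments, and passing from $\Pi(X_U/k)$ to $\pi(X_U/k,x)$ is the category equivalence used at the end of Corollary~\ref{Cor-Kpi1}.

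The main obstacle I expect is the last step: the $E_2$-comparison for the base, namely the identification of $\mathrm{H}^p(\Pi(U/k),-)$ with $\mathrm{H}^p_\mathrm{dR}(U/k,-)$ for the Gauss--Manin coefficients. These coefficients lie in $\mathrm{MIC}^\circ(U/k)$, so \cite{EH08} applies. I would first check that the comparison maps built via Lemma~\ref{lem_comparison} are compatible with the edge maps of both spectral sequences. I would try to prove this by the uniqueness of $\delta$-functor morphisms, noting that both spectral sequences are Grothendieck spectral sequences of composite functors compatible with the inclusion $\mathrm{Rep}(S:\Pi(X/k))\subset\mathrm{MIC}(X/k)$, using Lemma~\ref{lem-25} on the group side.
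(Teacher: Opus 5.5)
Your argument is correct for the statement as printed, but it does not follow the paper's route. Under the printed hypothesis the proposition is just a restatement of Corollary~\ref{Cor-Kpi1}: since $f$ already has a section, that corollary applies to $f$ itself, so $U=S$ works.

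Your paragraph choosing $U$ is therefore superfluous. Base change of the relative de Rham cohomology is not needed. Nor is making the spreading-out in Lemma~\ref{131} uniform, which a single shrinking could not achieve anyway, because the open set in that proof depends on the sub-object. The rest of your proposal re-derives Corollary~\ref{Cor-Kpi1}. There, your explicit use of \cite{EH08} is a real input that the paper's proof of that corollary uses only tacitly. It identifies $\mathrm{H}^p(\Pi(U/k),-)$ with $\mathrm{H}^p_{\mathrm{dR}}(U/k,-)$ on the Gauss--Manin coefficients, which lie in $\mathrm{MIC}^\circ(U/k)$ by Lemma~\ref{coh1}.

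The paper's proof never uses a given section. As the sentence just before the proposition indicates, it is aimed at families without a section; the section hypothesis in the statement is evidently a slip. The proof proceeds as follows.
\begin{enumerate}
\item Take the normalization $\tilde S$ of $S$ in a finite Galois extension $\tilde K/K$ over which $X_K$ acquires a rational point, and extend that point to a section over $\tilde S$ by the valuative criterion.
\item Shrink $S$ to an open $U$ over which an affine open $S'\subset\tilde S$ is Galois \'etale.
\item Apply Corollary~\ref{Cor-Kpi1} to $X_{S'}\to S'$.
\item Descend to $X_U$ along the Galois \'etale cover $X_{S'}\to X_U$ using Lemma~\ref{lem_descend_kp1}.
\end{enumerate}
This is the only reason any shrinking occurs.

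So your route gives nothing beyond the corollary it cites, while the paper's route gives the section-free statement. To prove that intended version you would need three ingredients your proposal lacks. These are a Galois base change producing a section, restriction to the \'etale locus, and Galois \'etale descent of the de Rham $K(\pi,1)$ property.
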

\begin{proof}
There exists a finite (possibly ramified) Galois map $\tilde S\longrightarrow S$ such that
the map $f_{\tilde S}: X_{\tilde S}\longrightarrow \tilde S$ admits a section (take
a Galois extension $\tilde K$ of $K$ such that $X_{\tilde K}$ admits a rational point and let $\tilde R$
be the normalization of $R$ in $\tilde K$, $\tilde S:=\mathrm{Spec}\tilde R$ and apply 
\cite[Thm~II.4.7]{Ha77}). Let $S'$ be an affine 
open of $\tilde S$ which is a Galois \'etale cover of its image $U$ which is open in $S$.
Thus, we have $f_{S'}: X_{S'}\longrightarrow S'$ which admits a section and $X_{S'}\longrightarrow X_U$
which is Galois \'etale. Now Corollary \ref{Cor-Kpi1} ensures that $X_{S'}$ is de Rham $K(\pi,1)$, and
Lemma \ref{lem_descend_kp1} below shows that $X_U$ is also de Rham $K(\pi,1)$. 
\end{proof}

\begin{lem}\label{lem_descend_kp1}
Let $f:Y\longrightarrow X$ be a Galois \'etale cover under a group $G$, where $X,Y$ are
smooth schemes over a field $k$ of characteristic zero. Assume that $Y$ is de Rham $K(\pi,1)$.
Then so is $X$.
\end{lem}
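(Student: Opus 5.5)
We will compare the two Hochschild--Serre type spectral sequences attached to the finite quotient $G$, one on the group side and one on the de Rham side. Fix $y\in Y(k)$ with image $x\in X(k)$. Since $f$ is finite étale Galois, the connection $f_*(\mathcal O_Y,d)$ is a finite connection on $X/k$. The subcategory of $\mathrm{MIC}^\circ(X/k)$ it generates is Tannakian dual to the constant group $G$. The pull-back $f^*:\mathrm{MIC}^\circ(X/k)\to\mathrm{MIC}^\circ(Y/k)$ is then identified, via the criterion \cite[Theorem A.1]{EHS08} or equivalently Theorem \ref{th1}, with restriction along a closed immersion $\pi(Y/k,y)\hookrightarrow\pi(X/k,x)$. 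The objects whose pull-back to $Y$ is trivial are exactly the subquotients of sums of $f_*\mathcal O_Y$. This yields an exact sequence
$$1\longrightarrow \pi(Y/k)\longrightarrow\pi(X/k)\longrightarrow G\longrightarrow 1 .$$

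The argument then proceeds in three steps.

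\textbf{Step 1 (group side).} The quotient $\pi(X/k)/\pi(Y/k)=G$ is finite, hence affine, so induction from $\pi(Y/k)$ is exact \cite[Corollary 5.13]{Ja03}. This gives a Lyndon--Hochschild--Serre spectral sequence
$$E_2^{p,q}=\mathrm{H}^p(G,\mathrm{H}^q(\pi(Y/k),V))\Rightarrow \mathrm{H}^{p+q}(\pi(X/k),V),$$
by the same argument as in Proposition \ref{prop-21}, Lemma \ref{lem-25}. Because $\mathrm{char}\,k=0$ and $G$ is finite, $\mathrm{H}^p(G,-)=0$ for $p>0$, so the sequence degenerates to
$$\mathrm{H}^i(\pi(X/k),V)\cong\mathrm{H}^i(\pi(Y/k),V)^G .$$

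\textbf{Step 2 (de Rham side).} Since $f$ is finite étale, $f_*$ is exact on quasi-coherent sheaves and $f_*\Omega^\bullet_{Y}\otimes f^*\mathcal V=\Omega^\bullet_X\otimes f_*f^*\mathcal V$. This gives
$$\mathrm{H}^i_{\mathrm{dR}}(Y/k,f^*\mathcal V)=\mathrm{H}^i_{\mathrm{dR}}(X/k,f_*f^*\mathcal V).$$
Moreover $\mathcal V$ is the $G$-invariant direct summand of $f_*f^*\mathcal V=\mathcal V\otimes f_*\mathcal O_Y$, split off by the averaging projector, which is available since $|G|$ is invertible. Hence
$$\mathrm{H}^i_{\mathrm{dR}}(X/k,\mathcal V)=\mathrm{H}^i_{\mathrm{dR}}(Y/k,f^*\mathcal V)^G .$$

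\textbf{Step 3 (compatibility).} It remains to check that the comparison maps $\delta^i_X$ and $\delta^i_Y$ are compatible with these identifications and with the $G$-actions. Both sides are universal $\delta$-functors in $\mathcal V\in\mathrm{MIC}^{\mathrm{ind}}(X/k)$, and the maps agree in degree $0$. Lemma \ref{lem_comparison} therefore gives a commutative square
$$\mathrm{H}^i(\pi(X/k),V)\to\mathrm{H}^i(\pi(Y/k),V)\to\mathrm{H}^i_{\mathrm{dR}}(Y/k,f^*\mathcal V)$$
versus
$$\mathrm{H}^i(\pi(X/k),V)\to\mathrm{H}^i_{\mathrm{dR}}(X/k,\mathcal V)\to\mathrm{H}^i_{\mathrm{dR}}(Y/k,f^*\mathcal V),$$
where the second arrows are the restriction maps. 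For each $g\in G$, the translate $g^*$ acts compatibly on $\pi(Y/k)$ as conjugation inside $\pi(X/k)$, up to change of base point, and as pull-back on connections. Hence $\delta_Y^i$ is $G$-equivariant. Since $\delta^i_Y$ is bijective by hypothesis, taking $G$-invariants shows that $\delta^i_X$ is bijective. This step is the main obstacle: making the equivariance precise requires handling the base-point change from $y$ to $gy$. I would address this by working with the groupoid, or with a torsor of paths, which removes the dependence on the base point. Alternatively, one can bypass equivariance entirely by noting that the maps $\mathrm{H}^i(\pi(X),V)\to\mathrm{H}^i(\pi(Y),V)$ and $\mathrm{H}^i_{\mathrm{dR}}(X,\mathcal V)\to\mathrm{H}^i_{\mathrm{dR}}(Y,f^*\mathcal V)$ are split injective, with retractions given by transfer divided by $|G|$. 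These transfers are compatible under $\delta$ because both are induced by the trace morphism $f_*f^*\mathcal V\to\mathcal V$ in $\mathrm{MIC}(X/k)$, and $\delta$ is natural in the coefficient. That $\delta^i_X$ is a retract of the bijection $\delta^i_Y$ then gives bijectivity.
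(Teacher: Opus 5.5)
Your proposal is correct. Steps 1 and 2 are essentially the paper's proof. The paper takes the exact sequence $1\to\pi(Y)\to\pi(X)\to G\to 1$ from Katz and degenerates the Lyndon--Hochschild--Serre spectral sequence to get $\mathrm{H}^i(\pi(X),V)\cong\mathrm{H}^i(\pi(Y),V)^G$. It obtains $\mathrm{H}^i_{\mathrm{dR}}(X,\mathcal V)\cong\mathrm{H}^i_{\mathrm{dR}}(Y,f^*\mathcal V)^G$ from the exactness of $(-)^G$ via a degenerate Grothendieck spectral sequence; your projection formula plus averaging idempotent is a more concrete version of this.

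The paper then finishes with ``since $f^*$ corresponds to restriction, the result follows''. It never checks that $\delta^i_Y$ is $G$-equivariant, or that $\delta$ intertwines the two descriptions by $G$-invariants. That is exactly the issue you isolate in Step 3.

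Your equivariance variant is left unfinished because of the base-point change. Your transfer variant, however, is a genuinely different and cleaner way to conclude. The object $\mathcal V$ is a direct summand of $f_*f^*\mathcal V=\mathcal V\otimes f_*\mathcal O_Y$ in $\mathrm{MIC}(X/k)$. By naturality, $\delta^i_X(\mathcal V)$ is therefore a retract of $\delta^i_X(f_*f^*\mathcal V)$, which is the bijection $\delta^i_Y(f^*\mathcal V)$.

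This route needs neither spectral sequence and no $G$-action or base-point bookkeeping. It uses only that $f^*$ and $f_*$ correspond to restriction and induction, and that induction is exact because $f$ is finite. The paper's route, once the equivariance is actually verified, gives the extra information that both sides are the $G$-invariants of the corresponding groups on $Y$.

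The one step you should write out is the identification of $\delta^i_X$ at $f_*f^*\mathcal V$ with $\delta^i_Y$ at $f^*\mathcal V$. Naturality of $\delta$ in the coefficient does not give this; it follows from universality.

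On the de Rham side, $f_*\colon\mathrm{MIC}(Y/k)\to\mathrm{MIC}(X/k)$ is exact. Being right adjoint to the exact functor $f^*$, it preserves injectives, so $\mathrm{H}^\bullet_{\mathrm{dR}}(X,f_*(-))=\mathrm{H}^\bullet_{\mathrm{dR}}(Y,-)$ as $\delta$-functors.

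On the group side, $\mathrm{Ind}_{\pi(Y)}^{\pi(X)}$ corresponds to $f_*$, since both are right adjoint to restriction $=f^*$. It is exact and preserves injectives, so $\mathrm{H}^\bullet(\pi(X),\mathrm{Ind}(-))$ is a universal $\delta$-functor on $\mathrm{Rep}(\pi(Y))$ (Shapiro).

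Now compare the two morphisms of $\delta$-functors ``$\delta_X$ followed by the push-forward identification'' and ``Shapiro followed by $\delta_Y$''. They agree in degree $0$, hence in all degrees by the uniqueness in Lemma~\ref{lem_comparison}.
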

\begin{proof}
Let $(\mathcal{V},\nabla)$ be an object in $\mathrm{MIC}^{\mathrm{coh}}(X)$ and
let $V$ be the corresponding representation of $\pi(X)$ (for simplicity, we omit the base points).

According to \cite[Proposition 1.4.4]{Ka87}, there is an exact sequence
\begin{align}\label{eq-galois}
    1 \longrightarrow \pi^{\mathrm{diff}}(Y) \longrightarrow \pi^\mathrm{diff}(X)\longrightarrow G \longrightarrow 1.
\end{align}
The Lyndon-Hochschild-Serre spectral sequence associated with this exact sequence  
yields
$$ E_2^{p,q} = \mathrm{H}^p(G, \mathrm{H}^q(\pi^\mathrm{diff}(Y),V) \Longrightarrow \mathrm{H}^{p+q}(\pi^\mathrm{diff}(X),V).$$
Since $G$ is a finite group and $k$ has characteristic zero, 
$$ \mathrm{H}^i(\pi^\mathrm{diff}(X),V) \cong \mathrm{H}^i(\pi^\mathrm{diff}(Y),V)^G,$$
for all $i\geq 0.$

On the other hand, 
$$ \mathrm{H}^0_{\mathrm{dR}}(X, \mathcal{V}) = \mathrm{H}^0_{\mathrm{dR}}(Y, f^\ast(\mathcal{V}))^G.$$
Since $(-)^G$ is an exact functor, the Grothendieck spectral sequence (see Remark \ref{rem-2}) degenerates
so that
$$\mathrm{H}^i_{\mathrm{dR}}(X, \mathcal{V})\cong \mathrm{H}^i_{\mathrm{dR}}(Y, f^\ast(\mathcal{V}))^G,$$
for all $i\geq 0.$ 

Finally, since the pullback functor $f^\ast$ corresponds to 
the restriction functor along the map  $\pi^\mathrm{diff}(Y)) \longrightarrow\pi^\mathrm{diff}(X)),$ the result follows.
\end{proof}

\appendix

\section{Affine group schemes over Dedekind domain}  \label{app_group_scheme}
Let $k$ be a field of characteristic $0$.
Let $R$ be a $k$-algebra, which is a Dedekind ring.
In what follows $R$-group schemes are always assumed to be \textit{flat and affine}. 
Our reference is \cite{DH18} and \cite{De90}, see also \cite{Ja03, Sa72}. 
\subsection{Representations ($G$-modules)}\label{specialsub-quotients}
Let $G$ be an affine flat $R$-group scheme.
A representation of $G$ in $R$-modules  (or a $G$-module) is $R$-modules equipped with a (rational) action of $G$. Since $G$ is affine, this is nothing but a comodule over
the Hopf algebra $R[G]$.

Since $G$ is flat, $\mathrm{Rep}(G)$ is an abelian category. 
Furthermore, it is a  tensor category. The full subcategory consisting of $R$-finite 
(resp. finite projective) 
representations will be denoted by  $\mathrm{Rep}^\mathrm{coh}(G)$
(resp.  $\mathrm{Rep}^{\circ}_R(G)$). $\mathrm{Rep}^{\circ}_R(G)$ is an $R$-linear, additive, rigid tensor category. Moreover, it \textit{dominates}
$\mathrm{Rep}^\mathrm{coh}(G)$, i.e., each object of $\mathrm{Rep}^\mathrm{coh}_R(G)$ 
is a quotient of an object in
$\mathrm{Rep}^{\circ}_R(G)$. Further,  $\mathrm{Rep}(G)$ is and ind-category of
$\mathrm{Rep}^\mathrm{coh}(G)$, i.e. each $G$-module is the union of its
$R$-coherent $G$-submodules.
 
As $R$ is a Dedekind ring, torsion free, flat and projective finite $R$-modules are the same. 
We say  that $M\subset N$ is a \textit{special} sub-object in $\mathrm{Rep} _R(G)$ if the quotient $N/M$ is $R$-flat. 
A \textit{special sub-quotient} is by definition a special sub of a quotient, or equivalently,   
a quotient of a special sub. This can be seen from the following diagram, where the left square is a 
push-out, or equivalently, a pull-back:
$$\xymatrix{
Q\ar@{^(->}[r]\ar@{->>}[d] \ar@{}[rd]|\Box& P\ar@{->>}[d]\ar[r]& P/Q
\ar[d]^\simeq  \\
M\ar@{^(->}[r] & N\ar[r]& M/N.}$$
Thus, if $N/M\cong P/Q$ is $R$-flat then $M$ is a special sub-quotient of $P$: it is a quotient of a special sub $Q$ and a special sub of $N$. 

We also formulate these in terms of comodules over $R[G]$ for the later use. 

 A subcomodule $M$ of an $R[G]$-comodule $N$ is said to be \textit{special} if $N/M$ is flat over
$R;$ a \textit{special sub-quotient} $M$ of an $R[G]$-comodule N is a special sub-module of a quotient of $N,$ or,
equivalently, a quotient of a special sub-module of $N.$
We also use the similar notations for comodules as $\mathrm{Comod}_R C$, 
$\mathrm{Comod}^\mathrm{coh}_RC$ or
$\mathrm{Comod}^\circ_RC$ for any $R$-flat coalgebra $C$.

\subsection{Tannakian duality over a Dedekind ring}\label{sect-TanDual2} 
The reference for this subsection is \cite{Sa72}, see also \cite{DH18}.  
Assume that $\mathcal C$ is an $R$-linear 
abelian tensor category.  
Denote by  $\mathcal C^{\rm o}$ the full subcategory of  $\mathcal C$ 
consisting of rigid (i.e. dualizable)
objects. We say that $\mathcal C$ is {\em dominated} by $\mathcal C^\text{o}$ if each object of 
$\mathcal C$ is a quotient of an object from $\mathcal C^\text{o}$.

 \begin{defn}\label{defn-Tannakaovering}
 A (neutral) Tannakian category  over a Dedekind ring $R$ is an $R$-linear abelian tensor category $\mathcal C$, dominated by $\mathcal C^{\rm o}$, together with an exact faithful tensor functor $\omega:\mathcal C\to {\sf Mod}(R)$.  
\end{defn}

\begin{thm}[{\cite[Thm.~II.4.1.1]{Sa72}}] \label{th_duality}
Let $(\mathcal C,\omega)$ be a neutral Tannakian category over a Dedekind ring $R$.
Then the group functor $A\mapsto {\sf Aut}_A^\otimes(\omega\otimes A)$ is representable by a flat group scheme $G$ and $\omega$ factors through an equivalence between 
$\mathcal C$ and $\mathrm{Rep}_R(G)$.
\end{thm}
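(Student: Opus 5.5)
The plan is to reconstruct $G$ as the spectrum of a coend Hopf algebra, following Saavedra's approach, and to reduce the delicate flatness questions to the local (discrete valuation ring) case. The first step is to observe that $\omega$ sends rigid objects to dualizable $R$-modules, that is, to finite projective $R$-modules. Accordingly I will define the coalgebra
$$L:=\varinjlim_{X}\, \mathrm{End}_R\bigl(\omega|_{\langle X\rangle}\bigr)^{\vee},$$
where $X$ runs over $\mathcal C^{\rm o}$ and $\langle X\rangle$ denotes the full subcategory of sub-quotients of finite direct sums of copies of $X$. Equivalently, $L$ is the coend $\int^{X\in\mathcal C^{\rm o}}\omega(X)^\vee\otimes_R\omega(X)$. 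Each $\omega(X)$ then becomes an $L$-comodule in a natural way. The tensor structure of $\mathcal C$ makes $L$ a commutative bialgebra, and the duals in $\mathcal C^{\rm o}$ supply an antipode, so $L$ is a Hopf algebra. I then set $G:=\mathrm{Spec}\, L$. By construction $\omega$ lifts to a functor $\mathcal C\to\mathrm{Rep}_R(G)$, since every object of $\mathcal C$ is a quotient of a rigid object and $\omega$ is exact.

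The second step, which I expect to be the main obstacle, is flatness of $L$ over $R$, together with the fact that the lifted functor is an equivalence. Over a Dedekind ring, flat is the same as torsion free, and both flatness and fully faithfulness are local on $\mathrm{Spec}\, R$. I therefore localize at each maximal ideal, where $R_{\mathfrak m}$ is a discrete valuation ring, and use the base-changed category $\mathcal C_{R_{\mathfrak m}}$.

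Torsion-freeness of $L$ follows once each finite-level piece $\mathrm{End}(\omega|_{\langle X\rangle})$ is shown to be a finite projective module whose dual surjects compatibly in the limit. For this I realize $\mathrm{End}(\omega|_{\langle X\rangle})$ as the stabilizer in $\mathrm{End}(\omega(X))$ of the images $\omega(Y)\subset\omega(X)^{\oplus n}$. Exactness of $\omega$ and domination by rigid objects guarantee that these images are saturated, that is, the corresponding sub-objects are special.

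The third step is the equivalence itself. For fully faithfulness, I identify $\mathrm{Hom}_{\mathcal C}(X,Y)$ with $L$-colinear maps $\omega(X)\to\omega(Y)$ inside $\langle X\oplus Y\rangle$. This uses the standard ``coend'' universal property and faithfulness of $\omega$.

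For essential surjectivity, I take a finite $L$-comodule $M$ and embed it, in the flat case, into $\omega(X)^{\oplus n}\otimes$ (a finite subcomodule of $L$). The finite subcomodules of $L$ are of the form $\omega(X^\vee\otimes X)$ up to sub-quotients, so the embedded comodule is a special sub-object of something in the image. A general finite comodule is then a quotient of a projective one, and I conclude using closedness of the image under kernels and cokernels, which comes from abelianness of $\mathcal C$ and exactness of $\omega$.

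Finally, I identify $G(A)$ with $\mathrm{Aut}^\otimes_A(\omega\otimes A)$. A tensor automorphism of $\omega\otimes A$ is the same as an $A$-algebra map $L\to A$: naturality makes it compatible with the coend relations, and the tensor condition makes it multiplicative. This identification proves representability of the automorphism functor by $G$.
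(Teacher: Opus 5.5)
The paper does not prove this statement; it only cites Saavedra. Judged on its own, your sketch has a genuine gap, and it lies exactly where the Dedekind case differs from the field case.

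\textbf{The two definitions of $L$ disagree.} The two descriptions of $L$ that you declare equivalent are not, and the finite-level machinery built on $E_X:=\mathrm{End}(\omega|_{\langle X\rangle})$ breaks down. Your claim that exactness of $\omega$ and domination force the images $\omega(Y)\subset\omega(X)^{\oplus n}$ to be saturated is false. Already $aX\subset X$, for a non-unit $a\in R$, is a non-special subobject.

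Here is an example where everything fails at once. Let $R$ be a DVR with uniformizer $\pi$ and fraction field $K$. Let $G=\mathrm{Spec}(R\oplus xK[x])$ with $\Delta(x)=x\otimes 1+1\otimes x$. This is a flat affine group scheme with generic fibre $\mathbb{G}_{a,K}$, and $\mathcal C=\mathrm{Rep}^{\rm f}_R(G)$ satisfies all the hypotheses, as for any flat $G$ over a Dedekind ring.

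The counit kills $xK[x]$, and $P\otimes_R xK[x]=P_K\otimes_K xK[x]$ for torsion-free $P$. Hence every $R$-submodule $M\subset P$ with $M_K$ a $G_K$-subrepresentation is a subcomodule. In particular, every full-rank lattice in $\omega(X)$ is a subobject of $X$. Using the lattices $Rv+\pi^j\omega(X)$, any $\lambda\in E_X$ must preserve every line $Rv$ with $v$ primitive. So $E_X=R$ for every $X$, and $\varinjlim_X E_X^{\vee}=R$: your group scheme would be trivial.

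This is wrong on both counts. First, $\mathcal C\not\simeq\mathrm{Mod}^{\rm f}_R$: the representation $U$ on $R^2$ with $e_2\mapsto e_2\otimes 1+e_1\otimes x$ is a non-split extension of $\mathbb{1}$ by $\mathbb{1}$. Second, $\mathrm{Aut}^{\otimes}(\omega\otimes K)$ contains the nontrivial group $G(K)=K$, and the coend is $R[G]\neq R$. Note also that flatness of $\varinjlim E_X^\vee$ is automatic, since it is a filtered colimit of finite projective modules. So the difficulty is not where you placed it.

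\textbf{What remains unproved with the coend.} If you instead take $L$ to be the coend $\int^{X\in\mathcal C^{\rm o}}\omega(X)^\vee\otimes\omega(X)$, the identification of $\mathrm{Aut}^{\otimes}_A(\omega\otimes A)$ with $\mathrm{Hom}_{R\text{-alg}}(L,A)$ is indeed formal. But the two substantive points are then left unproved:
\begin{enumerate}
\item flatness of the coend, which is no longer a colimit of the $E_X^\vee$;
\item the lemma that every $L$-subcomodule of $\omega(Y)$ equals $\omega(Y')$ for a subobject $Y'\subset Y$.
\end{enumerate}
Your fullness step (``coend universal property and faithfulness'') only shows that morphisms of $\mathcal C$ become colinear maps. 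The converse needs the lemma, applied to graphs. Your essential-surjectivity step (``special sub-object of something in the image'', ``closedness of the image under kernels and cokernels'') presupposes both fullness and closure under subcomodules.

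Over a field, the lemma comes from representability of $\omega|_{\langle X\rangle}$ by the smallest subobject of $X\otimes\omega(X)^\vee$ whose fibre contains the tautological element. That subobject exists because descending chains of subspaces stabilize. Over $R$ this mechanism fails. In the example above, the subobjects $\pi^jRe_1+Re_2\subset U$ all contain $e_2$ but have no smallest member: their intersection $Re_2$ is not a subcomodule. Moreover $\omega|_{\langle U\rangle}$ is not representable, since otherwise $\langle U\rangle\simeq\mathrm{Mod}^{\rm f}(E_U)=\mathrm{Mod}^{\rm f}_R$.

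Replacing this step by an argument that works with the coend, or with $\mathrm{Ind}(\mathcal C)$, rather than with the finite pieces $\langle X\rangle$, is precisely the content of the cited theorem. Your proposal does not supply it.
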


Let $f:G\to G'$ be a homomorphism of flat affine group schemes over $R$. 
Then it induces the \textit{restriction functor}
$f^*:\mathrm{Rep}(G')\longrightarrow \mathrm{Rep}(G)$
which is  a tensor functor. This functor will also be denoted by $\mathrm{Res}^{G'}_G$
when we want to emphasize on the groups. This functor also
restricts to the subcategories of $R$-coherent,
$R$-finite projective representations.

\begin{thm}[{\cite[Proposition 3.2.1 and Theorem 4.1.2]{DH18}}] \label{th1}
Let $f: G  \longrightarrow G'$ be a homomorphism of  affine flat groups over $R,$ and 
a tensor functor   be the  restriction functor, which also 
 \begin{enumerate}
 \item $f$ is (generically) faithfully flat if and only if $f^*: \mathrm{Rep}_R ^{\circ}(G') \longrightarrow \mathrm{Rep}_R ^{\circ}(G)$ is fully faithful and its image is closed under taking
 (special) sub-objects.  
 \item  $f$ is a closed immersion if and only if every object of  $\mathrm{Rep}_R ^{\circ}(G)$ is isomorphic to a {\em special} sub-quotient of
 an object of the form  $f^*(X'),$ for $X' \in \mathrm{Obj}(\mathrm{Rep}_R ^{\circ}(G')).$
 \end{enumerate}
\end{thm}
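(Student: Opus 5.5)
Both parts reduce to the Tannakian dictionary of Theorem \ref{th_duality}, under which representations of a flat affine $R$-group correspond to comodules over its Hopf algebra, and to the standard translation between properties of Hopf algebra maps and properties of the restriction functor. Write $f^\sharp: R[G']\to R[G]$ for the map of Hopf algebras. Since $R$ is Dedekind, flat, projective and torsion-free finite modules coincide. Every $R[G]$-comodule is the union of its finite subcomodules, and every finite comodule is a quotient of a finite projective one. These two facts let us pass freely between $\mathrm{Rep}_R^\circ$ and all of $\mathrm{Rep}_R$.

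For (2), recall that $f$ is a closed immersion iff $f^\sharp$ is surjective. Suppose first that $f^\sharp$ is surjective, and let $M\in\mathrm{Rep}^\circ_R(G)$. Embed $M$ into $M\otimes R[G]$ via the coaction, with trivial structure on $M$. Then surject $M\otimes R[G']$ onto $M\otimes R[G]$ using $f^\sharp$. This realizes $M$ as a subquotient of a comodule coming from $G'$. Writing $M\otimes R[G']$ as a union of finite projective $G'$-subcomodules $X'$, the image of $M$ lies in the image of some finite $X'$. So $M$ is a subquotient of $f^*X'$. It is special because $M\subset M\otimes R[G]$ is split by the counit, so the cokernel is flat; pulling back this special inclusion along the quotient map (the pull-back/push-out square of \S\ref{specialsub-quotients}) keeps it special.

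For the converse of (2), note that the image of $f^\sharp$ contains all matrix coefficients of comodules of the form $f^*X'$. Matrix coefficients pass to quotients. They also pass to special subobjects, and this is where specialness is used: if $M\subset N$ with $N/M$ flat, then $M$ is a direct summand of $N$ as an $R$-module locally, so the coefficient space of $M$ is obtained from that of $N$ by restricting to $M$ and projecting back onto $M$. Hence every coefficient of $M\in\mathrm{Rep}^\circ_R(G)$ lies in the image of $f^\sharp$, and since $R[G]$ is the union of coefficient spaces of such $M$, $f^\sharp$ is surjective.

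For (1), $f$ is faithfully flat iff $f^\sharp$ is injective and $R[G]$ is faithfully flat over $R[G']$. Over a field this is Deligne--Milne's criterion, namely full faithfulness plus closure under subobjects. Over $R$ I would reduce to the fibres. Full faithfulness on $\mathrm{Rep}^\circ$ can be checked after $\otimes K$, because Hom modules of projective modules are torsion-free and saturated. Closure under special subobjects implies closure under subobjects on the generic fibre, since every $K$-subobject is the generic fibre of its saturation, which is special. This gives the ``generically'' version: $f_K$ is faithfully flat, and $f^\sharp$ is injective because $R[G']$ is torsion-free. For genuine faithful flatness one must also control the closed fibres. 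Here I would use the criterion of \cite{DH18}, that $G'$-invariants of $R[G]$ behave well, together with the saturation property. This step, upgrading generic faithful flatness plus closure under special subobjects to faithful flatness over all of $\mathrm{Spec} R$, is the main obstacle. I would attack it by localizing at each maximal ideal, where $R$ is a DVR, and invoking the DVR case from \cite{DH18}, since faithful flatness is local on $R$.
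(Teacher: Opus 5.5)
Your argument for (2) works. When you cut down to a finite $X'$, use $P\cap X'$; it stays special because submodules of the torsion-free module $(M\otimes R[G'])/P$ are torsion-free. The reduction of the ``generically'' half of (1) to \cite{DM82} over $K$, via saturating lattices, also works. You do not write out the implications from (generic) faithful flatness back to the categorical conditions. They follow by a purity argument once $f^\sharp$ is injective, resp.\ once $R[G]/f^\sharp(R[G'])$ is $R$-flat. Faithful flatness gives that flatness, since $f^\sharp\otimes k_s$ stays injective.

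The genuine gap is the hard half of (1): that full faithfulness together with closure under \emph{all} subobjects makes $f$ faithfully flat. You defer this to ``the DVR case from \cite{DH18}''. But \cite{DH18} works over Dedekind rings, so its DVR case is just the statement you are proving; the paper itself simply imports the theorem from there. The remark about invariants ``behaving well'' is not an actual step. Two ingredients are missing.

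First, closure under non-special subobjects has to be turned into saturation of $f^\sharp(R[G'])$ in $R[G]$. Suppose $ab=f^\sharp(a')$ with $0\neq a\in R$. Take a finite $G'$-subcomodule $V'\subset R[G']$ containing $a'$, and set $U:=\ker\bigl(f^*V'\to R[G]/aR[G]\bigr)$. This is a $G$-subobject of $f^*V'$, in general not special, and it contains $a'$. By your hypothesis, together with full faithfulness, $U$ is $G'$-stable. So $\Delta'(u)=\sum_i u_i\otimes g_i$ with $u_i\in U$, and hence $u=\sum_i\varepsilon'(u_i)g_i\in aR[G']$, because $\varepsilon'(u_i)=\varepsilon(f^\sharp(u_i))\in aR$. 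Thus $a'\in aR[G']$, and by torsion-freeness $b\in f^\sharp(R[G'])$.

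Second, you need the non-formal fact that an injective map of flat Hopf algebras over $R$ with $R$-flat cokernel is faithfully flat. This is exactly what the paper invokes in Lemma \ref{faithfullyflat} via \cite[Theorem~4.1.1]{DH18}. It needs its own argument. For example, all fibres $f_K$ and $f_{k_s}$ are faithfully flat by the field case; the special fibres qualify because flatness of the cokernel keeps $f^\sharp\otimes k_s$ injective. One then still needs a fibrewise flatness argument over $R$.

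Localizing at maximal ideals does not remove this difficulty; it only moves it to a DVR. You would also have to check that your hypotheses localize.
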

We say that $f$ is \textit{surjective} or a {\em quotient map} if it is faithfully flat.

Let $ G \longrightarrow A$ be a homomorphism of affine group schemes over $R$. 
The kernel of this map is defined to be
$$L:=G\times_A\text{\rm Spec}(R).$$
This is a closed subgroup of $G$. 
The sequence 
\begin{displaymath}
\xymatrix{1 \ar[r] &  L \ar[r]^{q}&G\ar[r]^{p}& A \ar[r] &1}    
 \end{displaymath}                  
is said to be \textit{exact} if  $p$  is a quotient map with kernel $L.$ We will provide a criterion for the exactness in terms of the functors 
\begin{eqnarray}\label{eq}
\xymatrix{\mathrm{Rep}_R ^{\circ}(A)\ar[r]^{p^*}&\mathrm{Rep}_R^{\circ}(G) \ar[r]^{q^*}& \mathrm{Rep}_R ^{\circ}(L).}
\end{eqnarray}

\begin{thm}[Theorem 4.2.2 \cite{DH18}] \label{thm-ExactSequence}
  Let us be given a  sequence of homomorphisms 
 \begin{displaymath}
 \xymatrix{L \ar[r]^{q}&G\ar[r]^{p}& A}
 \end{displaymath}
 with $q$ a  closed  immersion  and $p$ faithfully  flat. 
 Then this sequence is  exact  if  and  only  if  the  following  conditions  are  fulfilled: \begin{itemize}
  \item[(a)]  For  an  object  $V$ in $\mathrm{Rep}_R ^{\circ}(G),$  $q^*(V )$  in  $\mathrm{Rep}_R ^{\circ}(L)$ is  trivial  if  and  only  if 
            $V \cong p^*U$  for some object $U$ in $\mathrm{Rep}_R ^{\circ}(A).$ 
 \item[(b)]  Let $W_0$   be the maximal trivial sub-object of $q^*(V )$ in $\mathrm{Rep}_R ^{\circ}(L).$ Then there 
            exists $V_0 \subset V \in \mathrm{Obj}(\mathrm{Rep}_R ^{\circ}(G)),$ such that $q^*(V_0) \cong W_0.$
 \item[(c)]  Any object $W$ in $\mathrm{Rep}_R ^{\circ}(L)$  is a quotient in (hence,  by taking duals,  a sub-object 
            of) $q^*(V )$ for some  $V \in \mathrm{Obj}(\mathrm{Rep}_R ^{\circ}(G)).$                             
\end{itemize}
\end{thm}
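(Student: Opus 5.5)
The plan is to prove necessity directly from the structure of $G$-modules, and sufficiency by identifying $L$ with the scheme-theoretic kernel $L':=G\times_A\mathrm{Spec}(R)$. That identification will be reduced to the generic fiber, where the field criterion \cite[Theorem A.1]{EHS08} applies, and then lifted back to $R$ using flatness.

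\emph{Necessity.} Assume the sequence is exact, so $L=L'$ is normal in $G$ and $p$ identifies $A$ with the fppf quotient $G/L$.
\begin{itemize}
\item[(a)] If $q^*V$ is trivial, the coaction of $R[G]$ on $V$ lands in $V\otimes R[G]^{L}=V\otimes R[A]$. Hence $V\cong p^*U$ with $U=V$ as an $R$-module. The converse is clear because $p\circ q$ is trivial.
\item[(b)] Take $V_0:=V^L$. By normality of $L$ it is a $G$-submodule of $V$. It is special: if $rv\in V^L$ with $0\neq r\in R$, then $v\in V^L$, since the coaction identity can be divided by $r$ inside the torsion-free module $V\otimes R[L]$. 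So $V/V_0$ is torsion free, hence projective. Its restriction is the maximal trivial subobject $W_0$.
\item[(c)] For $W\in\mathrm{Rep}^\circ_R(L)$, use the induced module $\mathrm{Ind}_L^G W\subset W\otimes R[G]$. The evaluation map $q^*\mathrm{Ind}_L^GW\to W$ is surjective because $G\to G/L$ is faithfully flat and $L$ is normal. Since $\mathrm{Ind}_L^GW$ is the union of its finite projective $G$-submodules, a finite one already surjects onto $W$. Applying this to $W^\vee$ and dualising gives the embedding.
\end{itemize}

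\emph{Sufficiency.} Since $p$ is faithfully flat and $G$ is $R$-flat, $L'$ is flat over $R$, being a base change of $p$. By (a), every $p^*U$ restricts trivially to $L$, so $p\circ q$ is trivial and $q$ factors through a closed immersion $q':L\to L'$. I then base change everything to $K$. Every object of $\mathrm{Rep}^\mathrm{f}(G_K)$ (resp. of $L_K$, $A_K$) is the generic fiber of an $R$-lattice in $\mathrm{Rep}^\circ_R$. Conditions (a)--(c) transfer to $K$ as follows:
\begin{itemize}
\item the maximal trivial subobject of $q^*V$ over $K$ is the generic fiber of the one over $R$, by saturation;
\item a trivial $L_K$-representation has a trivial $L$-stable lattice, obtained by saturating $V\cap V_K^{L_K}$.
\end{itemize}
The sequence $L_K\to G_K\to A_K$ then satisfies the hypotheses of \cite[Theorem A.1]{EHS08}. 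Since $p_K$ is faithfully flat and $q_K$ is a closed immersion, $L_K\to L'_K$ is an isomorphism. Now the map $R[L']\to R[L]$ is surjective because $q'$ is a closed immersion, and it becomes an isomorphism after $\otimes_RK$. Its kernel is therefore a torsion submodule of the flat, hence torsion-free, module $R[L']$, so it vanishes. Thus $L=L'$ and the sequence is exact.

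\emph{Main obstacle.} The delicate step is the passage to $K$. One must check that (a) and (b) over $R$ really imply their $K$-analogues. This uses that $\mathrm{Rep}^\mathrm{f}(G_K)\simeq \mathrm{Rep}^\circ_R(G)_K$, with Hom-sets tensored by $K$ as in \cite[1.3]{DH18}. It also needs a saturation argument showing that a $G_K$-subrepresentation of $V_K$ comes from a special $G$-subobject of $V$. I would handle this by intersecting with $V$ inside $V_K$ and using that $R$ is Dedekind, so the saturated intersection is projective and $G$-stable.
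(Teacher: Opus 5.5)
The paper does not prove this statement. It is quoted from \cite[Theorem~4.2.2]{DH18} and used as a black box, so there is no argument in the text to set yours against. Judged on its own, your proof is correct, given the field criterion \cite[Thm.~A.1]{EHS08}, which the paper also relies on.

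The only trace of the cited proof in the paper is Lemma~\ref{lem-999}, which borrows from it the exactness of induction from a normal subgroup. That suggests the source also obtains (c) through induction from $L$, as you do.

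A few steps deserve one more line each:
\begin{itemize}
\item \emph{Triviality of $p\circ q$.} This uses that $R[A]$, being $R$-flat, is the union of finite projective subcomodules $U$. Every $f\in U$ is the matrix coefficient of $\varepsilon|_U\in U^\vee$ and $f$, so it is sent to $\varepsilon(f)\cdot 1$ in $R[L]$.
\item \emph{Surjectivity of $q^*\mathrm{Ind}_L^GW\to W$.} This follows from the descent isomorphism $\mathrm{Ind}_L^GW\otimes_{R[A]}R[G]\cong W\otimes_RR[G]$, followed by $\mathrm{id}_W\otimes\varepsilon_G$. The descent is along the $L$-torsor $G\to A$, i.e.\ via $G\times_AG\cong G\times L$; the same descent gives $R[G]^L=R[A]$ in your proof of (a).
\item \emph{Passage to $K$.} The step you flag as the main obstacle is easier than you fear. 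For (a) over $K$, any $G$-stable lattice $V$ of an $L_K$-trivial $V_K$ is already $L$-trivial, because $V\otimes_RR[L]\to V_K\otimes_KK[L]$ is injective. For (b), $V^L\otimes_RK=V_K^{L_K}$ by flatness of $K$, so $V_0\otimes_RK$ does the job. No saturation of $G_K$-subrepresentations is needed.
\end{itemize}

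Your sufficiency argument is a genuinely clean route. Since $p$ is flat, $\ker p$ is $R$-flat, so a closed flat subgroup of it with the same generic fibre is all of it. Hence, once $p$ is faithfully flat, exactness over $R$ is a purely generic condition. Only the $K$-shadows of (a)--(c) are used, and \cite{EHS08} does the real work.

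A direct argument over $R$, of the kind the paper itself runs in Steps~1--2 of the proof of Theorem~\ref{FES}, would instead show that restriction from $\ker p$ to $L$ is full and essentially surjective. It would use (b), (c) and Theorem~\ref{th1} directly on lattices. That avoids the field case, but it has to manage special sub-objects and saturation over $R$.

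Your route is shorter and makes transparent that, in this direction, the integral hypotheses matter only through their generic fibres. The price is dependence on the field criterion.
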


\subsection{Fiber criterion for surjective maps of flat affine group schemes} \label{sect_morphism_group_schemes}
The materials in this subsection are taken from \cite{DHdS18}. 
The aim is to prove Lemma \ref{lem-DHdS} which apprears as Theorem
\ref{prop-ismorphicofgalois}.


 Let $N$ be an $R[G]-$comodule. Then $N_{\text {tor}}$, the $R$-torsion sub-module of $N$ is an $R[G]$-subcomodule. Hence, for any $R[G]$-subcomodule $M$, the pre-image of $(N / M)_{\text {tor}}$ in $N$, denoted $M^{\text {sat }},$ is an $R[G]$-comodule. Since $R$ is a Dedekind ring, the quotient $N / M^{\text {sat }}$ is flat, being torsion-free. Thus, $M^{\text {sat }}$ is the smallest special subcomodule of $N$, containing $M$. It is called the \textit{saturation} of $M$ in $N.$  


 Let $\rho: \Pi \rightarrow \mathrm{G}$ be a morphism of
(flat affine) $R$-group schemes. We describe the ``images'' of $\rho$ in two ways.

\begin{defn}[The diptych]\label{defndiptych}
     Define $\Psi_\rho$ as the group scheme whose Hopf algebra is the image of $R[G]$ in $R[\Pi]$. Define $R[\Psi_\rho^{\prime}]$ as the saturation of $R[\Psi_\rho]$ inside $R[\Pi]$. The obvious commutative diagram
     $$\xymatrix{ \Psi'_{\rho} \ar[r]& \Psi_{\rho}\ar[d]\\ \Pi\ar[r]_{\rho} \ar[u]& G}$$
is called the diptych of $\rho$.
\end{defn}

\begin{rem} We have 
\begin{itemize}
        \item [(1)] The image of a Hopf algebra homomorphism is a Hopf sub-algebra (of the target).
        \item [(2)]  Implicit in the above definition is the fact that 
        ${R}[\Psi_\rho^{\prime}]=R[\Psi_{\rho}]^{\text {sat }}$ is a Hopf algebra. Indeed, we have the filtration $$R[\Psi_{\rho}]^{\text {sat }} \otimes R[\Psi_{\rho}]^{\text {sat }} \subset R[\Psi_{\rho}]^{\text {sat }} \otimes R[\Pi] \subset R[\Pi] \otimes R[\Pi],$$ the successive quotients of which are flat, and therefore $(R[\Pi] \otimes R[\Pi]) / (R[\Psi_{\rho}]^{\text {sat }} \otimes R[\Psi_{\rho}]^{\text {sat }})$ is also flat. Thus
$$
(R[\Psi_{\rho}'] \otimes R[\Psi_{\rho}])^{\text {sat }} \subset R[\Psi_{\rho}]^{\text {sat }} \otimes R[\Psi_{\rho}]^{\text {sat }} .
$$
Hence, by the definition of $R[\Psi_{\rho}']^{\text {sat }}$, we have
$$
\Delta\left(R[\Psi_{\rho}]^{\text {sat }}\right) \subset(R[\Psi_{\rho}] \otimes R[\Psi_{\rho}])^{\text {sat }} \subset R[\Psi_{\rho}]^{\text {sat }} \otimes R[\Psi_{\rho}]^{\text {sat }}.
$$
    \end{itemize}
\end{rem}

\begin{lem}\label{faithfullyflat}
    The morphism $\Pi \rightarrow \Psi_\rho^{\prime}$ is faithfully flat and the
morphism $\Psi_\rho\longrightarrow G$ is a closed immersion.
\end{lem}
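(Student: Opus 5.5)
The two claims are of quite different nature, so I would prove them separately, treating the first as a statement about comodules and the second as a statement about Hopf ideals.

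For faithful flatness of $\Pi\to\Psi'_\rho$, I would use the criterion that a homomorphism of flat affine $R$-group schemes is faithfully flat exactly when the corresponding map of Hopf algebras is injective with $R$-flat cokernel. The Tannakian form of this criterion is Theorem \ref{th1}(1) in the form of \cite[Proposition 3.2.1]{DH18}. In our case $R[\Psi'_\rho]\subset R[\Pi]$ is an injective Hopf subalgebra, by the Remark following Definition \ref{defndiptych}. It is also saturated, so $R[\Pi]/R[\Psi'_\rho]$ is torsion free, hence $R$-flat because $R$ is Dedekind.

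To translate this into categories, I would verify the conditions of Theorem \ref{th1}(1) directly.
\begin{itemize}
\item A finite projective $R[\Psi'_\rho]$-comodule is the same as a finite projective $R[\Pi]$-comodule whose coaction lands in $M\otimes R[\Psi'_\rho]$. Hence restriction is fully faithful.
\item Let $M$ be such a comodule and $N\subset M$ a special $\Pi$-subcomodule. The coaction of $N$ lands in $(N\otimes R[\Pi])\cap(M\otimes R[\Psi'_\rho])$. This intersection equals $N\otimes R[\Psi'_\rho]$, because $M/N$ and $R[\Pi]/R[\Psi'_\rho]$ are flat, so tensoring preserves the relevant injections.
\end{itemize}
That gives closure under special sub-objects. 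Alternatively, one can quote the standard fact (\cite[Proposition 3.2.1]{DH18}, or the DVR version in \cite{DHdS18}) that an inclusion of flat Hopf algebras with flat cokernel is faithfully flat over a Dedekind base. Since flatness is local, this reduces to the localisations $R_{\mathfrak m}$, which are DVRs, and there one can invoke \cite{DHdS18}.

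For the closed immersion $\Psi_\rho\to G$, the Hopf algebra $R[\Psi_\rho]$ is by definition the image of $R[G]\to R[\Pi]$. So $R[G]\to R[\Psi_\rho]$ is surjective, which is exactly the statement that $\Psi_\rho\to G$ is a closed immersion. The one thing to check is that $\Psi_\rho$ is a flat $R$-group scheme, so that it lies in our setting. This holds because $R[\Psi_\rho]$ is a submodule of the flat module $R[\Pi]$, hence torsion free and therefore flat over a Dedekind ring. It is a Hopf algebra by part (1) of the Remark.

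The main obstacle is the faithful flatness part: passing from "saturated injective Hopf subalgebra" to "faithfully flat" over a Dedekind ring rather than a field. I would first try to cite \cite{DH18}. If that does not apply verbatim, I would localise at maximal ideals and reduce to the DVR case treated in \cite{DHdS18}.
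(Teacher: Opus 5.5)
Your argument is correct and is essentially the paper's. The paper cites \cite[Theorem~4.1.1]{DH18} to get faithful flatness of $\Pi\to\Psi'_\rho$ from the fact that $R[\Psi'_\rho]$ is a saturated Hopf subalgebra of $R[\Pi]$, and notes that $\Psi_\rho\to G$ is a closed immersion by the definition of the diptych, i.e.\ because $R[G]\to R[\Psi_\rho]$ is surjective.

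One correction to your direct categorical check: by Theorem~\ref{th1}(1), closure under \emph{special} sub-objects only characterizes \emph{generic} faithful flatness. Your intersection argument in fact works for every sub-object $N\subset M$, since the equality $(N\otimes R[\Pi])\cap(M\otimes R[\Psi'_\rho])=N\otimes R[\Psi'_\rho]$ uses only the flatness of $R[\Pi]/R[\Psi'_\rho]$, not that of $M/N$.
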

\begin{proof}
The first claim follows from \cite[Theorem 4.1.1]{DH18} and the second claim
follows from definition of the diptych.
\end{proof}

 The following proposition generalizes Lemma 4.5 in \cite{DHdS18} from PID rings to Dedekind rings.
\begin{prop}\label{proplocaltoglobal}
     Assume that $\Psi_{\rho, {k}_s}^{\prime} \rightarrow \Psi_{\rho, {k}_s}$ is faithfully flat for every residue field $k,$ where $s$ is a closed point of $\mathrm{Spec}(R).$ Then $\Psi_\rho^{\prime} \rightarrow \Psi_\rho$ is an isomorphism.
\end{prop}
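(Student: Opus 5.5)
The plan is to show that the Hopf algebra inclusion $R[\Psi_\rho]\subset R[\Psi'_\rho]=R[\Psi_\rho]^{\mathrm{sat}}$, which is the comorphism of $\Psi'_\rho\to\Psi_\rho$, is an equality. Write $A:=R[\Psi_\rho]$, $A':=R[\Psi'_\rho]$ and $C:=A'/A$. Since $A'$ is the saturation of $A$ in $R[\Pi]$, we have $A_K=A'_K$, so $C$ is an $R$-torsion module. Because $R$ is Dedekind, $C$ vanishes if and only if $C_{\mathfrak m}=0$ for every maximal ideal $\mathfrak m$. It therefore suffices to check that $C\otimes_R k_s=0$ for each closed point $s$. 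Indeed, a torsion module over the DVR $R_{\mathfrak m}$ with $C/\mathfrak m C=0$ is zero: any element is killed by some power of a uniformizer $t$, and $C=tC$ then forces every element to vanish. This is essentially the argument of \cite[Lemma 4.5]{DHdS18}, carried out one localization at a time.

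To get $C\otimes k_s=0$, tensor the exact sequence $0\to A\to A'\to C\to 0$ with $k_s$. This gives the exact sequence
$$\mathrm{Tor}_1^R(C,k_s)\longrightarrow A\otimes k_s\longrightarrow A'\otimes k_s\longrightarrow C\otimes k_s\longrightarrow 0.$$
Here $A'\otimes k_s=k_s[\Psi'_{\rho,k_s}]$ and $A\otimes k_s=k_s[\Psi_{\rho,k_s}]$, and both $A$ and $A'$ are $R$-flat because they sit inside the flat module $R[\Pi]$ over a Dedekind ring. The middle map is the comorphism of $\Psi'_{\rho,k_s}\to\Psi_{\rho,k_s}$. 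By hypothesis this morphism is faithfully flat, and a faithfully flat homomorphism of affine group schemes over a field has injective comorphism. This injectivity alone does not kill $C\otimes k_s$, so I need more.

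The extra input is that the natural map $A'\otimes k_s\to R[\Pi]\otimes k_s$ is injective, since $R[\Pi]/A'$ is flat. So $k_s[\Psi'_{\rho,k_s}]$ is a Hopf subalgebra of $k_s[\Pi_{k_s}]$. Its sub-Hopf-algebra $\mathrm{im}(A\otimes k_s)$ is exactly the image of $R[G]\otimes k_s$ in $k_s[\Pi_{k_s}]$. Faithful flatness over a field makes $A'\otimes k_s$ faithfully flat over the image $B$ of $A\otimes k_s$. On the other hand, $B\to A'\otimes k_s$ is surjective onto the part coming from $A$, and $\mathrm{Tor}_1(C,k_s)$ measures the failure of injectivity. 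I will argue as follows: $\Psi'_{\rho,k_s}\to\Psi_{\rho,k_s}$ factors through the closed subgroup $\mathrm{Spec}(B)\subset\Psi_{\rho,k_s}$, and this factorization is a closed immersion $\Psi'_{\rho,k_s}\to\mathrm{Spec} B$, because $B\to A'\otimes k_s$ is surjective modulo $C\otimes k_s$. A morphism that is both faithfully flat and a closed immersion is an isomorphism. Hence $B\to A'\otimes k_s$ is bijective, so $C\otimes k_s=0$.

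The main obstacle is this last step: making precise that, after reduction mod $\mathfrak m_s$, the map $\Psi'_{\rho,k_s}\to\mathrm{Spec}(B)$ is a closed immersion, i.e. that the image of $A\otimes k_s$ generates $A'\otimes k_s$ up to the torsion quotient $C\otimes k_s$. I will do this as in \cite[Lemma 4.5]{DHdS18}, working in the DVR $R_{\mathfrak m}$ with uniformizer $t$. Take $a'\in A'$ with $t a'\in A$. Its image in $A'\otimes k_s$ lies in $\ker(A'\otimes k_s\to\text{quotient})$, and I will show that faithful flatness forces $a'\in A+tA'$. This amounts to using that a faithfully flat inclusion of Hopf algebras over $k_s$ which is an equality on the subalgebra generated by the image must be an equality. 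Once the local statement $A_{\mathfrak m}=A'_{\mathfrak m}$ is established for all $\mathfrak m$, the global isomorphism follows.
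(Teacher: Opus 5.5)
You have the setup right: $C=A'/A$ is $R$-torsion, $A$ and $A'$ are $R$-flat, and faithful flatness of $\Psi'_{\rho,k_s}\to\Psi_{\rho,k_s}$ makes $A\otimes k_s\to A'\otimes k_s$ injective. The argument then fails in two places.

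First, the reduction you state at the outset is false. A torsion $R_{\mathfrak m}$-module with $C=tC$ need not be zero: $K/R_{\mathfrak m}$ is torsion and divisible. Writing $c=t^n c_n$ gives no contradiction, because the $c_n$ are killed by ever larger powers of $t$. Nakayama would need $C$ finitely generated, and $C=A'/A$ is in general not, since $A'$ typically has infinite rank. So even knowing $C\otimes k_s=0$ for all $s$ would not give $C=0$ by your reasoning.

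Second, your route to $C\otimes k_s=0$ is circular. The comorphism is injective, so $B\cong A\otimes k_s$ and $\mathrm{Spec}\,B=\Psi_{\rho,k_s}$. Hence the claim that $\Psi'_{\rho,k_s}\to\mathrm{Spec}\,B$ is a closed immersion is literally the claim $C\otimes k_s=0$. Your last paragraph (``faithful flatness forces $a'\in A+tA'$'') restates this goal without giving a mechanism.

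The missing idea is to kill the $t$-torsion of $C$, not its cotorsion $C/tC$, and you already have what you need for this. Because $A'$ is flat, your sequence begins
$0=\mathrm{Tor}_1^R(A',k_s)\to\mathrm{Tor}_1^R(C,k_s)\to A\otimes k_s\to A'\otimes k_s$.
The injectivity you obtained from faithful flatness therefore gives $\mathrm{Tor}_1^R(C,k_s)=0$. Localizing at $\mathfrak m=\mathfrak m_s$ with uniformizer $t$, this group is $\{c\in C_{\mathfrak m}: tc=0\}$. A nonzero torsion module always contains a nonzero element killed by $t$: if $c\neq 0$ and $n\geq 1$ is minimal with $t^nc=0$, take $t^{n-1}c$. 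Hence $C_{\mathfrak m}=0$ for every maximal ideal, so $C=0$ and $A=A'$, i.e.\ $\Psi'_\rho\to\Psi_\rho$ is an isomorphism. The paper simply cites the literature for this statement; the argument above is the standard one and needs no closed-immersion or $\mathrm{Spec}\,B$ considerations.
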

\begin{proof} This follows from \cite[Proposition~3.2]{DHH17} or  \cite[Theorem~1]{HHdS25}).
\end{proof}
%
Fix a closed point $s$ of $S$ and denote $k:=k_s$ for simplicity.
Over  $k$, there is   interesting group scheme in sight: the image of $\rho_{k}$. We then have the \textit{triptych} of $\rho_{k}$, which is the commutative diagram

\begin{align}\label{triptych}
    \begin{xy}
(15,30)*+{\Psi_{\rho,k}^{\prime}}="v1";(45,30)*+{\Psi_{\rho,k}}="v2";%
(30,15)*+{\mathrm{Im}(\rho_{k})}="v4";%
(15,0)*+{\Pi_{k}}="v6";(45,0)*+{G_{k}}="v7";%
 {\ar@{->>} "v6"; "v1"};%
{\ar@{->} "v1"; "v2"};{\ar@{->>} "v1"; "v4"};%
{\ar@{^(->} "v2"; "v7"};%
{\ar@{^(->} "v4"; "v2"};%
{\ar@{^(->} "v4"; "v7"};%
{\ar@{->} "v6"; "v7"};%
{\ar@{->>} "v6"; "v4"};%
\end{xy}.
\end{align}

Together with Proposition \ref{proplocaltoglobal}, diagram \eqref{triptych} proves the following:
\begin{cor}\label{Corim}
    
The following claims are true.
\begin{itemize}
    \item [i)] If $\operatorname{Im}\left(\rho_{k_s}\right) \rightarrow \Psi_{\rho, k_s}$ is an isomorphism for every residue field $k,$ then $\Psi_\rho^{\prime} \rightarrow \Psi_\rho$ is an isomorphism.
    \item [ii)]  If $\Psi_\rho^{\prime} \rightarrow \Psi_\rho$ is an isomorphism, then $\operatorname{Im}\left(\rho_{k_s}\right) \rightarrow \Psi_{\rho, k_s}$ is an isomorphism.
    \item [iii)] The image of $\Psi_{\rho, k_s}^{\prime}$ in $\Psi_{\rho, k_s}$ is none other than $\operatorname{Im}\left(\rho_{k_s}\right)$.
\end{itemize}
\end{cor}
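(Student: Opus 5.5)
We prove the three items in turn, using the triptych \eqref{triptych} and Proposition \ref{proplocaltoglobal}. Throughout we use the exactness properties already recorded. In the triptych, $\Pi_{k_s}\to\mathrm{Im}(\rho_{k_s})$ is faithfully flat and $\mathrm{Im}(\rho_{k_s})\to G_{k_s}$ is a closed immersion. Also, $\Pi_{k_s}\to\Psi'_{\rho,k_s}$ is faithfully flat, being the base change of the faithfully flat map in Lemma \ref{faithfullyflat}.

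We begin with (iii), since the other two items follow from it. The composite $\Pi_{k_s}\to\Psi'_{\rho,k_s}\to\Psi_{\rho,k_s}\to G_{k_s}$ equals $\rho_{k_s}$. Its first arrow is faithfully flat, so the scheme-theoretic image of $\Psi'_{\rho,k_s}$ in $\Psi_{\rho,k_s}$ equals the image of $\Pi_{k_s}$ in $\Psi_{\rho,k_s}$. Whether or not $\Psi_{\rho,k_s}\to G_{k_s}$ is still a closed immersion after base change, images of group homomorphisms over a field are closed subgroups. We compute the image of $\Pi_{k_s}$ in $\Psi_{\rho,k_s}$ at the level of Hopf algebras. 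The map $R[G]\otimes k_s\to R[\Psi_\rho]\otimes k_s$ is surjective, since $R[G]\to R[\Psi_\rho]$ is surjective and $\otimes k_s$ is right exact. Hence the kernel of $k_s[\Psi_{\rho,k_s}]\to k_s[\Pi_{k_s}]$ pulls back to the kernel of $k_s[G_{k_s}]\to k_s[\Pi_{k_s}]$. This identifies the image with $\mathrm{Im}(\rho_{k_s})$, embedded in $\Psi_{\rho,k_s}$ via the factorization in the triptych. This proves (iii).

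For (i), assume $\mathrm{Im}(\rho_{k_s})\to\Psi_{\rho,k_s}$ is an isomorphism for each closed point $s$. By (iii), $\Psi'_{\rho,k_s}\to\Psi_{\rho,k_s}$ then factors as the faithfully flat map $\Psi'_{\rho,k_s}\to\mathrm{Im}(\rho_{k_s})$ followed by an isomorphism. So it is faithfully flat, and Proposition \ref{proplocaltoglobal} gives that $\Psi'_\rho\to\Psi_\rho$ is an isomorphism.

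For (ii), if $\Psi'_\rho\to\Psi_\rho$ is an isomorphism, then so is its base change $\Psi'_{\rho,k_s}\to\Psi_{\rho,k_s}$; in particular it is surjective. By (iii) its image is $\mathrm{Im}(\rho_{k_s})$, so $\mathrm{Im}(\rho_{k_s})\to\Psi_{\rho,k_s}$ is a closed immersion that is also faithfully flat, hence an isomorphism.

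The delicate step is (iii): we must check that the identification of images survives base change to $k_s$, even though $\Psi_\rho\to G$ need not remain a closed immersion in a naive sense. The Hopf-algebra computation above, using right exactness of $\otimes k_s$, is designed to handle exactly this.
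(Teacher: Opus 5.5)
Your argument is correct and follows the paper's route: the paper simply asserts that (i)--(iii) follow from the triptych \eqref{triptych} together with Proposition~\ref{proplocaltoglobal}, and you have supplied the Hopf-algebra verification of (iii) and derived (i) and (ii) from it. One correction to your commentary: $\Psi_{\rho,k_s}\to G_{k_s}$ is a closed immersion, because closed immersions are stable under base change (this is exactly your right-exactness observation); what fails to commute with base change is the formation of the image, which is why $\mathrm{Im}(\rho_{k_s})$ may be strictly smaller than $\Psi_{\rho,k_s}$.
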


%

\begin{defn}
    Let $\Pi$ be a (flat affine) $R$-group scheme and $V \in \mathrm{Rep}_R^{\circ}(\Pi).$ 
Denote $T^{a,b}(V):=V^{\otimes a}\otimes V^{\vee\otimes b}$. 
The category of all special sub-quotients of various $T^{a_1, b_1} ({V}) 
\oplus \cdots \oplus T^{a_m, b_{m}} ({V})$ is denoted by 
$\langle V\rangle_{\otimes}^{s}$.
\end{defn}

Let $V$ be a finite projective $R$-module. Then the group functor 
$\mathrm{GL}(V)$ is represented by an affine group scheme (see \cite[2.2. p. 20]{Ja03}). 
We assume that our $\mathrm{G}$ (in the diptych) equals $\mathrm{GL}(V ).$
We now interpret $\Psi_\rho$ and $\Psi_\rho^{\prime}$ in terms of their representation categories. 
\begin{prop}\label{PropTannaka}
 Let $ {V}$ be an object of 
 $\operatorname{Rep}_{\mathrm{R}}^{\circ}(\Pi)$ and $\rho$ be the natural homomorphism $\Pi \rightarrow \mathrm{G}:=\mathrm{GL}({V}).$
 \begin{itemize}
     \item [(1)] The obvious functor $\operatorname{Rep}_R\left(\Psi_\rho\right) \rightarrow \operatorname{Rep}_R(\Pi)$ defines an equivalence of categories between $\operatorname{Rep}_R^{\circ}\left(\Psi_\rho\right)$ and $\langle V\rangle_{\otimes}^s$.
     \item [(2)] The obvious functor $\operatorname{Rep}_R\left(\Psi_\rho^{\prime}\right) \rightarrow \operatorname{Rep}_R(\Pi)$ defines an equivalence between $\operatorname{Rep}_R\left(\Psi_\rho^{\prime}\right)$ and $\langle V \rangle_{\otimes}$.
 \end{itemize}

\end{prop}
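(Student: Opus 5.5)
The plan is to adapt the argument of \cite{DHdS18} (where $R$ is a discrete valuation ring) to a Dedekind $R$, treating (1) and (2) separately. Throughout, $\rho:\Pi\to\mathrm{GL}(V)$, and we use that a representation of a closed subgroup, resp.\ of a quotient, of $\Pi$ is the same as a comodule over the corresponding Hopf algebra.

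For (2), we use that $\Pi\to\Psi'_\rho$ is faithfully flat (Lemma \ref{faithfullyflat}). By Theorem \ref{th1}(1), $\mathrm{Rep}_R(\Psi'_\rho)\to\mathrm{Rep}_R(\Pi)$ is then fully faithful, and its essential image is closed under subobjects. It is closed under quotients and tensor operations for formal reasons. An $R[\Pi]$-comodule $M$ lies in the image exactly when its coaction lands in $M\otimes R[\Psi'_\rho]$. For $M=T^{a,b}(V)$ the matrix coefficients lie in $R[\Psi_\rho]\subset R[\Psi'_\rho]$; here we use that $V^\vee$ has coefficients given by the antipode images of the coefficients of $V$, which stay in the Hopf image. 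Hence $\langle V\rangle_\otimes$ lies in the image. Conversely, we must show that every finite $R[\Psi'_\rho]$-comodule is a subquotient of some $\bigoplus T^{a_i,b_i}(V)$. Every finite comodule embeds, via its coaction, into a finite subcomodule of $R[\Psi'_\rho]^n$. So it suffices to show that every finite subcomodule $F\subset R[\Psi'_\rho]$ lies in $\langle V\rangle_\otimes$. Such an $F$ satisfies $rF\subset R[\Psi_\rho]$ for some nonzero $r\in R$, because it is finitely generated and every element of the saturation is torsion modulo $R[\Psi_\rho]$. Thus $F\cong rF$ is a subcomodule of a finite piece of $R[\Psi_\rho]$. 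Finite pieces of $R[\Psi_\rho]$ are spanned by coefficients of tensor powers, and the coefficient space of $T$ is a quotient of $T^\vee\otimes T$, or equivalently a sub of a direct sum of copies of $T$. This gives the claim.

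For (1), we use that $\Psi_\rho\to\mathrm{GL}(V)$ is a closed immersion. By Theorem \ref{th1}(2), every object of $\mathrm{Rep}^\circ_R(\Psi_\rho)$ is a special subquotient of the restriction of an object of $\mathrm{Rep}^\circ(\mathrm{GL}(V))$. The finite projective $\mathrm{GL}(V)$-representations are in turn special subquotients of sums of $T^{a,b}(V)$; this follows from the standard description of $R[\mathrm{GL}(V)]$ as generated by the coordinates together with $\det^{-1}$, whose finite pieces are saturated in the tensor spaces. Composing special subquotients gives a special subquotient, using the push-out/pull-back diagram in \ref{specialsub-quotients}. Hence $\mathrm{Rep}^\circ_R(\Psi_\rho)$ lands in $\langle V\rangle^s_\otimes$. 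Conversely, a special subquotient $M$ of $T=\bigoplus T^{a_i,b_i}(V)$ is a $\Psi_\rho$-representation. For a quotient this is clear. For a special sub $M\subset T$, note that $T/M$ is flat, so $M\otimes R[\Psi_\rho]=(M\otimes R[\Pi])\cap(T\otimes R[\Psi_\rho])$. This flatness-based intersection argument is exactly why saturation is required. Full faithfulness follows because $\Pi\to\Psi_\rho$ is dominant, that is, the Hopf algebra map is injective, so $\Psi_\rho$-linearity and $\Pi$-linearity coincide.

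The main obstacle is the intersection identity for special subs over a Dedekind ring, together with the step showing that $\mathrm{GL}(V)$-representations in projective modules are special subquotients of tensor spaces. For the identity, I would first reduce to local rings $R_{\mathfrak m}$, which are discrete valuation rings where \cite{DHdS18} applies. Since all modules involved are flat, membership in a submodule can be checked locally. This local check is then glued using $R=\bigcap_{\mathfrak m}R_{\mathfrak m}$.
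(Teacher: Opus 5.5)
Your argument for (2) is essentially the paper's. It uses full faithfulness and closure under subobjects from Theorem~\ref{th1}(1) for the faithfully flat map $\Pi\to\Psi'_\rho$. Then it embeds a comodule through its coaction into copies of $R[\Psi'_\rho]$ and moves a finitely generated piece of the saturation into $R[\Psi_\rho]$ by a nonzero scalar.

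For (1) you take a genuinely different route. The paper argues directly with coefficient coalgebras. By \cite[Lemma~1.1.7]{DH18}, the finite projective $\mathrm{Cf}(T)$-comodules are exactly $\langle T\rangle^s$, and $R[\Psi_\rho]$ is the directed union of the $\mathrm{Cf}(T^{a,b}(V))$. Hence $\mathrm{Rep}^\circ_R(\Psi_\rho)=\bigcup_{a,b}\langle T^{a,b}(V)\rangle^s=\langle V\rangle^s_\otimes$ in one stroke, for arbitrary $\Pi$. You instead factor through the closed immersion $\Psi_\rho\hookrightarrow\mathrm{GL}(V)$, invoke Theorem~\ref{th1}(2), and prove the reverse inclusion and fullness by hand.

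This works, but note where the content sits. Your claim that finite projective $\mathrm{GL}(V)$-representations are special subquotients of sums of $T^{a,b}(V)$ is statement (1) itself for $\Pi=\mathrm{GL}(V)$. The phrase ``finite pieces are saturated in the tensor spaces'' only gestures at its proof. You need to embed such a $W$ specially, via its $R$-split coaction, into a finite piece $\bigl(\det^{-N}\cdot\mathrm{Sym}^{\le d}(\mathrm{End}(V)^\vee)\bigr)^{\oplus m}$ of $R[\mathrm{GL}(V)]^{\oplus m}$. You then need characteristic $0$ to see that piece as a direct summand of a sum of $T^{a,b}(V)$'s. So the detour does not avoid a coefficient-type argument. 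It relocates it to the universal case, where it is explicit, at the price of the extra input Theorem~\ref{th1}(2).

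What your version does make visible is the intersection step hidden in \cite[Lemma~1.1.7]{DH18}. That step is not the obstacle you describe, and it needs no localization. Since $T/M$ is projective, $T\cong M\oplus T/M$ as $R$-modules. The identity $(M\otimes R[\Pi])\cap(T\otimes R[\Psi_\rho])=M\otimes R[\Psi_\rho]$ then follows from the injectivity of $(T/M)\otimes R[\Psi_\rho]\to(T/M)\otimes R[\Pi]$.

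Two small corrections.
\begin{itemize}
\item In (2), the embedding into $R[\Psi'_\rho]^{\oplus n}$ needs $W$ to be torsion free. Torsion comodules should be reached by domination: every finite comodule is a quotient of a finite projective one, and $\langle V\rangle_\otimes$ is closed under quotients. The paper also leaves this point implicit.
\item Your aside that $\mathrm{Cf}(T)$ is ``equivalently a sub of a direct sum of copies of $T$'' is false in general. For the standard representation $T$ of the upper triangular Borel subgroup of $\mathrm{GL}_2$, the lower diagonal coordinate spans a subcomodule of $\mathrm{Cf}(T)$ whose character does not embed in any $T^{\oplus m}$. You only use that $\mathrm{Cf}(T)$ is a quotient of $T^\vee\otimes T$, which is correct.
\end{itemize}
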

\begin{proof}
Proof of (1).  According to Theorem \ref{th1}, the functor $\operatorname{Rep}^{\circ}_R\left(\Psi_\rho\right)  \rightarrow \operatorname{Rep}^{\circ}_R(\Pi)$ 
is fully faithful with image closed under taking special subjects. 

The co-action $\rho:  {V} \longrightarrow  {V} \otimes R[\Psi_{\rho}]$ induces a map
$$
\mathrm{Cf}: V^{\vee} \otimes V \longrightarrow R[\Psi_{\rho}],\qquad
\varphi \otimes m \mapsto \sum \varphi\left(m_i\right) m_i^{\prime},$$
where $\varphi \in V^{\vee}, m \in V$, $\Delta(m)=\sum_i m_i \otimes m_i^{\prime}.$
The image of this map is called the 
\textit{coefficient space} of $V$, and denoted by $\operatorname{Cf}(V)$. 
According to \cite[Lemma~1.1.7]{DH18}, we have
$$\mathrm{Comod}^{\circ}(\mathrm{Cf}(V) )\cong \langle V \rangle^s,$$
where $\mathrm{Comod}^{\circ}(\mathrm{Cf}(V))$ denotes the category
of comodules of $\mathrm{Cf}(V)$ in finite projective $R$-modules, and
 $\langle V \rangle^s$ denotes the full subcategory of special subquotients of 
\textit{direct sum} of copies of $V$. Now we have
$$ \langle V \rangle^s_\otimes=\bigcup_{a,b}  \langle T^{a,b}(V) \rangle^s=
\mathrm{Comod}^{\circ}(R[\Psi_{\rho}])=\mathrm{Rep}^\circ_R(\Psi_{\rho}).$$

%
%
%
%

Proof of (2).     
According to Theorem \ref{th1} the functor $\mathrm{Rep}_R (\Psi_{\rho}') \longrightarrow \mathrm{Rep}_R(\Pi)$ is fully faithful and its image closed under taking sub-object -- and
hence under taking subquotients. Consequently, $\langle V \rangle_{\otimes}$ is a full subcategory
of $\mathrm{Rep}_R (\Psi_{\rho}')$.

Thus it suffices to show that any $W\in \mathrm{Rep}_R (\Psi_{\rho}')^s$ belongs to 
$\langle V \rangle_{\otimes}$. Indeed, the coaction $W\longrightarrow W\otimes R[\Psi_{\rho}']$
realizes $W$ as a subquotient of the representation $W\otimes R[\Psi_{\rho}']$ where 
$\Psi_{\rho}'$ acts solely on the second tensor component. Since $W$ can be embedded in a 
free $R$-module, we conclude that, as a representation of $\Psi_\rho'$, it can be embedded
in some direct sum $R[\Psi_{\rho}']^{\oplus d}$. Now $R[\Psi_{\rho}']$ is the saturation
of $R[\Psi_{\rho}]$ in $R[\Pi]$, we conclude that $a\cdot W\subset R[\Psi_{\rho}]^{\oplus d}$ 
for some non-zero element $a\in R$. This induces an injective map from $W$ to $R[\Psi_{\rho}]^{\oplus d}$, which is, by construction $\Psi_\rho'$-linear.
Using part (1) we conclude that $W$ belongs to $\langle V \rangle_{\otimes}$. 
%
\end{proof}

Note that, in general, $\operatorname{Rep}_R\left(\Psi_\rho\right)$ is not a full subcategory of $\operatorname{Rep}_R(\Pi)$. This means that we have the following interpretation of the diptych (Definition \ref{defndiptych}) in terms of representation categories:
$$\xymatrix{ \langle V \rangle_{\otimes} \ar[d]_{\text{fully faithful}}&
      \langle V \rangle_{\otimes}^s\ar[d]^{\text{faithful}}
       \ar[l]\\ \mathrm{Rep}_R(\Pi)& \mathrm{Rep}_R^{\circ}(\mathrm{GL}(V)).  \ar[l]} $$

We now express the triptych 
(Diagram \eqref{triptych}) of $\rho$ in categorical terms. For that, given an $R$-linear category 
$\mathcal C,$  we define the special fiber $\mathcal C_s$ at a closed point $s$ of $S:=\mathrm{Spec}(R)$ 
to be the full subcategory whose objects $W$ 
are annihilated by $\mathfrak{m}_s,$ i.e. $m_s\cdot\mathrm{id}_W = 0$ in 
$\mathrm{Hom}_{\mathcal  C}(W,W) = 0,$ where $\mathfrak{m}_s$ is the maximum ideal of 
$R,$ which determines $s.$  One can show that $\mathcal C_s$ is equivalent to the scalar 
extension $\mathcal C_{k}$ where $k = R/\mathfrak m_s$ is the residue field of $R$ at $s.$ 
We then have a commutative diagram of solid arrows between $k$-linear abelian categories:

\begin{equation}
\label{dot}
\xymatrix{\mathrm{Rep}^\mathrm{coh}_R(\Psi_{\rho}^{\prime})_{(k)}
\ar[dd]_{\  \ \text{fully faithful} }
&&
			 \mathrm{Rep}^\mathrm{coh}_R(\Psi_{\rho})_{(k)}\ar[ll]\ar[ld]
			 \\
&\langle V\otimes k\rangle_\otimes 
\ar[ul]_{\ \ \text{fully faithful}}   \ar[ld]^{\  \ \text{fully faithful}}
\\ 
\mathrm{Rep}^\mathrm{coh}_R(\Pi)_{(k)}. &&
}
\end{equation}

From \cite[Part I, 10.1, 141]{Ja03} the categories $\operatorname{Rep}_R(-)_{(k)}$ are simply the corresponding representations categories of the group schemes obtained by base change $R \rightarrow k$. Since $V$ is a faithful representation of $\Psi_\rho,$  $V\otimes k$ is a faithful representation of $\Psi_\rho \otimes k$, so that each object of 
$\operatorname{Rep}^\mathrm{coh}_R\left(\Psi_\rho\right)_{(k)}$ is a sub-quotient of some 
$\bigoplus T^{a_i,b_i}(V  \otimes k)$. This means that the upper horizontal arrow in 
Diagram \eqref{dot} factors through $\langle V  \otimes k\rangle_{\otimes}$, i.e. 
the dotted arrow exists and still produces a commutative diagram. We conclude that Diagram 
\eqref{dot} is Tannakian dual to Diagram \eqref{triptych} as the former can easily be 
completed by introducing the representation category of the general linear group in the lower right corner.

\begin{thm}[Lemma \ref{lem-DHdS}]\label{prop-ismorphicofgalois}
Let $(\mathcal{V},\nabla)$ be a connection on $X/k$. Then each locally free relative connection
in  $\langle \inf (\mathcal{V})\rangle_\otimes$ is indeed an 
object of $\langle\inf (\mathcal{V})\rangle_\otimes^s$.
\end{thm}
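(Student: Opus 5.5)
The plan is to translate the statement into the diptych/triptych language of this appendix and reduce it to a fiberwise statement at closed points. Set $V:=\eta^*\inf(\mathcal V)$, viewed as an object of $\mathrm{Rep}^\circ_R(\pi(X/S))$ via $\mathrm{MIC}^\mathrm{se}(X/S)\cong\mathrm{Rep}^\mathrm{f}(\pi(X/S))$. Let $\rho:\Pi:=\pi(X/S)\to G:=\mathrm{GL}(V)$ be the corresponding homomorphism, with diptych $\Psi'_\rho\to\Psi_\rho$.

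By Proposition \ref{PropTannaka}, $\mathrm{Rep}_R(\Psi'_\rho)\simeq\langle V\rangle_\otimes$ and $\mathrm{Rep}^\circ_R(\Psi_\rho)\simeq\langle V\rangle^s_\otimes$. The claim that every locally free object of $\langle\inf(\mathcal V)\rangle_\otimes$ lies in $\langle\inf(\mathcal V)\rangle^s_\otimes$ therefore follows once we show that $\Psi'_\rho\to\Psi_\rho$ is an isomorphism. Indeed, an isomorphism identifies $\mathrm{Rep}^\circ_R(\Psi'_\rho)$ with $\mathrm{Rep}^\circ_R(\Psi_\rho)$.

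By Corollary \ref{Corim}(i) it suffices to check, for every closed point $s$ with residue field $k=k_s$, that $\mathrm{Im}(\rho_{k})\to\Psi_{\rho,k}$ is an isomorphism. In the categorical form of diagram \eqref{dot}, this amounts to showing that every object of $\mathrm{Rep}^\mathrm{coh}_R(\Psi_\rho)_{(k)}$, that is, every subquotient of tensor constructions $\bigoplus T^{a_i,b_i}(V\otimes k)$ as $\Psi_{\rho,k}$-representations, already lies in $\langle V\otimes k\rangle_\otimes$, i.e. is a subquotient taken in $\mathrm{Rep}(\pi(X/S)_s)$. Equivalently, sub-objects of $T^{a,b}(V)\otimes k$ for the larger group $\Pi_k$ must be stable under $\Psi_{\rho,k}$.

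The key input here is the geometric origin of $V$. Its restriction $V\otimes k=\mathcal V|_{X_s}$ comes from the absolute connection $\mathcal V$ on $X/k$, and the surjection $\pi(X_s/k)\twoheadrightarrow\pi(X/S)_s$ of \eqref{163} lets us regard objects of $\langle V\otimes k\rangle_\otimes$ as subquotients in $\langle\mathcal V|_{X_s}\rangle_\otimes$. Given a sub-object $\mathcal N\subset T^{a,b}(\mathcal V)|_{X_s}$, Lemma \ref{lem-HES}(2) produces $\tilde{\mathcal N}\in\langle\mathcal V\rangle_\otimes$ with a monomorphism $\mathcal N\hookrightarrow\tilde{\mathcal N}|_{X_s}$. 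Using Lemma \ref{lem-HES}(1), applied to $\mathcal{H}om(\mathcal N',\tilde{\mathcal N})$-type objects to isolate the sub-object, together with Lemma \ref{lem-Deligne}, we lift $\mathcal N$ to a special sub-object of some inflated $T^{a',b'}(\inf\mathcal V)$ whose reduction modulo $\mathfrak m_s$ recovers $\mathcal N$. Reductions of special sub-objects of objects in $\langle V\rangle^s_\otimes$ are $\Psi_\rho$-stable, so $\mathcal N$ is a $\Psi_{\rho,k}$-subrepresentation. This mirrors the proof of \cite[Theorem~8.2]{DHdS18} over a discrete valuation ring, and we can localize at $s$ since the statement at $s$ only involves $R_{\mathfrak m_s}$.

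The main obstacle is this lifting step. Lemma \ref{lem-HES} only gives an embedding of $\mathcal N$ into the restriction of some $\tilde{\mathcal N}$, not a lift of $\mathcal N$ itself inside $T^{a,b}(V)$. One must show that the relevant sub-object of $\tilde{\mathcal N}|_{X_s}$ is cut out by invariants, i.e. by the maximal trivial sub-object of a suitable internal Hom. This is exactly what Lemma \ref{lem-HES}(1) provides: it spreads out to a sub-connection over $X$, and its saturation gives the special lift. Once this is done for every closed point, Corollary \ref{Corim} and Proposition \ref{PropTannaka} conclude the proof.
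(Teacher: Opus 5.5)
Your reduction is the paper's. You use the diptych of $\rho$, Proposition~\ref{PropTannaka} to reduce to $\Psi'_\rho\cong\Psi_\rho$, Corollary~\ref{Corim}(i) to reduce to $\mathrm{Im}(\rho_k)=\Psi_{\rho,k}$ at every closed point, and the lifting of Hom-spaces from Lemma~\ref{lem-HES}(1) through the maximal trivial sub-object of an internal Hom, which is the paper's \eqref{eq.R}. Your criterion, that every $\Pi_k$-stable subspace of $\bigoplus T^{a_i,b_i}(V_k)$ be $\Psi_{\rho,k}$-stable, is the right one: by Chevalley's theorem it forces $\mathrm{Im}(\rho_k)=\Psi_{\rho,k}$. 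It packages the paper's essential surjectivity and fullness into one statement. Your first phrasing, that $\Psi_{\rho,k}$-subquotients lie in $\langle V\otimes k\rangle_\otimes$, is only the trivial direction.

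The gap is the final step, where the saturation of a lift is supposed to be a special sub-object whose reduction is $\mathcal N$. Suppose $g$ lifts $g_k\colon\mathcal A|_{X_s}\to T|_{X_s}$ with image $\mathcal N$. The saturation $M$ of $\mathrm{Im}(g)$ satisfies $M\otimes k\supseteq\mathcal N$ and $\dim(M\otimes k)=\mathrm{rank}(g_K)\geq\mathrm{rank}(g_k)=\dim\mathcal N$. Hom-lifting does not exclude strict inequality for every lift.

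For instance, let $\mathbb G_{a,R}$ act on $V_1=R^2$ by $\begin{pmatrix}1&t\\0&1\end{pmatrix}$ and on $V_2=R^2$ by $\begin{pmatrix}1&\varpi t\\0&1\end{pmatrix}$, with $0\neq\varpi\in\mathfrak m_s$. Then $\mathrm{Hom}_{\mathbb G_a}(V_1,V_2)$ consists of the matrices $\begin{pmatrix}\varpi d&b\\0&d\end{pmatrix}$, and it surjects onto $\mathrm{Hom}_{\mathbb G_{a,k}}(V_{1,k},V_{2,k})$. However, every lift of $g_k=\begin{pmatrix}0&0\\0&1\end{pmatrix}$ has $d\in1+\mathfrak m_s$, hence determinant $\varpi d^2\neq0$. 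The stable line $ke_2\subset V_{2,k}$ is not the reduction of any saturated stable line of $V_2$, because rank-one representations of $\mathbb G_a$ are trivial and $V_2^{\mathbb G_a}=Re_1$.

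The paper's sentence ``saturate $f$ so that $\mathrm{Im}(f)\otimes k=\mathrm{Im}(f_k)$'' has the same defect. Separately, a special lift inside some other $T^{a',b'}(\inf(\mathcal V))$, as you first phrase it, would only make $\mathcal N$ abstractly a $\Psi_{\rho,k}$-representation. It would not show that the embedding $\mathcal N\subset T^{a,b}(V_k)$ is $\Psi_{\rho,k}$-equivariant; that is the fullness half, which the paper treats separately.

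The repair stays inside your framework and needs neither saturation nor Lemma~\ref{lem-Deligne}: lift the map rather than the sub-object.
\begin{enumerate}
\item Apply Lemma~\ref{lem-HES}(2) to $\mathcal N^\vee$ and dualize. This gives $\mathcal A\in\langle\mathcal V\rangle_\otimes$ and $g_k\colon\mathcal A|_{X_s}\twoheadrightarrow\mathcal N\subset T|_{X_s}$ with $T=\bigoplus T^{a_i,b_i}(\mathcal V)$. The monomorphism you take from Lemma~\ref{lem-HES}(2) is redundant, since $\mathcal N\subset T|_{X_s}$ already.
\item Coherent objects of $\mathrm{MIC}(X/k)$ are locally free, so $\inf(\mathcal A)$ is a special subquotient of an inflated tensor construction. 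Hence $\inf(\mathcal A)$ and $\inf(T)$ lie in $\langle\inf(\mathcal V)\rangle^s_\otimes\simeq\mathrm{Rep}^\circ_R(\Psi_\rho)$.
\item Lift $g_k$ to $g\colon\inf(\mathcal A)\to\inf(T)$ by \eqref{eq.R}.
\item Since $\Pi\to\Psi_\rho$ is generically faithfully flat, $\mathrm{Rep}^\circ_R(\Psi_\rho)\to\mathrm{Rep}^\circ_R(\Pi)$ is fully faithful (Theorem~\ref{th1}). So $g$ is $\Psi_\rho$-equivariant.
\item Hence $g_k=g\otimes k$ is $\Psi_{\rho,k}$-equivariant, and $\mathcal N=\mathrm{Im}(g_k)$ is $\Psi_{\rho,k}$-stable, which is exactly your criterion.
\end{enumerate}
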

\begin{proof}
    The $R$-point $\eta$ induces an equivalence of abelian tensor categories 
 \begin{align}\label{1}
     \eta^{\ast}: \langle \inf (\mathcal{V})\rangle_{\otimes} \longrightarrow 
     \mathrm{Rep}^\mathrm{coh}_R(G'),
 \end{align}
where $G'$ is a flat group scheme over $R$ (see Theorem \ref{th_duality}).   
We consider the diptych of 
$\rho: G'\longrightarrow \mathrm{GL}(\eta^{\ast}\inf (\mathcal{V})) $  (Definition \ref{defndiptych}):
$$\xymatrix{
\Psi'_{\rho}\ar[r]  & G \ar@{^(->}[d]\\
G'  \ar[r]_{\hspace{-1.3cm}\rho}   \ar[u]& \mathrm{GL}(\eta^{\ast}\inf (\mathcal{V})). 
}$$
Theorem \ref{th1} and Proposition \ref{PropTannaka}-(2) show that the left
vertical arrow above is an isomorphism. Moreover, according to Proposition \ref{PropTannaka}-(1), the functor $\eta^{\ast}$ induces an equivalence 
\begin{align}\label{25}
    \langle \inf (\mathcal{V}) \rangle^s_{\otimes} \overset{\sim}{\longrightarrow} \mathrm{Rep}_R^{\circ}(G).
\end{align}
 Thus, the statement of this lemma can be proved if we can show that $G$ isomorphic to $G'.$

Let $\nu:G'\longrightarrow G$ be the natural map. 
To prove it is an isomorphism, we only need to treat the local property, that is, 
to prove that for any closed point $s\in S$, $k=k_s$,
 $\nu_k:G'_{k} \longrightarrow G_{k} $ is faithfully flat
 (cf. Proposition \ref{proplocaltoglobal}). 
By means of Corollary \ref{Corim} (i) and Diagram \eqref{dot}, this reduces to showing
that the functor 
 \begin{align*}
   \mathrm{Rep}^\mathrm{coh}_k(G_{k}) \longrightarrow \langle \inf (\mathcal{V})|_{X_{s}} \rangle_{\otimes}
 \end{align*}
is an equivalence of categories. By abusing notation, we shall denote this functor
by $\nu^*_k$.

As $\nu^*_k$ is a tensor functor between Tannakian categories over a field $k$ it is faithful. We show that it is full and essentially surjective.

We first show the essential surjectivity. 
Let $\mathcal W $ be an object in $\langle \inf (\mathcal{V})|_{X_{s}}\rangle_\otimes$. 
It follows from the exact sequence \ref{HES} 
and the criterion for the exactness of sequences \ref{thm-ExactSequence} (for group schemes over
a field, cf. \cite{EHS08}), that $\mathcal W$ can be realized as the image of a morphism 
$f_k:\mathcal N_k\to\mathcal  N'_k$ where $\mathcal N$ and $\mathcal N'$ are objects in 
$\langle \inf (\mathcal{V})\rangle_\otimes$.
Using the exactness criterion \ref{thm-ExactSequence} we also have
\begin{equation}\label{eq.R}
\mathrm{Hom}(\inf(\mathcal N),\inf(\mathcal N'))\otimes_Rk\cong 
\mathrm{Hom}(\mathcal N_k,\mathcal N'_k).\end{equation}
In deed, the hom-set 
$$\mathrm{Hom}(\mathcal N_k,\mathcal N'_k)\cong 
\mathrm{Hom}(\mathbf 1,\mathcal N'_k\otimes\mathcal N_k^\vee)$$ 
is equal to the fiber at $x$ of the maximal trivial subobject of 
$\mathcal N'_k\otimes \mathcal N_k^\vee$, which, according to 
\ref{thm-ExactSequence}, is the pull-back of a subobject $\mathcal T$ of 
$\mathcal N'\otimes\mathcal  N^\vee$ in $\mathrm{MIC}^\mathrm{coh}(X/k)$, moreover, 
$\mathcal T$ is the pull-back of some object in $\mathrm{MIC}^\mathrm{coh}(S/k)$.
Consequently, the inflated module $\inf({\mathcal T})$ is trivial as an object in 
$\mathrm{MIC}^\mathrm{coh}(X/S)$. Therefore the map in \eqref{eq.R} is surjective, hence an isomorphism. 

As a consequence, we can lift $f_k$ to a morphism $f:\inf(\mathcal N)\to\inf(\mathcal  N')$. Further
we can saturate $f$ (using the Tannakian duality) so that $\mathrm{Im}(f)\otimes_Kk=\mathrm{Im}(f_k)$.
 Thus we obtain an object $\mathrm{Im} f$ 
in $\langle \inf(\mathcal V)\rangle_{\otimes}^s$ with the property 
$(\mathrm{Im} f)|_{X_s}=\mathcal W$. 
That means, $\mathcal W$ is in the image of $\nu^*_k$, in other words, $\nu^*_k$ is essentially surjective.

The fullness of $\nu^*_k$ is proved in a similar way. Let $W$ be a representation in 
$\mathrm{Rep}^\mathrm{coh}_k(G_k)$. Then 
$W$ is a quotient of an object in $\mathrm{Rep}^\circ_R(G)$. According to Lemma \ref{lem-Deligne},  
$W$ is then a quotient of some $\eta^*(\mathcal N_k)$, where $\mathcal N$ is an object in
$\langle\mathcal V\rangle_\otimes\subset \mathrm{MIC}^\mathrm{coh}(X/k)$.  
Dualizing this, for each $W$ in 
$\mathrm{Rep}^\mathrm{coh}_k(G_k)$ we can embed 
it into some $\eta^*(\mathcal N_k)$ with 
$\mathcal N\in \langle \mathcal V\rangle_\otimes$.

Let $\varphi: \nu^*_k(U)\to \nu^*_k(W)$ be a morphism in 
$\langle \inf(\mathcal V)\rangle_{\otimes k}$.  
We represent $U$ as a quotient of $\eta^*(\mathcal N_k)$ and $W$ as a 
subobject of $\eta^*(\mathcal N'_k)$, where 
$\mathcal N,\mathcal N'\in \langle \mathcal V\rangle_\otimes$. 
Then $\varphi$ is represented by a map $f:\mathcal N_k\to \mathcal N'_k$ in 
$\langle \inf(\mathcal V)\rangle_{\otimes k}$. 
The previous discussion shows that $f$ has a pre-image under $\nu^*_k$, $f=\nu^*_k(\psi)$.
As $\nu^*_k$ is 
a faithfully exact functor between abelian categories and 
$\nu^*_k(\psi)=\nu^*_k(p)\circ \varphi \circ \nu^*_k(q)$ where $p$ in injective and $p$ is surjective,
we conclude that  $\varphi$ also has pre-image under $\nu^*_k$, 
\end{proof}

\section{Cohomology of groupoid schemes} \label{sect_groupoid_scheme}
\subsection{Representation of groupoid schemes}\label{sect_repr}
Our reference is \cite[Section 2--4]{De90}.  
\subsubsection{Groupoid schemes} Let  $S$ be a $k$-scheme.
An \textit{affine $k$-groupoid scheme acting on $S$}
is an $S\times_kS$-affine scheme $G$ (with the structure morphisms being $s,t:G\to S$, which are called the source and the target morphisms),
together with the following data:  
\begin{itemize}
\item[(i)]   a morphism $m:G\sttimes G\to G$, called the product of
$G$, satisfying the following associativity property:
$$m(m\sttimes\text{\rm id}_G)=m(\text{\rm id}_G\sttimes m)$$
\item[(ii)]  a morphism $\varepsilon:S\to G$, called the unit
element map, satisfying:
$$m(\varepsilon\sttimes\text{\rm id}_G)=
m(\text{\rm id}_G\sttimes m)=\text{\rm id}_G$$
\item[(iii)]  a morphism $\iota:G\to G$, called the inverse map,
satisfying:
$$\iota\circ s=t;\quad \iota\circ t=s$$
$$m(\iota \sttimes \text{\rm id}_G)=\varepsilon\circ s,\quad
m(\text{\rm id}_G\sttimes\iota)=\varepsilon\circ t,$$
\end{itemize}
where $\sttimes$ denotes the fiber product over $S$ with respect
to the morphisms $s$ and $t$.

A \textit{morphism} of $k$-groupoid schemes acting on a $k$-scheme
$S$ is a morphism of the underlying $k$-schemes which is
compatible with all structure morphisms.

An affine $k$-groupoid scheme $G$ acting on $S$  is called \textit{flat} if the source and target morphisms $s,t: G \longrightarrow S$ are flat. The groupoid scheme $G$ said to be \textit{acting transitively on $S$}
if for any pair of morphism $(a,b):T\times U\to S$,
if there is a faithfully flat quasi-compact morphism $\phi:W\to T\times U$ such that the
set
$$\text{\rm Mor}_{S\times S}(W,G)\neq\emptyset.$$ 
This implies that $(s,t):G\to S\times S$
is a faithfully flat morphism.

\begin{rem}
    
    When $S = \mathrm{Spec}(k),$ a $k$-groupoid acting on $S$ is nothing but an affine group scheme over $k.$ It is automatically transitive.
\end{rem}

\subsubsection{Normal discrete subgroup scheme}
\begin{defn}\label{A.2.1}
 Let $G$  be an affine $k$-groupoid scheme acting on $S.$ We  define the \textit{diagonal group scheme}
$G^\Delta$ of $G$ as the
pull-back  of $G$ along the diagonal map $\Delta:S\longrightarrow S\times S$.
$$\xymatrix{
G^\Delta \ar[r]\ar[d]& G\ar[d]\\ 
S\ar[r]&  S\times_k S.}
$$

\end{defn}
\begin{defn}
 Let $G$  be an affine $k$-groupoid scheme acting on $S.$ We call $H$ a \textit{subgroupoid scheme} if $H$ is a closed sub-scheme of $G$ such that the morphisms $(m|_H, \varepsilon|_{H}, \iota|_{H})$ makes $H$ into $k$-groupoid scheme acting on $S.$ We call $H$ a \textit{discrete subgroupoid scheme} if $H$ is a closed sub-scheme of the diagonal group scheme $G^{\Delta}.$ A discrete subgroupoid scheme $H$  is said to be \textit{normal} if $H$ is a normal subgroup of $G^\Delta.$
\end{defn}

\subsubsection{Kernel of morphism of affine $k$-groupoid schemes}\label{sect_kernel}
Let $f: G_1 \longrightarrow G$ be a morphism of $k$-groupoid schemes acting on a $k$-scheme $S.$  We define the kernel of a homomorphism $f:G_1\to G$
as the fiber product $\ker f:=S\times_GG_1$:
$$\xymatrix{\ker f\ar[r]\ar[d] & G_1\ar[d]^f\\ S\ar[r]^{\epsilon} &G.}$$
Thus, $\ker f$ is a  group scheme over $S$.  
Taking the diagonal group schemes, we see
that $\ker f$ is isomorphic to the kernel of the  homomorphism
$G_1^\Delta\to G^\Delta$ of group schemes:
$$\xymatrix{ \ker f\ar[r]\ar@{=}[d]& G_1\ar[r]^f& G \\
\ker f^\Delta\ar[r]& G_1^\Delta\ar[r]^{f^\Delta}\ar[u]&G^\Delta.\ar[u].}$$
Indeed, the canonical map $\ker  f^\Delta\longrightarrow \ker f$
comes from the definition of $\ker  f$ and the  (outer)  commutative diagram
$$\xymatrix{\ker  f^\Delta\ar[r]\ar[d] &G_1^\Delta \ar[r]\ar[d] &G_1\ar[d]^f\\
S\ar[r]&G^\Delta\ar[r]& G.}$$
And the canonical map $\ker  f\longrightarrow  \ker  f^\Delta$
comes from the map $ \ker  f \longrightarrow G_1^\Delta$ which satisfies the commutative diagram
$$\xymatrix{ \ker  f \ar[r]\ar[d]& G_1^\Delta\ar[d]\\ S\ar[r]& G^\Delta.}$$

\subsubsection{Representations ($G$-modules)}\label{sub-representationofgroupoid}
 Let $V$ be a quasi-coherent
sheaf on $S$. A \textit{representation}  of $G$ in $V$ (a \textit{$G$-module}) is an operation
 $\rho$, that assigns to each $k$-scheme $T$ and each
 morphism $\phi:T\to G$ a $T$-isomorphism
\ga{A8}{\rho(\phi):a^*V\to b^*V}
where $(a,b)=(s,t)\circ \phi$, the source and the target of $\phi$, and
$a^*$ (resp. $b^*$) denotes the pull-back of $V$
along $a$ (resp. $b$). One requires that this operation
be compatible with the composition law of the groupoid
$(S(T),G(T))$ and with the base change.  The latter means:
for any morphism $r:T'\to T$
\ga{A9}{\rho(r^*\phi)=r^*\rho(\phi).} A representation is called \textit{finite} if its underlying sheaf is coherent. 

Let $S= \mathrm{Spec}(R)$ be an affine scheme. Any $R$-module $V$ defines the functor $V_l$ and $V_r$ from the category of $R$-algebras to category the of groups as follows:
\begin{align*}
  V_r: \;  R-\text{algebras} &\longrightarrow  \text{groups}\\
             B &\mapsto (B \otimes V, +) \hspace{0.1cm} 
             \;B \;\text{consider as right}\;R-\text{module}, 
\end{align*}
and
\begin{align*}
  V_l: \;  R-\text{algebras} &\longrightarrow  \text{groups}\\
             B &\mapsto (V\otimes B, +) \hspace{0.1cm} 
             \;B \;\text{consider as left}\;R-\text{module}.
\end{align*}
 From the perspective of functorial language, we have another way to define the representation of a groupoid scheme. Let $G$ be a flat affine $k$-groupoid scheme acting on $S.$ For any $k$-algebra $B,$ consider $_sB, B_t$ are $R$-modules through the morphisms $s$ and $t,$ respectively.  A \textit{representation} of $G$ on $V$ (a \textit{$G$-module}) is an action of $G$ on
the functor $V_r,$ defined as follows:
$$  G(B) \times V_r(_sB) \longrightarrow V_l(B_t)   $$
where $G(B)$ acts on $V_r(_sB)$ by $B$-linear maps and this action commutes with base change.

A \textit{morphism} of $G$-modules is an $R$-module homomorphism that is compatible with the $G$-action. 
We denote this category by
$\mathrm{Rep}  (S:G)$ and denote the full subcategory of finite representations by 
$\mathrm{Rep}^\mathrm{f}(S:G).$

\subsubsection{Representations as comodules}

Let $S$ be an affine scheme, i.e., $S=\mathrm{Spec}(R)$ for some commutative ring $R,$ and let $G$ be an affine group scheme, with its coordinate ring denoted by $\mathcal{O}(G).$ The groupoid structure on $G$ induces the structures of a Hopf algebroid on $\mathcal O(G)$.
The source and the target map for $G$ induce algebra maps
$s,t:R\to \mathcal O(G)$.
The transitivity of $G$ on $S$ can be rephrased by saying that
$\mathcal O(G)$ is faithfully flat over $R\otimes_kR$ with
respect to the base map $t\otimes_ks:R\otimes_kR\to \mathcal O(G)$.

The composition law for $G$ induces an $R\otimes_kR$-algebra map
\ga{A10}{\Delta: \mathcal O(G)\longrightarrow \mathcal O(G)\stotimes  \mathcal O(G)}
satisfying $(\Delta\otimes \text{\rm id})\Delta=(\text{\rm id}\otimes \Delta)\Delta$.
The unit element of $G$ induces a $R\otimes_kR$-algebra map
\ga{A11}{\varepsilon:\mathcal O(G)\longrightarrow R}
where $R\otimes_kR$ acts on $R$ diagonally (i.e., $(\lambda\otimes_k\mu)\nu
=\lambda\mu\nu$).
One has
\begin{align}\label{A12}
    m(\varepsilon\otimes \text{\rm id})\Delta=m(\text{\rm id}\otimes\varepsilon)\Delta
=\text{\rm id}.
\end{align}

Finally, the inverse element map on $G$ induces an automorphism
$\iota$ of $\mathcal O(G)$, which interchanges the actions $t$ and $s$:
\begin{align} \label{A13}\iota(t(\lambda)s(\mu)h)=s(\lambda)t(\mu)\iota(h),
\end{align}
and satisfies the following equations:
\begin{align}\label{A14}
    m(\iota\otimes\text{\rm id})\Delta=s\circ\varepsilon\quad
m(\text{\rm id}\otimes\iota)\Delta=t\circ\varepsilon.
\end{align}
Since $S=\text{\rm Spec} (R)$, quasi-coherent sheaves on $S$ are $R$-modules and coherent sheaves are finite $R$-modules.
A representation $\rho$ of $G$ in $V$ induces a map
$\rho:V\to V\otimes_t \mathcal O(G)$, called co-action of $\mathcal O(G)$ on $V$, such that
\ga{A15}{(\text{\rm id}_V\otimes \Delta)\rho=(\rho\otimes \text{\rm id}_V)\rho,\quad
(\text{\rm id}_V\otimes\varepsilon)\rho=\text{\rm id}_V.}
An $R$-module equipped with such an action is called an $\mathcal O(G)$-comodule.
Conversely, any co-action of $\mathcal O(G)$ on an $R$-module $V$ defines
a representation of $G$ in $V$. In fact, we have an equivalence
between the category of $G$-representations and the category
of $\mathcal O(G)$-comodules. 


In particular, the co-product on $\mathcal O(G)$ can be considered as a co-action
of $\mathcal O(G)$ on itself and hence defines a representation of $G$ in
$\mathcal{O}(G)$, called the \textit{(left) right regular representation}.

\begin{lem}\label{lem-13}
   Let $G$ be an  affine groupoid scheme acting transitively on $S,$ and let $H$ be a normal flat discrete  subgroup scheme of $G.$ If $V$ is a $G$-module, then $V^H$ is a $G$-sub-module of $V.$ 
\end{lem}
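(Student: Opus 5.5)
We would prove Lemma \ref{lem-13} by working entirely with comodules over the Hopf algebroid $\mathcal O(G)$, mimicking the classical argument for a normal subgroup of an affine group scheme (cf. \cite[I.6.3]{Ja03}). Write $\rho:V\to V\otimes_t\mathcal O(G)$ for the coaction. Since $H$ is a closed subscheme of $G^\Delta$, which is a closed subscheme of $G$, we have a surjection $\pi:\mathcal O(G)\to\mathcal O(H)$. The induced coaction of $H$ on $V$ is $\rho_H=(\mathrm{id}\otimes\pi)\rho:V\to V\otimes_R\mathcal O(H)$. Then $V^H=\{v: \rho_H(v)=v\otimes 1\}$, and this is an $R$-submodule. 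The goal is to show $\rho(V^H)\subset V^H\otimes_t\mathcal O(G)$.

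Because $H$ is $R$-flat and $\mathcal O(G)$ is flat over $R$ via $t$ (transitivity gives faithful flatness over $R\otimes_k R$, and hence flatness over each factor since $k$ is a field), we can express $V^H\otimes_t\mathcal O(G)$ as the kernel of $\rho_H\otimes\mathrm{id}-(\,\cdot\otimes1)\otimes\mathrm{id}$ on $V\otimes_t\mathcal O(G)$. So it suffices to prove that for $v\in V^H$ the element $\rho(v)$ lies in this equalizer. This reduces to an identity in $V\otimes\mathcal O(H)\otimes\mathcal O(G)$, which we check functorially on points. Fix a $k$-algebra $B$ and points $g\in G(B)$, $h\in H(B)$ with compatible base points. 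The required identity then says that $g\cdot v$ is fixed by $h$, where $v$ is pulled back along the source of $g$. The computation is
\[
h\cdot(g\cdot v)=g\cdot\bigl((g^{-1}hg)\cdot v\bigr)=g\cdot v,
\]
which uses two facts. First, $g^{-1}hg\in H(B)$, which is exactly the normality of $H$ under conjugation by $G$. Second, $v$ is $H$-fixed after every base change. Here $h$ lies in the diagonal group at the target of $g$, so $g^{-1}hg$ lies in the diagonal group at the source.

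The main obstacle is making the normality hypothesis usable. The definition only says that $H$ is normal in $G^\Delta$, not stable under conjugation by arbitrary arrows of $G$. To bridge this I would use transitivity: after an fpqc cover $W\to S\times S$, every arrow $g$ lifts to a section of $G$. Conjugation by $g$ then identifies the fibre of $G^\Delta$ at the source with the fibre at the target. One must argue that this identification carries $H$ to $H$. I expect to need one of two routes. The first is to interpret the paper's normality as stability under the conjugation action of $G$ on $G^\Delta$ (the standard notion in \cite{De90}), and verify that this action preserves the closed subscheme $H$ fpqc-locally. The second, in the case at hand, is to note that $H$ is a kernel of a groupoid map (as for $L=\ker f$ in Section \ref{sect_kernel}), so conjugation-invariance is automatic.

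Once the point-wise identity holds for all $B$, we pass to coordinate rings via Yoneda. Finally, since the coaction preserves $V^H$, it restricts to a comodule structure on $V^H$, so $V^H$ is a $G$-submodule.
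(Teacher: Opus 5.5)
Your argument is essentially the paper's. The paper checks, by a Sweedler computation, that $\rho_V\colon V\to V\otimes\mathcal O(H)$, and hence $v\mapsto\rho_V(v)-v\otimes 1$, is a morphism of $G$-modules when $G$ acts on $\mathcal O(H)$ through the conjugation coaction $\Delta_c$; it then takes the kernel in the abelian category of $G$-modules. That is the coordinate-ring form of your identity $h\cdot(g\cdot v)=g\cdot\bigl((g^{-1}hg)\cdot v\bigr)$ together with your flatness/equalizer step.

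Note that $\Delta_c$ is well defined on $\mathcal O(H)$ only if $H$ is stable under conjugation by all of $G$, so the paper tacitly relies on your first or second route. You should drop the idea of deducing this stability from transitivity, because normality in $G^\Delta$ alone does not suffice. For example, take $S=\mathbb A^1_k$, $G=S\times_k S\times_k\mathbb G_a^2$ and $H=\{y=sx\}\subset G^\Delta=S\times_k\mathbb G_a^2$; then $V^H$ is in general not $G$-stable. On the other hand, $G$-stability is automatic for $H=\pi^{\mathrm{geom}}(X/S)=\ker\bigl(\Pi(X/k)\to\Pi(S/k)\bigr)$, which is the only case where the lemma is used.
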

\begin{proof}
      Let $\rho_V: V \rightarrow V \otimes \mathcal{O}(H)$ denote the comodule map of $V,$ considered as an $H$-module. We regard $V \otimes \mathcal{O}(H)$ as a $G$-module via the given action on $V$ and the conjugation action on $\mathcal{O}(H).$ Using Sweedler's notation, 
    the conjugate action of $G$ on $\mathcal{O}(H)$ can be rewritten as follows:
    \begin{align*}
        \Delta_c: \mathcal{O}(H) &\longrightarrow \mathcal{O}(H) \otimes_t \mathcal{O}(G) \\
        \overline{v}&  \mapsto \sum_{(v)}\overline{v_{(2)}}\otimes v_{(1)} \iota(v_{(3)}).
    \end{align*}
   To prove that $\rho_V$ is a $G$-morphism, we  need to prove   that the following diagram: 
    $$ \xymatrix{V \ar[r]^{\rho_V} \ar[d]^{\rho} & V \otimes \mathcal{O}(H) \ar[d]^{\mathrm{id}\otimes \Delta_c} \\
    V \otimes_t \mathcal{O}(G) \ar[r]^{\rho_V \otimes \mathrm{id}} & V \otimes \mathcal{O}(H) \otimes_t \mathcal{O}(G)} $$
is commutative. We have
\begin{align*}
    (\mathrm{id} \otimes \Delta_c)\circ \rho_V(v) & = (\mathrm{id} \otimes \Delta_c)(\sum_{(v)} v_{(0)} \otimes v_{(1)})\\
    & = \sum_{(v)} v_{(0)(0)} \otimes \overline{v_{(1)(2)}} \otimes v_{(1)(1)}\iota(v_{(1)(3)})\\
    &= \sum_{(v)} v_{(0)}\otimes \overline{v_{(3)}} \otimes v_{(2)}\iota(v_{(4)})\\
    &= \sum_{(v)} v_{(0)} \otimes \overline{v_{(3)}}\otimes \iota(v_{(2)})v_{(4)}\\
    & = \sum_{(v)} v_{(0)}\otimes \overline{v_{(1)}}\otimes v_{(2)}\\
    & = \sum_{(v)} v_{(0)(0)} \otimes \overline{v_{(0)(1)}} \otimes v_{(1)}\\
    & = (\rho_V \otimes \mathrm{id})\rho(v).
\end{align*}

Thus, the map $\rho_V: V \to V \otimes \mathcal{O}(H)$ is a homomorphism of $G$-modules. The same holds for the map $v \mapsto \rho_V(v) - v \otimes 1$. Since the category $\mathrm{Rep}^\mathrm{f}(R:G)
$ is abelian (see Subsection \ref{sect_tann_dual}), the space of $H$-invariants, $V^H$, which is the kernel of this map, is a $G$-sub-module.
 \end{proof}

\subsection{Tannakian duality over a field} \label{sect_tann_dual} 

 \subsubsection{General Tannakian duality}
  Reference for this subsection is \cite{De90}.
\begin{defn}  Let $\mathcal C$ be a rigid tensor  $k$-linear abelian category with
$\mathrm{End}(I)\cong k$ ($I$ is the unit object). 
A \textit{fibre functor} for $\mathcal C$ with values in $R$-modules, where $R$ is a $k$-algebra,
is an exact faithful $k$-linear tensor functor $\omega:\mathcal C\to\mod R$.
Such a pair $(\mathcal C,\omega)$ 
is called a \textit{(general) Tannakian category} over $k$.  
 \end{defn} 
 
\begin{thm}\label{the-main-theoremB}
Let  $(\mathcal C,\omega)$ be a general Tannakian category with fiber functor to the category of $R$-modules.
Then, there exists a $k$-groupoid scheme $\mathcal G$, acting transitively upon 
 $\Spec R$, such that $\omega$ induces a tensor equivalence between $\mathcal C$ and $\mathrm{Rep}(R:\mathcal G)$. \end{thm}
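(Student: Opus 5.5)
The statement to prove is Deligne's general Tannakian duality (\cite[Th\'eor\`eme 1.12]{De90}). I would follow the classical reconstruction strategy, working over $R\otimes_kR$ instead of $k$.

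\emph{Step 1: the coend.} Define the $R\otimes_kR$-module
\[
\mathcal O(\mathcal G):=\int^{X\in\mathcal C}\omega(X)^\vee\otimes_k\omega(X).
\]
Concretely, this is the quotient of $\bigoplus_X\omega(X)^\vee\otimes_k\omega(X)$ by the relations coming from morphisms of $\mathcal C$. Here $\omega(X)$ is a finite projective $R$-module: rigidity of $X$ and the tensor property of $\omega$ make $\omega(X)$ dualizable.
\begin{itemize}
\item The tensor structure of $\mathcal C$ and $\omega$ makes $\mathcal O(\mathcal G)$ a commutative $R\otimes_kR$-algebra.
\item Composition of the canonical maps gives a comultiplication $\Delta:\mathcal O(\mathcal G)\to\mathcal O(\mathcal G)\stotimes\mathcal O(\mathcal G)$.
\item Evaluation gives the counit $\varepsilon$.
\item Duals in $\mathcal C$ give the antipode $\iota$ interchanging $s$ and $t$.
\end{itemize}
Checking the axioms \eqref{A10}--\eqref{A14} is formal. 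It yields an affine $k$-groupoid $\mathcal G=\mathrm{Spec}\,\mathcal O(\mathcal G)$ acting on $\mathrm{Spec}\,R$. Each $\omega(X)$ acquires a coaction $\omega(X)\to\omega(X)\otimes_t\mathcal O(\mathcal G)$, so $\omega$ lifts to a tensor functor $\mathcal C\to\mathrm{Rep}(R:\mathcal G)$.

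\emph{Step 2: equivalence.} I would reduce to finitely generated pieces. For each $X$, let $\langle X\rangle$ be the full subcategory of subquotients of objects $X^{\oplus n}$, and form the coalgebroid
\[
L_X=\int^{Y\in\langle X\rangle}\omega(Y)^\vee\otimes\omega(Y).
\]
Then $\mathcal O(\mathcal G)=\varinjlim_X L_X$. The key claim is that $\omega$ induces an equivalence of $\langle X\rangle$ with finite $L_X$-comodules. This is a Morita/Gabriel-type statement: $\langle X\rangle$ is equivalent to finitely generated modules over the finite-dimensional $k$-algebra $\mathrm{End}(P)$ for a suitable generator $P$. For this one needs $\mathcal C$ to be of finite length with finite-dimensional Hom spaces, which follows from faithfulness of $\omega$ into finite projective modules plus $\mathrm{End}(I)=k$. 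Passing to the limit gives full faithfulness and essential surjectivity onto finite comodules, since every finite comodule lies in some $L_X$-comodule category.

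\emph{Step 3: transitivity (main obstacle).} The hardest step is showing that $\mathcal O(\mathcal G)$ is faithfully flat over $R\otimes_kR$. Flatness alone is not automatic. I would follow Deligne's argument, which compares the two fibre functors $\omega\otimes_kR$ and $R\otimes_k\omega$ over $R\otimes_kR$. The scheme $\mathcal G=\underline{\mathrm{Isom}}^\otimes(s^*\omega,t^*\omega)$ is a torsor-type object. One shows it is fpqc-locally nonempty by proving that any two fibre functors on $\mathcal C$ with values in a nonzero ring become isomorphic after a faithfully flat extension. This uses the ind-object
\[
\mathcal O=\int^X X^\vee\otimes X
\]
inside $\mathrm{Ind}\,\mathcal C$: applying the two fibre functors and tensoring produces a nonzero algebra $\Lambda$, and one must argue that $\Lambda$ is faithfully flat. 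The crucial input is that, in the tensor category $\mathrm{Ind}\,\mathcal C$, every nonzero object is faithfully flat for $\otimes$ (the unit is simple since $\mathrm{End}(I)=k$). I expect this faithful-flatness argument to be the delicate point. Granting it, transitivity follows, and with Step 2 this completes the proof.
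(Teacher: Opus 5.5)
\textbf{Verdict: genuine gap.} The paper does not prove this statement; it quotes \cite[Th\'eor\`eme~1.12]{De90}. Your Step~1 is the standard construction and is fine. The gap is at the step you defer, and the mechanism you sketch for it would not work.

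\textbf{Step 3 (transitivity).} As written, this step is not a reduction. For the fibre functors $\omega\otimes_kR$ and $R\otimes_k\omega$ over $R\otimes_kR$, your algebra $\Lambda$ is $\mathcal O(\mathcal G)$ itself, so you restate transitivity and then grant it. The proposed mechanism also fails. The ind-object $\int^X X^\vee\otimes X$, built with the internal tensor product of $\mathrm{Ind}\,\mathcal C$, cannot be evaluated by two different fibre functors, because $\omega_1(X^\vee\otimes X)=\omega_1(X)^\vee\otimes\omega_1(X)$. Evaluating it by $\omega$ gives $\int^X\omega(X)^\vee\otimes_R\omega(X)=\mathcal O(\mathcal G^\Delta)$; for $\mathcal C=\mathrm{Rep}(G)$ this is $\mathcal O(G)$ with the conjugation action. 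Since $\omega$ is exact on $\mathrm{Ind}\,\mathcal C$ with flat values, this does show that the diagonal group scheme is faithfully flat over $R$. It says nothing about $\mathcal O(\mathcal G)=\int^X\omega(X)^\vee\otimes_k\omega(X)$ over $R\otimes_kR$. That module is a quotient of a sum of projective $R\otimes_kR$-modules by relations coming from morphisms of $\mathcal C$, and the point is to show these relations create no torsion. Flatness of objects for the internal $\otimes$ in $\mathrm{Ind}\,\mathcal C$ does not address this.

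\textbf{What is missing.} Deligne handles this with the tensor product of abelian categories \cite[\S5]{De90}. One takes $L=\int^X X^\vee\boxtimes X$ in $\mathrm{Ind}(\mathcal C\boxtimes\mathcal C)$. The essential input is that $\omega\boxtimes\omega$ is an exact functor on $\mathcal C\boxtimes\mathcal C$ with values in projective $R\otimes_kR$-modules. Then $(\omega\boxtimes\omega)(L)=\mathcal O(\mathcal G)$ and $(\omega\boxtimes\omega)(L/\mathbf 1)$ are flat. Moreover $\mathbf 1\to L$ is a monomorphism, because the unit of $\mathcal C\boxtimes\mathcal C$ is simple. So $R\otimes_kR\to\mathcal O(\mathcal G)$ is injective with flat cokernel, which gives faithful flatness. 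Your proposal contains neither the construction of $\mathcal C\boxtimes\mathcal C$ nor the exactness of $\omega\boxtimes\omega$.

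\textbf{Step 2 also needs repair.} When $R$ is not a field, faithfulness of $\omega$ into finite projective $R$-modules bounds neither lengths nor $\dim_k\mathrm{Hom}(X,Y)$, since $\mathrm{Hom}_R(\omega X,\omega Y)$ is usually infinite-dimensional over $k$. You must pass to the residue field $\kappa$ of a point of $\mathrm{Spec}\,R$ and check three things:
\begin{enumerate}
\item $\kappa\otimes_R\omega$ is still exact, because the values of $\omega$ are projective.
\item It is faithful, because an exact tensor functor out of a rigid abelian category with $\mathrm{End}(\mathbf 1)=k$ kills no nonzero object, $\mathbf 1$ being simple.
\item $\mathrm{Hom}(X,Y)\otimes_k\kappa\to\mathrm{Hom}_\kappa(\kappa\otimes_R\omega X,\kappa\otimes_R\omega Y)$ is injective, again by simplicity of $\mathbf 1$.
\end{enumerate}
Separately, Morita theory only gives $\langle X\rangle\simeq\mathrm{mod}\,A$ with $A=\mathrm{End}(P)$. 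Identifying $A$-modules with $L_X$-comodules, where $L_X\cong\omega(P)^\vee\otimes_A\omega(P)$, is a descent statement for this coring. It holds because $\omega(P)$ is faithfully flat over $A$, which follows from exactness and faithfulness of $\omega$. That argument is the actual content of the step and is absent from your sketch.
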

The groupoid $\mathcal G$ is called the Tannakian groupoid of $(\mathcal C,\omega)$.
Conversely, if we start from a groupoid scheme $\mathcal G$ acting transitively upon a ring $R$, then $\mathrm{Rep}^\mathrm{coh}(R:\mathcal G)$ equipped with the forgetful functor is a Tannakian category.
The corresponding Tannakian groupoid is isomorphic to $\mathcal G$. 
 

\subsubsection{Neutral Tannakian duality}\label{sect_Tannakian_field} 
 Reference for this subsection is \cite{DM82}. When $R=k,$ we obtain the  duality for a neutral Tannakian category. 
\begin{defn}
 A rigid abelian tensor category $\mathcal C$ equipped with an exact faithful $k$-linear tensor functor $\omega:\mathcal C\to\mathrm{Vec}_k$ is called a \textit{neutral Tannakian category} over $k.$ The functor $\omega$ is called a \textit{fibre functor} for $\mathcal C$.
 \end{defn} 
 \begin{thm}\label{the-main-theoremA}
  Let  $(\mathcal C,\omega)$ be a neutral Tannakian category. Then, there exists a $k$-group scheme $G$, such that $\omega$ 
  induces an equivalence between $\mathcal C$ and $\mathrm{Rep}^\mathrm{coh}_k(G)$.
 \end{thm}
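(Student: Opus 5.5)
The statement is Theorem \ref{the-main-theoremA}, the classical neutral Tannakian duality over a field. Since it is quoted from \cite{DM82}, my plan is to reconstruct the standard Deligne--Milne argument rather than to derive it from the general groupoid version (Theorem \ref{the-main-theoremB}). One could also specialise that theorem to $R=k$: a groupoid acting transitively on $\Spec k$ is just an affine group scheme, and $\mathrm{Rep}(k:\mathcal G)$ is the category of $\mathcal O(\mathcal G)$-comodules. I will instead sketch the direct proof, because Theorem \ref{the-main-theoremB} is itself a harder result of \cite{De90}.

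Step 1: reduce to a coalgebra. Every object $X$ of $\mathcal C$ generates a full abelian subcategory $\langle X\rangle$, consisting of subquotients of the objects $X^{\oplus n}$. For each such $X$ I will form the coalgebra
\[
C_X:=\mathrm{End}\bigl(\omega|_{\langle X\rangle}\bigr)^\vee .
\]
Concretely, $\mathrm{End}(\omega|_{\langle X\rangle})$ is the subalgebra of $\mathrm{End}_k(\omega(X))$ cut out by the endomorphisms induced by all morphisms in $\langle X\rangle$. It is finite dimensional because $\omega(X)$ is. The key lemma is that $\omega$ induces an equivalence
\[
\langle X\rangle \simeq \mathrm{Comod}^{\mathrm f}(C_X).
\]
I will prove it by realising $\langle X\rangle$ as modules over the finite-dimensional algebra $\mathrm{End}(\omega|_{\langle X\rangle})$, using exactness and faithfulness of $\omega$ together with the fact that every object of $\langle X\rangle$ is a subquotient of some $X^{\oplus n}$. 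This is the step I expect to be the main obstacle. The delicate point is to show that the comparison functor is full and essentially surjective, namely that every finite module comes from an object of $\langle X\rangle$. Here I would use a "projective generator" $P$ of $\langle X\rangle$: a suitable subobject of $X^{\oplus \dim \omega(X)}$ with $\mathrm{End}(P)\cong \mathrm{End}(\omega|_{\langle X\rangle})$ that represents $\omega|_{\langle X\rangle}$.

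Step 2: pass to the colimit and add the tensor structure. Set $C:=\varinjlim_X C_X$, taken over the directed system $\langle X\rangle\subset\langle X\oplus Y\rangle$. Compatibility of the equivalences from Step 1 gives $\mathcal C\simeq\mathrm{Comod}^{\mathrm f}(C)$, and under this equivalence $\omega$ becomes the forgetful functor. Next, the tensor structure on $\mathcal C$, together with the fact that $\omega$ is a tensor functor, induces a commutative multiplication and a unit on $C$. Commutativity comes from compatibility of $\omega$ with the symmetry constraint, and the unit comes from $\mathrm{End}(I)\cong k$, so $C$ is a commutative bialgebra. Rigidity of $\mathcal C$ supplies an antipode, which I will construct from the duals $X^\vee$ with their evaluation and coevaluation maps. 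Hence $C$ is a commutative Hopf algebra, $G:=\Spec C$ is an affine group scheme, and $\mathrm{Rep}^{\mathrm f}_k(G)=\mathrm{Comod}^{\mathrm f}(C)\simeq\mathcal C$ as tensor categories.

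Finally, I will identify $G$ with $\underline{\mathrm{Aut}}^\otimes(\omega)$ for completeness: for each $k$-algebra $A$, the $A$-points of $G$ correspond to tensor automorphisms of $\omega\otimes A$. This follows by unwinding the definition of $C$ as a colimit of the duals $\mathrm{End}(\omega|_{\langle X\rangle})^\vee$.
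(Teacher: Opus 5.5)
The paper gives no proof of its own here; it simply quotes the result from \cite{DM82}, and your sketch is a faithful reconstruction of the Deligne--Milne argument. That argument uses a representing object $P\subset X^{\oplus n}$ for $\omega|_{\langle X\rangle}$, the coalgebra $\varinjlim_X \mathrm{End}(\omega|_{\langle X\rangle})^\vee$, the commutative bialgebra structure coming from $\otimes$ and the symmetry constraint, and the antipode coming from rigidity, so your proposal is correct and agrees with the cited source. (One small slip: by Yoneda, representability of $\omega|_{\langle X\rangle}$ by $P$ gives $\mathrm{End}(\omega|_{\langle X\rangle})\cong\mathrm{End}(P)^{\mathrm{op}}$ rather than $\mathrm{End}(P)$, which does not affect the argument.)
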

 The group scheme $G$ above is called the \textit{Tannakian group} of the category $(\mathcal C,\omega)$.
An example of a Tannakian category is the category of finite dimensional representations of an affine group scheme $G$ over $k$, equipped with the forgetful functor of $k$-vector
spaces. The resulting Tannakian group is isomorphic to $G$.

\subsection{The induction from normal sub-group schemes}\label{sub-induction} In this subsection, we extend the result of \cite{EH06}. We continue to assume $S$ and $G$ are affine, $S=\text{\rm Spec} (R)$. Let $G$ be an  affine groupoid scheme acting transitively on $S,$ and  let $H$ be a flat discrete subgroup scheme of $G.$ In this subsection, we want to find the right adjoint functor of the restriction functor $\mathrm{Res}^G_H(-).$
\subsubsection{The induction functor from discrete normal subgroup} \label{A.2.5}
Let $W$ be a representation of $H$. We consider an action of $G$ on $W\otimes \mathcal{O}(G)$
by letting it act trivially 
on $W$ and via the left regular action on $\mathcal{O}(G)$.
We let $H$ act on $W\otimes \mathcal{O}(G)$ by the given action on $W$ and by the right
regular representation on $\mathcal{O}(G)$. Consider the 
$H$-invariant space $(W\otimes_t \mathcal{O}(G))^H.$ We want to show that 
$(W\otimes_t \mathcal{O}(G))^H$ is stable under the action of $G$ defined above. 
This follows from:
\begin{lem}\label{lem-23}
Let $G$ be an  affine groupoid scheme acting transitively on $S,$ and  let $H$ be a flat discrete subgroup scheme of $G.$ For any representation $W\in\mathrm{Obj}(\mathrm{Rep}_R (H)).$ Consider the following maps: 
\ga{6.10}{\begin{array}{l} p: W\otimes_t\mathcal O(G)
\stackrel{\rho_W\otimes\text{\rm id} }\longrightarrow
W\otimes\mathcal O(H)\otimes_t\mathcal O(G)\\
q:W\otimes_t\mathcal O(G)\stackrel{\text{\rm id} \otimes\Delta}\longrightarrow
 W\otimes_t\mathcal O(G)\stotimes\mathcal O(G)\stackrel{\mathrm{id} \otimes \pi \otimes\mathrm{id}}\longrightarrow
W\otimes\mathcal O(H)\otimes_t\mathcal O(G),\end{array}}
where $\rho_W:W\to W\otimes \mathcal O(H)$ is the co-action of
$\mathcal O(H)$ on $W$ and $\Delta$ is the co-product on $\mathcal O(G),$ and $\pi: \mathcal{O}(G)\longrightarrow \mathcal{O}(H)$ is induced by the closed immersion $H$ to $G.$
Then $(W\otimes_t \mathcal{O}(G))^H =\mathrm{Eq}(p,q)$ -- the equalizer of $p$ and $q$. 
\end{lem}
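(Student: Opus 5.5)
We must show $\mathrm{Eq}(p,q)$ equals the $H$-invariants of $W\otimes_t\mathcal O(G)$ for the diagonal $H$-action, i.e. the action given by $\rho_W$ on $W$ and by the right regular coaction on $\mathcal O(G)$. The plan is to write out the coaction of $\mathcal O(H)$ on $W\otimes_t\mathcal O(G)$ explicitly and compare it with $p$ and $q$.

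\textbf{Step 1: the coaction.} The right regular coaction of $\mathcal O(H)$ on $\mathcal O(G)$ is $(\mathrm{id}\otimes\pi)\circ\Delta$, after twisting factors so that $\mathcal O(H)$ sits in the last slot. The tensor product coaction on $W\otimes_t\mathcal O(G)$ is therefore
$$w\otimes g\longmapsto \sum w_{(0)}\otimes g_{(1)}\otimes w_{(1)}\pi(g_{(2)}).$$
An element $x$ is invariant precisely when its image equals $x\otimes 1$.

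\textbf{Step 2: reformulate invariance.} Since $H$ is a group, invariance under the diagonal action is equivalent to $\rho_W$-twisted invariance. Apply the inverse on $W$: compose with the automorphism $w\otimes h\mapsto \sum w_{(0)}\otimes w_{(1)}\iota(h)$ of $W\otimes \mathcal O(H)$. Equivalently, the condition says that the $H$-coaction coming from $W$ agrees with the inverse-regular action on the $\mathcal O(G)$ factor. Concretely, $x$ is invariant iff $(\rho_W\otimes\mathrm{id})(x)=(\mathrm{id}\otimes\pi\otimes\mathrm{id})(\mathrm{id}\otimes\Delta)(x)$, after matching the slot where $\mathcal O(H)$ appears. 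This is exactly $p(x)=q(x)$.

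The verification of this equivalence is a Sweedler-notation computation, and it uses three facts:
\begin{itemize}
\item coassociativity of $\Delta$;
\item that $\pi$ is a Hopf algebroid map, with $H$ discrete so that $s=t$ on $\mathcal O(H)$ and the tensor products over $R$ are unambiguous;
\item the antipode identities \eqref{A14} to pass between the two forms.
\end{itemize}

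\textbf{Main obstacle.} I expect the hardest part to be the bookkeeping of the $R$-module structures, namely $\otimes_t$ versus $\stotimes$. One has to check that all maps are well defined on these balanced tensor products, and that the invertible "shearing" map $w\otimes h\mapsto\sum w_{(0)}\otimes w_{(1)}h$ used in Step 2 respects them. Flatness of $H$ (so that $\mathcal O(H)$ is $R$-flat) ensures that the relevant tensor products are exact, and hence that the kernels identify correctly.

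Once Step 2 is established, $G$-stability of $(W\otimes_t\mathcal O(G))^H$ follows. The reason is that $p$ and $q$ are both $G$-equivariant for the left regular action on the last $\mathcal O(G)$ factor, by coassociativity, so their equalizer is a $G$-submodule.
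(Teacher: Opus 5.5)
Your plan (write down the $\mathcal O(H)$-coaction on $W\otimes_t\mathcal O(G)$, then untwist it with the invertible map $T(w\otimes h\otimes g)=\sum w_{(0)}\otimes w_{(1)}\iota(h)\otimes g$) is the right one. The gap is that the coaction you write in Step~1 is not the one for which the lemma holds, so the central claim of Step~2, ``this is exactly $p(x)=q(x)$'', is false.

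Your formula $w\otimes g\mapsto\sum w_{(0)}\otimes g_{(1)}\otimes w_{(1)}\pi(g_{(2)})$ lets $H$ act through the \emph{second} coproduct factor, i.e.\ by right translation. Untwisting it by the antipode on the $W$-side gives the condition $\sum w\otimes g_{(1)}\otimes\pi(g_{(2)})=\sum w_{(0)}\otimes g\otimes\iota(w_{(1)})$. By contrast, $p=q$ reads $\sum w_{(0)}\otimes w_{(1)}\otimes g=\sum w\otimes \pi(g_{(1)})\otimes g_{(2)}$. No matching of tensor slots turns $g_{(1)}\otimes\pi(g_{(2)})$ into $\pi(g_{(1)})\otimes g_{(2)}$.

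Concretely, take $S$ a point, $W=k$ trivial, $G=S_3$ (constant group), $H=\langle(12)\rangle$, and $\Delta(f)(x,y)=f(xy)$. Your invariants are the right-$H$-invariant functions on $G$, while $\mathrm{Eq}(p,q)$ consists of the left-$H$-invariant ones. The indicator function of $H(13)$ lies in $\mathrm{Eq}(p,q)$ but not in your invariant space.

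In the genuine groupoid case your formula is not even well defined on $W\otimes_t\mathcal O(G)$. Here $w_{(1)}$ lives over the $t$-end of $g_{(1)}$, while $\pi(g_{(2)})$ lives over its $s$-end, so the product $w_{(1)}\pi(g_{(2)})$ is not balanced. This is exactly the $\otimes_t$ versus ${}_s\otimes_t$ bookkeeping you postponed as the ``main obstacle''.

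The coaction that matches $p$ and $q$, and the one the paper uses, is
\[
\rho(w\otimes g)=\sum w_{(0)}\otimes w_{(1)}\iota(\pi(g_{(1)}))\otimes g_{(2)},
\]
i.e.\ $H$ acts through the first coproduct factor composed with the antipode. With this coaction, your own untwisting map finishes the argument:
\begin{itemize}
\item $T\circ q=\rho$ directly from the definitions.
\item $T(p(w\otimes g))=\sum w_{(0)}\otimes w_{(1)}\iota(w_{(2)})\otimes g=w\otimes 1\otimes g$, by \eqref{A14} and the counit axiom.
\item $T$ is invertible. It is $\mathrm{id}\otimes\iota\otimes\mathrm{id}$ (an involution, since $\mathcal O(H)$ is commutative) followed by the shear $w\otimes h\otimes g\mapsto\sum w_{(0)}\otimes w_{(1)}h\otimes g$, whose inverse is $w\otimes h\otimes g\mapsto\sum w_{(0)}\otimes\iota(w_{(1)})h\otimes g$.
\end{itemize}
Hence $x$ is $H$-invariant if and only if $p(x)=q(x)$. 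This is a compact form of the paper's proof, which checks the two implications by explicit Sweedler manipulations.

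Your closing remark on $G$-stability also needs the corrected action. $H$ acting through $g_{(1)}$ commutes with $G$ acting through $g_{(2)}$ by coassociativity, but your right-translation action does not commute with that $G$-action.
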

\begin{proof}
The action of $H$ on $W\otimes_t \mathcal{O}(G)$ corresponds to the following
coaction of $\mathcal{O}(H)$: 
$$
\xymatrix{ W\otimes_t \mathcal{O}(G) \ar[rr]^{\rho_W\otimes \Delta_H} \ar[d]_{\rho_{W\otimes_t \mathcal{O}(G)}} & &   W\otimes \mathcal{O}(H)\otimes\mathcal{O}(H)   \otimes_t \mathcal{O}(G) \ar[d]_{\mathrm{id}\otimes \mathrm{id}\otimes \iota \otimes \mathrm{id}}  \\
   W \otimes \mathcal{O}(H)\otimes_t \mathcal{O}(G) & &   W\otimes \mathcal{O}(H)\otimes \mathcal{O}(H)   \otimes_t \mathcal{O}(G) \ar[ll]^{\mathrm{id} \otimes m \otimes \mathrm{id}}},  $$
where $\Delta_H = (\pi \otimes \mathrm{id}) \circ \Delta,$ and $m$ is multiplication.  In Sweedler notation, we have 
$$ \rho_{W\otimes_t \mathcal{O}(G)}(w \otimes g) = \sum_{(w),(g)} w_{(0)}\otimes w_{(1)}\iota(\pi(g_{(1)}))\otimes g_{(2)},$$
for any pure tensor $w\otimes g \in W \otimes_t \mathcal{O}(G).$

Now let a pure tensor $w\otimes g \in \mathrm{Eq}(p,q).$ We have
$$\sum_{(w)} w_{(0)}\otimes w_{(1)}\otimes g
=
\sum_{(g)} w\otimes \pi(g_{(1)})\otimes g_{(2)}.$$
We apply $\mathrm{id}_{W\otimes \mathcal{O}(H)} \otimes \Delta_H,$ then apply $(\mathrm{id}\otimes \mathrm{id}\otimes \iota \otimes \mathrm{id}),$ and finally apply $\mathrm{id} \otimes m \otimes \mathrm{id}$ to obtain:
\begin{align}\label{eq-30}
    \sum_{(w),(g)} w_{(0)}\otimes w_{(1)}\iota(\pi(g_{(1)}))\otimes g_{(2)} = \sum_{(w),(g)} w_{(0)}\otimes\pi(g_{(1)}) \iota (\pi(g_{(2)(1)}))\otimes g_{(2)(2)}.   
\end{align}
The left hand side of \eqref{eq-30} is precisely $\rho_{W\otimes_t \mathcal{O}(G)},$ while the right hand side of \eqref{eq-30} is  $w \otimes 1 \otimes g.$ Therefore, $w\otimes g \in \mathrm{Ind}^G_H(W).$

On the other hand, assume that a pure tensor $w \otimes g$ belongs to $\mathrm{Ind}^G_{G^{\Delta}}(W).$ We have 
$$  \sum_{(w),(g)} w_{(0)}\otimes w_{(1)}\iota(\pi(g_{(1)}))\otimes g_{(2)} = w \otimes 1 \otimes g.$$
We apply $\mathrm{id}\otimes \iota \otimes \mathrm{id},$ then apply  $\rho_{W}\otimes \mathrm{id}\otimes \mathrm{id},$ and finally apply $\mathrm{id}\otimes m \otimes \mathrm{id}$ to obtain:
\begin{align}\label{eq-31}
    \sum_{(w),(g)} w_{(0)(0)}\otimes w_{(0)(1)}\iota(w_{(1)}\iota(\pi(g_{(1)})))\otimes g_{(2)} = \sum_{(w)} w_{(0)}\otimes w_{(1)} \otimes g.   
\end{align} 
The left hand side of \eqref{eq-31} is precisely $q(w\otimes g),$ while the right hand side of \eqref{eq-31} is $p(w\otimes g).$ Therefore, $w\otimes g \in \mathrm{Eq}(p,q).$
\end{proof}
\begin{lem}
With the assumption as in Lemma \ref{lem-23}, we have:  $(W\otimes_t \mathcal{O}(G))^H$ is a $G$-module (i.e. an $\mathcal{O}(G)$-comodule).
\end{lem}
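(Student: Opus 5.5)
The claim is that $(W\otimes_t \mathcal{O}(G))^H$ is stable under the $G$-action on $W\otimes_t\mathcal O(G)$, where $G$ acts trivially on $W$ and by the left regular coaction on $\mathcal O(G)$. By Lemma \ref{lem-23}, $(W\otimes_t \mathcal{O}(G))^H=\mathrm{Eq}(p,q)$. It therefore suffices to show that $p$ and $q$ are morphisms of $G$-modules, for suitable $G$-module structures on source and target. The equalizer of two $G$-linear maps is then a $G$-submodule, since $\mathrm{Rep}(R:G)$ is abelian, $G$ being flat and transitive as in Lemma \ref{lem-13}.

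First I fix the $G$-structures. On $W\otimes_t\mathcal O(G)$, the coaction is $\mathrm{id}_W\otimes\Delta_l$, where $\Delta_l$ is the left regular coaction; in Sweedler form,
$$w\otimes g\mapsto \sum_{(g)} w\otimes g_{(2)}\otimes \iota(g_{(1)}),$$
or whichever convention matches the left regular representation. On $W\otimes\mathcal O(H)\otimes_t\mathcal O(G)$, I let $G$ act only on the last factor, again by the left regular coaction, and trivially on $W\otimes\mathcal O(H)$.

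Next I check that $p$ is $G$-linear. We have $p=\rho_W\otimes\mathrm{id}$, which touches only the $W$-factor, while $G$ acts only on the $\mathcal O(G)$-factor. Hence $p$ commutes with the coactions, and this is immediate.

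Then I check that $q$ is $G$-linear. Here $q=(\mathrm{id}\otimes\pi\otimes\mathrm{id})\circ(\mathrm{id}\otimes\Delta)$. The key fact is that the comultiplication $\Delta:\mathcal O(G)\to\mathcal O(G)\stotimes\mathcal O(G)$ intertwines the left regular coaction on $\mathcal O(G)$ with the left regular coaction on the second tensor factor. This is exactly the statement that left and right translations commute, and it follows from coassociativity $(\Delta\otimes\mathrm{id})\Delta=(\mathrm{id}\otimes\Delta)\Delta$. Composing with $\pi$ on the first factor does not affect the last factor. I would verify this by a short Sweedler computation in the style of the proof of Lemma \ref{lem-13}, expanding both sides to $\sum w\otimes\pi(g_{(2)})\otimes g_{(3)}\otimes\iota(g_{(1)})$.

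The main obstacle is bookkeeping rather than substance. The tensor products are over $R$ via the source or target maps, and one must ensure that each map is well defined and respects the $s$- and $t$-module structures. In particular, $\iota$ swaps $s$ and $t$ by \eqref{A13}, so the left regular coaction must be written with the correct $\otimes_t$ placement for the Sweedler identity to make sense. I would handle this by working with $R\otimes_kR$-bimodule structures throughout and using \eqref{A13} and \eqref{A14}.

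Finally, since the equalizer of two $G$-comodule maps is a subcomodule, $(W\otimes_t\mathcal O(G))^H$ is an $\mathcal O(G)$-comodule. Flatness of $\mathcal O(G)$ over $R$ via $t$ guarantees that kernels of comodule maps are again comodules.
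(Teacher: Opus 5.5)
Your overall strategy is the paper's: show that $p$ and $q$ of Lemma \ref{lem-23} are $G$-linear and take the equalizer. However, the coaction you actually write down breaks the key step. Your formula $w\otimes g\mapsto\sum w\otimes g_{(2)}\otimes\iota(g_{(1)})$ lets $G$ act through the \emph{first} Sweedler leg of $g$. That is the same leg through which $H$ acts in Lemma \ref{lem-23}, via $\pi(g_{(1)})$.

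With this choice $q$ is not $G$-linear. Composing $q$ with your coaction on the last factor gives $\sum w\otimes\pi(g_{(1)})\otimes g_{(3)}\otimes\iota(g_{(2)})$. Composing your coaction with $q\otimes\mathrm{id}$ gives $\sum w\otimes\pi(g_{(2)})\otimes g_{(3)}\otimes\iota(g_{(1)})$. For an ordinary group these are the functions $(h,x,y)\mapsto f(hy^{-1}x)$ and $f(y^{-1}hx)$, so the common expansion you announce does not exist.

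This is not a naming issue that ``whichever convention'' absorbs. Left translation $f\mapsto f(y^{-1}\,\cdot\,)$ does not preserve the invariance condition $f(hx)=h\cdot f(x)$ in general; take $H=G$, $W$ the regular representation and $y$ non-central. Your ``key fact'' also fails for your own formula. There $\Delta$ intertwines the coaction with the coaction on the \emph{first} tensor factor of $\mathcal O(G)\stotimes\mathcal O(G)$, which is exactly the factor that $q$ then sends to $\mathcal O(H)$ via $\pi$.

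The repair is to use the coaction on the other leg, as the paper does. Take $\mathrm{id}_W\otimes\Delta$, i.e. $w\otimes g\mapsto\sum w\otimes g_{(1)}\otimes g_{(2)}$, on $W\otimes_t\mathcal O(G)$, and $\mathrm{id}_{W\otimes\mathcal O(H)}\otimes\Delta$ on $W\otimes\mathcal O(H)\otimes_t\mathcal O(G)$. Then $p$ commutes with the coactions trivially. For $q$, both composites equal $\sum w\otimes\pi(g_{(1)})\otimes g_{(2)}\otimes g_{(3)}$ by coassociativity. This is the genuine content of ``left and right translations commute'', and it is precisely the paper's verification of \eqref{dia-10} and \eqref{dia-11}. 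Your concluding step is fine and matches the paper: the equalizer of two comodule maps is a subcomodule because the representation category is abelian.
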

\begin{proof}
    To prove this lemma, it suffices to show that the morphisms $p$ and $q$ in Lemma \ref{lem-23} are $G$-morphisms.    To prove that $p$ is a $G$-morphism, we  need to prove   that the following diagram: 
\begin{equation}\label{dia-10}
         \xymatrix{W \otimes_t \mathcal{O}(G) \ar[r]^{p} \ar[d]^{\rho_G} & W \otimes \mathcal{O}(H) \otimes_t \mathcal{O}(G)\ar[d]^{\mathrm{id}_{W\otimes \mathcal{O}(H)}\otimes \rho_G} \\
    W \otimes_t \mathcal{O}(G) \stotimes \mathcal{O}(G)  \ar[r]^{p \otimes \mathrm{id}} & W \otimes \mathcal{O}(H) \otimes_t \mathcal{O}(G) \stotimes \mathcal{O}(G)}
\end{equation} 
is commutative, where $\rho_G$ is $G$-comodule structure of $W\otimes_t \mathcal{O}(G).$ Let a pure tensor $w\otimes g \in W \otimes_t \mathcal{O}(G).$ We have 
$$ (\mathrm{id}_{W\otimes \mathcal{O}(H)}\otimes \rho_G)(p(w\otimes g)) = \sum_{(w),(g)}w_{(0)}\otimes w_{(1)}\otimes g_{(1)}\otimes g_{(2)},$$
and
$$ (p\otimes \mathrm{id})(\rho_{G}(w\otimes g)) = \sum_{(w),(g)}w_{(0)}\otimes w_{(1)}\otimes g_{(1)}\otimes g_{(2)}.$$
Thus, the Diagram \eqref{dia-10} is commutative. 

On the other hand, to prove that $q$ is a $G$-morphism,  we  need to prove   that the following diagram: 
\begin{equation}\label{dia-11}
         \xymatrix{W \otimes_t \mathcal{O}(G) \ar[r]^{q} \ar[d]^{\rho_G} & W \otimes \mathcal{O}(H) \otimes_t \mathcal{O}(G)\ar[d]^{\mathrm{id}_{W\otimes \mathcal{O}(H)}\otimes \rho_G} \\
    W \otimes_t \mathcal{O}(G) \stotimes \mathcal{O}(G)  \ar[r]^{q \otimes \mathrm{id}} & W \otimes \mathcal{O}(H) \otimes_t \mathcal{O}(G) \stotimes \mathcal{O}(G)}
\end{equation} 
is commutative. Let a pure tensor $w\otimes g \in W \otimes_t \mathcal{O}(G).$ We have 
$$ (\mathrm{id}_{W\otimes \mathcal{O}(H)}\otimes \rho_G)(q(w\otimes g)) = \sum_{(g)}w\otimes \pi(g_{(1)})\otimes g_{(2)(1)}\otimes g_{(2)(2)},$$
and
$$ (q\otimes \mathrm{id})(\rho_{G}(w\otimes g)) = \sum_{(g)}w\otimes \pi(g_{(1)(1)})\otimes g_{(1)(2)}\otimes g_{(2)}.$$
Thus, the Diagram \eqref{dia-11} is commutative. 
\end{proof}

 We denote this $G$-module $(W\otimes_t \mathcal{O}(G))^H$ by $\mathrm{
Ind
}^G_H(W)$ and call it the \textit{induced module} of $W$ from $H$ to $G.$ 
The name will be justified by the Frobenius reciprocity proven in the next subsection.

Let $f: W_1 \longrightarrow W_2$ be a morphism in $\mathrm{Rep}_R(H)$, that is,
\[
\rho_{W_2} \circ f = (f \otimes \mathrm{id}) \circ \rho_{W_1}.
\]
We define
\[
\mathrm{Ind}_H^G(f) := (f \otimes \mathrm{id}_{\mathcal{O}(G)})\big|_{(W_1 \otimes_t \mathcal{O}(G))^H}.
\]
Since $f$ is $H$-equivariant, the map $f \otimes \mathrm{id}_{\mathcal{O}(G)}$ is also
$H$-equivariant, hence it preserves the subspace of $H$-invariants.
Thus, $\mathrm{Ind}_H^G(f)$ is well-defined and is $G$-equivariant. Moreover, for any morphisms
$f: W_1 \to W_2$ and $g: W_2 \to W_3$, one has
\[
\mathrm{Ind}_H^G(g \circ f)
= (g \circ f) \otimes \mathrm{id}
= (g \otimes \mathrm{id}) \circ (f \otimes \mathrm{id})
= \mathrm{Ind}_H^G(g) \circ \mathrm{Ind}_H^G(f),
\]
and
\[
\mathrm{Ind}_H^G(\mathrm{id}_W) = \mathrm{id}_{\mathrm{Ind}_H^G(W)}.
\]
Thus, we get a covariant functor
\[
\mathrm{Ind}_H^G : \mathrm{Rep}_R(H) \longrightarrow \mathrm{Rep}(S:G),
\]
called the \textit{induction functor} from $H$ to $G.$

\subsubsection{Frobenius Reciprocity}
Let $G$ be an  affine groupoid scheme acting transitively on $S,$ and  let $H$ be a  flat discrete subgroup scheme of $G.$ 

For any representation $V$ of $H$, 
let $\varepsilon_V: V \otimes_t \mathcal{O}(G) \rightarrow V$ be the linear map  
\begin{equation}\label{999}
v\otimes g \mapsto  (\operatorname{id}_V \bar{\otimes} \varepsilon_G) (v\otimes g) = 
v\cdot\varepsilon_G(g). 
\end{equation}
The map $\epsilon_V$ is $H$-equivariant. 
Indeed, let $x \in (V\otimes_t \mathcal{O}(G))^H.$ Then 
$ \rho_{V\otimes_t\mathcal{O}(G)}(x) = x \otimes 1.$
Therefore,
\begin{align*}
    (\epsilon_V \otimes \mathrm{id})(\rho_{V\otimes_t\mathcal{O}(G)}(x)) & = (\epsilon_V \otimes \mathrm{id})(x\otimes 1)\\
    & = \epsilon_V(x) \otimes \mathrm{id}\\
     & = \rho_V (\epsilon_V(x)).
\end{align*}
 \begin{lem}[Frobenius Reciprocity]\label{lem-frobeniusreciprocity}
 Let $G$ be an affine $k$-groupoid scheme acting transitively on $S = \mathrm{Spec}(R),$ and let $H$ be a flat discrete  subgroup scheme of $G.$ Let $W$ be an $H$-module. For each $G$-module $V$ the map $\varphi \mapsto \varepsilon_V \circ \varphi$ is an isomorphism
\begin{align}\label{37}
    \text{\rm Hom} _G(V,\text{\rm Ind}_{H}^G (W))\cong \text{\rm Hom} _{H}(V,W),
\end{align}
i.e., the functor $\text{\rm Ind}_{H}^G$ is the right adjoin to the functor restricting
$G$-representations to $H.$
\end{lem}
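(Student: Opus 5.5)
We prove Frobenius reciprocity \eqref{37} by writing down both directions explicitly and checking that they are mutually inverse, following the classical group-scheme argument (cf. \cite[I.3.4]{Ja03}) adapted to the Hopf algebroid $\mathcal O(G)$.

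\emph{Forward map.} Given a $G$-morphism $\varphi: V\to \mathrm{Ind}_H^G(W)=(W\otimes_t\mathcal O(G))^H$, we compose with the counit map $\varepsilon_W$ of \eqref{999}. The result $\varepsilon_W\circ\varphi : V\to W$ is $R$-linear. It is $H$-linear because $\varphi$ is $H$-linear (restriction of a $G$-map) and $\varepsilon_W$ restricted to $H$-invariants is $H$-equivariant, as checked just before the lemma. This gives $\Phi:\mathrm{Hom}_G(V,\mathrm{Ind}_H^G W)\to \mathrm{Hom}_H(V,W)$.

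\emph{Inverse map.} For an $H$-linear $\psi:V\to W$, set
$$\Psi(\psi):=(\psi\otimes\mathrm{id})\circ\rho_V : V\longrightarrow V\otimes_t\mathcal O(G)\longrightarrow W\otimes_t\mathcal O(G),$$
where $\rho_V$ is the $\mathcal O(G)$-coaction.
\begin{itemize}
\item $\Psi(\psi)$ is $G$-linear for the left regular structure on the second factor. This follows from coassociativity \eqref{A15}: $(\mathrm{id}\otimes\Delta)\rho_V=(\rho_V\otimes\mathrm{id})\rho_V$.
\item Its image lies in $\mathrm{Eq}(p,q)$, which equals the $H$-invariants by Lemma \ref{lem-23}. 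Using Sweedler notation,
$$q(\Psi\psi(v))=\textstyle\sum\psi(v_{(0)})\otimes\pi(v_{(1)})\otimes v_{(2)}.$$
This equals $\sum \psi(v_{(0)})_{(0)}\otimes\psi(v_{(0)})_{(1)}\otimes v_{(1)}=p(\Psi\psi(v))$, because $\psi$ intertwines the restricted coaction $(\mathrm{id}\otimes\pi)\rho_V$ with $\rho_W$, i.e.\ $\rho_W\psi=(\psi\otimes\pi)\rho_V$.
\end{itemize}

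\emph{Mutually inverse.}
\begin{itemize}
\item $\Phi\Psi=\mathrm{id}$ follows from the counit axiom $(\mathrm{id}\otimes\varepsilon)\rho_V=\mathrm{id}$.
\item For $\Psi\Phi=\mathrm{id}$, write $\varphi$ for a $G$-map into $\mathrm{Ind}$. $G$-linearity gives $(\varphi\otimes\mathrm{id})\rho_V=(\mathrm{id}_W\otimes\Delta)\varphi$. Applying $\varepsilon_W\otimes\mathrm{id}$ yields $(\varepsilon_W\varphi\otimes\mathrm{id})\rho_V=(\mathrm{id}\otimes(\varepsilon\otimes\mathrm{id})\Delta)\varphi=\varphi$, by \eqref{A12}.
\end{itemize}
Naturality in $V$ and $W$ is immediate from the formulas.

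\emph{Main obstacle.} The delicate point is bookkeeping in the groupoid setting. The tensor products $\otimes_t$, $\stotimes$ involve two different $R$-module structures on $\mathcal O(G)$, and \eqref{A12} involves the diagonal $R$-action on $R$. So one must check that every map above, notably $\varepsilon_W$ and $\psi\otimes\mathrm{id}$, is well defined on the balanced tensor products, and that $\varepsilon_W$ is compatible with the $t$-structure. For example, $\varepsilon_W(w\otimes t(\lambda)g)=w\lambda\varepsilon(g)$ requires $\varepsilon\circ t=\mathrm{id}_R$.

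Since $H$ is discrete, $\mathcal O(H)$ is an ordinary $R$-Hopf algebra with $s=t$, so the intertwining identity for $\psi$ is a plain comodule condition. This is exactly where discreteness of $H$ is used.
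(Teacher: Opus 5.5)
Your proposal is correct and follows essentially the same route as the paper: the same inverse $\psi\mapsto(\psi\otimes\mathrm{id})\rho_V$, the same equalizer check via Lemma~\ref{lem-23}, and the counit axioms for both composites. Your derivation of $\Psi\Phi=\mathrm{id}$, obtained by applying $\varepsilon_W\otimes\mathrm{id}$ to the $G$-linearity identity, together with your explicit coassociativity check that $\Psi(\psi)$ is $G$-linear, is in fact cleaner than the paper's write-up.

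The one step you outsource is the $H$-equivariance of $\varepsilon_W$ on $\mathrm{Ind}_H^G(W)$ for the \emph{restricted} $G$-action, and it should be checked directly. Take $x=\sum_i w_i\otimes g_i$ and apply $\mathrm{id}_W\otimes\mathrm{id}\otimes\varepsilon$ to $p(x)=q(x)$; this yields $\rho_W(\varepsilon_W(x))=\sum_i w_i\otimes\pi(g_i)$, as required. The invariance $\rho(x)=x\otimes 1$ invoked before the lemma concerns the other $H$-action and does not by itself give this.
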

\begin{proof}
 The converse map is given by $f \mapsto (f\otimes \mathrm{id})\rho_V, $ where $f$ belongs to $\mathrm{Hom}_{H}(V,W).$ We first check that this map is well defined, meaning that   $(f\otimes \mathrm{id})\rho_V$ is the morphism between $V$ and $\text{\rm Ind}_{H}^G (W).$ According to Lemma \ref{lem-23}, we need to prove that 
 $$ (\rho_W \otimes \mathrm{id})(f\otimes \mathrm{id})\rho_V = \pi (\mathrm{id}\otimes \Delta)(f\otimes \mathrm{id})\rho_V.   $$
 From the left hand side, we obtain the following commutative diagram:
 $$ \xymatrix{V \ar[r]^{\rho_V} \ar[d]^{\rho_V} & V\otimes_t \mathcal{O}(G) \ar[d]^{\rho_V \otimes \mathrm{id}} \ar[r]^{f\otimes \mathrm{id}}& W\otimes_t \mathcal{O}(G)\ar[r]^{\rho_W \otimes \mathrm{id}}& W \otimes \mathcal{O}(H)\otimes_t \mathcal{O}(G)\\ V\otimes_t \mathcal{O}(G) \ar[r]^{\mathrm{id}\otimes \Delta}  & V\otimes_t \mathcal{O}(G) \stotimes \mathcal{O}(G) \ar[r]^{\mathrm{id}\otimes \pi \otimes \mathrm{id}}& V\otimes \mathcal{O}(H)\otimes_t \mathcal{O}(G) \ar[ur]^{f\otimes \mathrm{id}\otimes \mathrm{id}},}$$
 so
 $$ (\rho_W \otimes \mathrm{id})(f\otimes \mathrm{id})\rho_V  = (f\otimes \mathrm{id}\otimes \mathrm{id}) (\mathrm{id}\otimes \pi \otimes \mathrm{id}) (\mathrm{id}\otimes \Delta) \rho_V. $$
Besides, we also have the following commutative diagram:
$$ \xymatrix{W \otimes_t \mathcal{O}(G) \ar[r]^{1 \otimes \Delta\qquad } & W\otimes_t \mathcal{O}(G) \stotimes \mathcal{O}(G) \ar[r]^{\mathrm{id} \otimes \pi \otimes \mathrm{id}} & W \otimes \mathcal{O}(H) \otimes_t \mathcal{O}(G) \\
V\otimes_t \mathcal{O}(G) \ar[u]^{f\otimes \mathrm{id}} \ar[r]^{\mathrm{id}\otimes \Delta\qquad } & V \otimes_t \mathcal{O}(G) \stotimes \mathcal{O}(G) \ar[r]^{\mathrm{id}\otimes \pi \mathrm{id}} &V \otimes \mathcal{O}(H)\times_t \mathcal{O}(G)\ar[u]^{f\otimes \mathrm{id}\otimes \mathrm{id}},} $$
this implies that 
\begin{align*}
      (\rho_W \otimes \mathrm{id})(f\otimes \mathrm{id})\rho_V &= (f\otimes \mathrm{id}\otimes \mathrm{id}) (\mathrm{id}\otimes \pi \otimes \mathrm{id}) (\mathrm{id}\otimes \Delta) \rho_V \\
      & = \pi (\mathrm{id}\otimes \Delta)(f\otimes \mathrm{id})\rho_V.
\end{align*}

We now prove that the the maps $\phi \mapsto \varepsilon \circ \phi$ and $f \mapsto   (f\otimes \mathrm{id})\rho_V$ are inverse to each other. Let $f$ be a $G$-morphism from $V$ to $\mathrm{Ind}^G_{H}(W).$ Then we have
\begin{align*}
     ((\varepsilon_W \circ f) \otimes \mathrm{id}) \circ \rho_V &= \varepsilon_W \circ (f \otimes \mathrm{id}) \circ \rho_V  \\  
 & = \varepsilon_W \circ (\rho_{\mathrm{Ind}_{H}^G(W)} \circ f) \\
 & = f.
\end{align*}
On the other hand, let $\phi$ be a $H$-morphism between $V$ and $W.$ We also have
 \begin{align*}
 \varepsilon_{W}((\phi \otimes \mathrm{id}) \circ \rho_V) &= \varepsilon_W (\rho_W \circ \phi) \\
 & = \phi.
 \end{align*}
\end{proof}

\begin{cor}\label{cor-23}
     Let $G$ be an affine $k$-groupoid scheme acting transitively on $S = \mathrm{Spec}(R),$ and let $H$ be a flat discrete  subgroup scheme of $G.$ Let $W$ be an $H$-module. There is a natural isomorphsim
     $$ \mathrm{Ind}_{G^{\Delta}}^G \circ \mathrm{Ind}_H^{G^{\Delta}} = \mathrm{Ind}_H^{G}.$$
\end{cor}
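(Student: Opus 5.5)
The plan is to prove transitivity of induction through Frobenius reciprocity, using uniqueness of right adjoints rather than manipulating the invariant subspaces directly. First I will check that the hypotheses of Lemma \ref{lem-frobeniusreciprocity} hold at each stage. The diagonal group scheme $G^{\Delta}$ is a flat discrete subgroup scheme of $G$: flatness follows from transitivity, since $(s,t)$ is faithfully flat and $G^\Delta$ is its base change along $\Delta$. Also $H\subset G^{\Delta}$ is a flat closed subgroup scheme of the $R$-group scheme $G^{\Delta}$. In the group case, i.e.\ for $\mathrm{Ind}_H^{G^\Delta}$, the same construction $(W\otimes \mathcal O(G^\Delta))^H$ and the same reciprocity hold. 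This is either the special case of Lemma \ref{lem-frobeniusreciprocity} for the trivial groupoid acting on $S$ through the diagonal, or the classical statement in \cite{Ja03}.

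Next I apply reciprocity twice. For any $G$-module $V$ and $H$-module $W$ this gives natural isomorphisms
$$\mathrm{Hom}_G(V,\mathrm{Ind}_{G^\Delta}^G\mathrm{Ind}_H^{G^\Delta}W)\cong \mathrm{Hom}_{G^\Delta}(V,\mathrm{Ind}_H^{G^\Delta}W)\cong \mathrm{Hom}_H(V,W)\cong \mathrm{Hom}_G(V,\mathrm{Ind}_H^G W).$$
Here I use that restriction is transitive: $\mathrm{Res}^{G^\Delta}_H\circ \mathrm{Res}^G_{G^\Delta}=\mathrm{Res}^G_H$. This holds because the $H$-coaction on a $G$-module is obtained by composing with $\mathcal O(G)\to\mathcal O(G^\Delta)\to\mathcal O(H)$. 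The isomorphisms are natural in $V$ and $W$, so the Yoneda lemma in $\mathrm{Rep}(S:G)$ gives a natural isomorphism of functors $\mathrm{Ind}_{G^{\Delta}}^G \circ \mathrm{Ind}_H^{G^{\Delta}} \cong \mathrm{Ind}_H^{G}$. Both functors are right adjoint to the same functor $\mathrm{Res}^G_H$, so they agree.

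Optionally, I will also make the isomorphism explicit. It is the map
$$\bigl((W\otimes\mathcal O(G^\Delta))^H\otimes_t\mathcal O(G)\bigr)^{G^\Delta}\to (W\otimes_t\mathcal O(G))^H,$$
given by applying the counit of $\mathcal O(G^\Delta)$ on the middle factor. Its inverse is $w\otimes g\mapsto \sum w\otimes\pi'(g_{(1)})\otimes g_{(2)}$, where $\pi'\colon\mathcal O(G)\to\mathcal O(G^\Delta)$. This matches the unit and counit maps $\varepsilon_V$ appearing in the proof of the lemma.

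I expect the main obstacle to be bookkeeping rather than conceptual. One must check that the $G^{\Delta}$-action used on $\mathrm{Ind}_H^{G^\Delta}(W)$, namely the left regular action, is compatible with how $\mathrm{Ind}_{G^\Delta}^G$ uses the right regular action of $G^\Delta$ on $\mathcal O(G)$. One must also check that the tensor products $\otimes_t$ over $R$ are taken on the correct sides, so that the reciprocity of Lemma \ref{lem-frobeniusreciprocity} really applies with $G^\Delta$ in the role of $H$. Once those identifications are fixed, the adjunction argument is formal.
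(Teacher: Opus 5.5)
Your proposal is correct and is the paper's argument: restriction is transitive, $\mathrm{Res}^{G^{\Delta}}_H\circ\mathrm{Res}^G_{G^{\Delta}}=\mathrm{Res}^G_H$, so by Frobenius reciprocity both functors are right adjoint to $\mathrm{Res}^G_H$, and uniqueness of adjoints gives the isomorphism. Your extra checks fill in points the paper leaves implicit: flatness of $G^\Delta$ by base change of $(s,t)$, and citing \cite{Ja03} for the group-scheme reciprocity. The latter is the safer of your two options, since $G^\Delta$ does not act transitively on $S$, so Lemma \ref{lem-frobeniusreciprocity} does not literally apply to $\mathrm{Ind}_H^{G^\Delta}$ even though its proof goes through.
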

\begin{proof}
One has 
$$ \mathrm{Res}^{G^{\Delta}}_H  \circ\mathrm{Res}^G_{G^{\Delta}} = \mathrm{Res}^G_H.   $$
 The result follows by the uniqueness of the adjoint.
\end{proof}

\begin{thm}[{Cf. \cite[Remark 6.7]{EH06}}]\label{thm-exact}
Let $G$ be an  affine groupoid scheme acting transitively on $S.$ The functor $\text{\rm Ind}_{G^\Delta}^G$ is faithfully exact. Hence, the canonical map
$$\text{\rm Ind}_{G^\Delta}^G(W)\longrightarrow W$$ 
is surjective for any $G$-representation $W$. 
\end{thm}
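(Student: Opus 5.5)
We plan to follow Deligne's descent argument, cf. \cite[Remark 6.7]{EH06} and \cite{De90}. Write $H:=G^\Delta$. Recall from \ref{A.2.5} that $\mathrm{Ind}_H^G(W)=(W\otimes_t\mathcal O(G))^H$, where $H$ acts on $W$ and on $\mathcal O(G)$ through right translation. The key geometric input is that $(t,s):G\to S\times_kS$ is a right $H$-torsor. Transitivity gives that $\mathcal O(G)$ is faithfully flat over $R\otimes_kR$. Moreover, the map
$$G\times_{S}H\longrightarrow G\times_{S\times S}G,\qquad (g,h)\longmapsto (g,gh)$$
is an isomorphism, with inverse $(g,g')\mapsto (g,g^{-1}g')$. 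This inverse is well defined because $g^{-1}g'$ has equal source and target, so it lies in $H$.

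\emph{Step 1: descent.} For an $H$-module $W$, the $H$-equivariant module $W\otimes_t\mathcal O(G)$ over $\mathcal O(G)$ is the same as an $\mathcal O(G)$-module with descent datum relative to the faithfully flat map $R\otimes_kR\to\mathcal O(G)$. Here we use the torsor isomorphism above to translate the $\mathcal O(H)$-coaction into a descent datum; checking the cocycle condition reduces to coassociativity. By faithfully flat descent there is therefore an $R\otimes_kR$-module $M_W$ with
$$M_W\otimes_{R\otimes R}\mathcal O(G)\cong W\otimes_t\mathcal O(G).$$
The descended module is exactly the equalizer of the two coactions, and by Lemma \ref{lem-23} this is $\mathrm{Ind}_H^G(W)$. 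Hence
$$\mathrm{Ind}_H^G(W)\otimes_{R\otimes R}\mathcal O(G)\cong W\otimes_t\mathcal O(G),$$
functorially in $W$.

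\emph{Step 2: exactness and faithfulness.} Since $t$ is flat, the functor $W\mapsto W\otimes_t\mathcal O(G)$ is exact. Since $R\otimes R\to\mathcal O(G)$ is faithfully flat, a sequence of $R\otimes R$-modules is exact if and only if it is exact after tensoring with $\mathcal O(G)$. Combining these with the natural isomorphism of Step 1 shows that $\mathrm{Ind}_H^G$ is exact. For faithfulness, suppose $\mathrm{Ind}_H^G(W)=0$. Then $W\otimes_t\mathcal O(G)=0$. Since $t$ is faithfully flat (transitivity), this forces $W=0$. For an exact functor this is equivalent to faithfulness.

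\emph{Step 3: surjectivity.} Let $W$ be a $G$-representation. Consider the counit $\varepsilon_W:\mathrm{Ind}_H^G(W)\to W$ of \eqref{999}, applied to the restriction of $W$ to $H$. By Frobenius reciprocity (Lemma \ref{lem-frobeniusreciprocity}), the identity of $W$ viewed as an $H$-map corresponds to the $G$-map $\rho_W:W\to\mathrm{Ind}_H^G(W)$. The counit axiom $(\mathrm{id}\otimes\varepsilon)\rho_W=\mathrm{id}_W$ gives $\varepsilon_W\circ\rho_W=\mathrm{id}_W$. Thus $\varepsilon_W$ is split surjective, which is the required surjectivity.

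The main obstacle is Step 1, specifically the precise identification of the torsor structure in Hopf-algebroid terms. One has to keep track of which of the $s$- and $t$-module structures on $\mathcal O(G)$ and on $\mathcal O(G)\stotimes\mathcal O(G)$ enters at each point. We would handle this by working on points: for a $k$-algebra $B$, transitivity makes $G(B)\to (S\times S)(B)$ an fpqc-local torsor, and it suffices to verify the isomorphism $G\times_SH\cong G\times_{S\times S}G$ functorially, which is purely formal.
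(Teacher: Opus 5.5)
\paragraph{Steps 1 and 2.} These match the paper's argument, phrased more conceptually. The paper proves that
$\varphi\colon \mathcal O(G)\otimes_{R\otimes_kR}\mathcal O(G)\to\mathcal O(G^\Delta)\otimes_t\mathcal O(G)$, $g\otimes h\mapsto\sum\pi(g_{(1)})\otimes g_{(2)}h$, is an isomorphism; this is the Hopf-algebroid form of your torsor isomorphism. It then writes down the descent isomorphism $\Phi\colon \mathrm{Ind}^G_{G^\Delta}(W)\otimes_{R\otimes_kR}\mathcal O(G)\to W\otimes_t\mathcal O(G)$, $w\otimes g\otimes h\mapsto w\otimes gh$, and gives an explicit inverse in Sweedler notation instead of quoting faithfully flat descent. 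This settles the $s$/$t$ bookkeeping you flagged. The deduction of faithful exactness from faithful flatness of $R\otimes_kR\to\mathcal O(G)$ and of $t$ is the same in both.

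\paragraph{The gap is in Step 3.} The phrase ``$G$-representation'' in the statement must be read as ``$G^\Delta$-representation''. Three things show this. The paper's proof starts from $W\in\mathrm{Rep}_R(G^\Delta)$. The word ``Hence'' presents surjectivity as a consequence of faithful exactness. Corollary \ref{lem-RepGDiagonal} applies the theorem to an \emph{arbitrary} $G^\Delta$-module $W$, in order to exhibit it as a quotient of the $G$-module $\mathrm{Ind}^G_{G^\Delta}(W)$.

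Read literally, your Step 3 is correct, but it is vacuous. The splitting $\varepsilon_W\circ\rho_W=\mathrm{id}_W$ exists only when $W$ is already a restricted $G$-module, which is exactly the case that corollary does not need. For a general $G^\Delta$-module there is no coaction $W\to W\otimes_t\mathcal O(G)$, so your Step 3 says nothing; note also that it never uses Steps 1--2.

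The paper's argument for the intended claim runs as follows, with $u_W$ the counit.
\begin{enumerate}
\item If $W\neq0$ then $u_W\neq0$. Indeed, $u_W$ corresponds to $\mathrm{id}_{\mathrm{Ind}(W)}$ under the adjunction of Lemma \ref{lem-frobeniusreciprocity}, and $\mathrm{Ind}(W)\neq0$ by faithfulness.
\item Let $T=W/\mathrm{Im}(u_W)$ with projection $p$. By exactness $\mathrm{Ind}(p)$ is surjective. Naturality gives $u_T\circ\mathrm{Ind}(p)=p\circ u_W=0$, so $u_T=0$, and hence $T=0$ by (1).
\end{enumerate}

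Alternatively, you can read surjectivity off your Step 1 directly. The map $\varepsilon_W\colon W\otimes_t\mathcal O(G)\to W$ is surjective, since $w=\varepsilon_W(w\otimes1)$. Composing it with the descent isomorphism gives, because $\varepsilon$ is multiplicative, $x\otimes h\mapsto u_W(x)\,\varepsilon(h)$. Hence $W=R\cdot\mathrm{Im}(u_W)=\mathrm{Im}(u_W)$.
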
 
\begin{proof} Before giving the proof of this theorem, we discuss the algebra $\mathcal{O}(G^{\Delta}).$ By the definition of $\mathrm{G}^{\Delta}$, we have
$$
\mathcal{O}\left(G^{\Delta}\right) \cong \mathcal{O}(G) \otimes_{R \otimes_k R} R
$$
where $R \otimes_k R \rightarrow R$ is the product map. Then $J:=\operatorname{Ker}\left(R\otimes_k R \rightarrow R\right)$ is generated by elements of the form $\lambda \otimes 1-1 \otimes \lambda, \lambda \in R.$ Since $\mathcal{O}(G)$ is faithfully flat over $R \otimes_k R,$ tensoring the exact sequence $0 \rightarrow J \rightarrow R\otimes_k R \rightarrow R \rightarrow 0$ with $\mathcal{O}(G)$ over $R \otimes_{k} R$, one obtains an exact sequence
\begin{align}\label{31}
    0 \longrightarrow J\mathcal{O}(G) \longrightarrow \mathcal{O}(G) \stackrel{\pi}{\longrightarrow} \mathcal{O}\left(G^{\Delta}\right) \longrightarrow 0.
\end{align}
That is, we can identify $J \otimes_{R \otimes_k R}  \mathcal{O}(G)$ with its image $J \mathcal{O}(G)$ in $\mathcal{O}(G).$ In order to prove the faithfully exactness of $\mathrm{Ind}^G_{G^{\Delta}},$ we need the following claim.

\noindent \textbf{{Claim}}. 
The map:
\begin{equation}\label{32}
    \begin{aligned}
 \varphi: \mathcal{O}(G) \otimes_{R \otimes_{k} R} \mathcal{O}(G) &\longrightarrow \mathcal{O}\left(G^{\Delta}\right) \otimes_{t} \mathcal{O}(G), \\
 g \otimes h &\longmapsto \sum_{(g)} \pi\left(g_{(1)}\right) \otimes g_{(2)} h,
\end{aligned}
\end{equation}
is an isomorphism, where $\pi$ is defined in the formula \eqref{31}.
Here we use the Sweedler's notation:  
$
\Delta(g)=\sum_{(g)} g_{(1)} \otimes g_{(2)}.
$

\noindent \textit{Verification}. We define an inverse to this map. Let
$$
\bar{\psi}: \mathcal{O}(G)_s \otimes_t \mathcal{O}(G) \longrightarrow \mathcal{O}(G) \otimes_{R \otimes_{k}R} \mathcal{O}(G)
$$
be the map that maps $g \otimes h \mapsto \sum_{(g)} g_{(1)} \otimes \iota\left(g_{(2)}\right) h$. We have for $\lambda \in R$ and the structure maps $t, s: R \rightarrow \mathcal{O}(G)$
$$
\begin{aligned}
 \bar{\psi}\left(t(\lambda) g \stotimes h\right)&=\sum_{(g)} g_{(1)} \otimes \iota\left(t(\lambda) g_{(2)}\right) h \\
& =\sum_{g} g_{(1)} \otimes_{R \otimes_k R} s(\lambda) \iota\left(g_{(2)}\right) h \quad \text { by }\eqref{A13}  \\
& =s(\lambda) \sum_{g} g_{(1)} \otimes_{R \otimes_k R} \iota\left(g_{(2)}\right) h \\
& =\bar{\psi}\left(s(\lambda) g \stotimes h\right) \text {. } \\
&
\end{aligned}
$$
Thus, $\bar{\psi}$ maps $J \mathcal{O}(G) \stotimes \mathcal{O}(G)$ to $0,$ hence factors through a map $$\psi: \mathcal{O}\left(G^{\Delta}\right) \otimes_t \mathcal{O}(G) \rightarrow \mathcal{O}(G) \otimes_{R \otimes_{k} R} \mathcal{O}(G).$$ Checking $\varphi \psi=\mathrm{id}, \psi \varphi=\mathrm{id}$ can be done by using the property \eqref{A14} of $\iota.$ \hfill $\triangleleft$

We now prove that the functor $\mathrm{Ind}^G_{G^{\Delta}}(-)$ is faithfully exact. Let $W \in \mathrm{Obj}(\mathrm{Rep}_R(G^{\Delta})).$ Consider the following map
\begin{equation}\label{34}
    \begin{aligned}
 \Phi: \operatorname{Ind}^G_{G^{\Delta}} (W)\otimes_{R \otimes_k R} \mathcal{O}(G) &\xrightarrow{\mathrm{id}\otimes\varphi} W \otimes_{t} \mathcal{O}(G), \\
 w \otimes g \otimes h &\longmapsto w \otimes g h .
\end{aligned}
\end{equation}\label{35}

\noindent{\bf Claim}. This map is an isomorphism with an inverse given as follows:
$$
\begin{aligned}\Psi=\left(\mathrm{id}_W \otimes \psi\right)\circ\left(\rho_W \otimes \mathrm{id}\right):  W \otimes_{t} \mathcal{O}(G) \stackrel{\rho_W\otimes \mathrm{id}}{\longrightarrow} W \otimes \mathcal{O}\left(G^{\Delta}\right) \otimes \mathcal{O}(G) \stackrel{\mathrm{id}\otimes\psi}{\longrightarrow} W \otimes_{t} \mathcal{O}(G) \otimes_{R \otimes_{k} R} \mathcal{O}(G).
\end{aligned}
$$

\noindent{\it Verification}. 
It is to check that the image of $\Psi$ belongs to 
$\mathrm{Ind}^G_{G^{\Delta}}(W)\otimes_{R\otimes_kR}\mathcal{O}(G)$ 
and that $\Psi$ is the inverse of $\Phi.$  In Sweedler's notations, we have 
$$ \Psi(w \otimes g) = \sum_{(w)} w_{(0)}\otimes w_{(1)(1)}\otimes \iota(w_{(1)(2)})g$$
for any pure tensor $w\otimes g \in W \otimes_t \mathcal{O}(G).$ 
It is easy to check that 
$$p(\sum_{(w)} w_{(0)}\otimes w_{(1)(1)}) = q(\sum_{(w)} w_{(0)}\otimes w_{(1)(1)}).$$
So, by Lemma \ref{lem-23},
we have
$\Psi(w\otimes g) \in \mathrm{Ind}^G_{G^{\Delta}}(W)\otimes_{R\otimes_kR}\mathcal{O}(G)$. Next, for a pure tensor $w\otimes g$ in $W \otimes_t \mathcal{O}(G)$ we have
\begin{align*}
    \Phi(\Psi(w\otimes g)) &= \Phi(\sum_{(w)}w_{(0)}\otimes w_{(1)(1)} \otimes \iota(w_{(1)(2)})g)\\
    & = \sum_{(w)} w_{(0)}  \otimes w_{(1)(1)} \iota(w_{(1)(2)})g\\
    &= 
    w\otimes g.
\end{align*}
Further, if a tensor $\sum_i w_i\otimes g_i \otimes h$ belongs to $\mathrm{Ind}^G_{G^{\Delta}}(W) \otimes_{R\otimes_k R}\mathcal{O}(G)$ then
\begin{equation} \label{eq-34}
    \begin{aligned}
    \Psi(\Phi(\sum_iw_i\otimes g_i \otimes h)) &= \sum_i\Psi(w_i\otimes g_ih)\\
    &=\sum_i (\mathrm{id}\otimes \psi) (\rho_W \otimes \mathrm{id})(w_i\otimes g_ih)\\
    &=\sum_i (\mathrm{id}\otimes \psi) (\sum_{(w_i)} w_{i(0)}\otimes w_{i(1)}\otimes g_ih).
\end{aligned}
\end{equation}
By Lemma \ref{lem-23}, we have
$$  \sum_{(w)} w_{(0)}\otimes w_{(1)}\otimes gh = \sum_{(g)} w \otimes \pi(g_{(1)}) \otimes g_{(2)}h.$$ Thus  \eqref{eq-34} becomes: 
\begin{align*}
      \Psi(\Phi(\sum_iw_i\otimes g_i \otimes h)) &=\sum_i 
       (\mathrm{id}\otimes \psi) (\sum_{(g_i)} w_i \otimes \pi(g_{i(1)}) \otimes g_{i(2)}h)\\
      & =\sum_i \sum_{(g_i)} w_i \otimes g_{i(1)(1)} \otimes \iota(g_{i(1)(2)})g_{i(2)}h \\
      &=\sum_i 
      (w_i\otimes g_i \otimes h).
\end{align*}
\  \hfill $\triangleleft$

According to \eqref{34}, the functor
$$
\operatorname{Ind}^G_{G^{\Delta}}(-) \otimes_{R \otimes_k R} \mathcal{O}(G) \cong(-) \otimes_t \mathcal{O}(G)
$$
is faithfully exact. Since $\mathcal{O}(G)$ is faithfully flat over $R \otimes_{k} R$, the functor $\mathrm{Ind}_{G^{\Delta}}^{G}$ is also faithfully exact.

We now prove the last part of the theorem. Setting $V = \mathrm{Ind}^G_{G^{\Delta}}(W)$ in \eqref{37}, we define the canonical map
 $u_W: \operatorname{Ind}^G_{G^{\Delta}}(W) \rightarrow W$ as follows:
\begin{align*}
      \text{\rm Hom} _G(\text{\rm Ind}_{G^\Delta}^G (W),\text{\rm Ind}_{G^\Delta}^G (W))&\longrightarrow \text{\rm Hom} _{G^\Delta}(\text{\rm Ind}_{G^\Delta}^G (W),W) \\
      \mathrm{id} & \longmapsto u_W.
\end{align*}
 The map $u_W$ is nonzero whenever $W$ is nonzero. Indeed, since $\mathrm{Ind}^G_{G^{\Delta}}$ is faithfully exact, the $G^{\Delta}$-module $\mathrm{Ind}^G_{G^{\Delta}} (W)$ is nonzero whenever $W$ is nonzero. Thus, if $u_W=0$, then both sides of \eqref{37} are zero for any $V$. On the other hand,  the right-hand side contains the identity map. A contradiction. 
 

We now turn to show that $u_W$ is surjective. Let ${U}=\operatorname{Im}\left(u_W\right)$ and let $ {T}=W / {U}$. We have the following diagram:
$$\xymatrix{0 \ar[r] &\mathrm{Ind}^G_{G^{\Delta}}(U)\ar[r]\ar[d] &\mathrm{Ind}^G_{G^{\Delta}}(W) \ar[r]\ar[d] &\mathrm{Ind}^G_{G^{\Delta}}(T)\ar[d] \ar[r] & 0\\
0 \ar[r]& U\ar[r]&W\ar[r]& T \ar[r]& 0.}$$

The composition $\operatorname{Ind}^G_{G^{\Delta}}(W) \rightarrow \operatorname{Ind}^G_{G^{\Delta}}(T) \rightarrow T$ is $0.$ Therefore, $\operatorname{Ind}^G_{G^{\Delta}}(T) \rightarrow T$ is a zero map, implying $T=0.$
\end{proof}

\begin{cor}\label{lem-RepGDiagonal}
Let $G$ be an  affine groupoid scheme acting transitively on $S = \mathrm{Spec}(R).$ Any $G^\Delta$-representation is a quotient the restriction to $G^\Delta$ of a $G$-representation. 
Consequently, any $R$-projective finite representation of $G^\Delta$ is  also a special sub-object of a finite $G$-representation considered as representation of $G^{\Delta}.$
\end{cor}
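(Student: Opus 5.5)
The first assertion will come from Theorem \ref{thm-exact} and Frobenius reciprocity. For a $G^\Delta$-representation $W$, the counit $u_W:\mathrm{Ind}^G_{G^\Delta}(W)\to W$ is a $G^\Delta$-linear map. Theorem \ref{thm-exact} shows it is surjective, so $W$ is a quotient of the restriction of the $G$-module $\mathrm{Ind}^G_{G^\Delta}(W)$. It remains to pass to finite objects, since the corollary will be applied in $\mathrm{Rep}^\mathrm{f}$. For $W$ finite over $R$, I will write $\mathrm{Ind}^G_{G^\Delta}(W)$ as the union of its finite $G$-submodules, using that $\mathrm{Rep}(S:G)$ is the ind-category of $\mathrm{Rep}^\mathrm{f}(S:G)$ (\ref{sect-indTanDual}). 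Because $W$ is finitely generated, the images of these submodules under $u_W$ form an increasing family whose union is $W$, so one of them already surjects onto $W$. This gives a finite $G$-representation $V$ together with a $G^\Delta$-epimorphism $V\twoheadrightarrow W$.

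For the second assertion, let $W$ be finite and $R$-projective. I will apply the first part to the dual $W^\vee$, which is again a finite projective $G^\Delta$-module. This yields a finite $G$-representation $V$ with a $G^\Delta$-epimorphism $V\twoheadrightarrow W^\vee$. The kernel is a submodule of a finite module over the Dedekind ring $R$; the quotient $W^\vee$ is projective, so the sequence splits over $R$ and the kernel is projective as well. This forces $V$ to be projective, and I may replace $V$ by $V/V_{\mathrm{tor}}$ if needed, since $W^\vee$ is torsion free and the map factors through this quotient.

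Dualizing $V\twoheadrightarrow W^\vee$ then gives an injection $W\cong W^{\vee\vee}\hookrightarrow V^\vee$. Its cokernel is the dual of the kernel, which is projective, hence $R$-flat. So $W$ is a special sub-object of the restriction of the finite $G$-representation $V^\vee$, as required. For this I will use that duals exist in $\mathrm{Rep}^\circ(S:G)$ and that restriction to $G^\Delta$ commutes with duality, both standard for the rigid tensor structure.

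The only delicate point is the reduction from the possibly infinite module $\mathrm{Ind}(W)$ to a finite $G$-submodule still surjecting onto $W$. This rests on local finiteness of $G$-modules, that is, comodules over the flat Hopf algebroid $\mathcal O(G)$. I will take this from Deligne's theory (\cite{De90}), as recalled in \ref{sect-indTanDual}.
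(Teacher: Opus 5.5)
Your proposal is correct and follows the paper's proof: surjectivity of the counit $u_W$ from Theorem~\ref{thm-exact}, local finiteness to extract a finite $G$-submodule of $\mathrm{Ind}^G_{G^\Delta}(W)$ that still surjects onto $W$, and then the same argument applied to $W^\vee$ followed by dualization. The one thing to tidy is the order in the second part: the kernel contains $V_{\mathrm{tor}}$, so its projectivity (and that of $V$) holds only after you replace $V$ by $V/V_{\mathrm{tor}}$, which is legitimate because $V_{\mathrm{tor}}$ is $G$-stable ($\mathcal O(G)$ being flat over $R$ via $s$ and $t$); once that is done, your observation that the cokernel of $W\hookrightarrow V^\vee$ is the projective module $\ker^\vee$ supplies the ``special'' part that the paper leaves implicit.
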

\begin{proof}
Let $W$ be a representation of $G^{\Delta}$ and $u_W: \mathrm{Ind}^{G}_{G^{\Delta}}(W) \longrightarrow W$ be the canonical map as in Theorem \ref{thm-exact}. This theorem implies that $W$ is a quotient of $\mathrm{Ind}^{G}_{G^{\Delta}}(W).$
Since $\mathrm{Ind}^G_{G^{\Delta}}(W)$ is a union of its finite sub-representations, we can  find a finite  $G$-sub-representation $W_0(W)$ of $\operatorname{Ind}^G_{G^{\Delta}}(W)$, which still maps surjectively on $W$. In order to obtain the statement on the embedding of $R$-projective representation $W$, one repeats the above argument for $W^{\vee}$ to get the surjective map $W_0(W^{\vee}) \twoheadrightarrow
W^{\vee},$ and then dualizes to $W \xhookrightarrow{} (W_0(W^{\vee}))^{\vee}.$
\end{proof}

\subsection{Cohomology}\label{sect_cohomology_of_groupoid}
Let $G$ be a flat affine $k$-groupoid scheme acting  on $S.$  
\subsubsection{Injective objects in the category $\mathrm{Rep}(S:G)$}
\begin{lem}\label{lem-enoughinjectives}
 Let $G$ be a flat affine $k$-groupoid scheme acting  on $S.$ The following statements hold true:
\begin{itemize}
    \item [(1)] The category $\mathrm{Rep}(S:G)$ has enough injectives.
    \item [(2)] A $G$-module $V$ is injective if and only if there is an injective $R$-module $I$ such that $V$ is isomorphic to a direct summand of $I \otimes_t \mathcal{O}(G)$ with $I$ regards as a trivial $G$-module.
\end{itemize}
\end{lem}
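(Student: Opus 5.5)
The plan is to mimic the classical argument for comodules over a flat coalgebra (Jantzen, Part I, 3.9--3.10), replacing the Hopf algebra by the Hopf algebroid $\mathcal O(G)$ and keeping track of which $R$-module structure ($s$ or $t$) is used at each step. The central tool is the adjunction between the forgetful functor $\mathrm{For}:\mathrm{Rep}(S:G)\to\mathrm{Mod}_R$ and the cofree functor $I\mapsto I\otimes_t\mathcal O(G)$. Here $G$ acts through the coproduct $\Delta$ on the second factor and trivially on $I$.

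First, I will establish the natural isomorphism
$$\mathrm{Hom}_{G}(V, I\otimes_t\mathcal O(G))\cong \mathrm{Hom}_R(V,I).$$
The map from left to right is composition with $\mathrm{id}_I\otimes\varepsilon$. The inverse sends an $R$-linear $f$ to $(f\otimes\mathrm{id})\circ\rho_V$. Checking that these are mutually inverse uses the coassociativity and counit identities \eqref{A15} and \eqref{A12}. This is essentially the same computation as in the Frobenius reciprocity Lemma \ref{lem-frobeniusreciprocity}, with $H$ replaced by the trivial group over $S$. Some care is needed to verify that $(f\otimes \mathrm{id})\rho_V$ is $R$-linear for the correct structure, namely that $\rho_V$ lands in $V\otimes_t\mathcal O(G)$ and is $R$-linear via $s$.

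Given the adjunction, the cofree functor is right adjoint to the exact functor $\mathrm{For}$. Exactness of $\mathrm{For}$ is automatic, and $\mathrm{Rep}(S:G)$ is abelian because $G$ is flat, so $t:R\to\mathcal O(G)$ is flat. A right adjoint of an exact functor preserves injectives, so $I\otimes_t\mathcal O(G)$ is injective whenever $I$ is an injective $R$-module. For (1), I take any $G$-module $V$ and embed its underlying $R$-module into an injective $R$-module $I$ via some $j$. The coaction $\rho_V:V\to V\otimes_t\mathcal O(G)$ is $G$-equivariant and injective, since it has the left inverse $\mathrm{id}\otimes\varepsilon$. I then compose with $j\otimes\mathrm{id}$, which stays injective because $\mathcal O(G)$ is $t$-flat. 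This gives an embedding of $V$ into an injective object, which proves (1).

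For (2), the ``if'' direction follows immediately, because direct summands of injective objects are injective. For ``only if'', I take an injective $V$ and embed it into $I\otimes_t\mathcal O(G)$ as in the construction for (1). Since $V$ is injective, this embedding splits, so $V$ is a direct summand of $I\otimes_t\mathcal O(G)$.

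The main obstacle I expect is bookkeeping rather than anything conceptual. I must ensure that the comodule $I\otimes_t\mathcal O(G)$ with coaction $\mathrm{id}\otimes\Delta$ is well defined, landing in $(I\otimes_t\mathcal O(G))\otimes_t\mathcal O(G)$ through the $\stotimes$ identification, and that the adjunction maps respect the two distinct $R$-actions. If these checks become delicate, I will verify them on pure tensors in Sweedler notation, as the paper does in Lemma \ref{lem-23}.
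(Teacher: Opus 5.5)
Your proposal is correct and matches the paper's proof: the cofree adjunction $\mathrm{Hom}_G(V, I\otimes_t\mathcal{O}(G))\cong\mathrm{Hom}_R(V,I)$ shows that $I\otimes_t\mathcal{O}(G)$ is injective, and the embedding $V\xrightarrow{\rho_V}V\otimes_t\mathcal{O}(G)\hookrightarrow I\otimes_t\mathcal{O}(G)$ gives (1) and, after splitting, (2). You verify the adjunction directly instead of quoting Lemma \ref{lem-frobeniusreciprocity} (which is stated for transitive $G$), and you use $t$-flatness of $\mathcal{O}(G)$ to see that $j\otimes\mathrm{id}$ is injective; the paper leaves both details implicit, and you handle them soundly.
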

\begin{proof}
We have $\mathrm{Ind}_1^G(-)=(-)\otimes_t\mathcal{O}(G)$, hence
for any $R$-module $I$ and  $G$-module $U$, the Frobenius reciprocity (Lemma \ref{lem-frobeniusreciprocity}) yields a functorial isomorphism
$$ \mathrm{Hom}_G(U,I \otimes_t \mathcal{O}(G)) \xrightarrow\cong \mathrm{Hom}_R(U,V).
$$
If $I$ is injective, the functor on the right hand
side is exact, hence so is that on the left hand side, that is, $I \otimes_t \mathcal{O}(G)$ is
an injective $G$-module. 

Let $V$ be a $G$-module. We can embed $V$ into an injective $R$-module $I,$ 
and have injective $G$-maps:
$$ V \overset{\rho_V}{\longrightarrow} V \otimes_t \mathcal{O}(G) \xhookrightarrow{} I \otimes_t \mathcal{O}(G).$$ 
Hence $\mathrm{Rep}(S:G)$ has enough injectives.
If $V$ is an injective $G$-module, the above injective maps show 
that $V$ is a direct summand of $I \otimes_t \mathcal{O}(G),$ where $I$ is the injective envelope
of $V$ as $R$-modules.  
The converse direction holds  because every direct summand of an injective $G$-module is
injective.
\end{proof}

\subsubsection{Cohomology groups} 
   Let $G$ be a flat affine $k$-groupoid scheme acting on $S,$ and let $V$ be a $G$-module. We define the set of fixed points by
$$ V^{G} = \{v \in V | \rho_V(v) = v \otimes 1 \in V \otimes_t \mathcal{O}(G) \}.  $$
If $\phi: V \rightarrow V'$ is a homomorphism of $G$-modules, then $\phi(V^{G}) \subset {V'}^{G}.$ Thus, we obtain the fixed point functor
\begin{align*}
    \mathrm{Rep}(S:G) &\longrightarrow \mathrm{Vec}_k\\
    V & \mapsto V^{G}=\mathrm{Hom}_G(k,V),
\end{align*}
where $k$ is equipped with the trivial action.
We observe that if $G$ is flat over $S$, then the fixed point functor is left exact. 
The category $\mathrm{Rep}(S:G)$ has enough injectives (Lemma \ref{lem-enoughinjectives}), 
so we define the derived functors of $(-)^G$:
$$      V \mapsto \mathrm{H}^i(G,V), \quad i\geq 0.        $$
The module $\mathrm{H}^i(G,V)$ is called the $\textit{$i$-cohomology group}$ of $V.$

\subsubsection{Shapiro's lemma}\label{sec-1}
Grothendieck's spectral sequence is
standard in Homological algebra. We recall it briefly here for the reader's sake, our reference is \cite{We94}.
 
Let $\mathcal{C}, \mathcal{C}', \mathcal{C}''$ be  abelian categories with $\mathcal{C}, \mathcal{C}'$ having enough injectives. Suppose now that $\mathcal{F}: \mathcal{C} \rightarrow \mathcal{C}^{\prime}$ and $\mathcal{F}^{\prime}: \mathcal{C}^{\prime} \rightarrow \mathcal{C}^{\prime \prime}$ are additive (covariant) functors. If $\mathcal{F}'$ is left exact and if $\mathcal{F}$ maps injective objects in $\mathcal{C}$  to objects acyclic for $\mathcal{F}^{\prime}$, then there is a spectral sequence for each object $M$ in $\mathcal{C}$ with differentials $d_r$ of bi-degree $(r, 1-r)$, and
\begin{align}
    E_2^{p,q} = (\mathbf{R}^p\mathcal{F}')(\mathbf{R}^q\mathcal{F})M \Rightarrow \mathbf{R}^{p+q}(\mathcal{F}'\circ \mathcal{F})M.
\end{align}
\begin{rem}\label{rem-2}
    We notice the following:
    \begin{itemize}
        \item [(1)] If $\mathcal{F}^{\prime}$ is exact, then $\mathcal{F}^{\prime} \circ \mathbf{R}^q \mathcal{F} \simeq \mathbf{R}^q\left(\mathcal{F}^{\prime} \circ \mathcal{F}\right)$ for all $n \in \mathbb{N}$.
        \item [(2)] If $\mathcal{F}$ is exact and maps injective objects to objects acyclic for $\mathcal{F}^{\prime}$, then
        $$(\mathbf{R}^i\mathcal{F}')\circ \mathcal{F} \simeq \mathbf{R}^i(\mathcal{F}'\circ \mathcal{F}),$$
    for all $n\in \mathbb{N}.$
    \end{itemize}
\end{rem}
  
\begin{rem}
    Let $G$ be an affine $k$-groupoid scheme acting transitively on $S = \mathrm{Spec}(R)$ and $H$ be its flat subgroupoid scheme. Then $\mathrm{Ind}_H^G(-)$ is a left exact functor. Thus, we can take the right derived functors $\mathbf{R}^i\mathrm{Ind}_H^G(-).$
\end{rem}
\begin{prop}\label{lem-spectralsequences}
Let $G$ be a flat affine $k$-groupoid scheme acting transitively on $S = \mathrm{Spec}(R)$ and $H$ be its flat discrete subgroupoid scheme. Let $W$ be an $H$-module. There is a spectral sequence with
$$
E_2^{p, q}=\mathrm{H}^p\left(G, R^q \mathrm { Ind }_H^G W\right) \Rightarrow \mathrm{H}^{p+q}(H, W) .
$$

\end{prop}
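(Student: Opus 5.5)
This is the Grothendieck spectral sequence for the composite of $\mathrm{Ind}_H^G$ followed by the $G$-fixed point functor $(-)^G$, in the form recalled in \ref{sec-1}. Take $\mathcal C=\mathrm{Rep}_R(H)$, $\mathcal C'=\mathrm{Rep}(S:G)$, $\mathcal C''=\mathrm{Vec}_k$, $\mathcal F=\mathrm{Ind}_H^G$ and $\mathcal F'=(-)^G$. Both $\mathcal C$ and $\mathcal C'$ have enough injectives by Lemma \ref{lem-enoughinjectives}; for $\mathcal C$ one applies the case of a flat group scheme, viewing $H$ as a groupoid over $S$ acting via the diagonal. The functor $\mathcal F'$ is left exact, and $\mathcal F$ is left exact as well, being a right adjoint.

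\textbf{Composite functor.} We first identify $\mathcal F'\circ\mathcal F$ with $(-)^H$. Using Frobenius reciprocity (Lemma \ref{lem-frobeniusreciprocity}) with $V$ the trivial $G$-module $R$ gives
$$\mathrm{Hom}_G(R,\mathrm{Ind}_H^G W)\cong \mathrm{Hom}_H(R,W).$$
The left side is $(\mathrm{Ind}_H^G W)^G$ and the right side is $W^H$. This isomorphism is natural in $W$. Hence $\mathbf R^{n}(\mathcal F'\circ\mathcal F)W=\mathrm H^n(H,W)$, the cohomology being computed in $\mathrm{Rep}_R(H)$.

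\textbf{Injectives to injectives.} Next we show that $\mathcal F$ sends injectives to injectives, and hence to $\mathcal F'$-acyclic objects. The functor $\mathrm{Ind}_H^G$ is right adjoint to $\mathrm{Res}^G_H$, and restriction is exact. Exactness holds because restriction does not change the underlying $R$-module, and kernels and cokernels in both categories are computed on underlying modules, using flatness of $\mathcal O(G)$ and $\mathcal O(H)$. The standard argument then applies. If $I$ is injective in $\mathrm{Rep}_R(H)$, then
$$\mathrm{Hom}_G(-,\mathrm{Ind}_H^G I)\cong \mathrm{Hom}_H(\mathrm{Res}(-),I)$$
is exact, so $\mathrm{Ind}_H^G I$ is injective. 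This is the same argument as in the proof of Lemma \ref{lem-25}. The Grothendieck spectral sequence now gives
$$E_2^{p,q}=\mathbf R^p(-)^G\bigl(\mathbf R^q\mathrm{Ind}_H^G W\bigr)=\mathrm H^p(G,\mathbf R^q\mathrm{Ind}_H^G W)\Rightarrow \mathrm H^{p+q}(H,W).$$

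\textbf{Main obstacle.} The delicate point is checking that the hypotheses hold in the groupoid setting, not merely formally. One must confirm that the adjunction of Lemma \ref{lem-frobeniusreciprocity} holds for all $H$-modules, not only finite ones, and that the trivial module $R$ is a legitimate $G$-module so that $\mathrm{Hom}_G(R,-)=(-)^G$. The latter requires the identification $V^G=\{v:\rho_V(v)=v\otimes1\}$ with $G$-maps from the unit object, which uses the unit $\varepsilon$ and the $s$/$t$-linearity conventions of $\otimes_t$. One also needs exactness of $\mathrm{Res}^G_H$, which rests on flatness of $H$ over $R$. Once these are verified, the rest is the formal Grothendieck machinery.
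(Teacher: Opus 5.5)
Your proposal is correct and matches the paper's proof essentially step for step. The paper likewise applies Frobenius reciprocity with $V=R$ to identify $(-)^G\circ\mathrm{Ind}_H^G$ with $(-)^H$, uses that $\mathrm{Ind}_H^G$ preserves injectives because it is right adjoint to the exact functor $\mathrm{Res}^G_H$, and then invokes the Grothendieck spectral sequence. Your extra remarks (enough injectives in $\mathrm{Rep}_R(H)$, and writing $(-)^G=\mathrm{Hom}_G(R,-)$ with the trivial module $R$ rather than $k$) only make explicit what the paper leaves implicit.
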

\begin{proof}
 Lemma \ref{lem-frobeniusreciprocity} can be interpreted as an isomorphism of functors (choose $V=R$)
$$
\operatorname{Hom}_G(R, -) \circ \operatorname{Ind}_H^G \simeq \operatorname{Hom}_H(R, -) .
$$
Since $\mathrm{Ind}_H^G$ is the right adjoin to the restriction functor which is exact,  
it maps injective $H$-modules to injective $G$-modules. 
Hence, we can apply Grothendieck's spectral sequence.
\end{proof}

\begin{defn}
Let $G$ be an affine $k$-groupoid scheme acting transitively on $S = \mathrm{Spec}(R)$ and $H$ be its flat subgroupoid scheme. We call $H$ \textit{exact in $G$} if $\mathrm{Ind}_H^G$ is an exact functor.    
\end{defn}

Using Proposition \ref{lem-spectralsequences}, we obtain the following result.
\begin{cor}[Shapiro's lemma]
Let $G$ be a flat affine $k$-groupoid scheme acting transitively on $S = \mathrm{Spec}(R)$ and $H$ be its flat discrete subgroupoid scheme. Suppose that $H$ is exact in $G$. Let $W$ be an $H$-module. For each $n \in \mathbb{N},$ there is an isomorphism
$$
\mathrm{H}^i\left(G, \operatorname{Ind}_H^G W\right) \simeq \mathrm{H}^i(H, W) .
$$
\end{cor}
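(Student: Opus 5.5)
The approach is to derive Shapiro's lemma directly from the Grothendieck spectral sequence of Proposition \ref{lem-spectralsequences}, using the exactness hypothesis to kill all rows but the bottom one. Concretely, I would take the factorization
$$\operatorname{Hom}_H(R,-)\simeq \operatorname{Hom}_G(R,-)\circ \operatorname{Ind}_H^G,$$
given by Frobenius reciprocity (Lemma \ref{lem-frobeniusreciprocity} with $V=R$). Here $\operatorname{Hom}_G(R,-)=(-)^G$ and $\operatorname{Hom}_H(R,-)=(-)^H$.

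Step one is to record that $\operatorname{Ind}_H^G$ preserves injectives. It is right adjoint to the exact restriction functor $\mathrm{Res}^G_H$, so for an injective $H$-module $I$ the functor $\operatorname{Hom}_G(-,\operatorname{Ind}_H^G I)\cong \operatorname{Hom}_H(\mathrm{Res}(-),I)$ is exact. Hence $\operatorname{Ind}_H^G I$ is injective, and in particular acyclic for $(-)^G$. Both categories have enough injectives by Lemma \ref{lem-enoughinjectives}; for $\mathrm{Rep}_R(H)$ this is the case of a group scheme over $S$.

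Step two is to apply Remark \ref{rem-2}(2) with $\mathcal F=\operatorname{Ind}_H^G$ and $\mathcal F'=(-)^G$. By hypothesis ($H$ exact in $G$), $\mathcal F$ is exact, and by step one it sends injectives to $\mathcal F'$-acyclics. Therefore
$$\mathbf R^i\big((-)^G\big)\circ \operatorname{Ind}_H^G\simeq \mathbf R^i\big((-)^G\circ\operatorname{Ind}_H^G\big)\simeq \mathbf R^i\big((-)^H\big),$$
that is, $\mathrm H^i(G,\operatorname{Ind}_H^G W)\simeq \mathrm H^i(H,W)$. Equivalently, in the spectral sequence of Proposition \ref{lem-spectralsequences}, exactness gives $R^q\operatorname{Ind}_H^G W=0$ for $q>0$. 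The sequence then collapses to the row $q=0$, and the edge map yields the isomorphism.

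I expect no serious obstacle, since the statement reduces to the degenerate case of the spectral sequence. The only point needing care is that the composite functor $(-)^G\circ\operatorname{Ind}_H^G$ is naturally isomorphic to $(-)^H$ as functors on all of $\mathrm{Rep}_R(H)$, not just objectwise. This is exactly the naturality in $W$ of Frobenius reciprocity, which holds because the bijection $\varphi\mapsto\varepsilon_W\circ\varphi$ is natural in $W$.
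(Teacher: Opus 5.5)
Your proposal is correct and follows the paper's route. The paper obtains Shapiro's lemma directly from the Grothendieck spectral sequence of Proposition \ref{lem-spectralsequences}, which collapses because exactness of $\operatorname{Ind}_H^G$ forces $R^q\operatorname{Ind}_H^G W=0$ for $q>0$; your use of Remark \ref{rem-2}(2), with $\operatorname{Ind}_H^G$ preserving injectives as a right adjoint of the exact restriction functor, is that same argument stated without the spectral sequence.
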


Since $\mathrm{Ind}_1^G =-\otimes_t \mathcal{O}(G)$ is an exact functor, we obtain the following result.
\begin{lem}\label{cor-2}
Let $G$ be a flat affine $k$-groupoid scheme acting transitively on $S = \mathrm{Spec}(R)$.
 We have for each $G$-module $V:$ 
$$
\mathrm{H}^i(G, V \otimes_t \mathcal{O}(G)) \simeq\left\{\begin{array}{lll}
V & \text { if } & i=0, \\
0 & \text { if } & i>0 .
\end{array}\right.
$$
\end{lem}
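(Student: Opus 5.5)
The statement to prove is Lemma \ref{cor-2}: $\mathrm{H}^0(G,V\otimes_t\mathcal O(G))\cong V$ and $\mathrm{H}^i(G,V\otimes_t\mathcal O(G))=0$ for $i>0$. I would derive it from Shapiro's lemma applied to the trivial discrete subgroup $H=1$, meaning the unit section $\varepsilon:S\to G^\Delta\subset G$. This is the case the sentence before the statement points to.

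The first step is the identification $\mathrm{Ind}_1^G(W)=W\otimes_t\mathcal O(G)$ for an $R$-module $W$, i.e.\ a representation of the trivial group over $S$. With $H=1$ we have $\mathcal O(H)=R$, so the $H$-invariance condition in Lemma \ref{lem-23} is vacuous and $(W\otimes_t\mathcal O(G))^H=W\otimes_t\mathcal O(G)$. The $G$-action is the left regular one on the second factor. Flatness of $t:R\to\mathcal O(G)$ makes $W\mapsto W\otimes_t\mathcal O(G)$ exact. Hence $1$ is exact in $G$, and Shapiro's lemma gives
$$\mathrm{H}^i(G,W\otimes_t\mathcal O(G))\cong \mathrm{H}^i(1,W).$$
Here $\mathrm{H}^i(1,-)$ denotes the derived functors of the identity functor on $R$-modules. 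These are $W$ for $i=0$ and $0$ for $i>0$.

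The second step concerns the $G$-module structure on $V\otimes_t\mathcal O(G)$ when $V$ carries its own $G$-action: it could be diagonal or trivial on $V$. To handle the diagonal case, I would construct the tensor identity isomorphism
$$V\otimes_t\mathcal O(G)\ (\text{diagonal})\;\cong\; V^{\mathrm{triv}}\otimes_t\mathcal O(G)\ (\text{regular on the right}).$$
It is given in Sweedler notation by $v\otimes g\mapsto \sum v_{(0)}\otimes v_{(1)}g$, with inverse $v\otimes g\mapsto\sum v_{(0)}\otimes\iota(v_{(1)})g$. I would check that the $t$- and $s$-module structures match using \eqref{A13}, and that the two maps are mutually inverse and $G$-linear using \eqref{A14}. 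This reduces the lemma to the case of the trivial action on $V$, which the first step already covers.

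I expect the main obstacle to be the bookkeeping of source and target in the groupoid setting. One must confirm that the tensor identity map is well defined over the correct $R$-module structures, and that Frobenius reciprocity (Lemma \ref{lem-frobeniusreciprocity}) holds with $H=1$ even though $1$ is not literally a closed subgroup of $G^\Delta$ with the same structure maps. If Shapiro's lemma does not apply directly, I would use a direct argument instead. Lemma \ref{lem-enoughinjectives} shows that $I\otimes_t\mathcal O(G)$ is injective when $I$ is an injective $R$-module. Taking an injective $R$-resolution $W\to I^\bullet$ and tensoring with the flat module $\mathcal O(G)$ gives an injective $G$-resolution of $W\otimes_t\mathcal O(G)$. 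Its $G$-invariants are $I^\bullet$, because by Frobenius reciprocity $\mathrm{Hom}_G(R,I\otimes_t\mathcal O(G))=\mathrm{Hom}_R(R,I)=I$. This complex is exact in positive degrees, which yields the claim.
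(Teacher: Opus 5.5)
Your proposal is correct, and its core is exactly the paper's argument: $\mathrm{Ind}_1^G=-\otimes_t\mathcal O(G)$ is exact because $t$ is flat, so Shapiro's lemma for $H=1$ gives $\mathrm{H}^i(G,V\otimes_t\mathcal O(G))\cong \mathrm{H}^i(1,V)$; here $1$, embedded via the unit section, is a genuine flat closed discrete subgroup, so that worry is unnecessary, and your fallback with an injective $R$-resolution is just the proof of Shapiro's lemma in this case. The tensor-identity step is not needed, since the paper means $G$ acting only on the factor $\mathcal O(G)$ (as in its later use of the coaction $V\to V\otimes_t\mathcal O(G)$ as a $G$-map); if you keep it, note that the diagonal module is formed over the $s$-structure, so your map actually identifies $V\otimes_s\mathcal O(G)$ with $V\otimes_t\mathcal O(G)$.
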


\begin{prop}\label{lem-11}
Let $G$ be a flat affine $k$-groupoid scheme acting transitively on $S = \mathrm{Spec}(R)$ and $H$ be a flat discrete subgroupoid scheme. We have for each $H$-module $V$ and each $i \in \mathbb{N}$ an isomorphism of $R$-modules
$$
\mathrm{H}^i(H, V \otimes_t \mathcal{O}(G)) \simeq \left(R^i \mathrm{ Ind }_H^G\right) V .
$$
\end{prop}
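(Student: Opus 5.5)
The plan is to realize $R^i\mathrm{Ind}_H^G$ as the derived functors of a composite and compare it with the derived functors of $H$-invariants. This is the standard argument of Jantzen (I.4.10), adapted to groupoids.

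The starting point is the description $\mathrm{Ind}_H^G(V)=(V\otimes_t\mathcal O(G))^H$ from Lemma \ref{lem-23}. Here $H$ acts diagonally on $V\otimes_t\mathcal O(G)$: through the given action on $V$, and through the right regular action on $\mathcal O(G)$. Consider the functor
$$\mathcal F: \mathrm{Rep}_R(H)\longrightarrow \mathrm{Rep}_R(H),\qquad V\longmapsto V\otimes_t\mathcal O(G)$$
with this diagonal action. Then $\mathrm{Ind}_H^G=(-)^H\circ\mathcal F$, where we compose with the forgetful functor to $R$-modules. Since $\mathcal O(G)$ is flat over $R$ via $t$ (flatness of $G$), $\mathcal F$ is exact. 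I will then apply Remark \ref{rem-2}(2), which gives $\mathbf R^i\mathrm{Ind}_H^G\cong \mathrm H^i(H,-)\circ\mathcal F$; this is exactly the claim.

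The main point is to check that $\mathcal F$ sends injective $H$-modules to $(-)^H$-acyclic ones. By Lemma \ref{lem-enoughinjectives}, applied to $H$ regarded as a groupoid acting on $S$ through the diagonal, every injective $H$-module is a direct summand of $I\otimes\mathcal O(H)$ with $I$ an injective $R$-module. Since $\mathcal F$ is additive, it suffices to treat $V=I\otimes\mathcal O(H)$.

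For this case I would use the tensor identity: for any $H$-module $M$,
$$(I\otimes\mathcal O(H))\otimes M\cong I\otimes\mathcal O(H)\otimes M_{\mathrm{triv}},$$
where $H$ acts on the left side diagonally and on the right side only on $\mathcal O(H)$. The isomorphism is the usual untwisting map $f\otimes m\mapsto \sum f m_{(1)}\otimes m_{(0)}$, with inverse built from the antipode $\iota$. With $M=\mathcal O(G)$, this gives
$$\mathcal F(I\otimes\mathcal O(H))\cong (I\otimes_t\mathcal O(G))\otimes\mathcal O(H),$$
which is of the form $W\otimes\mathcal O(H)=\mathrm{Ind}_1^H(W)$ with $W$ a trivial $H$-module. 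Lemma \ref{cor-2}, applied to $H$, then shows that it is $(-)^H$-acyclic.

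The obstacle I expect is bookkeeping. The $R$-module structures via $s$ and $t$ have to be compatible with the untwisting isomorphism, since $H\subset G^\Delta$ is discrete and so $s=t$ on $\mathcal O(H)$. One also has to confirm that $\mathcal F$ lands in genuine $H$-comodules, which amounts to checking the coassociativity of $\rho_{W\otimes_t\mathcal O(G)}$ from Lemma \ref{lem-23}. Once this is in place, Remark \ref{rem-2}(2) and the isomorphism above finish the proof.
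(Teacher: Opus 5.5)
Your proposal is correct and follows the paper's proof. The paper also writes $\mathrm{Res}^G_1\circ\mathrm{Ind}_H^G\simeq(-)^H\circ(-\otimes_t\mathcal O(G))$, uses that $-\otimes_t\mathcal O(G)$ is exact and sends injective $H$-modules to $(-)^H$-acyclic modules, and concludes with Remark \ref{rem-2}(1),(2). Your tensor-identity reduction to $W\otimes\mathcal O(H)$ with trivial action on $W$ supplies the justification for that acyclicity step, which the paper only indicates by citing Lemma \ref{cor-2}; strictly, that lemma is stated for transitive $G$, so for $H$ you are really using its standard group-scheme analogue over $R$ (Jantzen I.4.7).
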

\begin{proof}
The proof is based on the definition of the induction functor. Indeed, the definition of $\mathrm{Ind}_H^G$ yields an isomorphism of functors
$$   \mathrm{Res}^G_1 \circ \mathrm{Ind}^G_H \simeq (-)^H \circ (-\otimes_t \mathcal{O}(G)).$$
Since the functor $- \otimes_t \mathcal{O}(G)$ is exact and maps injective $H$-modules to modules 
that are acyclic for the fixed points functor (see Lemma \ref{cor-2}-(1)), we can apply Remark 
\ref{rem-2}-(1),(2) and the result  follows.
\end{proof}

 \subsection*{Acknowledgments }
 We  thank Francesco Baldassarri and H\'el\`ene Esnault
for engaging in insightful discussions. 
 
\subsection*{Funding} 
 
The work of Tran Phan Quoc Bao is supported by VAST, under grant number
CTTH00.02/23-24 ``Arithmetic and Geometry of schemes over function fields and applications''
and is funded by Vingroup Joint Stock Company and supported by Vingroup Innovation Foundation under the project code VINIF.2021.DA00030.

The work of Vo Quoc Bao is partially supported by the International Centre of Research and
Postgraduate-Training, IM-VAST,  under the grant number ICRTM03\_2020.03 and by the Vingroup Innovation Foundation under grants number VINIF.2021.TS.116 and  VINIF.2023.TS.011
and by the NAFOSTED under the grant number  101.04-2025.20 ``Some problems on fundamental groups, cohomology, and moduli space''.

Phung Ho Hai is partially supported by VAST under grant number
CTTH00.02/23-24 ``Arithmetic and Geometry of schemes over function fields and applications'' 
and by the NAFOSTED under the grant number  101.04-2025.20 ``Some problems on fundamental groups, cohomology, and moduli space''.

\end{document}